  \definecolor{dark-red}{rgb}{0.6,0.15,0.15}
   \definecolor{dark-blue}{rgb}{0.15,0.15,0.6}
   \definecolor{medium-blue}{rgb}{0,0,0.5}
\numberwithin{equation}{section}
\newtheorem{thm}{Theorem}[section]
\newtheorem{theorem}{Theorem}[section]
\newtheorem{Theorem}{Theorem}[section]
\newtheorem{cor}{Corollary}[section]
\newtheorem{prop}{Proposition}[section]
\newtheorem{lem}{Lemma}[section]
\newtheorem{lemma}{Lemma}[section]
\theoremstyle{definition}
\newtheorem{defn}{Definition}[section]
\newtheorem{ex}{Example}[section]
\newtheorem{rem}{Remark}[section]
\let\c@equation=\c@thm
\let\c@lem=\c@thm
\let\c@theorem=\c@thm
\let\c@lemma=\c@thm
\let\c@Theorem=\c@thm
\let\c@Lemma=\c@thm
\let\c@cor=\c@thm
\let\c@corollary=\c@thm
\let\c@Corollary=\c@thm
\let\c@conj=\c@thm
\let\c@conjecture=\c@thm
\let\c@prop=\c@thm
\let\c@proposition=\c@thm
\let\c@Proposition=\c@thm
\let\c@defn=\c@thm
\let\c@definition=\c@thm
\let\c@Definition=\c@thm
\let\c@notation=\c@thm
\let\c@note=\c@thm
\let\c@exmp=\c@thm
\let\c@ex=\c@thm
\let\c@exmps=\c@thm
\let\c@rem=\c@thm
\let\c@warn=\c@thm
\let\c@claim=\c@thm
\let\c@convention=\c@thm
\let\c@conventions=\c@thm
\let\c@quest=\c@thm
\let\c@facts=\c@thm
\let\c@slogan=\c@thm
\newcommand{\F}{\mathbb{F}}
\newcommand{\Z}{\mathbb{Z}}
\newcommand{\W}{\mathbb{W}}
\newcommand{\G}{\mathbb{G}}
\newcommand{\R}{\mathbb{R}}
\newcommand{\ZZ}{\mathbb{Z}}
\newcommand{\FF}{\mathbb{F}}
\newcommand{\GG}{\mathbb{G}}
\newcommand{\wchi}{\widetilde{\chi}}
\def\SS{\mathbb{S}}
\newcommand{\xra}{\xrightarrow}
\newcommand{\cC}{\mathcal{C}}
\newcommand{\cS}{\mathcal{S}}
\newcommand{\sE}{\mathscr{E}}
\def\makeop#1{\expandafter\def\csname #1\endcsname{\mathop{\mathrm{#1}}\nolimits}}
\newcommand{\kappabar}{\bar{\kappa}}
\newcommand{\WW}{\mathbb{W}}
\def\KEn{\Lambda} 
\def\KE{\Lambda} 
\def\KIn{\Lambda^\infty}
\def\longr{{\longrightarrow}}
\def\RP{{\mathbb{R}\mathrm{P}}}
\def\FF{{\mathbb{F}}}
\def\bX{{{\mathbf{X}}}}
\def\bY{{\mathbf{Y}}}
\newcommand{\Einf}{\mathcal{E}_{\infty}} 
\newcommandx{\irina}[2][1=]{\todo[linecolor=red,backgroundcolor=red!25,bordercolor=red,#1]{#2}}
\newcommandx{\cuong}[2][1=]{\todo[linecolor=orange,backgroundcolor=orange!25,bordercolor=orange,#1]{#2}}
\newcommandx{\agnes}[2][1=]{\todo[linecolor=blue,backgroundcolor=blue!25,bordercolor=blue,#1]{#2}}
\newcommandx{\agnesinline}[2][1=]{\todo[linecolor=blue,backgroundcolor=blue!25,bordercolor=blue,#1,inline]{#2}}
\newcommandx{\paul}[2][1=]{\todo[linecolor=green,backgroundcolor=green!25,bordercolor=green,#1]{#2}}
\newcommandx{\vesna}[2][1=]{\todo[linecolor=purple,backgroundcolor=purple!25,bordercolor=purple,#1]{#2}}
\newcommandx{\hanswerner}[2][1=]{\todo[linecolor=cyan,backgroundcolor=cyan!25,bordercolor=purple,#1]{#2}}
\newcommand{\mylabel}[2]{#2\def\@currentlabel{#2}\label{#1}}
\newcommand{\Pic}{\mathrm{Pic}}
\def\Gl{{\mathrm{Gl}}}
\def\Mor{{\mathfrak{Mor}}}
 \newcommand{\tmf}{E^{hG_{48}}}
\title[Exotic $K(2)$-Local Picard Group]{The Exotic $K(2)$-Local Picard Group at the Prime $2$}
\date{\today}
\author[Beaudry]{Agn\`es Beaudry}
\address{Department of Mathematics, University of Colorado, Boulder, Boulder, CO, Campus Box 395,
Boulder, CO, 80309, USA}
\email{agnes.beaudry@colorado.edu}
\author[Bobkova]{Irina Bobkova}
\address{Department of Mathematics, Texas A\&M University, College Station, TX, 77843, USA}
\email{ibobkova@tamu.edu}
\author[Goerss]{Paul G. Goerss}
\address{Department of Mathematics\\ Northwestern University \\  2033 Sheridan Road 
\\ Evanston \\ Illinois \\ 60208 }
\email{pgoerss@math.northwestern.edu}
\author[Henn]{Hans-Werner Henn}
\address{Institut de Recherche Math\'ematique Avanc\'ee, C.N.R.S. et 
Universit\'e de Strasbourg \\ 7, rue Ren\'e Descartes \\ 67084 Strasbourg Cedex \\ France}
\email{henn@math.unistra.fr}
\author[Pham]{Viet-Cuong Pham}
\address{Lycée Charles Jully \\ 59 rue Maréchal Foch \\ 57500 Saint-Avold \\France}
\email{phamvietcuonga2@gmail.com}
\author[Stojanoska]{Vesna Stojanoska}
\address{Department of Mathematics, University of Illinois, Urbana-Champaign, 273 Altgeld Hall
1409 W. Green Street, 
Urbana, IL 61801}
\email{vesna@illinois.edu}
\begin{document}

\begin{abstract} We calculate the group $\kappa_2$ of exotic elements in the $K(2)$-local Picard group at the prime
$2$ and find it is a group of order $2^9$ isomorphic to $(\ZZ/8)^2 \times (\ZZ/2)^3$. In order to do this we must define and
exploit a variety of different ways of constructing elements in the Picard group, and this requires a significant
exploration of the theory. The most innovative technique, which so far has worked best at the prime $2$, is
the use of a $J$-homomorphism from the group of real representations of finite quotients of the Morava stabilizer
group to the $K(n)$-local Picard group. 
\end{abstract}
 
\maketitle

\setcounter{tocdepth}{1}
\tableofcontents

\renewcommand{\thepart}{\Roman{part}}


\section{Introduction}

The focus of this paper is a computation and analysis of the subgroup $\kappa_n$ of exotic elements in
the Picard group of the $K(n)$-local stable homotopy category at a prime $p$. We give quite a few
structural results and outline some significant techniques, all culminating in the  
main result \Cref{thm:kappa}, an isomorphism at $n=p=2$
\[
\kappa_2 \cong (\ZZ/8)^2 \times (\ZZ/2)^3.
\]
We immediately note that this isomorphism, in itself, gives little insight into the importance of this group, where
the elements arise, or what they mean. In fact, we might be better served writing
\[
\kappa_2 \cong \big[\ZZ/8 \times (\ZZ/2)^2\big] \times \ZZ/2 \times \ZZ/8
\]
reflecting the fact that the elements in this group arise in three different ways and reflect three fundamentally different
aspects of the $K(2)$-local category. Making this clear and precise requires some explanation,
and we begin there. 

Let $(\cC,\otimes)$ be a symmetric monoidal category with unit object $I$.  An object $X\in \cC$ is invertible if
there exists an object $Y$ and an isomorphism $X\otimes Y\cong I$. If the collection of isomorphism classes of
invertible objects is a set, then $\otimes$ defines a group structure on this set. 
This basic invariant of the category $\cC$ is the {\it Picard group} $\Pic(\cC)$. 

We are particularly interested in examples from stable homotopy theory. If we consider the stable homotopy category
of  spectra, the answer turns out to be simple, even if the proof is not:
the only invertible spectra are the sphere spectra $S^n$, $n \in \ZZ$  and the map $\ZZ \to \Pic(\cS)$ sending $n$
to $S^n$ is an isomorphism. See \cite{HopMahSad}. 

However, it is an insight due to Mike Hopkins 
\cite{StricklandInterpolation} that we don't need to stop there and that, in fact, we can get a great deal
of information if we pass to the localized stable homotopy categories which arise in chromatic stable
homotopy theory. The basic example is then  $K(n)$-local spectra, where $K(n)$ is the $n$th Morava $K$-theory
at a fixed prime $p$. 

Thus, the aim of this paper is to study $\Pic_n$, the Picard group of the $K(n)$-local stable homotopy category at a
fixed prime $p$. The first observation is that the $K(n)$-local category as a whole and its Picard group 
in particular can be accessed using Morava $E$-theory.

We fix a formal group law $\Gamma_n$ of height $n$ over the field $\FF_{p^n}$ and let
$E = E_n = E(\FF_{p^n},\Gamma_n)$ be the associated Lubin-Tate or Morava $E$-theory. (For a few more details,
see \Cref{sec:background-kn}.) For a spectrum $X$ we  then define
\[
E_\ast X = \pi_\ast L_{K(n)}(E \wedge X).
\]
This graded $E_\ast$-module has a continuous action by the Morava stabilizer group
$\GG_n = \Aut(\FF_{p^n},\Gamma_n)$, which gives it the structure of a Morava module in the sense
of \cite[Definition 5.2.30]{BarthelBeaudry}. The completed tensor product over $E_\ast $ endows the category of Morava  modules with
a symmetric monoidal structure with unit $E_\ast  S^0$. Thus, the category of Morava modules also has 
a Picard group, which we write $(\Pic_n)_{\mathrm{alg}}$ and call the \emph{algebraic Picard group}. It consists of isomorphism classes of
Morava modules which are free of rank $1$ over $E_\ast$. A basic fact from the foundational paper 
\cite{HopMahSad} of Hopkins, Mahowald, and Sadofsky says that a $K(n)$-local spectrum
$X$ is in $\Pic_n$ if and only if $E_\ast X$ is in $(\Pic_n)_{\mathrm{alg}}$. Thus we obtain 
a group homomorphism
\begin{equation}\label{eq:top-to-alg-intro}
\varepsilon: \Pic_n \to (\Pic_n)_{\mathrm{alg}} \ .
\end{equation}
Elements of the kernel of $\varepsilon$, denoted by $\kappa_n$, are the \emph{exotic} invertible $K(n)$-local spectra and $\kappa_n$ is the \emph{exotic Picard group}. 
Thus, an invertible element $X$ is in $\kappa_n$ if and only if
\[
E_\ast X \cong E_\ast S^0 = E_\ast
\] 
as Morava modules. 

To understand $\Pic_n$, one must
\begin{enumerate}[(1)]
\item\label{it:1intro} compute the algebraic Picard group $ (\Pic_n)_{\mathrm{alg}}$,
\item\label{it:3intro} compute the exotic Picard group $\kappa_n$, 
\item\label{it:2intro} determine the image of $\varepsilon$, and
\item\label{it:4intro} resolve the extension
$ 0 \to \kappa_n \to \Pic_n \to \im(\varepsilon) \to 0$.
\end{enumerate}
This paper studies the second of these items at $n=p=2$ and establishes structural results 
at all heights and primes. But before getting into the details of our results, it is worth summarizing what we know about
some of the other items.

The computation of the algebraic Picard group is equivalent to the computation of the continuous cohomology 
of $\GG_n$ with coefficients in the units of the ring $E_0 = E_0S^0$. This was widely regarded as impossible.
That all changed with the work of Barthel--Schlank--Stapleton--Weinstein \cite{BSSW}, which used deep
techniques and computations from arithmetic geometry to give a complete calculation of the algebraic Picard group
and, except in one case, the image of  $\varepsilon$. This link to arithmetic geometry goes back to
Gross--Hopkins \cite{GrossHopkins2}, among others, but there has been revolutionary work in that field in the last decade
or so, and the observation of Barthel, Schlank, Stapleton, and Weinstein is that these new directions made the
connection very concrete. The paper \cite{BSSW} offers at once a fundamental insight and a tour de force of technique. 

More specifically, in Theorem 2.4.3 of \cite{BSSW}, the authors show that at primes $p \ne 2$, the algebraic Picard group is 
isomorphic to $\ZZ_p^2 \oplus \ZZ/2(p-1)$. It is topologically generated by $E_\ast S^1$ and
$E_\ast S\langle \mathrm{det}\rangle$, where $S\langle \mathrm{det}\rangle$ is the determinant sphere,
as in \eqref{eq:Gross-Hopkins-relation}.  The prime $2$ is only slightly more complicated.  A number of special cases
were known before the results of  \cite{BSSW}. See, for example,  \cite{HopMahSad} for $n=1$ and \cite{Lader} and 
\cite{Karamanov} for $n=2$ and $p \geq 3$. Furthermore, Hopkins had long suggested that the algebraic Picard group would be 
generated by $E_\ast S^1$ and $E_\ast S\langle \mathrm{det}\rangle$. 

Except possibly in the case $n=p=2$, the map $\varepsilon$ of \eqref{eq:top-to-alg-intro}
is onto, and in the exceptional case it is onto or the image has index $2$. See Corollary 2.4.5 of \cite{BSSW}; the exceptional 
case was also known to Henn. If $p$ is large with respect to $n$ then $\varepsilon$ is an isomorphism;  for example,
if $n=2$, then we need $p > 3$. This is a reflection of the fact that in those cases the homotopy theory of $K(n)$-local spectra is
algebraic, in a sense which can be made completely precise. See \cite{PstPic,HeardPicAlg,PstAlgebraic} 
for the state of the art. Now that we have \cite{BSSW}, some results of this sort follow immediately
from older sources, such as \cite{HopMahSad} and \cite{HoveySadofsky}.

Systematic results on $\kappa_n$ at low primes remain elusive. It was known right from the start that $\kappa_n$
could be non-zero; for example, if $p=2$, $\kappa_1 \cong \ZZ/2$ by \cite{HopMahSad}. If $p=3$, then
$\kappa_2 \cong \ZZ/3 \times \ZZ/3$ by \cite{GHMRPicard}. In earlier work  \cite{ShimomuraKamiya}, Kamiya and
Shimomura had shown $\kappa_2 \ne 0$ at $p=3$, and had given an upper bound as well. For all primes $p$, 
the group $\kappa_{p-1}$ contains non-trivial elements of  order $p$, by \cite{BGHS}, and if $p=2$ then $\kappa_n \ne 0$
by \cite{HeardLiShi}, who also produce elements of $2$-power order which grows with $n$. There are other qualitative results of 
this type when $n$ is large (but not too large) with  respect  to $p$; see, for example, \cite{CulverZhang} and \cite{BLLSZ}. 
General considerations using \cite{HovStrick} and the vanishing line results of \cite{DH}
show that there is an integer $N$ so that $p^N\kappa_n=0$, but we have no good bounds on $N$.
It is possible to conjecture that $\kappa_n$ is finite, just as it is  possible to conjecture that $\pi_\ast L_{K(n)}S^0$
is topologically finitely generated in each degree. Both conjectures could be deduced from finiteness results for
the cohomology of the group $\GG_n$. The developing connections between chromatic homotopy theory 
and arithmetic geometry may bring such finiteness results within reach.

Our basic technique for studying $\kappa_n$ uses the subgroup structure of $\GG_n$. By Devinatz, Goerss, Hopkins, and
Miller, the group $\GG_n$ acts continuously on the spectrum $E$ through $\sE_\infty$-ring maps; 
thus if 
$H \subseteq \GG_n$ is a closed subgroup, the homotopy fixed point spectrum $E^{hH}$ is
a ring spectrum with an associated category of modules, which has a Picard group $\Pic(E^{hH})$ and
we get a homomorphism $\Pic_n \to \Pic(E^{hH})$. We would like to define $\kappa(H) \subseteq 
\kappa_n$ to be those elements in the kernel of this map; that is, those $X \in \kappa_n$ so
that there is an equivalence $E^{hH} \simeq E^{hH} \wedge X$ of $E^{hH}$-modules. For various reasons, we
need a more rigid definition of trivialization, which we call an $H$-orientation; see \Cref{defn:subgroupfilt}.
However, in every case we consider the weaker definition will suffice. See \Cref{lem:old-is-new} and
\Cref{rem:old-is-new-at-2}.  In any case, if $H_1 \subset H_2 \subseteq \GG_n$ is a sequence
of subgroups will get inclusions $\kappa(H_2) \subseteq \kappa(H_1)$. 

At $n=2$ and $p=2$ we will focus on two subgroups of $\GG_2$. The first of these is $G_{48}$, a maximal
finite subgroup with the property that $E^{hG_{48}}$ is the $K(2)$-localization of the Hopkins-Miller spectrum
of topological modular forms. The other is $\GG_n^1$ defined using the determinant map $\det:\GG_n \to \ZZ_p^\times$.
See \Cref{defn:det-defined-here}. Namely, if $\mu \subseteq \ZZ_p^\times$ is the finite subgroup, then we get a map
\[
\xymatrix{
\GG_n \ar[r]^-{\det} &\ZZ_p^\times \ar[r] & \ZZ_p^\times/\mu \cong \ZZ_p
}
\]
and $\GG_n^1 \subseteq \GG_n$ is the kernel of this map. This gives a filtration of $\kappa_2$ at $p=2$
\begin{equation}\label{eq:sbgrp-filt-one-more-time}
\kappa_2 \supseteq \kappa(G_{48}) \supseteq \kappa(\GG_2^1)
\end{equation}
and then in  \Cref{thm:kappag21}, \Cref{thm:Qfirst}, and \Cref{thm:first-quotient} we compute the filtration quotients as
\begin{align}\label{eq:div-in-3pts}
\kappa(\GG_2^1)&\cong \Z/8 \oplus (\Z/2)^{ 2}\nonumber\\
\kappa(G_{48})/\kappa(\GG_2^1) &\cong \Z/2\\
\kappa_2/ \kappa(G_{48}) &\cong \Z/8.\nonumber
\end{align}
We then show that this filtration splits in \Cref{prop:kappaG48} and \Cref{thm:kappa}.  To help with this,
we compare this subgroup filtration with the descent filtration, the one arising from the
Adams-Novikov spectral sequence. See \Cref{defn:descent-filt} for the details on this filtration. We remark immediately
that this splitting is by no means canonical.  A roadmap and a more detailed synopsis is given after \Cref{defn:subgroup-filt}, 
near the beginning of  Part II and after we have all the definitions in place.  

Echoing the three-part division of $\kappa_2$ displayed in \eqref{eq:div-in-3pts}, we have three ways of producing
exotic elements in the Picard group. The first is a twisting construction, which appeared in \cite{GHMRPicard}
and was studied in some detail in \cite{Westerland}. Let $\KEn \subseteq \pi_0E^{h\GG_n^1}$ be in the kernel of
the Hurewicz map to $E_0E^{h\GG_n^1}$. Then we can form the multiplicative subgroup $1 + \KEn \subseteq (\pi_0E^{h\GG_n^1})^\times$.
Note the quotient group $\GG_n/\GG_n^1 \cong \ZZ_p$ acts on $1 + \KEn$.
Now if $\alpha \in 1 + \KEn$ and $\psi \in \ZZ_p$ is a topological generator
we can form a fiber sequence
\[
\xymatrix{
X(\alpha) \ar[r] & E^{h\GG_n^1} \ar[r]^-{\psi-\alpha} & E^{h\GG_n^1}.
}
\]
In \Cref{prop:twist-constr} we prove this assignment $\alpha \mapsto X(\alpha)$ defines an isomorphism
\[
H^1(\ZZ_p,1+\KEn) \cong \kappa(\GG_n^1).
\]
We then implement this isomorphism at $n=2$ and $p=2$. The group $\pi_0E^{h\GG_2^1}$ was
computed in \cite{BGH}; the difficulty here is to determine the action of $\GG_2/\GG_2^1$ on this group. 
It turns out to be trivial, and the final calculation $\kappa(\GG_2^1)$ is given in \Cref{thm:kappag21}. 

The second construction also uses the determinant map. At the prime $2$, $\mu = \{\pm 1\} \subseteq 
\ZZ_2^\times$ is a cyclic group of order $2$ and we have a homomorphism $\chi:\GG_n \to C_2$ defined
as the composition 
\[
\xymatrix{
\GG_n \ar[r]^-{\det} &\ZZ_2^\times \ar[r] & \ZZ_2^\times/(1+4\ZZ_2) \cong \mu = C_2.
}
\]
If $V$ is a virtual real representation of $C_2$ with one-point compactification $S^V$, $\GG_n$ acts on $S^V$ through this quotient, Giving $E \wedge S^V$ the diagonal $\GG_n$-action, we can form
\[
J(V) = (E \wedge S^V)^{h\GG_n}.
\]
If $V$ has virtual dimension zero and $C_2$ acts trivially on $H_0S^V \cong \ZZ$,
then $J(V) \in \kappa_n$. Now let $\sigma$ be the sign representation of $C_2$. We show in \Cref{thm:Qfirst}
and \Cref{thm:2Q} that if $n=2$
\[
J(2\sigma-2) \in \kappa(G_{48})
\]
is an element of order $2$ and generates the quotient group $\kappa(G_{48})/\kappa(\GG_2^1) \cong \ZZ/2$.
As a preliminary to all this, we devote
\Cref{sec:j-construction} to discussing the general categorical properties of this $J$-construction. Before getting
this far, we need to show $\kappa(G_{48})/\kappa(\GG_2^1)$ is no larger than $\ZZ/2$. This requires
a separate suite of ideas and uses the topological resolutions pioneered in \cite{ghmr} and made
explicit at $p=2$ in \cite{BobkovaGoerss}. So far these techniques are very special to height $2$.

Note that we might conjecture that $J(2\sigma-2)$ is a non-trivial element of $\kappa_n$ for all $n$ at $p=2$.
This is true for $n=1$ by a simple calculation and here we show it at $n=2$. It is possible to prove 
this for $n$ odd using \cite{HeardLiShi} and the fact that $\chi$ has a splitting. For larger even $n$, this would follow
if we  could show a certain cohomology class in $H^3(\GG_n,E_2)$ was non-zero. The precise
class at $n=2$ is given in \Cref{prop:keypropd2meansd3}. This is quite plausible, but we make no attempt to prove it here. 

The third technique uses the fact that the $K(n)$-local category has two dualities: Spanier-Whitehead 
duality $D_n$ and Gross-Hopkins duality $I_n$. These are related by the formula
\[
I_nX \cong D_nX \wedge I_n
\]
where $I_n$ is the Gross-Hopkins dual of the sphere. By \cite{GrossHopkins} and \cite{StrickGrossHop} we have
an equation 
\[
I_n = \Sigma^{n^2-n} S\langle \det \rangle \wedge P
\]
where $S\langle \det \rangle$ is a determinant sphere and $P \in \kappa_n$. Again see \eqref{eq:Gross-Hopkins-relation} 
and also \cite{BBGS}. The technique then is to find an $X$ so that we know $D_nX$ and $I_nX$, and then see
that the equation for $I_n$ forces $P \ne 0$. This is by now a classical idea, and has been used in 
\cite{BehrensModular,GHMRPicard,BGHS,HeardLiShi}, and probably elsewhere. In our case we take
$X = E^{hG_{48}}$ where we can use a combination of \cite{Pham,MahowaldRezkBCDual, BrunerGreenleesRognes, Stojanoska,TmfAt2} 
for $I_2E^{hG_{48}}$ and \cite{Bobkova_DTMF,BGHS}
for $D_2E^{hG_{48}}$. Then not only is $0 \ne P \in \kappa_2$, but $P$ generates
$\kappa_2/\kappa(G_{48}) \cong \ZZ/8$. See \Cref{thm:first-quotient}.

There is a drawback to this last technique. The spectrum $I_n$ is the Brown-Comenetz dual of the $n$th 
monochromatic  layer of the sphere and thus is a product of localization theory and Brown Representability.
That we know $E_\ast I_n$ at all relies on a deep result in the algebraic geometry of formal groups:
the identification of the dualizing sheaf of Lubin-Tate space, as in \cite{GrossHopkins2}. 
This is all very indirect, and gives no detailed information on the homotopy type of $I_n$ or, by extension,
of the spectrum $P$. We only know that $P$ is non-trivial because of its effects on much larger spectra. 
It is as if we've observed the perturbations of the  orbit of Uranus, but have not yet
discovered Neptune. Put another way, Gross-Hopkins duality remains a tantalizing mystery 
when the prime is low with respect to the height. For further thoughts on $P$, see \Cref{rem:as-close-as-we-get}. 

It is possible to ask how much all of this generalizes to higher heights. 
The filtration of \eqref{eq:sbgrp-filt-one-more-time} is rather short, but we suspect that this is an artifact of the fact that 
we are working at a relatively low height.  More generally, we hypothesize that we would need at least need know
$\kappa(\GG_n^1)$ and $\kappa(H)$ where $H$ runs over the entire lattice of finite subgroups of $\GG_n$.
At low heights, this lattice is very simple; for example at $n=p=2$, $G_{48}$ is the unique (up to conjugacy) maximal
finite subgroup of $\GG_2$, so it is sufficient to calculate $\kappa(G_{48})$. But knowing $\kappa(\GG_n^1)$ and $\kappa(H)$
for all finite $H$ would not be enough: even in our case the inclusion $\kappa(\GG_2^1) \subseteq \kappa(G_{48})$ 
is strict.

\setcounter{subsection}{1}

\subsection*{Acknowledgements}  The research for this paper has been supported by the National Science
Foundation through the grants DMS-2005627, DMS-1906227 and DMS-1812122, the Simons Foundation
through a Fellowship to Stojanoska and a Travel Grant to Goerss, and the L'Agence Nationale de la Recherche
through the grant ANR-16-CE40-0003 Chrok ``Chromatic homotopy and K-theory''.

This project began in 2015 during the Hausdorff Research Institute for Mathematics (HIM) Trimester 
Program ``Homotopy Theory, Manifolds, and Field Theories'', and was completed in 2022  during the HIM Trimester 
program ``Spectral Methods in Algebra, Geometry, and Topology''. We would like to thank HIM for the hospitality, with 
acknowledgement of support from the Deutsche  Forschungsgemeinschaft (DFG, German Research Foundation) under 
Germany's Excellence Strategy -- EXC-2047/1 -- 390685813.

Over this period we have worked on these ideas in many places. In addition to the Hausdoff Institute
we'd like to particularly thank the University of Colorado Boulder, the Isaac Newton Institute for
Mathematical Sciences in Cambridge, the Max Planck Institute for Mathematics in Bonn,
and the Institut de Recherche Math\'ematique Avanc\'ee at the Universit\'e de Strasbourg.

We are grateful to Mike Hill for insightful and supportive discussions throughout the duration of this project, and 
especially during the second HIM visit. 

Finally, we'd like to reaffirm our enduring debt to Mark Mahowald, his inspiration, his ideas, his generosity, and his
friendship. Long ago, Mark had very concrete ideas on how to produce elements of $\kappa_2$ at $p=2$ as the 
localization of finite complexes. We don't know which elements of $\kappa_2$ can be constructed 
this way, and this question remains very interesting. Any affirmative answer would tell us a great deal about the global 
structure of stable homotopy theory.

\subsection*{Notation} In this paper, concepts defined one place can reappear in others, often
many pages distant. We collect here some of the less standard items, along with references to where they are defined.

\subsubsection*{Subgroups and cohomology classes of the Morava stabilizer group $\GG_n$}
\begin{enumerate}

\item $\det \colon \GG_n \to \ZZ_p^\times$ is
defined in \Cref{defn:det-defined-here}

\item $\zeta:\GG_n \to \ZZ_p$ is defined in \Cref{defn:zeta}

\item $\chi: \GG_n \to \ZZ/2$ is defined in \Cref{defn:chi-defined}

\item $S\GG_n$, $\GG^1_n$, and $\GG_n^0$ are the kernels of $\det$, $\zeta$ and $\chi$ respectively

\item $\zeta \in H^1(\GG_n,E_0)$ and $\wchi \in H^2(\GG_n,E_0)$ are
defined in \Cref{defn:what-is-wchi-anyway1}

\item $G_{48}$, $G_{24}$ and other finite subgroups of $\GG_2$ are defined in \Cref{defn:finite-subgrps}

\item $e$ and $k$ in $H^\ast(\GG^1_2,E_0)$ appear in \Cref{fig:HFPSSG2}
\end{enumerate}

\subsubsection*{Subgroups of $\kappa_n$}
\begin{enumerate}

\item $\kappa(K)$ and orientations are discussed in \Cref{defn:subgroupfilt} and \Cref{lem:incl-kappa-K}

\item $\kappa_{n,r}$ is defined in \Cref{defn:descent-filt}

\item $\phi_r \colon\kappa_{n,r} \longr E_r^{r,r-1}(S^0)$ is defined in \Cref{defn:descent-filt}

\item $\kappa_r(K) = \kappa(K) \cap \kappa_{n,r}$

\item $\phi_r^1$ (we need only $\phi_3^1$) is defined in \eqref{eq:phi31}
\end{enumerate}

\subsubsection*{Various constructions and concepts} 

\begin{enumerate}

\item algebraic maps of spectra are defined in \Cref{def:alg-mor}

\item $\KEn \subseteq \pi_0E^{h\GG_n^1}$ is defined in \eqref{eq:lambda-defined}

\item $\KEn_s$ is defined in \Cref{def:kens-filt}

\item $X(\alpha) \in \kappa(\GG_n^1)$ is defined in \Cref{defn:super-e-defn}

\item $J(q,f,K)$ and $J(V)$ are defined in \eqref{eq:defnJQFK} and  \Cref{exam:rep-spheres}

\item $\bX_{\leq M}$ and $\bX_K^M$ are defined in \Cref{defn:trunc-towers-ss} and \Cref{defn:trunc-towers-rel}

\item $\RP^n_k$ is defined in \eqref{eq:thom-m}
\end{enumerate}

\part{$K(n)$-Local Results for General $n$ and $p$}

This first part of the paper focuses on machinery and results used to study the $K(n)$-local category, its Picard group,
and $\kappa_n$ for general heights $n$ and prime $p$. We will specialize to the case $n=p=2$ in the second part.


\section{The $K(n)$-local category}\label{sec:background-kn}

Detailed introductions to the $K(n)$-local category can be found in many places; for example,
much of the foundations can be found in \cite{HovStrick}. A precise summary of what is needed here can be
found in Section 2 of \cite{BGH} and we will use the language and notation of that reference.  Here we give
a short summary to establish the context. 

We begin with the selection of a formal group $\Gamma_n$ of height $n$. We will assume $\Gamma_n$ is defined
over $\FF_p$ and that for any extension $\FF_{p^n} \subseteq \FF_q$ of finite fields the inclusion of automorphism
groups
\[
\Aut(\Gamma_n/\FF_{p^n}) \subseteq \Aut(\Gamma_n/\FF_q)
\]
is an equality. Examples include the Honda 
formal group of height $n$ and the formal group arising from the standard supersingular elliptic curves at $p=2$ and 
$p=3$. See \cite{StrickGrossHop} and \cite{HennCent}.

We will write $\GG_n$ for the  automorphisms of the pair $(\FF_{p^n},\Gamma_n)$;  there is a semi-direct
product decomposition
\[
\GG_n \cong \Aut(\Gamma_n/\FF_{p^n}) \rtimes \Gal(\FF_{p^n}/\FF_p).
\]
It is customary to define $\SS_n = \Aut(\Gamma_n/\FF_{p^n})$ and we will write $\Gal = \Gal(\FF_{p^n}/\FF_p)$.

We define $K(n)$ to be a $2$-periodic complex orientable homology theory with
$K(n)_\ast \cong \FF_{p^n}[u^{\pm 1}]$ with $u$ in degree $-2$ and associated formal group $\Gamma_n$. This
homology theory has the same Bousfield localization functor as the $2(p^n-1)$-periodic version 
historically labelled $K(n)$. The 2-periodic version used here is better related to Morava $E$-theory.
See \eqref{eq:Morava-E-theory-def}.

The $K(n)$-local category has a symmetric monoidal structure with product given by
\[
X \wedge Y = L_{K(n)}(X \wedge Y). 
\]
As this equation indicates, we adorn the smash product with the localization only when emphasis is needed; normally,
we will leave it understood. 

The most important and most basic algebraic invariant of the $K(n)$-local category is given by Lubin-Tate or Morava
$E$-theory. In fact, $E$-theory is a functor that associates a $K(n)$-local $\Einf$-ring spectrum to a pair of a perfect field and a formal group law over it \cite{GHModuli}. We will write 
\begin{equation}\label{eq:Morava-E-theory-def}
E = E_n = E(\FF_{p^n},\Gamma_n).
\end{equation} 
There is a non-canonical isomorphism
\[
E_\ast \cong \WW[[u_1\ldots,u_{n-1}]][u^{\pm 1}],
\]
where $\WW = W(\FF_{p^n})$ is the Witt vectors on $\FF_{p^n}$. The power series rings is in degree zero and $u$
is in degree $-2$. This is a Landweber exact complex orientable  theory with formal group given by a universal
deformation of $\Gamma_n$. Our version of Morava $K$-theory is chosen so that there is a map $E \to K(n)$;
on coefficients this map is given by the quotient by the maximal ideal $\mathfrak{m} = (p,u_1,\ldots,u_{n-1})$.

We define
\[
E_\ast X = \pi_\ast L_{K(n)} (E \wedge X).
\]
Since the formal group for $E$ is the universal deformation of $\Gamma_n$, the group $\GG_n$ acts on $E_\ast$ and,
in fact, this lifts to an action on $E$ through maps of $\Einf$-ring spectra. 
Thus $E_\ast X$ is a continuous $E_\ast$-module with a continuous action by $\GG_n$; more precisely, it is a Morava
module in the sense of Definition~5.3.20 of \cite{BarthelBeaudry}.
We will write $\Mor$ for the category of Morava modules.

A fundamental fact is that the action of $\GG_n$ on the right factor of $E \wedge E=L_{K(n)}(E \wedge E)$
defines an isomorphism of Morava modules
\begin{equation}\label{eq:basic-eg}
 E_\ast E \cong \map(\GG_n,E_\ast) 
\end{equation}
to the ring of continuous functions. See \cite{StrickGrossHop},  \cite{DH}, or \cite{HennCent}. From this it follows,
as in \cite{DH}, that for many reasonable spectra -- including all the spectra of this paper -- the $K(n)$-local
Adams-Novikov Spectral Sequence has the form
\begin{equation}\label{eq:ANSS-first-app}
E_2^{s,t}(X) = H^s(\GG_n,E_tX) \Longrightarrow \pi_{t-s}L_{K(n)}X. 
\end{equation}

{\bf Note well:} Here, and throughout the paper, group cohomology will be understood
to be {\it continuous} cohomology whenever this makes sense. Likewise, maps of sets will be understood to be continuous whenever that makes sense.
We will have more to say about this spectral sequence and its construction in \Cref{subsec:ANSS}. 

\begin{rem}\label{rem:mormodulehom}
The Devinatz-Hopkins fixed point theory \cite{DH} supplies a functorial assignment $K \mapsto E^{hK}$
from closed subgroups of $\GG_n$ to $K(n)$-local $\Einf$-ring spectra. We have that
$L_{K(n)}S^0 \simeq E^{h\GG_n}$ and if $K = \{e\}$ then $E^{hK} = E$. This construction has
the following properties. First, there is an isomorphism of
Morava modules
\begin{equation}\label{eq:basic-eg-2}
E_\ast E^{hK} \cong \map(\GG_n/K,E_\ast)
\end{equation}
and, second, for dualizable $K(n)$-local spectra $X$, there is a homotopy fixed point or Adams-Novikov spectral sequence 
\begin{equation}\label{eq:ANSS-first-app-2}
E_2^{s,t}(K, X) = H^s(K,E_tX) \Longrightarrow \pi_{t-s}(E^{hK} \wedge X). 
\end{equation} 
The spectral sequence of \eqref{eq:ANSS-first-app} is the case of $K = \GG_n$. Further, the spectral sequence \eqref{eq:basic-eg-2} for any $X$ is a module over the analogous spectral sequence for $X=S^0$, a fact which will be exploited several times in this paper.
\end{rem}


\section{Orientations and Filtrations}

Let $K \subseteq \GG_n$ be a closed subgroup. In this section we introduce the concept of an $E^{hK}$-orientation of
an exotic element in the Picard group. This allows us to introduce a decomposition on $\kappa_n$,  
which  reflects the subgroup lattice of $\GG_n$. We also introduce a second, more classical, descent filtration which, in 
essence,  comes from the Adams--Novikov filtration.
 
\subsection{Orientations} \label{sec:subgroupfilt}

We begin with a basic definition. 

\begin{defn}\label{defn:Gninvariantgen} Let $X \in \kappa_n$. Then a \emph{$\GG_n$-invariant generator}
for $X$  is a choice of a $\GG_n$-invariant element $\iota_X$ in $E_0X$ which generates $E_\ast X$ as an
$E_\ast$-module. 
\end{defn}

\begin{rem}\label{rem:really-basic-siso}
For $X \in \kappa_n$, a $\GG_n$-invariant
generator $\iota_X$ determines, and is determined by, a choice of isomorphism of Morava modules $\varphi_\ast: E_\ast \to E_\ast X$. This $\varphi_\ast$ defines an isomorphism
\[
H^\ast(\GG_n,E_\ast) \cong H^\ast(\GG_n,E_\ast X).
\]
Two choices of $\GG_n$-invariant generators differ by an element of $\ZZ_p^\times \cong H^0(\GG_n,E_0)^\times$.
The latter isomorphism was historically known to the experts; one proof is in \cite[Lemma 1.33]{BobkovaGoerss}.
\end{rem}

\begin{defn}\label{defn:subgroupfilt}
Let $K \subseteq \GG_n$ be a closed subgroup, and suppose that $X \in \kappa_n$ is an exotic invertible spectrum. 
\begin{enumerate}
\item We say that $z \in \pi_0(E^{hK} \wedge X)$ is \emph{$\GG_n$-invariant $E^{hK}$-orientation}, or briefly an \emph{$E^{hK}$-orientation} of $X$ if $z$ maps to a
$\GG_n$-invariant generator under the map
\[
\pi_0(E^{hK} \wedge X) \to \pi_0 (E\wedge X) = E_0X.
\]
\item Define $\kappa(K) \subseteq \kappa_n$ as the subgroup of $X\in \kappa_n$ such that $X$ has an $E^{hK}$-orientation $z$ as in (1).
\end{enumerate}
\end{defn}
\begin{rem}
A more natural definition for an $E^{hK}$-orientation for $X$ might be to require an equivalence $E^{hK}\wedge X \simeq E^{hK}$ of $E^{hK}$-modules. We will see shortly in \Cref{prop:detect-kappa-ANSS-bis} that our definition of a $\GG_n$-equivariant orientation gives such an equivalence. The converse is not necessarily true, but holds under a mild technical assumption, as proved in \Cref{lem:old-is-new}. The stronger notion of orientation is needed already in \Cref{prop:orientations-nat-4}, and is of key importance in the proof of \Cref{lem:orientations-nat-2}.
\end{rem}

\begin{rem}[Subgroup filtrations]\label{lem:incl-kappa-K}
If $K_1 \subseteq K_2$, then $\kappa(K_2) \subseteq \kappa(K_1)$, so the assignment $K\mapsto \kappa(K)$
defines a filtered diagram of subgroups of $\kappa_n$. In particular, any nested sequence of  closed subgroups 
\[
 \{e\} = K_0 \subseteq K_1 \subseteq \cdots \subseteq K_k =\GG_n ,
\] 
gives an associated filtration
\[
\kappa_n= \kappa(\{e\}) \supseteq \kappa(K_{1}) \supseteq \cdots \supseteq 
\kappa(\GG_n) =\{L_{K(n)}S^0\} \ ,
\]
We call this a \emph{subgroup filtration} of $\kappa_n$. 
\end{rem}

In order to compute the subgroups $\kappa(K)$ it is necessary to be able to find $E^{hK}$-orientations of elements $X$ 
of $\kappa_n$. The goal of the remainder of this section is to give recognition principles for $E^{hK}$-orientations which 
use the homotopy fixed point spectral sequence \eqref{eq:ANSS-first-app-2}. 

\begin{prop}\label{prop:detect-kappa-ANSS} Let $X \in \kappa_n$ and let $\iota_X \in E_0X$ be a
$\GG_n$-invariant generator. Then $X$ is in $\kappa_n(K)$ if and only if $\iota_X$ is a permanent cycle
in the homotopy fixed point spectral sequence
\[
H^s(K,E_tX) \Longrightarrow \pi_{t-s}(E^{hK} \wedge X).
\]
\end{prop}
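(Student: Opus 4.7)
The plan is to use the edge homomorphism of the homotopy fixed point spectral sequence together with the uniqueness (up to a unit) of a $\GG_n$-invariant generator, as recorded in \Cref{rem:really-basic-siso}. Concretely, the composite $E^{hK} \to E^{h\{e\}} = E$ induces
\[
\pi_0(E^{hK} \wedge X) \longrightarrow \pi_0(E \wedge X) = E_0 X,
\]
which factors through the edge map of the HFPSS \eqref{eq:ANSS-first-app-2} as
\[
\pi_0(E^{hK} \wedge X) \twoheadrightarrow E_\infty^{0,0} \hookrightarrow E_2^{0,0} = H^0(K, E_0 X) \hookrightarrow E_0 X.
\]
An element of $H^0(K,E_0X)$ is, by definition, a permanent cycle precisely when it lies in $E_\infty^{0,0}$, i.e., when it is the edge-image of some class in $\pi_0(E^{hK}\wedge X)$.

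For the forward direction, assume $X \in \kappa(K)$ and choose an $E^{hK}$-orientation $z \in \pi_0(E^{hK} \wedge X)$. By \Cref{defn:subgroupfilt}(1) the image of $z$ in $E_0 X$ is a $\GG_n$-invariant generator $\iota'_X$, which in particular lies in $H^0(K, E_0X)$. The factorization above shows $\iota'_X \in E_\infty^{0,0}$, so $\iota'_X$ is a permanent cycle. By \Cref{rem:really-basic-siso} the given generator $\iota_X$ equals $u\cdot \iota'_X$ for some $u \in \ZZ_p^\times \cong H^0(\GG_n, E_0)^\times$, which acts by multiplication on $H^0(K, E_0X)$; hence $\iota_X$ is also a permanent cycle.

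For the backward direction, assume $\iota_X$ is a permanent cycle. Its class in $E_\infty^{0,0}$ is the image under the edge map of some $z \in \pi_0(E^{hK} \wedge X)$. From the factorization above, the image of $z$ in $E_0 X$ is exactly $\iota_X$ (not merely modulo higher filtration, since the edge image lives in $H^0(K,E_0X) \subseteq E_0X$ without correction). Since $\iota_X$ is a $\GG_n$-invariant generator, $z$ is an $E^{hK}$-orientation of $X$, and so $X \in \kappa(K)$.

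The only subtlety to keep straight is that the filtration quotient $E_\infty^{0,0}$ is a \emph{subgroup} of $E_2^{0,0} = H^0(K, E_0X)$ (as opposed to a quotient), so a permanent cycle at bidegree $(0,0)$ really does lift \emph{on the nose} to $E_0X$; this is what makes the correspondence between orientations and permanent-cycle generators tight rather than only up to higher-filtration indeterminacy. Everything else is bookkeeping with the edge homomorphism.
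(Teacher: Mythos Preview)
Your proof is correct and follows essentially the same approach as the paper's: in both directions you use that the Hurewicz/edge map $\pi_0(E^{hK}\wedge X)\to E_0X$ factors through $E_\infty^{0,0}\hookrightarrow H^0(K,E_0X)$, and you invoke \Cref{rem:really-basic-siso} to pass between two $\GG_n$-invariant generators by a $\ZZ_p^\times$-multiple. The paper is terser and reverses the order of the two implications, but the logical content is identical.
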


\begin{proof} First, suppose that $\iota_X$ is a permanent cycle detecting a
class $z \in \pi_0(E^{hK}\wedge X)$. Then since $\iota_X$ is a $\GG_n$-equivariant of $E_\ast X$, the class $z$ is an
$E^{hK}$-orientation.

Conversely, assume $X \in \kappa(K)$. Then we have $z \in \pi_0(E^{hK} \wedge X)$, whose image in $E_0X$ is a $
\GG_n$-invariant generator, so it must be a unit multiple of $\iota_X$. Thus $z$ is represented by a permanent cycle 
equaling $a\iota_X$ for some $a \in \ZZ_p^\times$. This implies that $\iota_X$ itself is also a permanent cycle. 
\end{proof}

We now examine some consequences of having an $E^{hK}$-orientation.  

\begin{prop}\label{prop:detect-kappa-ANSS-bis} Let $z \in \pi_0(E^{hK} \wedge X)$ be an $E^{hK}$-orientation
for $X$. Then the extension of $z$ to an $E^{hK}$-module map
\[
\varphi_z\colon E^{hK} \longrightarrow E^{hK} \wedge X
\]
is an equivalence.
\end{prop}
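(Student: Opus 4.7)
The plan is to verify that $\varphi_z$ is an equivalence by showing it induces an isomorphism on homotopy groups, and to do this by analyzing the map on homotopy fixed point spectral sequences \eqref{eq:ANSS-first-app-2}.

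Because $\varphi_z$ is $E^{hK}$-linear with $\varphi_z(1) = z$, the induced map on homotopy
\[
\pi_\ast \varphi_z \colon \pi_\ast E^{hK} \longrightarrow \pi_\ast(E^{hK} \wedge X)
\]
is simply multiplication by $z$. Under the module structure of the HFPSS for $E^{hK} \wedge X$ over the HFPSS for $E^{hK}$, this multiplication-by-$z$ map is realized on $E_2$-terms as multiplication by whatever class detects $z$ on the $E_\infty$-page. By \Cref{prop:detect-kappa-ANSS}, the orientation hypothesis says precisely that this detecting class is $\iota_X \in H^0(K, E_0 X)$.

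It then remains to show that multiplication by $\iota_X$ induces an isomorphism on $E_2$-pages. The $\GG_n$-invariant generator $\iota_X$ defines a $K$-equivariant isomorphism of $E_\ast$-modules $E_\ast \xrightarrow{\cong} E_\ast X$ by sending $1$ to $\iota_X$ (cf.\ \Cref{rem:really-basic-siso}), and taking continuous $K$-cohomology produces precisely the multiplication map
\[
\iota_X \cdot (-) \colon H^\ast(K, E_\ast) \xrightarrow{\cong} H^\ast(K, E_\ast X),
\]
which is therefore an isomorphism. Hence $\pi_\ast \varphi_z$ induces an isomorphism on $E_2$-pages of the HFPSSs, and so on all $E_r$-pages and on $E_\infty$. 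Since both spectral sequences are strongly convergent with complete, Hausdorff filtrations on their abutments, this upgrades to a $\pi_\ast$-isomorphism, and $\varphi_z$ is an equivalence of spectra.

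The main subtlety is verifying that the map of HFPSSs induced by $\varphi_z$ is, on $E_2$, multiplication by the detecting class of $z$; this follows formally from the compatibility of the HFPSS with the $E^{hK}$-module structure on $E^{hK} \wedge X$, together with the identification $\varphi_z(1) = z$. Once this module-structure compatibility is set up, the remaining algebraic input—that a $K$-equivariant generator induces an isomorphism on group cohomology—is immediate. Alternatively, one could bypass the spectral sequence and argue directly that $E_\ast \varphi_z$ is an isomorphism of Morava modules: its image on $1 \in E_0 E^{hK}$ is the Hurewicz image of $z$, which is automatically $\GG_n$-invariant (the $\GG_n$-action on $E$ is by $\Einf$-ring maps, so it fixes the unit), and a $\GG_n$-invariant element of $E_0(E^{hK} \wedge X) \cong \map(\GG_n/K, E_0 X)$ is determined by its value at $eK$, which the orientation hypothesis identifies as $\iota_X$.
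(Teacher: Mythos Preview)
Your proof is correct and follows essentially the same route as the paper: both arguments use the module structure of the HFPSS for $E^{hK}\wedge X$ over that for $E^{hK}$ to identify the induced map on spectral sequences as multiplication by $\iota_X$, which is an isomorphism on $E_2$ and hence on all pages. Your alternative argument via $E_\ast\varphi_z$ is a clean variant the paper does not spell out, but the primary argument matches.
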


\begin{proof} As mentioned in \Cref{rem:mormodulehom}, the homotopy fixed point spectral sequence \eqref{eq:ANSS-first-app-2}
\[
E_2^{s,t}(K,S^0) = H^s(K,E_t) \Longrightarrow \pi_{t-s}E^{hK}
\]
for $S^0$ acts on the spectral sequence $E_2^{s,t} (K, X)$,
and calculates the action of $E_\ast^{hK}$ on $E_\ast^{hK}(X)=\pi_\ast(E^{hK} \wedge X)$.
Suppose $X \in \kappa_n(K)$, and suppose that we have chosen a $\GG_n$-invariant generator $\iota_X  \in E_0X$. If $d_q(\iota_X) = 0$ for $q < r$,
then the map $a \mapsto a\iota_X$ defines an isomorphism 
\[
E_r^{\ast,\ast}(K,S^0) \xrightarrow{\cong} E_r^{*,*} (K,X).
\]
In particular, if $\iota_X$ is a permanent cycle detecting the class
$z \in \pi_0(E^{hK} \wedge X)$, then the module map $E^{hK} \to E^{hK} \wedge X$ extending $z$
is an equivalence. 
\end{proof} 

The equivalence of \Cref{prop:detect-kappa-ANSS-bis} also works well in $E$-homology.

\begin{prop}\label{prop:orientations-nat-4} Let $z \in \pi_0(E^{hK} \wedge X)$ be an $E^{hK}$-orientation
for $X$ and let $\varphi_\ast:E_\ast \to E_\ast X$
be the Morava module isomorphism from \Cref{rem:really-basic-siso}. Then there is a commutative diagram of Morava modules
\begin{equation*}
\xymatrix{
E_\ast E^{hK} \ar[d]_{E_\ast (\varphi_z)} \ar[r]^-\cong & \map(\GG/K,E_\ast)\ar[d]^{\map(\GG/K,\varphi_\ast)} \\
E_\ast (E^{hK} \wedge X) \ar[r]_-\cong & \map(\GG/K,E_\ast X).
}
\end{equation*}
\end{prop}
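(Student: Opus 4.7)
The plan is to verify commutativity by chasing the canonical generator $1 \in \map(\GG_n/K, E_\ast)$ around the square and appealing to $E_\ast E^{hK}$-linearity together with freeness.

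First, I recall that both horizontal isomorphisms are instances of a natural identification $E_\ast(E^{hK} \wedge Y) \cong \map(\GG_n/K, E_\ast Y)$ for $Y$ a dualizable $K(n)$-local spectrum, generalising \eqref{eq:basic-eg-2}. Under this identification, the class represented by the unit $\pi_\ast(\eta \wedge \mathrm{id}_Y)\colon \pi_\ast Y \to E_\ast(E^{hK} \wedge Y)$ corresponds to the constant function, and evaluation at the identity coset $[e] \in \GG_n/K$ corresponds to the map $E_\ast(E^{hK} \wedge Y) \to E_\ast(E \wedge Y) = E_\ast Y$ induced by the unit $E^{hK} \to E$. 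Moreover, the $\GG_n$-action on the left corresponds to the diagonal action on the right.

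Next I would note that both vertical maps are homomorphisms of $E_\ast E^{hK}$-modules: the right-hand map $\map(\GG_n/K, \varphi_\ast)$ because $\varphi_\ast$ is $E_\ast$-linear, and the left-hand map $E_\ast(\varphi_z)$ because $\varphi_z$ is an $E^{hK}$-module map by construction (\Cref{prop:detect-kappa-ANSS-bis}). Since the source $\map(\GG_n/K, E_\ast)$ is free of rank one over itself, it suffices to check that both maps send the constant function $1$ to the same element of $\map(\GG_n/K, E_\ast X)$. The right-hand side is immediate: $\map(\GG_n/K, \varphi_\ast)(1)$ is the constant function $[g] \mapsto \varphi_\ast(1) = \iota_X$.

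For the left-hand map, because $\varphi_z$ is the $E^{hK}$-linear extension of $z$, the element $E_\ast(\varphi_z)(1) \in \map(\GG_n/K, E_0 X)$ is precisely the image $\bar z$ of $z$ under $\pi_0(E^{hK} \wedge X) \to E_0(E^{hK} \wedge X)$. Since $\bar z$ lies in the image of the Hurewicz map, it is $\GG_n$-invariant, and under the diagonal action on $\map(\GG_n/K, E_0 X)$ this reads $\bar z([g]) = g \cdot \bar z([e])$ for all $g \in \GG_n$. The $E^{hK}$-orientation hypothesis on $z$ says that $\bar z([e]) = \iota_X$, and $\iota_X$ is $\GG_n$-invariant by \Cref{defn:Gninvariantgen}, so $\bar z([g]) = g \cdot \iota_X = \iota_X$ for all $g$. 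Thus $\bar z$ is the constant function with value $\iota_X$, which matches the right-hand side and closes the square. The main point requiring care is the bookkeeping around the identification $E_0(E^{hK} \wedge X) \cong \map(\GG_n/K, E_0 X)$, specifically its compatibility with the diagonal $\GG_n$-action and with evaluation at $[e]$; once these naturality properties are in place, the proof reduces to the tautology that a $\GG_n$-equivariant function on $\GG_n/K$ taking a $\GG_n$-invariant value at $[e]$ must be constant.
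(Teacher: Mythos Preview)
Your proof is correct and follows essentially the same approach as the paper's. The paper merely records the description of the horizontal isomorphisms as adjoints of the action-then-multiply map and then says ``it is a matter of chasing around the diagram''; you carry out that chase explicitly by reducing to the generator $1$ via $E_\ast E^{hK}$-linearity and using $\GG_n$-invariance of the Hurewicz image of $z$. One small point you rely on implicitly is that the horizontal isomorphisms are $E_\ast E^{hK}$-module maps (equivalently, that they are natural for the $E^{hK}$-module structure), which is standard but worth noting since your reduction to the generator depends on it.
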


\begin{proof} The horizontal equivalences are obtained as the adjoint of the composition
\[
\GG_n/K \times E_\ast (E^{hK} \wedge X) \to E_\ast (E \wedge X) \to E_\ast X
\]
where the first map is given by the action map $\GG_n/K \times E^{hK} \to E$ and the second by
the multiplication $E\wedge E \to  E$. Now it is a matter of chasing around the diagram. 
\end{proof}


We can now give a partial converse to \Cref{prop:detect-kappa-ANSS-bis}.

\begin{lem}\label{lem:old-is-new} Suppose $X \in \kappa_n$ and that we have an $E^{hK}$-module equivalence
\[
f \colon E^{hK} \xrightarrow{\simeq }E^{hK} \wedge X,
\] 
not assumed to be induced by a $\GG_n$-equivariant $E^{hK}$-orientation of $X$.
Suppose further that the edge homomorphism
\[
\pi_0E^{hK} \longrightarrow H^0(K,E_0)
\]
is onto. Then $X$ has a $\GG_n$-equivariant $E^{hK}$-orientation, and $X \in \kappa(K)$. 
\end{lem}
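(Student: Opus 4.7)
The strategy is to exploit the surjectivity hypothesis to rescale the given equivalence by a unit in $\pi_0 E^{hK}$, producing a class $z \in \pi_0(E^{hK} \wedge X)$ whose image in $E_0 X$ is a $\GG_n$-invariant generator. Once we have such a $z$, the definition of $\kappa(K)$ in \Cref{defn:subgroupfilt} places $X$ into $\kappa(K)$ automatically, and \Cref{prop:detect-kappa-ANSS-bis} confirms that its $E^{hK}$-module extension is again an equivalence.

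To set things up, put $z_0 = f(1) \in \pi_0(E^{hK} \wedge X)$ and let $g$ denote its image in $E_0 X$ under the map induced by the unit $E^{hK} \to E$. I would establish two properties of $g$. First, $g$ is a generator of $E_0 X$: extending scalars along $E^{hK} \to E$ turns the $E^{hK}$-module equivalence $f$ into an $E$-module equivalence $E \xrightarrow{\simeq} E \wedge X$ sending $1$ to $g$, so $g$ generates. Second, $g$ is $K$-invariant, because the map $\pi_0(E^{hK} \wedge X) \to E_0 X$ factors through the edge homomorphism of the HFPSS \eqref{eq:ANSS-first-app-2} and therefore lands in $H^0(K, E_0 X) \subseteq E_0 X$. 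Now fix any $\GG_n$-invariant generator $\iota_X$ of $E_0 X$, which exists because $X \in \kappa_n$, and use the Morava module isomorphism $\varphi_\ast \colon E_\ast \xrightarrow{\cong} E_\ast X$ from \Cref{rem:really-basic-siso} to write $g = u \cdot \iota_X$ for a unique $u \in E_0$. Since $g$ is $K$-invariant and $\iota_X$ is $\GG_n$-invariant, the scalar $u$ lies in $H^0(K, E_0)$, and it is a unit there because $g$ is a generator.

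At this point invoke the surjectivity hypothesis to pick $a \in \pi_0 E^{hK}$ whose image in $H^0(K, E_0)$ is $u^{-1}$. Setting $z := a \cdot z_0$ via the natural $\pi_0 E^{hK}$-module structure on $\pi_0(E^{hK} \wedge X)$, compatibility of the edge homomorphism with products and with the $E$-module structure on $E \wedge X$ shows that the image of $z$ in $E_0 X$ is $u^{-1} g = \iota_X$, which is $\GG_n$-invariant. Thus $z$ is a $\GG_n$-equivariant $E^{hK}$-orientation of $X$, and $X \in \kappa(K)$.

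The most delicate point is the verification that $g$ actually generates $E_0 X$; once this is in place everything else is a routine rescaling argument. This step relies crucially on the hypothesis that $f$ is an $E^{hK}$-module equivalence (rather than merely an equivalence of underlying spectra), so that extending scalars to $E$ is well-behaved. The surjectivity of the edge map then serves as exactly the cohomological input needed to promote the $K$-invariant unit $u^{-1}$ to an element of $\pi_0 E^{hK}$ on which we can multiply, which is the content the statement must assume.
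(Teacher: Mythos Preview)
Your proof is correct and follows essentially the same line as the paper's: both take the image of $f(1)$ in $E_0X$, compare it to a chosen $\GG_n$-invariant generator via a $K$-invariant unit in $E_0$, lift that unit (or its inverse) to $\pi_0E^{hK}$ using the surjectivity hypothesis, and rescale. The only cosmetic difference is that the paper phrases the rescaling as precomposing $f$ with the $E^{hK}$-module self-map extending the lifted unit, whereas you multiply $z_0$ directly by $a$; these are the same operation.
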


\begin{proof} We need to find a class $z \in \pi_0(E^{hK} \wedge X)$ which maps to a $\GG_n$-invariant
generator of $E_0X$. The $E^{hK}$-module equivalence $f$ determines a $K$-invariant
generator $f_\ast(1) = y \in E_0X$. Since $X \in \kappa_n$, we also can choose a $\GG_n$-invariant generator $x$
of $E_0X$. There is an element $a \in (E_0)^\times$ so that $ay = x$, since both are $E_0$-module
generators. Since $x$ and $y$ are both $K$-invariant generators of a free module, $a$ is also $K$-invariant.
Choose an element $\alpha \in \pi_0E^{hK}$ which maps to $a$ and
let $\phi:E^{hK} \to E^{hK}$ be the $E^{hK}$-module map extending $\alpha$. We then have a new
$E^{hK}$-module equivalence $g = (\phi \wedge 1_X)f: E^{hK} \to E^{hK} \wedge X$ and $g_*(1) = x$ as needed.
\end{proof}  

\begin{ex}\label{rem:old-is-new-at-2} The hypothesis on $K$ in \Cref{lem:old-is-new} is equivalent to the
statement that $E_\infty^{0,0} = E_2^{0,0}$ in the homotopy fixed point spectral sequence
\[
E_2^{s,t}(K,S^0)=H^s(K,E_t) \Longrightarrow \pi_{t-s} E^{hK}. 
\]
This happens in all the examples for which we have complete calculations, including all the examples
at $n=p=2$ we will consider in this paper. 
\end{ex}

The following result will allow for some flexibility in later sections, as it tells us that the Galois group will not cause 
complications when it comes to orientations.

\begin{prop}\label{prop:galois-doesnt-matter}Let $K \subseteq \GG_n$ be a closed subgroup and
$K_0 = \SS_n \cap K$. Then the inclusion $\kappa_n(K) \subseteq \kappa_n(K_0)$ is an equality. 
\end{prop}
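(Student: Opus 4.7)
The plan is to apply \Cref{prop:detect-kappa-ANSS}: given $X \in \kappa(K_0)$ with $\GG_n$-invariant generator $\iota_X \in E_0 X$, the hypothesis yields that $\iota_X$ is a permanent cycle in the $K_0$-HFPSS, and I must show the same for the $K$-HFPSS in order to conclude $X \in \kappa(K)$. By naturality of the HFPSS under the inclusion $K_0 \subseteq K$, any potential differential $d_r^K(\iota_X) \in H^r(K, E_{r-1}X)$ restricts to $d_r^{K_0}(\iota_X) = 0$, and hence lies in the kernel of the restriction map $\mathrm{res}\colon H^r(K, E_{r-1}X) \to H^r(K_0, E_{r-1}X)$. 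Since $E_\ast X \cong E_\ast$ is concentrated in even total degrees, $E_{r-1}X = 0$ whenever $r$ is even, so only $r \geq 3$ odd requires attention.

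To control this kernel, I would use the Lyndon-Hochschild-Serre spectral sequence
\[
H^p(K/K_0, H^q(K_0, E_{r-1}X)) \Longrightarrow H^{p+q}(K, E_{r-1}X)
\]
attached to $1 \to K_0 \to K \to K/K_0 \to 1$. The quotient $K/K_0$ embeds in $\Gal \cong \ZZ/n$, so it is a finite cyclic group. In the tame case where $|K/K_0|$ is coprime to $p$, the transfer/restriction identity $\mathrm{cor} \circ \mathrm{res} = |K/K_0|\cdot \mathrm{id}$ acts invertibly on the $p$-complete module $E_{r-1}X$, making $\mathrm{res}$ split injective; hence $d_r^K(\iota_X) = 0$ immediately.

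The main obstacle is the wild case $p \mid |K/K_0|$, which in this paper occurs at $p = n = 2$ with $K/K_0 = C_2$. There I would bring in two further inputs: first, that $\iota_X$ is $\GG_n$-invariant rather than merely $K$-invariant, which forces the LHS class detecting $d_r^K(\iota_X)$ to lie in the $\Gal$-fixed part of $H^\ast(K_0, E_{r-1}X)$; and second, that the Galois action on $E_\ast \cong \WW[[u_1,\ldots,u_{n-1}]][u^{\pm 1}]$ is confined to the Witt-vector factor, which makes the relevant higher terms $H^p(K/K_0, H^{r-p}(K_0, E_{r-1}X))$ for $p \geq 1$ explicitly computable and ultimately trivial on the classes that can arise as $d_r^K(\iota_X)$. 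As a parallel or backup approach, I would work at the level of module spectra: \Cref{prop:detect-kappa-ANSS-bis} already produces an $E^{hK_0}$-module equivalence $\varphi_{z_0}\colon E^{hK_0} \xrightarrow{\simeq} E^{hK_0} \wedge X$, and modifying $z_0$ within its torsor of orientations so that it becomes $K/K_0$-invariant would allow me to take $h(K/K_0)$-fixed points and produce an $E^{hK}$-module equivalence, which \Cref{lem:old-is-new} then upgrades to an $E^{hK}$-orientation of $X$ (the edge-surjectivity hypothesis being satisfied in all our cases, per \Cref{rem:old-is-new-at-2}).
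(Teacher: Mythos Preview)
Your overall strategy---reduce to \Cref{prop:detect-kappa-ANSS} and show that the restriction map on spectral sequences is injective so that $d_r^{K_0}(\iota_X)=0$ forces $d_r^K(\iota_X)=0$---is exactly what the paper does. The tame case via transfer is fine. But there are two genuine gaps.

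First, you need injectivity of restriction on every $E_r$-page, not just on $E_2$: the class $d_r^K(\iota_X)$ lives in $E_r^{r,r-1}(K,X)$, which is a proper subquotient of $H^r(K,E_{r-1}X)$ once $r>2$. Your LHS discussion only addresses $E_2$. Second, in the wild case your argument is a sketch rather than a proof: saying the higher LHS terms are ``explicitly computable and ultimately trivial'' is precisely the content to be supplied, and your backup approach of averaging $z_0$ over $K/K_0$ to make it invariant is circular---that is essentially what you are trying to prove---and its appeal to \Cref{lem:old-is-new} imports a hypothesis the proposition does not assume.

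The paper's proof handles both issues at once with a single structural observation that makes the tame/wild dichotomy disappear. The extension $\WW^G \to \WW$ (with $G=K/K_0\subseteq \Gal$) is finite \'etale, and one has a base-change isomorphism
\[
\WW \otimes_{\WW^G} E_2^{s,t}(K,X) \;\cong\; E_2^{s,t}(K_0,X)
\]
(this is \cite[Lemma 1.32]{BobkovaGoerss}). Because the unit $\ZZ_p \to \WW$ is \'etale, the differentials in $E_r^{s,t}(K_0,X)$ are $\WW$-linear (via the module structure over $E_r^{s,t}(K_0,S^0)$, cf.\ \cite[Remark 1.35]{BobkovaGoerss}), so this isomorphism propagates to every page:
\[
\WW \otimes_{\WW^G} E_r^{s,t}(K,X) \;\cong\; E_r^{s,t}(K_0,X).
\]
Faithful flatness of $\WW^G \to \WW$ then gives injectivity of restriction on all pages simultaneously, and \Cref{prop:detect-kappa-ANSS} finishes. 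Your LHS intuition (``Galois action confined to the Witt-vector factor'') is pointing at exactly this \'etale structure; the missing move is to package it as a base-change identity on the whole spectral sequence rather than trying to kill individual cohomology groups.
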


\begin{proof} 
Let
$G = K/K_0 \subseteq \GG_n/\SS_n \cong \Gal.
$
For any dualizable spectrum $X$, which in particular includes invertible spectra, the same argument as for
\cite[Lemma 1.32]{BobkovaGoerss} shows that
\[
\WW \otimes_{\WW^G} E_2^{s,t}(K,X) \cong E_2^{s,t}(K_0,X).
\]
But for such $X$, the differentials in $E_r^{s,t}(K_0,X) $
are automatically $\WW$-linear since the spectral sequence is a module over the spectral sequence
$E_r^{s,t}(K_0,S^0)$  and the latter has $\WW$-linear differentials. This follows from Remark 1.35 of
\cite{BobkovaGoerss} and the fact  that the unit $\ZZ_p \to \WW$ is \'etale. Thus, for any dualizable
spectrum $X$, we have
\[
\WW \otimes_{\WW^G} E_r^{s,t}(K,X) \cong E_r^{s,t}(K_0,X)
\]
and the isomorphism preserves differentials. Now apply \Cref{prop:detect-kappa-ANSS}. 
\end{proof}

\subsection{The naturality of orientations}
The material in this section, specifically the notion of an algebraic map introduced in \cref{def:alg-mor} and their 
relationship with orientations presented in \cref{lem:orientations-nat-2}  will be used in an essential way in 
\cref{sec:untwistduality}, specifically in \cref{prop:untwist-the-TDSS-1}.

For a closed subgroup $K$ of $ \GG_n$, let $X\in \kappa(K)$, and let $z \in \pi_0(E^{hK} \wedge X)$ be a chosen $E^{hK}$-orientation of $X$.
Let $\varphi_z:E^{hK} \to E^{hK} \wedge X$ be the equivalence of $E^{hK}$-modules obtained by extending $z$. 

If $g \in \GG_n$ and $H \subseteq gKg^{-1}$,
the composite
\[
\xymatrix{E^{hK} \ar[r]^-{g} &  E^{h(gKg^{-1})} \ar[r] & E^{hH}}
\]
gives a map which we also call $g:E^{hK} \to E^{hH}$. The following result is immediate from the definitions.

\begin{lem}\label{lem:orientations-nat-1} Let $g \in \GG_n$ and $H \subseteq gKg^{-1}$.
If $z \in \pi_0(E^{hK} \wedge X)$ is an $E^{hK}$-orientation then 
\[
gz = (g \wedge 1)_\ast (z) \in \pi_0(E^{hH} \wedge X)
\] 
is an $E^{hH}$-orientation and the following
diagram commutes
\begin{align}\label{eq:invgen}
\xymatrix@C=45pt{
E^{hK} \ar[d]_-{\varphi_z}^-\simeq \ar[r]^g & E^{hH} \ar[d]^-{\varphi_{gz}}_-\simeq\\
E^{hK} \wedge X \ar[r]^{g\wedge 1_X}  & E^{hH} \wedge X\ . 
}
\end{align}
\end{lem}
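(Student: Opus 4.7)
The plan is to verify two things: first, that $gz$ is indeed an $E^{hH}$-orientation, and second, that the square commutes. Both assertions should fall out of naturality of the fixed-point construction and the defining universal property of $\varphi_{(-)}$.

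First I would establish the naturality square
\begin{equation*}
\xymatrix{
E^{hK} \ar[r]^-{g} \ar[d] & E^{hH} \ar[d] \\
E \ar[r]^-{g} & E
}
\end{equation*}
for the canonical maps to $E$, which is immediate from the construction of $E^{hK} \to E^{hH}$ as the $g$-conjugation followed by restriction. Smashing with $X$ and applying $\pi_0$, if $z \in \pi_0(E^{hK} \wedge X)$ maps to a $\GG_n$-invariant generator $\iota_X \in E_0X$, then $gz = (g \wedge 1_X)_\ast(z)$ maps to $g_\ast(\iota_X)$ in $E_0X$. Since $\iota_X$ is $\GG_n$-invariant by hypothesis, $g_\ast(\iota_X) = \iota_X$, so $gz$ maps to the same $\GG_n$-invariant generator. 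This shows $gz$ is an $E^{hH}$-orientation, so in particular $\varphi_{gz}$ is defined.

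For the commutativity of the square, I would use the universal property characterizing $\varphi_{(-)}$. Both composites $(g \wedge 1_X) \circ \varphi_z$ and $\varphi_{gz} \circ g$ are maps of spectra $E^{hK} \to E^{hH} \wedge X$. Endowing $E^{hH} \wedge X$ with the $E^{hK}$-module structure pulled back along the unital ring map $g \colon E^{hK} \to E^{hH}$, both composites are $E^{hK}$-module maps. On the unit $1 \in \pi_0 E^{hK}$ the first sends $1 \mapsto z \mapsto (g \wedge 1_X)_\ast(z) = gz$, while the second sends $1 \mapsto g_\ast(1) = 1 \mapsto gz$. Hence they agree by the uniqueness of $E^{hK}$-module extensions of a $\pi_0$-class.

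There is no real obstacle here; the lemma is indeed formal once one keeps track of which module structure is being used to invoke the universal property of $\varphi_{(-)}$. The only subtlety worth flagging, and the one that must be handled cleanly before the lemma is used in the next result, is that the two vertical maps in the square are $E^{hK}$- and $E^{hH}$-module maps respectively, and the horizontal maps are only ring maps, so the commuting square is a statement about underlying spectra rather than modules over a single ring.
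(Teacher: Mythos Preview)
Your argument is correct and is exactly what the paper has in mind: the paper simply states that the result is ``immediate from the definitions'' and gives no further detail, and you have unpacked precisely those definitions. Your observation about which module structure is in play when invoking the universal property of $\varphi_{(-)}$ is the only point requiring care, and you have handled it correctly.
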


In this result, the map $g$ is induced from an element in the Morava stabilizer group. We would like to
expand the class of maps for which we have a similar diagram. Our key result is \Cref{lem:orientations-nat-2}, but to 
state and prove it, we first need to develop some language.

First, here is some material originally due to Devinatz--Hopkins \cite{DH}; it was also
reviewed in \cite{ghmr} and \cite{BBGS}. For any dualizable $X$ in the $K(n)$-local category,
there is an isomorphism of Morava modules
\begin{equation}\label{eq:mormoduleiso}
E_\ast (E^{hK}\wedge X) \cong \map(\GG_n/K,E_\ast X),
\end{equation}
where $\GG_n$ acts on the right-hand side by conjugation: $(g\phi)(hK) = g\phi(g^{-1}hK)$.
We also have an isomorphism of Morava modules
\begin{equation}\label{eq:alg-vs-sets}
\map(\GG_n/K,E_\ast X) \cong \Hom_{\ZZ_p}(\ZZ_p[[\GG_n/K]],E_\ast X),
\end{equation}
where the right-hand side is the group of continuous $\ZZ_p$-module maps. 

We let $X=S^0$. Given closed subgroups $K_1, K_2$ of $\G_n$, let $h:\ZZ_p[[\GG_n/K_2]] \to \ZZ_p[[\GG_n/K_1]]$
be a continuous $\GG_n$-module map. Using the  identification of \eqref{eq:alg-vs-sets} we obtain a map
\[
\Hom(h,E_\ast)\colon \map(\GG_n/K_1,E_\ast) \longr \map(\GG_n/K_2,E_\ast)
\]
of Morava modules and so we get a homomorphism 
\begin{align}\label{eq:algebra-to-mor}
\Psi \colon \Hom_{\ZZ_p[[\GG_n]]}(\ZZ_p[[\GG_n/K_2]],\ZZ_p[[\GG_n/K_1]]) &\longr\\
\Hom_{\Mor}(\map(\GG_n/K_1,E_\ast),&\map(\GG_n/K_2,E_\ast)).\nonumber
\end{align}
The map $\Psi$ is always an injection; however, it is not always surjective. For example, if $K_1 = \GG_n$; then,
\[
\Hom_{\ZZ_p[[\GG_n]]}(\ZZ_p[[\GG_n/K_2]],\ZZ_p) \cong \ZZ_p,
\]
while
\[
\Hom_{\Mor}(E_\ast,\map(\GG_n/K_2,E_\ast)) \cong  H^0(K_2,E_0) \cong E_0^{K_2}. 
\]
If $n > 1$ and $K_2 = \{e\}$, then $E_0$ is a power series ring strictly containing $\ZZ_p$. More critically
for us, if $p=2$ and $K_2$ is finite, then  $E_0^{K_2}$ is typically also a power series ring larger than $\ZZ_2$. 

This discussion suggests that we single out a class of Morava module maps
\[
E_\ast E^{hK_1} \cong \map(\GG_n/K_1, E_*) \longr \map(\GG_n/K_2, E_*)\cong E_\ast E^{hK_2}
\]
that includes those the image of $\Psi$; that is, those that can be written $\Hom(h,E_\ast)$ for some continuous
$\GG_n$-map $h\colon \ZZ_p[[\GG_n/K_2]] \to \ZZ_p[[\GG_n/K_2]]$. We will also need to incorporate the periodicity of 
$E^{hK}$ into the picture. Thus, we start by a brief discussion on the interplay of Morava modules and periodicities.

\begin{defn}\label{defn:alg-periodicity}  Let $K \subseteq \GG_n$ be a closed subgroup. An {\it algebraic periodicity 
class} for $K$ is a unit
\[
d \in H^0(K,E_*)
\]
in the ring of invariants. We say $E^{hK}$ has {\it algebraic period} $k$ if $k$ is the smallest positive integer
for which there is an algebraic periodicity class in degree $k$, i.e. $d \in H^0(K,E_k)$.
\end{defn}

Let $K \subseteq \GG_n$ be a closed subgroup, $X$ be a spectrum, and suppose we have an element
$d \in H^0(K,E_k)$. Then for all $m$ we get an induced map of Morava modules
\begin{align*}
f_d^X \colon \map(\GG_n/K,E_mX) &\longr \map(\GG_n/K,E_{m+k}X)\\
\varphi&\longmapsto \psi
\end{align*}
with
\[
\psi(gK) =(gd) \varphi(gK) .
\]
This is a map of $\map(\GG_n/K,E_0)$-modules in the category of Morava modules. \Cref{defn:alg-periodicity} makes the following result obvious, since if $d$ is invertible, then $f_{d^{-1}}^X$ is inverse to $f_d^X$.

\begin{lem} Let $d \in H^0(K,E_k)$ be a periodicity class, then for all $m$ and all $X$, the map
\[
f_d^X: \map(\GG_n/K,E_mX) \longr \map(\GG_n/K,E_{m+k}X)
\]
is an isomorphism. 
\end{lem}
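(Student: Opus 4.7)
The plan is to exploit the fact that, by definition, an algebraic periodicity class $d \in H^0(K,E_k)$ is a unit in $H^0(K,E_\ast)$. Thus there exists a two-sided multiplicative inverse $d^{-1} \in H^0(K,E_{-k})$, which is itself a $K$-invariant class since the set of $K$-invariants is a subring of $E_\ast$. I would then use $d^{-1}$ to produce an explicit inverse to $f_d^X$.

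Concretely, I would first note that since $d$ is $K$-invariant, the formula $g \mapsto gd$ is well-defined on $\GG_n/K$, and for every $g \in \GG_n$ the element $gd^{-1}$ is the multiplicative inverse of $gd$ in $E_\ast$. This is what makes the assignments $f_d^X$ and $f_{d^{-1}}^X$ defined on all of $\map(\GG_n/K,E_\ast X)$ and $\map(\GG_n/K,E_{\ast+k} X)$ respectively. Next, I would check directly from the formula that the composites are the identity: for $\varphi \in \map(\GG_n/K,E_m X)$,
\[
(f_{d^{-1}}^X \circ f_d^X)(\varphi)(gK) = (gd^{-1})(gd)\varphi(gK) = g(d^{-1}d)\varphi(gK) = \varphi(gK),
\]
and symmetrically $f_d^X \circ f_{d^{-1}}^X = \mathrm{id}$.

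There is essentially no obstacle here: the whole content of the lemma is the observation, already flagged in the paper, that inverting a unit $d$ yields a unit $d^{-1}$ whose associated multiplication map is inverse to multiplication by $d$. The only thing to verify is that these multiplication maps are indeed Morava module maps (they are, by the preceding discussion identifying $f_d^X$ as a map of $\map(\GG_n/K,E_0)$-modules in Morava modules), and that the pointwise multiplication formula behaves as expected, which is immediate from associativity of the $E_\ast$-action.
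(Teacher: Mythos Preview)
Your proposal is correct and is exactly the paper's approach: the paper simply notes that since $d$ is invertible, $f_{d^{-1}}^X$ is inverse to $f_d^X$, which is precisely the computation you wrote out explicitly.
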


\begin{rem} It could happen that the only algebraic periodicity classes are in $H^0(K,E_0)$, in which case $E^{hK}$ doesn't have an algebraic period. This is the case for $K = \GG_n$ itself, for example. We are more interested in this notion when $K$ is a finite subgroup of $\GG_n$. Indeed, if $K$ is finite
of order $m$ then 
\[
d = \prod_{g \in K} gu^{-1} \in H^0(K,E_{2m})
\]
is an algebraic periodicity class, although perhaps not one of minimal positive degree.
\end{rem} 

\begin{rem} There is a corresponding notion of a topological periodicity class, namely a unit $x \in \pi_*E^{hK}$ in positive dimension $k$. Then the induced $E^{hK}$-module map $\Sigma^kE^{hK} \to E^{hK}$ is an equivalence of $E^{hK}$-module spectra.
The corresponding notion of topological period is the minimal $k$ for which such an $x$ exists.  An algebraic periodicity class determines a topological 
periodicity class if and only if it is a permanent cycle in the homotopy fixed point spectral sequence; hence, the algebraic
and topological periods can and often do differ. For example, $E^{hC_2}$ at height one (which is the  $2$-completion of the real $K$-theory spectrum $KO$) has algebraic period $4$ and topological period $8$. At height two and the prime
2, $E^{hG_{48}}$ has algebraic period $24$ and topological period $192 = 8\cdot 24$. (Details on the subgroup  $G_{48}$
can be found in \Cref{defn:finite-subgrps}.)
\end{rem}

For any spectrum $X$, an algebraic periodicity class $d \in H^0(K,E_k)$ determines an isomorphism
\[
\xymatrix{
P_d^X: E_\ast( \Sigma^k E^{hK}\wedge X )\ar[r]^-\cong & \map(\GG_n/K,E_\ast X)
}
\]
as the composition
\[
\xymatrix{
E_\ast( \Sigma^k E^{hK}\wedge X) \ar[r]^-\cong & \map(\GG_n/K,E_{\ast-k}X) \ar[r]^-{\cong}_-{f_d^X} & \map(\GG_n/K,E_\ast X).
}
\]

\begin{rem}\label{rem:fXPXtensor}
We have isomorphisms
\[\xymatrix{
E_*E^{hK}\otimes_{E_*}E_* X  \ar[d]^-{\cong} \ar[r]^-{\cong} & E_*(E^{hK}\wedge X)  \ar[d]^-{\cong} \\
 \map(\GG_n/K, E_*)\otimes_{E_*}E_*X \ar[r]^-{\cong}  & \map(\GG_n/K, E_*X) 
}\]
Using these identifications and letting $f_d=f_d^{S^0}$ and $P_d =P_d^{S^0}$, we have that $f_d^X  = f_d \otimes E_*X$ and $P_d^X \cong P_d \otimes E_*X$.
\end{rem}

We now single out a particularly useful set of maps.

\begin{defn}\label{def:alg-mor} Let $K_1,K_2$ be closed subgroups of $\GG_n$, and let $d \in H^0(K_2,E_k)$ be an algebraic periodicity class for $K_2$. A map
$f: E^{hK_1} \to \Sigma^k E^{hK_2}$ of spectra is {\it algebraic} for $d$ if the induced
map $g$ of Morava modules
\[
\xymatrix{
\map(\GG_n/K_1,E_\ast) \ar[r]^g& \map(\GG_n/K_2,E_\ast)
}
\]
defined by the commutative square
\[
\xymatrix{
E_\ast E^{hK_1} \ar[r]^-{E_\ast f} \ar[d]_\cong & E_\ast \Sigma^k E^{hK_2} \ar[d]_\cong^{P_d}\\
\map(\GG_n/K_1,E_\ast) \ar[r]_g& \map(\GG_n/K_2,E_\ast)
}
\]
is in the image of the map $\Psi$ of \eqref{eq:algebra-to-mor}. Thus, $g=\Hom(h,E_\ast)$ for some continuous
$\GG_n$-module map
\[
h\colon \ZZ_p[[\GG_n/K_2]] \longr \ZZ_p[[\GG_n/K_1]].
\]
\end{defn} 

The next result mixes orientations and algebraic maps, so requires some set-up. We fix the following data
\begin{enumerate}

\item a closed subgroup $K \subseteq \GG_n$ and element $X \in \kappa(K)$;

\item an $E^{hK}$-orientation $z \in \pi_0(E^{hK} \wedge X)$; 

\item two elements $g_i \in \GG_n$, for $1\leq i \leq 2$ and two closed subgroups $H_i \subseteq  g_iKg_i^{-1}$; and,

\item an algebraic periodicity class $d \in H^0(H_2,E_k)$. 
\end{enumerate} 

Then by \Cref{lem:orientations-nat-1} we have induced orientations  $g_iz \in  \pi_0(E^{hH_i} \wedge X)$. Let
\[
\varphi_i = \varphi_{g_iz} \colon E^{hH_i} \longr E^{hH_i} \wedge X
\]
be the induced $E^{hH_i}$-module equivalences. In this context, we have the following result. 

\begin{prop}\label{lem:orientations-nat-2} Let $f \colon E^{hH_1} \to \Sigma^k E^{hH_2}$ be any map which is algebraic for $d$. Then the following diagram of Morava modules commutes
\begin{equation}\label{eq:orientations-nat-3}
\xymatrix@C=45pt{
E_\ast E^{hH_1} \ar[d]_{E_\ast(\varphi_1)} \ar[r]^-{E_\ast f} & E_\ast \Sigma^kE^{hH_2}\ar[d] \ar[d]^{E_\ast (\Sigma^k\varphi_2)}\\
E_\ast (E^{hH_1} \wedge X) \ar[r]_-{E_\ast (f \wedge 1_X)} & E_\ast (\Sigma^k E^{hH_2} \wedge X).
}
\end{equation} 
\end{prop}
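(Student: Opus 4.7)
The plan is to reduce the claimed commutative square of Morava modules to a tautological naturality square by transporting every term through the explicit identifications with $\map(\GG_n/H_i, E_*)$-modules supplied by \Cref{prop:orientations-nat-4} together with the periodicity isomorphisms $P_d$ and $P_d^X$. Throughout, let $\varphi_* \colon E_* \to E_* X$ be the Morava module isomorphism determined by $\iota_X$, as in \Cref{rem:really-basic-siso}.

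First, I would apply \Cref{prop:orientations-nat-4} to each of the $E^{hH_i}$-orientations $g_i z$. This identifies $E_*(\varphi_1)$, and likewise $E_*(\Sigma^k\varphi_2)$, with $\map(\GG_n/H_i, \varphi_*)$ after the canonical Morava module identifications of $E_* E^{hH_i}$ with $\map(\GG_n/H_i, E_*)$ and $E_*(E^{hH_i} \wedge X)$ with $\map(\GG_n/H_i, E_* X)$. For the right-hand column this further requires transporting along $P_d$ and $P_d^X$, which is harmless since these are $E_*$-linear and hence commute with $\varphi_*$; concretely, the identity $(gd)\cdot\varphi_*(a) = \varphi_*((gd)\cdot a)$ shows that $\map(\GG_n/H_2,\varphi_*)\circ f_d = f_d^X \circ \map(\GG_n/H_2,\varphi_*)$.

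Next, I would translate the horizontal arrows. By hypothesis $f$ is algebraic for $d$, so under $P_d$ the top arrow $E_* f$ equals $\Hom(h, E_*)$ for some continuous $\GG_n$-module map $h \colon \ZZ_p[[\GG_n/H_2]] \to \ZZ_p[[\GG_n/H_1]]$. For the bottom arrow I would invoke \Cref{rem:fXPXtensor}: since $P_d^X$ factors as $P_d \otimes_{E_*} E_* X$, and since $E_*(f \wedge 1_X)$ is the tensor product $E_* f \otimes_{E_*} E_* X$ (using that $X$ is invertible so $E_* X$ is free of rank one), the bottom horizontal becomes $\Hom(h, E_* X)$ after all identifications.

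With all four edges named, the square reduces to the evident naturality of $\Hom(h, -)$ in the coefficient module applied to $\varphi_*$: both composites equal $\Hom(h, \varphi_*)$. The main obstacle is purely bookkeeping, namely verifying that $P_d$ and $P_d^X$ intertwine correctly with $\varphi_*$ and that the Künneth-type identification $E_*(- \wedge X) \cong E_*(-) \otimes_{E_*} E_* X$ carries $f \wedge 1_X$ to $E_* f \otimes 1$; both follow from $E_*$-linearity and $\GG_n$-equivariance of $\varphi_*$ together with \Cref{rem:fXPXtensor}.
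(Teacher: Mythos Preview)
Your proposal is correct and follows essentially the same approach as the paper: identify the vertical maps via \Cref{prop:orientations-nat-4}, identify the horizontal maps via algebraicity of $f$ together with \Cref{rem:fXPXtensor}, and reduce to the tautological naturality of $\Hom(h,-)$ in its coefficient module. The paper also singles out the point that the \emph{same} $\varphi_*$ works for both $H_1$ and $H_2$ because the induced orientations $g_1z$ and $g_2z$ map to the same $\GG_n$-invariant generator $\iota_X$; you use this implicitly by fixing a single $\varphi_*$ at the outset, which is fine given the setup preceding the proposition.
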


\begin{proof} By \Cref{prop:orientations-nat-4}, the induced $H_i$-orientations for $X$ give us commutative diagrams
\begin{equation}\label{eq:orientations-nat-4}
\xymatrix{
E_\ast E^{hH_i} \ar[d]_{E_\ast (\varphi_{z_i})} \ar[r]^-\cong & \map(\GG/H_i,E_\ast)\ar[d]^{\map(\GG/H_i,\varphi_\ast)} \\
E_\ast (E^{hH_i} \wedge X) \ar[r]_-\cong & \map(\GG/H_i,E_\ast X).
}
\end{equation}

We now use the hypothesis that $f$ is an algebraic map, as then \Cref{def:alg-mor} gives a corresponding map $g$ with
\[
g = \Hom(h,E_\ast)
\]
for some continuous $\GG_n$-module map $h:\ZZ_p[[\GG_n/H_2]] \to \ZZ_p[[\GG_n/H_1]]$. 
Using \cref{rem:fXPXtensor}, we also have the following commutative diagram
\[
\xymatrix{
E_\ast (E^{hH_1}\wedge X) \ar[r]^-{E_\ast (f\wedge X)} \ar[d]_\cong &
E_\ast (\Sigma^k E^{hH_2}\wedge X) \ar[d]_\cong^{P_d^X}\\
\map(\GG_n/H_1,E_\ast X) \ar[r]_-{g_X}& \map(\GG_n/H_2,E_\ast X)
}
\]
with $g_X=\Hom(h,E_\ast X)$. 

Thus,  we have an isomorphism from the diagram of \eqref{eq:orientations-nat-3}
to the diagram
\[
\xymatrix@C=60pt{
\Hom(\ZZ_p[[\GG_n/H_1]],E_\ast )  \ar[r]^-{\Hom(h,E_\ast) } \ar[d]_{\Hom(\GG/H_1,\varphi_\ast)}
&\Hom(\ZZ_p[[\GG_n/H_2]],E_\ast ) \ar[d]^{\Hom(\GG/H_2,\varphi_\ast)}\\
\Hom(\ZZ_p[[\GG_n/H_1]],E_\ast X)  \ar[r]_-{\Hom(h,E_\ast X)} &\Hom(\ZZ_p[[\GG_n/H_2]],E_\ast X).
}
\]
Note that we can use the same $\varphi_\ast$ for both $H_1$ and $H_2$ because the $E^{hH_i}$-orientations $g_iz$
of $E^{hH_i} \wedge X$ map to the {\it same} $\GG_n$-invariant generator of $E_0X$. 
This last diagram evidently commutes, finishing the argument.
\end{proof}
 
 \subsection{The descent filtration}
 
Another  commonly used filtration to study $\kappa_n$ comes from the $K(n)$-local Adams--Novikov spectral 
sequence. See, for example \cite{HoveySadofsky} and \cite{ShimomuraKamiya}.

Recall from \Cref{rem:mormodulehom} that if $X$ is a dualizable object in the $K(n)$-local category, then the $E$-based Adams--Novikov spectral sequence for $E^{hK} \wedge X$ is given by
\begin{align*}
E_2^{s,\ast }(K,X) \cong H^s(K, E_tX) \Longrightarrow \pi_{t-s}L_{K(n)}(E^{hK}\wedge X).
\end{align*}
We will use the following key and deep property of this spectral sequence.

\begin{rem}\label{rem:props-of-ANSS} The $K(n)$-local $E$-based Adams-Novikov spectral sequence has a uniform and 
horizontal vanishing line at $E_\infty$; that is, there is an integer $N$, depending only on $n$, $p$, and $K$, so that in the 
Adams-Novikov spectral sequence we have
\[
E_\infty^{s,\ast }(K,X) = 0, \qquad s > N.
\]
This can be found in the literature in several guises; for example, it can be put together from the material in Section
5 of \cite{DH}, especially Lemma 5.11. See also \cite{BBGS} for even further explanation. 
If $p-1 > n$, there is often a  horizontal vanishing line at $E_2$, but we are decidedly not in that case in the second
part of this paper. 
\end{rem}

 Let $X \in \kappa_n$ and choose  a $\GG_n$-invariant generator $\iota_X \in E_0X$ as in
 \cref{defn:Gninvariantgen}. The Adams--Novikov Spectral Sequence  for the sphere acts on the 
 Adams--Novikov spectral sequence $X$; thus, if $d_q(\iota_X)=0$ for $r < q$, then we have an
 isomorphism of $E_r$-terms
\begin{align}\label{eq:isoEr}
E_r^{\ast,\ast}(\GG_n,S^0) &\cong E_r^{\ast,\ast}(\GG_n,X)\\
a &\longmapsto a\iota_X . \nonumber
\end{align}
Similar ideas were deployed in the proof of \Cref{prop:detect-kappa-ANSS-bis}. 

In \Cref{rem:really-basic-siso} we also observed that any two choices for a $\GG_n$-invariant
generator of $E_\ast X$ differ by multiplication by a unit in $\ZZ_p^\times$. Thus the following
definition is independent of the choices. 

\begin{defn}[Descent filtration]\label{defn:descent-filt} 
For $r\geq 2$, let
\[
\kappa_{n,r}= \{\ X \in \kappa_2 \mid d_k(\iota_X)=0,\ 2\leq k < r\  \} ,
\]
where $d_k$ denotes the differential in the homotopy fixed point spectral sequence
\[E_2^{s,t}(\GG_n,X) = H^s(\GG_n,E_tX )\Longrightarrow \pi_{t-s}X.\]
Define $\phi_r:\kappa_{n,r} \to E_r^{r,r-1}(S^0)$ by the equation
\[
d_r(\iota_X) = \phi_r(X)\iota_X,
\]
where the right-hand side uses the identification \eqref{eq:isoEr}.
\end{defn}
  
\begin{lem}\label{lem:desfilt} 
The subsets $\kappa_{n,r}$ of $\kappa_n$ 
satisfy the following basic properties.
\begin{enumerate}\item The subset $\kappa_{n,r}$ is a subgroup of $\kappa_n$ and $\phi_r$
is a homomorphism.

\item The kernel of $\phi_r$ is identified with $\kappa_{n,r+1}$, and so we have an exact sequence
\[
\xymatrix{
0 \ar[r] &  \kappa_{n,r+1} \ar[r] &  \kappa_{n,r} \ar[r]^-{\phi_r} &E_r^{r,r-1}(\GG_n, S^0)\ .
}
\]

\item There exists an integer $N$ so that $\kappa_{n,N}  = \{L_{K(n)}S^0\}$.
\end{enumerate}
\end{lem}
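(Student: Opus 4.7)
The plan is to prove the three claims of \Cref{lem:desfilt} in order, using the multiplicative structure of the Adams--Novikov spectral sequence for (1), the defining formula for $\phi_r$ for (2), and the horizontal $E_\infty$-vanishing line of \Cref{rem:props-of-ANSS} for (3).

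For (1), I would exploit the multiplicative structure of the spectral sequence. Under the Morava-module isomorphism $E_\ast(X\wedge Y) \cong E_\ast X \otimes_{E_\ast} E_\ast Y$, the product $\iota_X\iota_Y$ is a $\GG_n$-invariant generator for $X\wedge Y$, and by \Cref{rem:really-basic-siso} any other choice differs by a unit in $\ZZ_p^\times$, which affects neither the vanishing of differentials nor the value of $\phi_r$. The Leibniz rule
\[
d_k(\iota_X\iota_Y) = d_k(\iota_X)\iota_Y + \iota_X\, d_k(\iota_Y)
\]
then yields closure under smash product, and specializing at $k=r$ gives additivity $\phi_r(X\wedge Y) = \phi_r(X)+\phi_r(Y)$. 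For inverses, choose $\iota_{X^{-1}}$ with $\iota_X\iota_{X^{-1}} = 1$; then $d_k(1) = 0$ combined with the Leibniz rule gives $\iota_X\,d_k(\iota_{X^{-1}}) = -d_k(\iota_X)\iota_{X^{-1}}$, and since multiplication by $\iota_X$ is injective on $E_r$-pages (coming from the equivalence $X\wedge X^{-1}\simeq L_{K(n)}S^0$), this forces $d_k(\iota_{X^{-1}}) = 0$ whenever $d_k(\iota_X) = 0$. Part (2) is then immediate from the definition: multiplication by $\iota_X$ is an isomorphism $E_r^{r,r-1}(\GG_n,S^0) \to E_r^{r,r-1}(\GG_n,X)$, so $\phi_r(X) = 0$ if and only if $d_r(\iota_X) = 0$, which is precisely the condition $X \in \kappa_{n,r+1}$.

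For (3), I invoke the horizontal $E_\infty$-vanishing line from \Cref{rem:props-of-ANSS}: there exists $N$ with $E_\infty^{s,\ast}(\GG_n,S^0) = 0$ for $s > N$. The key observation is that if $\iota_X$ is a permanent cycle in the AN spectral sequence for $X$, then it detects a class $\hat\iota_X \in \pi_0 X$ whose induced map $S^0 \to X$ is an $E_\ast$-isomorphism (since $\iota_X$ generates $E_\ast X$ as a Morava module) and hence a $K(n)$-local equivalence by \cite{HopMahSad}, forcing $X \simeq L_{K(n)}S^0$. It therefore suffices to find a uniform $N'$ so that $X \in \kappa_{n, N'}$ forces $\iota_X$ to be a permanent cycle; combining the $E_\infty$-vanishing with the strong convergence of the $K(n)$-local AN spectral sequence should yield such an $N'$ at which the differentials $d_r(\iota_X)$ for $r \geq N'$ are all forced to vanish because their targets $E_r^{r,r-1}(\GG_n, S^0)$ become zero on a uniformly bounded page. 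The main obstacle is extracting this uniform finite-page vanishing from the $E_\infty$ vanishing line: the pages $E_r^{r, r-1}(\GG_n, S^0)$ can remain nonzero at finite $r$ even when their $E_\infty$ limits vanish, so one must combine the vanishing line with strong convergence and the detection arguments underlying \cite[Section~5]{DH} to guarantee the stabilization occurs at a page depending only on $N$.
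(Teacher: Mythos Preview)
Your treatment of (1) and (2) is correct and matches the paper's (very terse) proof: the paper simply notes that $\iota_X \wedge \iota_Y$ is a $\GG_n$-invariant generator of $E_0(X\wedge Y)$, and that (2) is built into the definitions.

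For (3) the paper just cites the vanishing line of \Cref{rem:props-of-ANSS}, and you are right to flag that the $E_\infty$ statement as written there does not obviously suffice: knowing $E_\infty^{s,*}(\GG_n,S^0)=0$ for $s>N$ does not by itself force $E_r^{r,r-1}(\GG_n,S^0)=0$ at any finite page. The clean resolution is that the Devinatz--Hopkins argument in fact yields a \emph{finite-page} vanishing line uniform in $X$: there exist $N$ and $r_0$ such that $E_{r_0}^{s,*}(\GG_n,X)=0$ for all $s>N$ and all dualizable $X$. Granting this, take $N'=\max(r_0,N+2)$; if $X\in\kappa_{n,N'}$ then inductively for each $r\geq N'$ the isomorphism \eqref{eq:isoEr} holds at page $r$ and the target $E_r^{r,r-1}(\GG_n,X)\cong E_r^{r,r-1}(\GG_n,S^0)$ vanishes (since $r\geq r_0$ and $r>N$), so $\iota_X$ is a permanent cycle and your detection argument gives $X\simeq L_{K(n)}S^0$. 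Your instinct to look inside \cite[Section~5]{DH} is correct; the fix is this strengthening of the vanishing line rather than a separate convergence argument layered on top of the $E_\infty$ statement.
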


\begin{proof}  Part (1) follows from the observation that $\iota_X \in E_0X$ and $\iota_Y \in E_0Y$ are
$\GG_n$-invariant generators then
\[
\iota_X \wedge \iota_Y \in E_0(X \wedge Y)
\]
is a $\GG_n$-invariant generator. Part (2) is built into the definitions and part (3) follows from the horizontal
vanishing line of \Cref{rem:props-of-ANSS}.
\end{proof}

\begin{rem}\label{rem:phi-not-onto} The map $\phi_r:\kappa_{n,r} \to E_r^{r,r-1}(\GG_n, S^0)$ need not be onto.
In \Cref{rem:phi-not-surj} we note that the class $\eta^2e$ is not in the image of $\phi_5$ at $n=p=2$.
\end{rem}


\section{The $\GG_n^1$-orientable elements of the Picard group}\label{sec:third-graded}

For all $n$ and all $p$, the group $\GG_n$ has a closed subgroup $\GG_n^1$ defined as the kernel of a reduced
determinant map. In this section we give general results on $\kappa(\GG_n^1) \subseteq \kappa_n$, the
subgroup of exotic invertible elements which have an $E^{h\GG_n^1}$-orientation, and give some remarks on the 
interaction between $\kappa(\GG_n^1)$ and the filtration coming from the descent filtration of \Cref{defn:descent-filt}. 
Most of the key ideas are already present in Section 5 of \cite{GHMRPicard}, and then adapted and generalized
in \cite{Westerland}.

\subsection{The determinant, the subgroup $\GG_n^1$ and the class $\zeta$}
We briefly introduce the subgroup $\GG_n^1$ and the closely related homotopy class $\zeta$. 

We have already defined  $\SS_n = \Aut(\Gamma_n/\FF_{p^n}) \subseteq \GG_n$ and indeed, we have
a semi-direct product decomposition $\SS_n \rtimes \Gal \cong \GG_n$ where  $\Gal = \Gal(\FF_{p^n}/\FF_p)$. 
The group $\SS_n$ is the group of units in the endomorphism ring $\mathcal{O}_n$ of $\Gamma_n$ over $\FF_{p^n}$. 
The inclusion $\ZZ_p \to \mathcal{O}_n$ sending $n$ to the multiplication by $n$ extends to an inclusion
of the Witt vectors $\WW \to \mathcal{O}_n$ and $\mathcal{O}_n$ is a left $\WW$ module of rank $n$. The right action
of $\SS_n$ on $\mathcal{O}_n$ then defines a map
\[
\SS_n \longrightarrow \mathrm{Gl}_n(\WW).
\]
The image of this map has enough symmetry that the determinant restricts to a map
\[
\det \colon \SS_n \to \ZZ_p^\times \subseteq \WW^{\times}. 
\]
We then extend the determinant to all of $\GG_n$ as follows. 

\begin{defn} \label{defn:det-defined-here}
The \emph{determinant} $\det \colon \GG_n \to \ZZ_p^{\times}$ is defined to be
 the composite
\[
\xymatrix{
\GG_n \cong \mathbb{S}_n \rtimes \Gal \ar[rr]^-{\det \times \Gal} && \Z_{p}^{\times} \times \Gal \ar[r]  &\Z_{p}^{\times}
}
\]
where the second map is the projection. The kernel of $\det$ is denoted by $S\GG_n$.
\end{defn}
We can now define one of the key players in this story, the reduced determinant $\zeta$.

\begin{defn}\label{defn:zeta}
The homomorphism $\zeta=\zeta_n \colon \GG_n \to \Z_p$ is the composite
\[ \GG_n \xrightarrow{\det} \Z_p^{\times} \to \Z_p^{\times}/\mu \cong \Z_p\]
where the second map is the quotient by the subgroup $\mu$ of roots of unity. 
The kernel of $\zeta$ is denoted by $\G_n^1$. 
\end{defn}

\begin{rem}
We note that $\zeta$ is always a split surjection so that $\GG_n \cong \GG_n^1 \rtimes \ZZ_p$. Throughout, we will let
\[
\psi \in \GG_n/\GG_n^1 \cong \Z_p
\]
be a  topological generator.
\end{rem}

When $p=2$, we can define another important homomorphism derived from the determinant, which will not be used
in this section but later in \Cref{part:two} to study the case $n=p=2$.

\begin{defn} \label{defn:chi-defined}
Let $p=2$. The homomorphism $\chi \colon \GG_n \to \Z/2$ is defined as the composite 
\[
\xymatrix{
\GG_n \ar[r]^-{\det}& \Z_2^{\times} \ar[r] & \ZZ_2^\times/(1+4\ZZ_2) \cong (\Z_2/4\Z_2)^{\times} \cong \ZZ/2.
}
\]
We define $\GG_n^0$ to be the kernel of $\chi$.
\end{defn}

\begin{rem}
The fixed point spectrum $E^{h\GG_n^0}$ is a mysterious object, unusual even at $p=2$ and $n=1$. However,
the other fixed point spectrum $E^{h\GG_n^1}$ is much more familiar. If $n=1$, the determinant $\GG_1 \to
\ZZ_2^\times$ is the identity, so $\GG_1^1 = \{\pm 1\}$ and  $E^{h\GG_1^1} \simeq KO$, the $2$-complete
real $K$-theory spectrum. 
\end{rem}

We use the homomorphisms $\zeta$ and $\chi$ to define some of our key cohomology classes.

\begin{defn}\label{defn:what-is-wchi-anyway1} The homomorphisms
\begin{align*}
\zeta\colon \GG_n &\longr \ZZ_p\\
\chi  \colon \GG_n &\longr \ZZ/2
\end{align*}
define classes 
\[
\zeta \in H^1(\GG_n,\ZZ_p)\qquad\mathrm{and}\qquad \chi \in H^1(\GG_n,\ZZ/2)
\]
in cohomology with trivial coefficients. Since the inclusion $\ZZ_p \to E_0$ of the submodule
generated by the unit is $\GG_n$-invariant we can also write 
\[
\zeta \in H^1(\GG_n,E_0)\qquad\mathrm{and}\qquad \chi \in H^1(\GG_n,E_0/2)
\]
for the image of these classes under the inclusion map. Finally, let
\[
\wchi \in H^2(\GG_2,E_0)
\]
be the Bockstein on $\chi$. Note that $\chi$ and $\wchi$ are only defined if $p=2$. 
\end{defn}

We now can record a standard result for recovering $L_{K(n)}S^0$ from
$E^{h\GG_n^1}$.  See, for example, \cite[Proposition 8.1]{DH}.
It follows easily from the isomorphism of Morava modules
$\map(\GG_n/\GG_n^1,E_\ast) \cong E_\ast E^{h\GG_n^1}$ of \cref{rem:mormodulehom}. 

\begin{prop}\label{prop:fund-above} Let $\psi$ be a topological generator of $\GG_n/\GG_n^1$.
Then there is a fiber sequence
\[
\xymatrix{
L_{K(n)}S^0 \ar[r]^-i & E^{h\GG_n^1} \ar[r]^-{\psi-1} & E^{h\GG_n^1}
}
\]
that gives a short exact of Morava modules
\[
\xymatrix{
E_\ast S^0 \ar[r]^-{i_\ast} & E_\ast E^{h\GG_n^1} \ar[rr]^{(\psi - 1)_\ast} && E_\ast E^{h\GG_n^1}.
}
\]
The map $i_\ast$ is an injection onto the sub-Morava module of rank one generated by the
unit of the ring $E_\ast E^{h\GG_n^1}$. 
\end{prop}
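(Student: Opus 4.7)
My plan is to deduce the fiber sequence by iterated homotopy fixed points and then unwind the short exact sequence on $E_\ast$ from the algebraic structure of $E_\ast E^{h\GG_n^1}$ as a module over $\ZZ_p \cong \GG_n/\GG_n^1$.

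First, the splitting of $\zeta$ recorded after \Cref{defn:zeta} gives a decomposition $\GG_n \cong \GG_n^1 \rtimes \ZZ_p$. I will use the iterated fixed point identification
\[
L_{K(n)}S^0 \simeq E^{h\GG_n} \simeq (E^{h\GG_n^1})^{h\ZZ_p},
\]
where $\ZZ_p$ acts via its image in $\GG_n/\GG_n^1$. Since $\ZZ_p$ has continuous cohomological dimension one, for any $K(n)$-local spectrum $Y$ with a continuous $\ZZ_p$-action generated by $\psi$, there is a standard fiber sequence $Y^{h\ZZ_p} \to Y \xrightarrow{\psi - 1} Y$; applying this with $Y = E^{h\GG_n^1}$ yields the desired fiber sequence after the above identification.

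To obtain the short exact sequence on Morava modules, I will apply $E_\ast = \pi_\ast L_{K(n)}(E \wedge -)$ to this fiber sequence. By the Devinatz-Hopkins identification of \Cref{rem:mormodulehom},
\[
E_\ast E^{h\GG_n^1} \cong \map(\GG_n/\GG_n^1, E_\ast) \cong \map(\ZZ_p, E_\ast),
\]
and under this identification $(\psi - 1)_\ast$ corresponds to the difference between translation by $\psi$ and the identity. The key point is that $\map(\ZZ_p, E_\ast)$ is coinduced from the trivial $\ZZ_p$-module $E_\ast$, hence is $\ZZ_p$-cohomologically acyclic, and its $H^0$ is $E_\ast$ by Shapiro's lemma. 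Equivalently, after dualising the standard exact resolution $0 \to \ZZ_p[[\ZZ_p]] \xrightarrow{\psi - 1} \ZZ_p[[\ZZ_p]] \to \ZZ_p \to 0$ of the trivial module, we see that $(\psi-1)_\ast$ is surjective with kernel precisely $E_\ast$, so the long exact sequence in $E_\ast$-homology collapses to the claimed short exact sequence.

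Finally, the identification of $i_\ast$ is essentially formal: $i$ is the unit map of the ring spectrum $E^{h\GG_n^1}$, so $i_\ast$ sends the unit of $E_\ast S^0$ to the unit of $E_\ast E^{h\GG_n^1}$, which under the isomorphism with $\map(\ZZ_p, E_\ast)$ is the constant function at $1$. The $E_\ast$-submodule generated by this element is exactly the rank-one submodule of constant functions, which agrees with the kernel of $(\psi-1)_\ast$ computed above. The only substantive obstacle is establishing the surjectivity of $(\psi - 1)_\ast$; this is handled cleanly by recognising $\map(\ZZ_p, E_\ast)$ as a coinduced $\ZZ_p$-module, which reduces the question to the vanishing of $H^1(\ZZ_p, \map(\ZZ_p, E_\ast))$.
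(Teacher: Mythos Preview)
Your proposal is correct and follows essentially the same approach as the paper: the paper does not give a detailed proof but simply cites \cite[Proposition 8.1]{DH} and remarks that the statement follows easily from the Morava module isomorphism $E_\ast E^{h\GG_n^1} \cong \map(\GG_n/\GG_n^1,E_\ast)$ of \Cref{rem:mormodulehom}. You have spelled out exactly that argument, including the iterated fixed point identification and the Shapiro/coinduction step for the surjectivity of $(\psi-1)_\ast$, so there is nothing to correct.
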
 

We have already conflated the cohomology class
\[
\zeta = \zeta_n \in H^1(\GG_n,\ZZ_p).
\]
with its image under the map
\[
H^1(\GG_n,\ZZ_p) \longrightarrow H^1(\GG_n,E_0) 
\]
defined by the inclusion of rings $\ZZ_p \to E_0$. As is customary, we will
further confuse this element with the homotopy class in 
\[
\zeta \in \pi_{-1}L_{K(n)}S^0
\] 
defined as the image of the unit $i\colon S^0 \to E^{h\GG_n^1}$ under the boundary homomorphism
\[
\partial \colon \pi_0E^{h\GG_n^1} \longrightarrow \pi_{-1}L_{K(n)}S^0.
\]
The homotopy class $\zeta$ is detected by the cohomology class $\zeta$ in the Adams--Novikov Spectral
Sequence
\[
E_2^{s,t}=H^s(\GG_nE_t) \Longrightarrow \pi_{t-s}L_{K(n)}S^0.
\]
This follows from  \cref{prop:fund-above}, the long
exact sequence in cohomology
\[
\xymatrix{
\cdots \ar[r] & H^s(\GG^1_n,E_t) \ar[r]^-{\psi-1} & H^s(\GG^1_n,E_t) \ar[r]^-{\partial_2}&
H^{s+1}(\GG_n,E_t) \ar[r] &\cdots\ ,
}
\]
and the Geometric Boundary Theorem. This is all due to Hopkins and Miller, see Theorem 6 in \cite{DH}.

We immediately have 
\[
\xymatrix{
\pi_kL_{K(n)} S^0 \ar[r]^-{i_\ast} & \pi_kE^{h\GG_n^1}  \ar[r]^-{\partial} & \pi_{k-1}L_{K(n)}S^0
}
\]
is multiplication by $\zeta$. This extends to the maps on Adams--Novikov Spectral Sequences: the composition 
\[
\xymatrix{
E_r^{s,t}(\GG_n,S^0)\ar[r]^{i_\ast} & E_r^{s,t}(\GG^1_n,S^0)  \ar[r]^{\partial_r} & E_r^{s+1,t}(\GG_n,S^0)
}
\]
is multiplication by $\zeta \in E_r^{1,0}(\GG_n,S^0)$.

\subsection{A description of $\kappa(\GG_n^1)$ in terms of homotopy groups}\label{sec:desckGn1}
There is a standard way to produce spectra in
$\kappa(\GG_n^1)$, which we now review. The main result of this subsection is \Cref{prop:twist-constr}. 

Define
\begin{equation}\label{eq:lambda-defined}
\KEn = \mathrm{Ker}\{\ \pi_0E^{h\GG_n^1} \longrightarrow E_0E^{h\GG_n^1}\ \} 
\end{equation}
to be the kernel of the $E$-Hurewicz homomorphism; this is the kernel of the edge homomorphism of the 
spectral sequence
\[
E_2^{s,t}(\GG_n, S^0)=H^s(\GG_n^1,E_t) \Longrightarrow \pi_{t-s} E^{h\GG_n^1}
\]
when $t=s$.
Since this spectral sequence has a horizontal vanishing line (see \Cref{rem:props-of-ANSS}) any 
element in $\KEn$ is nilpotent. We thus have a subgroup
\[
1 + \KEn \subseteq (\pi_0E^{h\GG_n^1})^\times
\]
of the units in the ring $\pi_0E^{h\GG_n^1}$.

\begin{defn}[Twisting Construction]\label{defn:super-e-defn}
For $\psi \in \GG_n/\GG_n^1$ a topological generator
and $\alpha \in 1 + \KEn$, we define $X(\alpha)$ by the fiber sequence
\[
\xymatrix{
X(\alpha) \ar[r]^{i_\alpha} & E^{h\GG_n^1} \ar[rr]^{\psi - \alpha} && E^{h\GG_n^1}.
}
\]
Here, in an abuse of notation, we also denote by $\alpha$ the unique $E^{h\GG_n^1}$-module map
$\varphi_\alpha:E^{h\GG_n^1} \to E^{h\GG_n^1}$ 
obtained by extending $\alpha \colon S^0 \to E^{h\GG_n^1}$. 
\end{defn}

By \Cref{prop:fund-above},
\[
L_{K(n)}S^0 \simeq X(1)
\] 
and there is a short exact sequence of Morava modules
\begin{equation}\label{eq:fund-above-alt}
\xymatrix{
E_\ast S^0 \ar[r]^-{(i_1)_\ast} & E_\ast E^{h\GG_n^1} \ar[rr]^{(\psi - 1)_\ast} && E_\ast E^{h\GG_n^1}.
}
\end{equation}
The map $(i_1)_\ast$ is an injection onto the sub-Morava module of rank one generated by the
unit of the ring $E_0E^{h\GG_n^1}\cong \map(\GG_n/\GG_{n}^1, E_0)$. This observation is extended in the following result.

\begin{prop}\label{prop:westerland-works} Let $\alpha \in 1 + \KEn$. Then the map
\[
(i_\alpha)_\ast : E_\ast X(\alpha) \longr E_\ast E^{h\GG_n^1}
\]
is an injection onto to the sub-Morava module of rank one generated by the unit.
The assignment $\alpha \mapsto X(\alpha)$ defines a homomorphism
\[
X(-):1+\KEn \longr \kappa(\GG_n^1).
\]
\end{prop}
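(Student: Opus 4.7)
The statement packages three claims: part (a) on the identification of $E_\ast X(\alpha)$; the assertion that $X(\alpha)$ actually lies in $\kappa(\GG_n^1)$, not just $\kappa_n$; and the multiplicativity of $X(-)$. I would handle these in order.

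First, for part (a) I would apply $E$-homology to the defining fiber sequence. Writing $\alpha = 1 + \lambda$ with $\lambda \in \KEn$, the element $\lambda$ lies in the kernel of the $E$-Hurewicz map, so $\alpha$ has image $1 \in E_0 E^{h\GG_n^1}$. Consequently the $E^{h\GG_n^1}$-module extension $\varphi_\alpha$ induces the identity on $E_\ast E^{h\GG_n^1}$, and the map $E_\ast(\psi - \alpha)$ equals $E_\ast(\psi - 1)$. Applying \Cref{prop:fund-above} to the resulting long exact sequence identifies $(i_\alpha)_\ast$ as an injection onto the rank-one Morava submodule generated by the unit, proving (a). This simultaneously produces a $\GG_n$-invariant generator $\iota_{X(\alpha)} \in E_0 X(\alpha)$ and so places $X(\alpha)$ in $\kappa_n$.

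Next, to verify $X(\alpha) \in \kappa(\GG_n^1)$, I would use \Cref{prop:detect-kappa-ANSS} and show that $\iota_{X(\alpha)}$ is a permanent cycle in the homotopy fixed point spectral sequence converging to $\pi_\ast(E^{h\GG_n^1} \wedge X(\alpha))$. Naturality along $i_\alpha \colon X(\alpha) \to E^{h\GG_n^1}$ forces $(i_\alpha)_\ast d_r \iota_{X(\alpha)} = 0$, since in the target spectral sequence the image class is the unit of $\pi_0(E^{h\GG_n^1} \wedge E^{h\GG_n^1})$ and hence a permanent cycle. The short exact sequence from the previous paragraph gives a long exact sequence in continuous cohomology; combined with the Shapiro identification $H^\ast(\GG_n^1, E_\ast E^{h\GG_n^1}) \cong H^\ast(\GG_n, E_\ast)$ and the fact that $\lambda$ has positive Adams--Novikov filtration, this constrains the possible connecting classes enough to force $d_r \iota_{X(\alpha)} = 0$.

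Finally, for the homomorphism property, I would build an explicit map $f \colon X(\alpha) \wedge X(\beta) \to X(\alpha\beta)$. Because $\psi$ is a ring map, the composite $(\psi - \alpha\beta) \circ m \circ (i_\alpha \wedge i_\beta)$, where $m$ is the ring multiplication on $E^{h\GG_n^1}$, is nullhomotopic via the decomposition
\[
\psi \wedge \psi - \alpha \wedge \beta = \psi \wedge (\psi - \beta) + (\psi - \alpha) \wedge \beta,
\]
since each summand becomes null after composing with $i_\alpha \wedge i_\beta$. This produces $f$ via the defining fiber sequence of $X(\alpha\beta)$, and $f$ induces an isomorphism on $E$-homology (both sides are rank-one Morava modules with $f$ sending the canonical generator to the canonical generator by construction), hence is a $K(n)$-local equivalence.

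The main obstacle is the orientation argument in the second paragraph: the identification $E_\ast X(\alpha) \cong E_\ast$ as Morava modules does not by itself identify the two homotopy fixed point spectral sequences, and controlling the differentials on $\iota_{X(\alpha)}$ requires genuinely exploiting the fiber sequence via its associated long exact sequence in cohomology together with the positive filtration of $\lambda$. The multiplicativity step is then essentially formal once the orientations are in place.
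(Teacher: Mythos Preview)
Your treatment of part (a) and of the homomorphism property is essentially what the paper does: the paper uses exactly your observation $E_\ast(\psi-\alpha)=E_\ast(\psi-1)$ together with \Cref{prop:fund-above}, and for multiplicativity it simply cites Westerland's Proposition~3.17, whose content is the pairing map you sketch.

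The gap is in your second paragraph. First, the Shapiro identification you invoke is misstated: Shapiro gives $H^\ast(\GG_n,\map(\GG_n/\GG_n^1,E_\ast))\cong H^\ast(\GG_n^1,E_\ast)$, not $H^\ast(\GG_n^1,E_\ast E^{h\GG_n^1})\cong H^\ast(\GG_n,E_\ast)$. Since $\GG_n^1$ is normal it acts trivially on $\GG_n/\GG_n^1$, so $H^\ast(\GG_n^1,E_\ast E^{h\GG_n^1})$ is a (completed) product of copies of $H^\ast(\GG_n^1,E_\ast)$, not the $\GG_n$-cohomology. Second, even granting some correct identification, the phrase ``constrains the possible connecting classes enough'' is where the proof would have to happen, and you have not supplied it: knowing that $(i_\alpha)_\ast d_r\iota_{X(\alpha)}=0$ does not by itself force $d_r\iota_{X(\alpha)}=0$ unless you control injectivity on every page, not just $E_2$.

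The paper avoids this entirely. Once you know $(i_\alpha)_\ast$ hits the unit, the $E^{h\GG_n^1}$-module extension of $i_\alpha$,
\[
E^{h\GG_n^1}\wedge X(\alpha)\xrightarrow{\ 1\wedge i_\alpha\ }E^{h\GG_n^1}\wedge E^{h\GG_n^1}\xrightarrow{\ m\ }E^{h\GG_n^1},
\]
is already an $E_\ast$-isomorphism and hence an equivalence. Pulling back the unit of $\pi_0E^{h\GG_n^1}$ along this equivalence gives an element $z\in\pi_0(E^{h\GG_n^1}\wedge X(\alpha))$ whose image in $E_0X(\alpha)$ is the $\GG_n$-invariant generator; this is the required orientation, with no spectral-sequence analysis needed. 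Replace your second paragraph with this direct construction.
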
 

\begin{proof}  Since $E_\ast \alpha = E_\ast (1)$ the first statement follows from the short exact sequence
\eqref{eq:fund-above-alt}. It follows that the map $i_\alpha: X(\alpha) \to E^{h\GG_n^1}$ extends to an equivalence
\[
E^{h\GG_n^1} \wedge X(\alpha) \xrightarrow{\simeq} E^{h\GG_n^1}.
\]
If we let $z \in \pi_0(E^{h\GG_n^1} \wedge X(\alpha))$ be the element which maps to the unit in
the ring $\pi_0E^{h\GG_n^1}$ under this equivalence, then $z$ is an $E^{h\GG_n^1}$-orientation in
the sense of \Cref{defn:subgroupfilt}. Thus, $X(\alpha) \in \kappa(\GG_n^1)$. Proposition 3.17 of \cite{Westerland}
shows that the function $\alpha \mapsto X(\alpha)$ has the property that
there are canonical pairing maps
\[
X(\alpha)\wedge X(\beta) \longrightarrow X(\alpha\beta).
\]
and that this map is an equivalence. Thus we have a homomorphism. 
\end{proof}

\begin{rem} There's an omission in the statements of Propositions 3.15 and 3.17 of \cite{Westerland}. They are stated for a general unit in
$\pi_0E^{h\GG_n^1}$; however, the proofs in  \cite{Westerland} work only if $\alpha \equiv 1$ modulo the maximal
ideal in $\pi_0E^{h\GG_n^1}$. In all our applications, and indeed in all the applications in \cite{Westerland}, this
additional hypothesis holds. 
\end{rem} 

\begin{prop}\label{lem:cohom-inv} Let $\alpha \in 1 +  \KEn$ and let $\psi$ be a topological generator
of $\GG_n/\GG_n^1$. There is an equivalence $L_{K(n)}S^0 \to X(\alpha)$ if and only if
\[
\alpha\beta = \psi\beta
\]
for some $\beta \in 1+\KEn$.
\end{prop}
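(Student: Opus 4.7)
The plan is to translate the question of whether $L_{K(n)}S^0 \simeq X(\alpha)$ into the construction of a map of fiber sequences, and to extract or produce the cocycle relation $\alpha\beta = \psi\beta$ from the resulting diagram. In both directions, the key self-map will be multiplication by a unit $\beta$ of $\pi_0 E^{h\GG_n^1}$, and we will use in a crucial way that $\psi$ acts on $E^{h\GG_n^1}$ as a ring map (so $\psi \circ \mu_\beta = \mu_{\psi(\beta)} \circ \psi$ and $\psi$ fixes the unit).

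For the ($\Leftarrow$) direction, starting with $\beta \in 1 + \KEn$ satisfying $\alpha\beta = \psi\beta$, I let $\mu_\beta \colon E^{h\GG_n^1} \to E^{h\GG_n^1}$ be the $E^{h\GG_n^1}$-module map of multiplication by $\beta$. Since $\beta$ is a unit (being $1$ plus a nilpotent element, using the horizontal vanishing line of \Cref{rem:props-of-ANSS} applied to $\KEn$), $\mu_\beta$ is a self-equivalence, and so is $\mu_{\psi(\beta)}=\mu_{\alpha\beta}$. The computation
\[
\mu_{\psi(\beta)} \circ (\psi - 1) \;=\; \psi \circ \mu_\beta - \mu_{\psi(\beta)} \;=\; (\psi - \alpha) \circ \mu_\beta
\]
then identifies $\mu_\beta$ and $\mu_{\psi(\beta)}$ as the middle and right vertical maps of a map from the defining fiber sequence for $X(1)=L_{K(n)}S^0$ to that of $X(\alpha)$. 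Both are equivalences, so the induced map on fibers is the desired equivalence.

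For ($\Rightarrow$), given an equivalence $h \colon L_{K(n)}S^0 \to X(\alpha)$, I set $\beta = i_\alpha \circ h \in \pi_0 E^{h\GG_n^1}$. The map $E_0(h)$ sends the invariant generator of $E_\ast S^0$ to a $\ZZ_p^{\times}$-multiple of the invariant generator of $E_\ast X(\alpha)$ (by the uniqueness statement in \Cref{rem:really-basic-siso}), and $E_0(i_\alpha)$ carries that generator to $1 \in E_0 E^{h\GG_n^1}$ by \Cref{prop:westerland-works}. After rescaling $h$ by the resulting unit in $\ZZ_p^\times$, we may assume $\beta \in 1 + \KEn$, and then $\mu_\beta$ is a self-equivalence of $E^{h\GG_n^1}$ whose composite $\mu_\beta\circ i_1$ equals $i_\alpha \circ h$ by construction. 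Since both rows extend to fiber sequences, the triangulated structure supplies a map $g \colon E^{h\GG_n^1} \to E^{h\GG_n^1}$ with
\[
g \circ (\psi - 1) \;=\; (\psi - \alpha) \circ \mu_\beta .
\]
Precomposing with the unit $u\colon S^0 \to E^{h\GG_n^1}$ gives $0$ on the left (since $\psi u = u$) and $\psi(\beta) - \alpha\beta$ on the right; the desired relation $\alpha\beta = \psi\beta$ falls out.

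The only subtle point, and the main place one has to be careful, is the rescaling step used to place $\beta$ in $1 + \KEn$: it relies on the fact that $\GG_n$-invariant generators of $E_\ast X$ for $X \in \kappa_n$ are unique up to $\ZZ_p^\times$, not on a stronger rigidity. Once this is done and one takes care to distinguish $\psi$ (a ring self-map) from the $E^{h\GG_n^1}$-module maps $\mu_\beta$ and $\mu_\alpha$, the rest of the argument is essentially formal diagram chasing in the stable homotopy category.
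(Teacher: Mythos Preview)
Your proof is correct and follows essentially the same approach as the paper. The paper is slightly more direct in both directions: for ($\Leftarrow$) it simply observes that $(\psi-\alpha)\beta = 0$ so the map $\beta\colon S^0 \to E^{h\GG_n^1}$ factors through the fiber $X(\alpha)$ and checks via \Cref{prop:westerland-works} that this lift is an $E_\ast$-isomorphism, and for ($\Rightarrow$), since your $\beta = i_\alpha \circ h$ already factors through the fiber of $\psi - \alpha$ by construction, one gets $(\psi-\alpha)\beta = 0$ immediately---your TR3 maneuver producing the auxiliary map $g$ is unnecessary.
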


\begin{proof} First suppose $\psi\beta = \alpha\beta$. Then we have a factoring
\[
\xymatrix{
&L_{K(n)}S^0 \ar[d]^\beta \ar@{-->}[dl]_f\\
X(\alpha) \ar[r]_-{i_{\alpha}} & E^{h\GG_n^1} \ar[rr]^{\psi - \alpha} && E^{h\GG_n^1}.
}
\]
Since $\beta \in 1+\KEn$, the map $E_\ast \beta$ is injection onto the sub-Morava module of $E_\ast E^{h\GG_n^1}$ 
generated by the unit. Then \Cref{prop:westerland-works} implies the  map $f$ is an $E_\ast$-isomorphism. 

Conversely, suppose we are given an equivalence
$h \colon L_{K(n)}S^0 \to X(\alpha)$. We define $\gamma$ to be the image of $h$ in $\pi_0E^{h\GG_n^1}$ under
the map $i_\alpha \colon X(\alpha) \to E^{h\GG_n^1}$. Then we have a diagram
\[
\xymatrix{
&E_\ast S^0 \ar[d]^{\gamma_\ast} \ar[dl]^\cong_{h_\ast}\\
E_\ast X(\alpha) \ar[r]_-{(i_\alpha)_\ast} & E_\ast E^{h\GG_n^1}  
}
\]
Let $1 \in E_0S^0$ be the tautological generator.  Since the ring of Morava module endomorphisms of $E_\ast S^0$
is isomorphic to $\ZZ_p$ and $h_*$ is such an endomorphism, \Cref{prop:westerland-works} implies that
\[
\gamma_\ast(1) = a (i_\alpha)_\ast(1)
\]
for some $a \in \ZZ_p^\times$. Define $\beta = a^{-1}\gamma$.
Then $\beta \in 1 + \KEn$ and $\beta$ is in the kernel of $\psi-\alpha$. 
\end{proof} 

The quotient group $\ZZ_p \cong \GG_n/\GG_n^1$ acts on $1 + \KEn$ and we have an exact sequence
\[
\xymatrix{
1+\KEn \ar[r]^-{\partial} & 1+\KEn \ar[r] & H^1(\ZZ_p,1+\KEn) \ar[r] &0
}
\]
where $\partial(\beta) = \beta^{-1}\psi(\beta)$. Thus \Cref{lem:cohom-inv} implies that $X(\alpha)$ is trivial in $\kappa_n$
if and only if $\alpha$ is a coboundary in $1+\KEn$. Thus we have an injection
\[
\xymatrix{
H^1(\GG_n/\GG_n^1,1+\KEn) \ar[r] & \kappa(\GG_n^1)
}
\]
sending the coset of $\alpha$ to $X(\alpha)$. We will show in \Cref{prop:twist-constr} that this is an isomorphism, 
but we need some preliminaries. 

Let $\KIn \subseteq [E^{h\GG_n^1},E^{h\GG_n^1}]$ be the set of maps $f$ such that $E_\ast f = 0$; note that $\KIn$ is
an ideal in the endomorphism ring. The map $f \mapsto f(1_E)$ defines a split surjection
\begin{align}\label{eq:splita}
\xymatrix{\KIn \ar[r]^-\epsilon  & \KEn \ar@/^1.0pc/@{.>}[l]^-{s}}
\end{align}
The splitting sends $a \in \KEn$ to the $E^{h\GG_n^1}$-module map defined by extending $a$. 

Now let $g \in 1 + \KIn$ and form the cofiber sequence
\[
\xymatrix{
X(g) \ar[r]^-{i_g} & E^{h\GG_n^1} \ar[rr]^{\psi - g} && E^{h\GG_n^1}.
}
\]
The following is a generalization of \Cref{prop:westerland-works} and of \cref{lem:cohom-inv}.

\begin{prop}\label{prop:westerland-works-1}
Let $g \in 1 + \KIn$. Then the map
\[
(i_g)_\ast : E_\ast X(g) \longr E_\ast E^{h\GG_n^1}
\]
is an injection onto to the sub-Morava module of rank one generated by the unit.
The assignment $g \mapsto X(g)$ defines a homomorphism
\[
X(-):1+\KIn \longr \kappa(\GG_n^1).
\]
For $g \in 1 +  \KIn$ and $\psi$ the chosen topological generator
of $\GG_n/\GG_n^1$, there is an equivalence $L_{K(n)}S^0 \to X(g)$ if and only if
\[
g \beta = \psi \beta
\]
for some $\beta \in 1+\KEn$. 
\end{prop}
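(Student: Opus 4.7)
The plan is to adapt the proofs of \Cref{prop:westerland-works} and \Cref{lem:cohom-inv} by exploiting the single fundamental observation that every $g \in 1+\KIn$ satisfies $E_\ast g = E_\ast(1_E)$, since by definition $\KIn$ consists of maps inducing zero on $E$-homology. For the first assertion, I would apply $E_\ast$ to the defining cofiber sequence of $X(g)$; the resulting long exact sequence has $E_\ast(\psi - g) = (\psi - 1)_\ast$ acting on $E_\ast E^{h\GG_n^1}$, which by \Cref{prop:fund-above} fits into the short exact sequence \eqref{eq:fund-above-alt}. This forces $(i_g)_\ast$ to be an injection onto the sub-Morava module of rank one generated by the unit.

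For the second assertion, I would follow the template of \Cref{prop:westerland-works}: form the composite
\[
E^{h\GG_n^1}\wedge X(g) \xrightarrow{1 \wedge i_g} E^{h\GG_n^1}\wedge E^{h\GG_n^1} \xrightarrow{\mu} E^{h\GG_n^1},
\]
observe it is an $E_\ast$-isomorphism of $E^{h\GG_n^1}$-modules by the first assertion, conclude it is an equivalence, and use the preimage of the unit to produce a $\GG_n$-equivariant $E^{h\GG_n^1}$-orientation of $X(g)$ in the sense of \Cref{defn:subgroupfilt}. For the homomorphism claim, I would adapt Westerland's pairing to construct an equivalence $X(g_1) \wedge X(g_2) \simeq X(g_1 g_2)$, where the product on $1+\KIn$ is composition in the endomorphism ring, which is a group operation since elements of $\KIn$ are nilpotent by the horizontal vanishing line of \Cref{rem:props-of-ANSS}.

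For the triviality criterion, the proof of \Cref{lem:cohom-inv} carries over almost verbatim, with composition $g \circ \beta$ replacing ring multiplication $\alpha\beta$. In the forward direction, given $\beta \in 1+\KEn$ with $\psi \circ \beta = g \circ \beta$, the composite $(\psi - g)\circ \beta$ is null, hence $\beta$ factors through $X(g)$, and the factorization is an $E_\ast$-isomorphism by the first assertion. Conversely, given an equivalence $h \colon L_{K(n)}S^0 \to X(g)$, set $\gamma = i_g \circ h \in \pi_0 E^{h\GG_n^1}$ and use $\End_{\Mor}(E_\ast) \cong \ZZ_p$ together with the first assertion to find $a \in \ZZ_p^\times$ so that $\beta = a^{-1}\gamma$ lies in $1+\KEn$ and satisfies $g \circ \beta = \psi \circ \beta$.

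The main obstacle is the homomorphism part of the second assertion. Unlike in \Cref{prop:westerland-works}, the maps $g_i$ need not be $E^{h\GG_n^1}$-module maps, so the naive candidate $\mu \circ (i_{g_1} \wedge i_{g_2}) \colon X(g_1) \wedge X(g_2) \to E^{h\GG_n^1}$ does not automatically factor through $X(g_1 g_2)$ by a formal manipulation with $\psi$; the identity $\mu \circ (g_1 \wedge g_2) = (g_1 g_2) \circ \mu$ fails in general. I expect this obstruction to vanish after restriction along $i_{g_1}\wedge i_{g_2}$ by comparing $E$-homology using the rank-one identification from the first assertion, but verifying this carefully is the substantive step in the argument.
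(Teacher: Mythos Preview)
Your approach is precisely the paper's: its entire proof reads that the arguments of \Cref{prop:westerland-works} and \Cref{lem:cohom-inv} go through without change. You are more careful than the paper in flagging the homomorphism step, and your worry is justified---Westerland's pairing uses the identity $\mu\circ(g_1\wedge g_2)=(g_1g_2)\circ\mu$, which fails for non-module self-maps, and your proposed $E_\ast$-comparison does not by itself produce the required null-homotopy---but the paper does not address this either, and its only application of the homomorphism (in \Cref{lem:replace-mod}) takes one factor to be the module map $s(\alpha^{-1})$.
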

\begin{proof}
The proof of \Cref{prop:westerland-works} goes through
without change for the first part, and that of \cref{lem:cohom-inv} for the second part.
\end{proof}
\begin{rem}
As in \cref{prop:westerland-works-1}, we have in particular that if $g(1_E)=1_E$, then $X(g)$ is trivial.
\end{rem}

If we write $g \in 1 + \KIn$ as $g = 1 + f$ with $f \in \KIn$, then $g(1_E) = 1_E$ if and only if $f(1_E) = 0$. We then have
a diagram
\begin{equation}\label{eq:replace-mod}
\xymatrix{
1+ \KIn \ar[r] \ar[d]_{1+\epsilon}& \kappa(\GG_n^1)\\
1+ \KEn \ar[ur]
}
\end{equation} 
where the vertical map sends $g$ to $g(1_E)$. Thus we have the following result.

\begin{lem}\label{lem:replace-mod} Suppose $g \colon E^{h\GG_n^1} \to E^{h\GG_n^1}$ is any self map so that
$E_\ast g$ is the identity and let $\alpha = g(1_E) \in \pi_0E^{h\GG_n^1}$. Then
\[
X(g) =  X(\alpha) \in \kappa(\GG_n^1).
\]
\end{lem}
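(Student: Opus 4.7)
The plan is to exploit the multiplicativity of the construction $X \colon 1+\KIn \to \kappa(\GG_n^1)$ from \Cref{prop:westerland-works-1} by finding a factorization $g = \bar\alpha \circ u$ in which the second factor visibly produces a trivial class.

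Let $\bar\alpha \colon E^{h\GG_n^1} \to E^{h\GG_n^1}$ denote the $E^{h\GG_n^1}$-module self-map extending $\alpha = g(1_E) \in 1+\KEn$; by \Cref{defn:super-e-defn}, this is precisely the self-map used to build $X(\alpha)$, so $X(\alpha)$ coincides with the class $X(\bar\alpha)$ produced by \Cref{prop:westerland-works-1}. Since $\alpha - 1 \in \KEn$ is nilpotent (as $\KEn$ consists of nilpotents by the horizontal vanishing line of the HFPSS), $\alpha$ is a unit in $\pi_0 E^{h\GG_n^1}$, and $\bar\alpha$ admits an $E^{h\GG_n^1}$-linear two-sided inverse $\bar{\alpha^{-1}}$ given by the module-map extension of $\alpha^{-1}$, which again lies in $1+\KIn$.

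Set $u = \bar{\alpha^{-1}} \circ g$, so that $u \in 1+\KIn$ (being a product of two elements of the group $1+\KIn$ under composition) and $g = \bar\alpha \circ u$. The central computation is
\[
u(1_E) \;=\; \bar{\alpha^{-1}}\!\bigl(g(1_E)\bigr) \;=\; \bar{\alpha^{-1}}(\alpha) \;=\; \alpha \cdot \bar{\alpha^{-1}}(1_E) \;=\; \alpha \cdot \alpha^{-1} \;=\; 1_E,
\]
where the third equality uses that $\bar{\alpha^{-1}}$ is $E^{h\GG_n^1}$-linear and the fourth that it extends $\alpha^{-1}$. By the remark immediately following \Cref{prop:westerland-works-1} (applied with $\beta = 1$ in the triviality criterion), it follows that $X(u)$ is the trivial element $L_{K(n)}S^0$ of $\kappa(\GG_n^1)$.

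Finally, since $X \colon 1+\KIn \to \kappa(\GG_n^1)$ is a group homomorphism by \Cref{prop:westerland-works-1},
\[
X(g) \;=\; X(\bar\alpha \circ u) \;=\; X(\bar\alpha) \cdot X(u) \;=\; X(\bar\alpha) \;=\; X(\alpha),
\]
which is the desired equality. No real obstacle arises; the only tempting pitfall is to try to compare the fibers of $\psi - g$ and $\psi - \bar\alpha$ directly, which is awkward because the difference $g - \bar\alpha$ is a nonzero element of $\KIn$ despite vanishing on the unit. Routing the comparison through the multiplicative structure on $1+\KIn$ converts this apparent additive discrepancy into a multiplicative one that is immediately killed by the triviality criterion for $X$.
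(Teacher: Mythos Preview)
Your proof is correct and takes essentially the same approach as the paper: both arguments compose $g$ with the module-map extension of $\alpha^{-1}$ (your $\bar{\alpha^{-1}}$ is the paper's $s(\alpha^{-1})$), verify that the resulting element of $1+\KIn$ sends $1_E$ to $1_E$ and is therefore trivial by \Cref{prop:westerland-works-1}, and conclude via the homomorphism property. The only cosmetic difference is that you factor $g = \bar\alpha \circ u$ and compute $X(g)$, whereas the paper computes $X(s(\alpha^{-1})g)$ directly and cancels.
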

\begin{proof}
Let $s$ be the splitting of \eqref{eq:splita}, then $X(s(\alpha))=X(\alpha)$ by definition. Therefore, $X(s(\alpha^{-1})g)= 
X(s(\alpha^{-1}))\wedge X(g) = X(\alpha^{-1})\wedge X(g)$. But \cref{prop:westerland-works-1} implies that 
$X(s(\alpha^{-1})g)$ is trivial since $(s(\alpha^{-1})g)(1_E)=1_E$. 
\end{proof}

We can now prove the following result.

\begin{thm}\label{prop:twist-constr} The assignment $\alpha \mapsto X(\alpha)$ defines an isomorphism
\[
\xymatrix{
H^1(\GG_n/\GG_n^1,1+\KEn) \ar[r]^-\cong & \kappa(\GG_n^1). 
}
\]
In particular, if $\GG_n/\GG_n^1$ acts trivially on $\KEn$, then we have an isomorphism $1+\KEn \cong \kappa(\GG_n^1)$.
\end{thm}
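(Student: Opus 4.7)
The plan is to handle injectivity and surjectivity separately, with injectivity essentially already in place. By \Cref{prop:westerland-works} the rule $\alpha \mapsto X(\alpha)$ is a homomorphism $1+\KEn \to \kappa(\GG_n^1)$, and by \Cref{lem:cohom-inv} its kernel is precisely the set of $\alpha$ of the form $\beta^{-1}\psi(\beta)$ with $\beta \in 1+\KEn$; these are exactly the coboundaries for the $\GG_n/\GG_n^1$-action on $1+\KEn$. Thus an injection $H^1(\GG_n/\GG_n^1,1+\KEn) \hookrightarrow \kappa(\GG_n^1)$ is automatic, and all the real work lies in surjectivity.

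For surjectivity, I would start with $Y \in \kappa(\GG_n^1)$ together with a choice of $E^{h\GG_n^1}$-orientation $z$, yielding an $E^{h\GG_n^1}$-module equivalence $\varphi_z\colon E^{h\GG_n^1} \to E^{h\GG_n^1}\wedge Y$ by \Cref{prop:detect-kappa-ANSS-bis}. Smashing \Cref{prop:fund-above} with $Y$ and transporting through $\varphi_z$ produces
\[
Y \longr E^{h\GG_n^1} \xrightarrow{\tilde\psi - 1} E^{h\GG_n^1},\qquad \tilde\psi := \varphi_z^{-1}\circ (\psi\wedge 1_Y)\circ \varphi_z.
\]
Because $\psi$ is a ring endomorphism of $E^{h\GG_n^1}$ and $\varphi_z$ is $E^{h\GG_n^1}$-linear, the map $\tilde\psi$ is $\psi$-semilinear. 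Since $\psi$-semilinear self-maps of the free rank-one $E^{h\GG_n^1}$-module are determined by their value on $1_E$, setting $c := \tilde\psi(1_E) \in \pi_0 E^{h\GG_n^1}$ yields a factorisation $\tilde\psi = \varphi_c \circ \psi$, where $\varphi_c$ is the $E^{h\GG_n^1}$-module self-map extending $c$.

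The main obstacle will be verifying $c \in 1+\KEn$; here I would appeal to \Cref{prop:orientations-nat-4} to trace the effect of $\varphi_z$ on $E$-homology. Under the $\GG_n$-equivariant identification $E_\ast Y \cong E_\ast$ determined by the invariant generator $\iota_Y$ paired with $z$, the map $E_\ast \varphi_z$ becomes the identity on $E_\ast E^{h\GG_n^1} \cong \map(\GG_n/\GG_n^1, E_\ast)$, whence $E_\ast \tilde\psi$ equals $E_\ast \psi$. Since the induced $\psi$-action on $\map(\GG_n/\GG_n^1, E_\ast)$ is by right translation and so fixes the constant function $1$, the $E$-Hurewicz image of $c$ is $1$, placing $c$ in $1+\KEn$, which is a subgroup of the units of $\pi_0E^{h\GG_n^1}$. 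The factorisation $\tilde\psi - 1 = \varphi_c\circ(\psi - \varphi_{c^{-1}})$ together with the fact that $\varphi_c$ is an equivalence then gives
\[
Y \simeq \hofib(\tilde\psi - 1) \simeq \hofib(\psi - \varphi_{c^{-1}}) = X(c^{-1}),
\]
proving surjectivity. The ``in particular'' clause follows at once: trivial $\GG_n/\GG_n^1$-action makes every coboundary $\beta^{-1}\psi(\beta)$ equal to $1$, so $H^1(\GG_n/\GG_n^1, 1+\KEn) = 1+\KEn$.
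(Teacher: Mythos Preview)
Your argument is correct and actually a bit more direct than the paper's. Both proofs begin identically: transport the defining fiber sequence for $Y$ through the orientation equivalence $\varphi_z$ to exhibit $Y$ as the fiber of $\tilde\psi - 1$ with $\tilde\psi = \varphi_z^{-1}\circ(\psi\wedge 1_Y)\circ\varphi_z$. From there the two diverge. The paper only uses the weaker fact $E_\ast\tilde\psi = E_\ast\psi$, writes $\tilde\psi - 1 = \psi - (1+f)$ with $f = \psi - \tilde\psi \in \KIn$, and then invokes the auxiliary machinery of \Cref{prop:westerland-works-1} and \Cref{lem:replace-mod} to pass from $1+\KIn$ down to $1+\KEn$. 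You instead exploit the stronger observation that $\tilde\psi$ is $\psi$-\emph{semilinear} (since $\psi$ is a ring map and $\varphi_z$ is $E^{h\GG_n^1}$-linear), and that a $\psi$-semilinear self-map of the free rank-one module is determined by its value on the unit; this gives the exact factorization $\tilde\psi = \varphi_c\circ\psi$ and lets you read off $Y\simeq X(c^{-1})$ without ever introducing $\KIn$.

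Your route buys economy: it bypasses \Cref{prop:westerland-works-1} and \Cref{lem:replace-mod} entirely. The paper's route buys a slightly more robust intermediate statement (the $X(g)$ construction for $g\in 1+\KIn$), though that generality is not used elsewhere. One small point worth making explicit in your write-up: the claim that a homotopy-$\psi$-semilinear map $h$ with $h(1_E)=0$ must vanish is the crux of your factorization step. It follows by precomposing the semilinearity relation $h\circ m \simeq m\circ(\psi\wedge h)$ with $1\wedge\eta$ (for $\eta$ the unit) and using unitality of $m$, but since the paper does not state this lemma you should spell it out.
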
 

\begin{proof} \Cref{lem:cohom-inv} shows that the map is well-defined and an injection; thus we must 
show it is onto.

Let $X\in \kappa(\GG_n^1)$. Choose an $E^{h\GG_n^1}$-orientation $z \in  \pi_0(E^{h\GG_n^1} \wedge X)$;
see \Cref{defn:subgroupfilt}. Let $\varphi_1 \colon E^{h{\GG_n^1}} \to E^{h{\GG_n^1}}\wedge X$ and
$\varphi \colon  E \to E\wedge X$ be the equivalences induced by extending $z$. The
image of $z$ in $E_0X$ is denoted $\iota_X$; it is a $\GG_n$-invariant generator of $E_0X$.  

For any spectrum $A$ we have an isomorphism $\phi_A$ of Morava modules
\[
\xymatrix{
E_\ast A \ar[r]^-{\cong} & E_\ast A \otimes_{E_0} E_0X \ar[r]^-{\cong} & E_\ast (A \wedge X).
}
\]
The first isomorphism sends $a$ to  $a \otimes \iota_X$, and the second is the K\"unneth isomorphism.
Both maps are natural is $A$; thus, so is $\phi_A$. If $A = E^{h\GG_n^1}$ we will simply write $\phi = \phi_A$. 
The composition 
\[
\xymatrix@C=45pt{
E_\ast E^{h\GG_n^1} \ar[r]^-{(\varphi_1)_*} & E_\ast(E^{h\GG_n^1}\wedge X)    \ar[r]^-{\phi^{-1}} & E_\ast E^{h\GG_n^1}.
}
\]
is then the identity. Let $\psi \in \GG_n/\GG_n^1$ be a topological generator. (See \Cref{prop:fund-above}.) 
We next define a self-map $\widetilde{\psi}\colon E^{h\GG_n^1} \to E^{h\GG_n^1}$ by requiring the following diagram to commute
\[
\xymatrix@C=55pt{
E^{h\GG_n^1} \ar[d]_-{\varphi_1}^-\simeq \ar[r]^-{\widetilde{\psi}} &E^{h\GG_n^1} \ar[d]_-{\varphi_1}^-\simeq\\
E^{h\GG_n^1} \wedge X \ar[r]_-{\psi \wedge X} & E^{h\GG_n^1} \wedge X.
}
\]
Applying $E$-homology and prolonging with $\phi^{-1}$, we obtain the commutative diagram
\[
\xymatrix@C=55pt{
E_\ast E^{h\GG_n^1} \ar[d]_{(\varphi_1)_\ast} \ar[r]^-{\widetilde{\psi}_\ast} &
E_\ast E^{h\GG_n^1} \ar[d]^-{(\varphi_1)_\ast}\\
E_\ast ( E^{h\GG_n^1} \wedge X )\ar[d]_-{\phi^{-1}} \ar[r]_-{(\psi \wedge X)_\ast } &
E_\ast ( E^{h\GG_n^1} \wedge X)\ar[d]^-{\phi^{-1}}  \\
E_\ast E^{h\GG_n^1} \ar[r]^-{\psi_\ast} &E_\ast E^{h\GG_n^1},
}
\]
where the vertical composites are the identity.
From this we conclude that
\[
\widetilde{\psi}_\ast = \psi_\ast\colon E_\ast E^{h\GG_n^1} \longrightarrow E_\ast E^{h\GG_n^1}.
\]
Defining $F$ to be the fiber of $\widetilde{\psi}-1$, we get a diagram of fiber sequences
\[
\xymatrix@C=55pt{
F \ar[r] \ar[d]_\simeq &E^{h\GG_n^1} \ar[d]_-{\varphi_1}^-\simeq \ar[r]^-{\widetilde{\psi}-1} &E^{h\GG_n^1} \ar[d]_-{\varphi_1}^-\simeq\\
X \ar[r] & E^{h\GG_n^1} \wedge X \ar[r]_-{(\psi-1) \wedge X} & E^{h\GG_n^1} \wedge X.
}
\]
Define $f=\psi-\widetilde{\psi} \colon E^{h\GG_n^1} \to E^{h\GG_n^1}$. By construction $\widetilde{\psi} -1 = \psi - (1+f)$.
Furthermore, $E_\ast f=0$. Hence $f \in \KIn$ and $F \simeq X(1+f)$. The result now follows from
\Cref{lem:replace-mod}.
\end{proof}

\subsection{A comparison of filtrations}

In \Cref{defn:descent-filt}  we defined a filtration on $\kappa_n$ using the Adams-Novikov spectral sequence;
specifically $\kappa_{n,s} \subseteq \kappa_n$ is the subgroup of elements so that $d_r(\iota_X) = 0$ for $r < s$
for any choice of $\GG_n$-invariant generator $\iota_X$ of $E_0X$. We also discussed homomorphisms
\[
\phi_s \colon \kappa_{n,s} \to E_{r}^{s,s-1}(\GG_n,S^0)
\]
determined by the formula
\[
d_s(\iota_X) = \phi_s(X) \iota_X.
\]

Define
\[
\kappa_s(\GG_n^1) = \kappa_{n,s} \cap \kappa(\GG_n^1) \subseteq \kappa(\GG_n^1) \cong 
H^1(\GG_n/\GG_n^1,1+\KEn).
\]
These subgroups give a filtration of $\kappa(\GG_n^1)$. In this section, we compare it with
the Adams--Novikov filtration on $\KEn$. This does not appear to be formal; indeed, it is not clear
how the two filtrations can be compared without some additional hypotheses. We provide such a result with hypotheses that will suffice for our purposes in \Cref{lem:filtcompare}.

\begin{defn}\label{def:kens-filt}
Let $\KEn_s \subseteq \pi_0E^{h\GG_n^1}$ be the subgroup of elements 
whose Adams--Novikov filtration is greater than or equal to $s$ in the spectral sequence
\[
E_2^{s,t}(\GG_n^1,S^0) = H^s(\GG_n^1,E_t) \Longrightarrow \pi_{t-s}E^{h\GG_n^1}.
\]
\end{defn}

Note that by definition, $\KEn = \KEn_1$, and furthermore, we get a corresponding filtration $1+\KEn_s \subseteq 1+\KEn$. The first observation about this latter filtration will be the cause of much technical complication.

\begin{lem}\label{lem:filtstructure} Let $\alpha \in 1+\KEn_s$ and $\beta \in 1+ \KEn_{s'}$ and $s < s'$. Then $\alpha\beta \in 1+\KEn_s$. If $\alpha$ has exact filtration $s$, then so does $\alpha\beta$.
\end{lem}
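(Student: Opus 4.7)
The plan is to work directly with the defining expansion $\alpha\beta = 1 + (a + b + ab)$, where $\alpha = 1+a$ with $a \in \KEn_s$ and $\beta = 1+b$ with $b \in \KEn_{s'}$, and then use multiplicativity of the Adams--Novikov filtration on $\pi_\ast E^{h\GG_n^1}$ to control each term.

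First I would invoke the standard fact that the $K(n)$-local $E$-based Adams--Novikov spectral sequence
\[
E_2^{s,t}(\GG_n^1,S^0) = H^s(\GG_n^1, E_t) \Longrightarrow \pi_{t-s} E^{h\GG_n^1}
\]
is multiplicative because $E^{h\GG_n^1}$ is a ring spectrum (in fact $\sE_\infty$), and hence the induced decreasing filtration on $\pi_0 E^{h\GG_n^1}$ satisfies $\KEn_s \cdot \KEn_{s'} \subseteq \KEn_{s+s'}$. In particular $ab \in \KEn_{s+s'}$.

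Next, since $s < s'$ we have $s' \geq s+1$, so both $b$ and $ab$ lie in $\KEn_{s+1}$. Combined with $a \in \KEn_s$, this gives
\[
a + b + ab \in \KEn_s,
\]
which proves the first claim, namely $\alpha\beta \in 1+\KEn_s$. For the exact filtration claim, the same inclusions yield
\[
a + b + ab \equiv a \pmod{\KEn_{s+1}},
\]
so if $a$ has exact filtration $s$ (i.e. $a \in \KEn_s \setminus \KEn_{s+1}$), then so does $a+b+ab$, and therefore $\alpha\beta$ has exact filtration $s$ as well.

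The only genuine input is multiplicativity of the Adams--Novikov filtration, which is standard, so I do not expect a substantive obstacle; the proof is essentially bookkeeping around the identity $(1+a)(1+b) = 1 + a + b + ab$.
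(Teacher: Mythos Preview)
Your proof is correct and is essentially the same as the paper's: both write $\alpha = 1+x$, $\beta = 1+y$ and observe that $\alpha\beta - 1 \equiv x$ modulo elements of filtration greater than $s$, using multiplicativity of the Adams--Novikov filtration. The paper's version is just a terser rendering of the same computation.
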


\begin{proof} Write $\alpha = 1 + x$ and $\beta = 1+ y$ with $x$ and $y$ of filtration $s$ and $s'$ respectively 
in the Adams-Novikov spectral sequence for $E^{h\GG_n^1}$. Then
$\alpha\beta -1 $ is congruent to $ x$ modulo elements of filtration greater than $s$. 
\end{proof} 

\begin{rem}\label{rem:filtstructure} Beyond the complications implied by \Cref{lem:filtstructure}, the
filtration $1+\KEn_s$ of $1 + \KEn$ also does not fit particularly well with the cohomological description in \Cref{prop:twist-constr}.
While $1+\KEn_s$ is closed under the action of  $\GG_n/\GG_n^1$, the map 
\[
H^1(\GG_n/\GG_n^1,1+\KEn_s) \longr H^1(\GG_n/\GG_n^1,1 + \KEn) 
\]
induced by the inclusion is not obviously one-to-one. These facts complicate the analysis of the relationship
between the Adams-Novikov filtration $\kappa_r(\GG_n^1)$ and the Adams-Novikov filtration on
$\KEn \subseteq \pi_0E^{h\GG_n^1}$, at least in the absence of further hypotheses. This observation explains the excessive
(even in the context of this paper) technicality of \Cref{lem:filtcompare}.
\end{rem}

\begin{rem} A first hypothesis will be to require that $\GG_n/\GG_n^1$ act trivially  on $\pi_0E^{h\GG_n^1}$.
Then \cref{prop:twist-constr} specifies an isomorphism
$1+\KEn \cong \kappa(\GG_n^1)$, sending $\alpha$ to $X(\alpha)$. We then get a filtration
\[
\kappa(\GG_n^1) \cong 1+\KEn = 1+\KEn_1 \supseteq 1+ \KEn_2 \supseteq \ldots
\]
of $\kappa(\GG_n^1)$. In that case, we have two filtrations of $\kappa(\GG_n^1)$, namely $1+\KEn_s$ and
$\kappa_s(\GG_n^1)$ and we wish to compare them. This is the goal of the rest of the section. Note
that if $\alpha = 1 + x$ with $x \in \KEn_{s-1}$, then we hope to have a formula of the form
\[
\phi_{s}(X(\alpha)) = \zeta \overline{x} \in E_{s}^{s,s-1}(\GG_n,S^0)
\]
where $\overline{x} \in E_{s}^{s-1,s-1}(\GG^1_n,S^0)$ detects $x$ and $\zeta \in H^1(\GG_n,E_0)$ is the
cohomology class of \Cref{defn:what-is-wchi-anyway1}. If this holds, then $X(\alpha) \in \kappa_{s}(\GG_n^1)$.
Thus we would be comparing $1+\KEn_{s-1}$ and $\kappa_{s}(\GG_n^1)$. 
\end{rem}

\begin{lem}\label{lem:input-for-all-below-general} 
Suppose that $\GG_n/\GG_n^1$ acts trivially on $\pi_0E^{h\GG_n^1}$, and let $\alpha \in 1+\KEn$. The boundary map
$\pi_0E^{h\GG_n^1} \to\pi_{-1}X(\alpha)$ of the long exact sequence on homotopy for the fibration
\begin{equation}\label{eq:xalpha-again}
X(\alpha) \to E^{h\GG_n^1}\xrightarrow{\psi-\alpha} E^{h\GG_n^1}
\end{equation}
induces an injection
\[
\pi_0E^{h\GG_n^1}/(\alpha-1) \hookrightarrow \pi_{-1}X(\alpha) .
\]
\end{lem}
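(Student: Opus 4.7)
The plan is to read off the result directly from the long exact sequence in homotopy associated to the fibration \eqref{eq:xalpha-again}, once we identify the homomorphism $(\psi - \alpha)_*$ on $\pi_0 E^{h\GG_n^1}$ explicitly using the triviality hypothesis.

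First I would write the relevant piece of the long exact sequence,
\[
\pi_0 E^{h\GG_n^1} \xrightarrow{(\psi - \alpha)_*} \pi_0 E^{h\GG_n^1} \xrightarrow{\partial} \pi_{-1} X(\alpha),
\]
so that factoring $\partial$ through its image gives an injection of $\pi_0 E^{h\GG_n^1}/\mathrm{im}(\psi-\alpha)_*$ into $\pi_{-1}X(\alpha)$. The whole question is therefore to identify this image with the ideal $(\alpha - 1)\pi_0 E^{h\GG_n^1}$.

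Next I would unpack the two maps. The self-map $\alpha \colon E^{h\GG_n^1} \to E^{h\GG_n^1}$ is by construction the $E^{h\GG_n^1}$-module extension of $\alpha \in \pi_0 E^{h\GG_n^1}$, so on $\pi_0$ it acts as multiplication by $\alpha$. The self-map $\psi$ is the one induced by the chosen topological generator $\psi \in \GG_n/\GG_n^1$ acting on $E^{h\GG_n^1}$, so $\psi_*$ on $\pi_0 E^{h\GG_n^1}$ is exactly the action of $\psi$ on that abelian group. Under the hypothesis that $\GG_n/\GG_n^1$ acts trivially on $\pi_0 E^{h\GG_n^1}$, the topological generator $\psi$ acts as the identity, hence $\psi_* = \mathrm{id}$. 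Consequently
\[
(\psi - \alpha)_*(x) \;=\; x - \alpha x \;=\; -(\alpha - 1)\,x
\]
for every $x \in \pi_0 E^{h\GG_n^1}$, so the image of $(\psi - \alpha)_*$ equals the ideal $(\alpha-1)\pi_0 E^{h\GG_n^1}$.

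Combining the two steps, exactness at the middle term gives $\ker \partial = (\alpha - 1)\pi_0 E^{h\GG_n^1}$, and therefore $\partial$ descends to an injection
\[
\pi_0 E^{h\GG_n^1}/(\alpha - 1) \hookrightarrow \pi_{-1} X(\alpha),
\]
as claimed. There is no real obstacle here; the only subtle point is ensuring that ``$\GG_n/\GG_n^1$ acts trivially'' is being used in exactly the right way, namely that a topological generator induces the identity on $\pi_0 E^{h\GG_n^1}$, which is immediate since the $\GG_n/\GG_n^1$-action on this discrete abelian group factors through a quotient topologically generated by $\psi$.
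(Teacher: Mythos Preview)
Your proof is correct and follows exactly the same approach as the paper's: use the triviality hypothesis to see that $(\psi-\alpha)_*$ is multiplication by $1-\alpha$ on $\pi_0 E^{h\GG_n^1}$, and then read off the injection from exactness of the long exact sequence. The paper's version is simply terser.
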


\begin{proof} For $a \in \pi_0E^{h\GG_n^1}$, $\psi(a) = a$, so
\[
(\psi - \alpha)(a) = (1-\alpha)a.
\]
The claim then follows from the long exact sequence on homotopy groups for the fiber sequence
\eqref{eq:xalpha-again}. 
\end{proof} 

We make two basic observations to help organize the assumptions in the following results.
First, recall from \Cref{def:kens-filt} that
$\KEn_s \subseteq \pi_0E^{h\GG_n^1}$ is the subgroup of elements of filtration at least $s$ in the $\GG_n^1$ homotopy fixed point spectral sequence.
Since $H^1(\GG_n^1,E_1) = 0$, we have $\KEn_2 = \KEn_1 = \KEn$. Thus many of 
statements begin with $s=2$. 

Second, because of the uniform horizontal vanishing of the homotopy fixed point spectral sequence,
we know that there is an integer $N$ so that for all $s \geq N$ we have that  $1+ \KEn_s$ 
and $\kappa_s(\GG_n^1) \subseteq \kappa_{n,s} $ are trivial. One of our goals is to show that, at least under
certain hypotheses, the assignment $\alpha \mapsto X(\alpha)$ defines an isomorphism
$1+\KEn_{s-1} \cong \kappa_s(\GG_n^1)$. If $s > N$, this is obvious, so we are free to concentrate
on smaller $s$. 

The first step for proving our comparison result is the following factorization of the homomorphism $\phi_s$ restricted to $\kappa_s(\GG_n^1)$.

\begin{lemma}\label{lem:factoringkappas}
Assume that $\GG_n/\GG_n^1$ acts trivially on $\pi_0E^{h\GG_n^1}$, that $\KEn_{s-1}$ is trivial for $s> N$, and
assume further that for all $2\leq s  \leq N$ the following conditions are satisfied:

\begin{enumerate}[(i)]

\item  $E_{\infty}^{s-1,s-1}(\GG_n^1, S^0) \cong E_{s}^{s-1,s-1}(\GG_n^1, S^0)$; and
\smallskip

\item there is an exact sequence 
\[ 
\xymatrix{ 
0 \ar[r] & E_{s}^{s-1,s-1}(\GG_n^1, S^0) \ar[r]^-{\zeta} & E_{s}^{s,s-1}(\GG_n,S^0) \ar[r]^-{i_*} & E_s^{s,s-1}(\GG_n^1,S^0)
}
\]
where $i_*$ is the restriction.
\end{enumerate}
Then, the map $\phi_s$ restricts to a homomorphism
\[ \phi_s: \kappa_s(\GG_n^1) \to \zeta E_s^{s-1,s-1}(\GG_n^1,S^0)\]
for all $2 \leq s \leq N$.
\end{lemma}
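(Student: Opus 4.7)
The plan is to show that for $X \in \kappa_s(\GG_n^1)$ the class $\phi_s(X) \in E_s^{s,s-1}(\GG_n, S^0)$ restricts to zero in $E_s^{s,s-1}(\GG_n^1, S^0)$; then the exact sequence of hypothesis (ii) identifies $\zeta E_s^{s-1,s-1}(\GG_n^1, S^0)$ with $\ker(i_*)$, and the inclusion of images follows. The homomorphism property is free: $\phi_s$ is already known to be a homomorphism on the larger group $\kappa_{n,s}$ by \Cref{lem:desfilt}, and $\kappa_s(\GG_n^1) \subseteq \kappa_{n,s}$ by definition, so the restriction to $\kappa_s(\GG_n^1)$ is automatically a homomorphism.

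First I would set up the naturality. The inclusion $\GG_n^1 \hookrightarrow \GG_n$ induces, for any dualizable spectrum $Y$, a map of Adams--Novikov spectral sequences
\[
i_* \colon E_r^{*,*}(\GG_n, Y) \longrightarrow E_r^{*,*}(\GG_n^1, Y)
\]
that commutes with all differentials and is multiplicative. Fix a $\GG_n$-invariant generator $\iota_X \in E_0 X$; its image under $i_*$ remains a $\GG_n^1$-invariant generator of $E_0X$. Since $X \in \kappa_{n,s}$, the differentials $d_q^{\GG_n}(\iota_X)$ vanish for $q < s$, so the identification $a\mapsto a\iota_X$ gives an isomorphism $E_s^{*,*}(\GG_n, S^0) \xrightarrow{\cong} E_s^{*,*}(\GG_n, X)$, exactly as in the proof of \Cref{prop:detect-kappa-ANSS-bis}. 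Moreover, since $X \in \kappa(\GG_n^1)$, \Cref{prop:detect-kappa-ANSS} tells us that $\iota_X$ is a permanent cycle in the $\GG_n^1$-homotopy fixed point spectral sequence; in particular all prior differentials vanish on it, so multiplication by $\iota_X$ is likewise an isomorphism on the $E_s$-page of the $\GG_n^1$-spectral sequence.

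The core step is then a short diagram chase. By the defining formula $d_s^{\GG_n}(\iota_X) = \phi_s(X)\,\iota_X$, naturality of $i_*$ with respect to differentials, and multiplicativity of $i_*$, we have
\[
i_*(\phi_s(X))\cdot \iota_X \;=\; i_*\bigl(\phi_s(X)\,\iota_X\bigr) \;=\; i_*\bigl(d_s^{\GG_n}(\iota_X)\bigr) \;=\; d_s^{\GG_n^1}(\iota_X) \;=\; 0,
\]
the last equality precisely because $\iota_X$ is a permanent cycle in the $\GG_n^1$-HFPSS. Cancelling the isomorphism of multiplication by $\iota_X$ on $E_s^{*,*}(\GG_n^1, -)$ yields $i_*(\phi_s(X)) = 0$ in $E_s^{s,s-1}(\GG_n^1, S^0)$. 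By the exactness in hypothesis (ii), this forces $\phi_s(X) \in \zeta\, E_s^{s-1,s-1}(\GG_n^1, S^0)$, as claimed.

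The main point requiring care is to verify that the map of spectral sequences really does intertwine the identifications $a \mapsto a\iota_X$ in both the $\GG_n$- and $\GG_n^1$-spectral sequences; this rests on the fact that the multiplicative structure on each spectral sequence comes from the $\sE_\infty$-ring structure on $E$ and is natural in the acting group. Note that condition (i) is not logically needed for the factorization itself, but it is the ingredient that will allow elements of $\zeta E_s^{s-1,s-1}(\GG_n^1, S^0)$ to be recognized as surviving to $E_\infty^{s,s-1}(\GG_n, S^0)$, which is what makes this factorization useful in the subsequent comparison between the two filtrations $1+\KEn_{s-1}$ and $\kappa_s(\GG_n^1)$.
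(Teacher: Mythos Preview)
Your proof is correct and follows essentially the same approach as the paper: both argue that since $X \in \kappa(\GG_n^1)$, the class $\iota_X$ is a permanent cycle in the $\GG_n^1$-spectral sequence, so by naturality $i_*(\phi_s(X)) = 0$, and then hypothesis (ii) forces $\phi_s(X)$ into the image of $\zeta$. Your observation that hypothesis (i) is not logically required for this particular factorization is also correct; the paper invokes it in passing to identify $E_\infty^{s-1,s-1}(\GG_n^1,S^0)$ with $E_s^{s-1,s-1}(\GG_n^1,S^0)$, but this identification is only really needed in the subsequent \Cref{lem:filtcompare}.
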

\begin{proof}

By \Cref{prop:twist-constr}, $\kappa(\GG_n^1) \cong 1+\KEn$ and from (i) and (ii), for $2\leq s\leq N$ we have
an injection
\[
\xymatrix{
0 \ar[r] & \KEn_{s-1}/\KEn_{s} \cong E_{\infty}^{s-1,s-1}(\GG_n^1, S^0) \ar[r]^-{\zeta} & E_s^{s,s-1}(\GG_n,S^0).
}
\]
If $X \in \kappa(\GG_n^1)$ then we can choose a $\GG_n$-invariant generator $\iota_X \in E_0X$ which
is a permanent cycle in the $\GG_n^1$-homotopy fixed point spectral sequence
\[
H^s(\GG_n^1,E_tX) \Longrightarrow \pi_{t-s}(E^{h\GG_n^1} \wedge X).
\]
For this reason, any differential on $\iota_X$ in the $\GG_n$-homotopy fixed point spectral sequence must lie in the kernel of 
the restriction $i_\ast: E_s^{s,s-1}(\GG_n,S^0) \to E_s^{s,s-1}(\GG_n^1,S^0)$ for some $s$. 
We conclude from (ii) that the possible differentials are of the form
\[
d_s(\iota_X) = (\zeta y)\iota_X
\]
for some element $y \in E_{\infty}^{s-1,s-1}(\GG_n^1, S^0)$. Rephrased, we have shown that the
map $\phi_s$ factors as a map
\[
\xymatrix{
\kappa_s(\GG_n^1) \ar[r] &\zeta E_s^{s-1,s-1}(\GG^1_n,S^0) \ar[r]^-{\subseteq} &E_s^{s,s-1}(\GG_n,S^0).
}
\]
We will continue to write $\phi_s$ for the first of these maps, and this is the map from the statement of the lemma.
\end{proof}

The stage is now set for our main comparison result. Note that all hypotheses will be checked in 
\Cref{sec:subgroupsk2G21} in the case $p=n=2$. 

\begin{thm}\label{lem:filtcompare}
Assume that $\GG_n/\GG_n^1$ acts trivially on $\pi_0E^{h\GG_n^1}$, that $\KEn_{s-1}$ is trivial for $s> N$, and
assume further that for all $2\leq s  \leq N$ the following conditions are satisfied:

\begin{enumerate}[(i)]

\item  $E_{\infty}^{s-1,s-1}(\GG_n^1, S^0) \cong E_{s}^{s-1,s-1}(\GG_n^1, S^0)$;
\smallskip

\item there is an exact sequence 
\[ 
\xymatrix{ 
0 \ar[r] & E_{s}^{s-1,s-1}(\GG_n^1, S^0) \ar[r]^-{\zeta} & E_{s}^{s,s-1}(\GG_n,S^0) \ar[r]^-{i_*} & E_s^{s,s-1}(\GG_n^1,S^0)
}
\]
where $i_*$ is the restriction; and
\smallskip

\item  all elements in  $E_{s}^{s-1,s-1}(\GG_n^1, S^0)$ are torsion.

\end{enumerate}

Then, we conclude that for all $s\geq 2$,

\begin{enumerate}[(1)]
\item the homomorphism $\alpha \mapsto X(\alpha)$ defines
an isomorphism $1+\KEn_{s-1} \cong \kappa_s(\GG_n^1)$;
\smallskip

\item the map $\phi_s$ induces an isomorphism
\[
\kappa_s(\GG_n^1)/\kappa_{s+1}(\GG_n^1) \cong \zeta E_s^{s-1,s-1}(\GG_n^1, S^0);
\]

\item if $x \in \KEn_{s-1}$, $\alpha = 1 + x$, and
$\overline{x} \in E_{s}^{s-1,s-1}(\GG_n^1,S^0)$ is the equivalence class of $x$, then
\[
d_{s}(\iota_{X(\alpha)}) = a\, \overline{x} \zeta
\]
for some $a\in \Z_p^{\times}$. 
\end{enumerate}
\end{thm}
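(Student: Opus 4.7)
The strategy is to establish part (3) as the core technical computation; parts (1) and (2) then follow by formal arguments comparing filtrations. Since both $1 + \KEn_{s-1}$ and $\kappa_s(\GG_n^1)$ vanish for $s > N$, it suffices to work with $s$ in the range $2 \le s \le N$.

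For part (3), fix $\alpha = 1 + x$ with $x \in \KEn_{s-1}$, and consider the defining fiber sequence $X(\alpha) \xrightarrow{i_\alpha} E^{h\GG_n^1} \xrightarrow{\psi - \alpha} E^{h\GG_n^1}$. The image of $\iota_{X(\alpha)}$ under $i_\alpha$ is the unit of $\pi_0 E^{h\GG_n^1}$, which is a permanent cycle in the $\GG_n^1$-HFPSS. Consequently, $d_s(\iota_{X(\alpha)})$ maps to zero under restriction $i_\ast$, so by hypothesis (ii) it must have the form $\zeta w$ for some $w \in E_s^{s-1,s-1}(\GG_n^1, S^0)$. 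To identify $w$ with $a \overline{x}$ up to a unit, the plan is to apply the Geometric Boundary Theorem to the fiber sequence. The key observation is the decomposition $\psi - \alpha = (\psi - 1) - x$: since $E_\ast \alpha = E_\ast(1)$ (because $\alpha \in 1 + \KEn$ implies the extended module map induces the identity on $E_\ast E^{h\GG_n^1}$), the algebraic connecting map on Morava modules agrees with the one for $\alpha = 1$ that produces the class $\zeta$, while the topological discrepancy between $X(\alpha)$ and $L_{K(n)}S^0 = X(1)$ is governed entirely by the perturbation $x$, detected at filtration $s-1$ by $\overline{x}$. Invoking the Geometric Boundary Theorem (as in \cite[Theorem 6]{DH}) then gives $d_s(\iota_{X(\alpha)}) = a \overline{x} \zeta$ for some $a \in \ZZ_p^\times$, the unit ambiguity accounting for sign and normalization conventions.

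Given part (3), part (1) follows by downward induction on $s$, with base case $s = N+1$ trivial. For the inductive step, part (3) shows that the isomorphism $1 + \KEn \cong \kappa(\GG_n^1)$ of \Cref{prop:twist-constr} restricts to an injection $1 + \KEn_{s-1} \hookrightarrow \kappa_s(\GG_n^1)$: if $\alpha = 1+x$ with $x \in \KEn_{s-1}$, then the class of $x$ in $E_k^{k-1,k-1}(\GG_n^1, S^0)$ vanishes for $k < s$, forcing $d_k(\iota_{X(\alpha)}) = 0$ for $k < s$. Conversely, if $X = X(\alpha)$ lies in $\kappa_s(\GG_n^1)$ but $x \notin \KEn_{s-1}$, applying part (3) at the actual filtration of $x$ produces a nonzero lower differential, contradicting membership in $\kappa_s(\GG_n^1)$. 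Hypothesis (iii) ensures the subquotients $\KEn_{s-1}/\KEn_s$ are torsion and the identification with the multiplicative quotient $(1+\KEn_{s-1})/(1+\KEn_s)$ sending $1+x \mapsto x$ is a well-defined isomorphism of abelian groups (using $xy \in \KEn_{2(s-1)} \subseteq \KEn_s$ from \Cref{lem:filtstructure}). Part (2) then combines part (1), \Cref{lem:factoringkappas}, and part (3): under the identification $\kappa_s(\GG_n^1) \cong 1 + \KEn_{s-1}$, the map $\phi_s$ sends $1 + x$ to $a \overline{x}\zeta$, which surjects onto $\zeta E_s^{s-1,s-1}(\GG_n^1, S^0)$ via hypothesis (i) (identifying $\KEn_{s-1}/\KEn_s$ with $E_\infty^{s-1,s-1} = E_s^{s-1,s-1}$), with kernel $1 + \KEn_s \cong \kappa_{s+1}(\GG_n^1)$.

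The main obstacle is part (3), specifically the identification of $w$ as $a \overline{x}$ rather than merely asserting membership in $\zeta E_s^{s-1,s-1}(\GG_n^1, S^0)$. While the shape $\zeta \cdot w$ is forced by naturality and hypothesis (ii), extracting the precise dependence on $x$ demands a careful application of the Geometric Boundary Theorem to the perturbed fiber sequence, with particular attention to how the low-filtration correction $-x$ interacts with the boundary class $\zeta$ at the level of the Adams--Novikov spectral sequence.
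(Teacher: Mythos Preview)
Your overall architecture is sound, and your observation that $d_s(\iota_{X(\alpha)})$ must land in $\zeta E_s^{s-1,s-1}(\GG_n^1,S^0)$ is correct and matches \Cref{lem:factoringkappas}. However, the crucial identification in part (3) has a genuine gap: invoking the Geometric Boundary Theorem does not directly compute $d_s(\iota_{X(\alpha)})$. The GBT tells you how the boundary map $\partial\colon \pi_* E^{h\GG_n^1}\to \pi_{*-1}X(\alpha)$ interacts with detecting classes; it says nothing immediately about the differential supported on $\iota_{X(\alpha)}$ itself. The paper's argument is indirect. First, since $\GG_n/\GG_n^1$ acts trivially, $(\psi-\alpha)(1)=1-\alpha=-x$, so $x=\alpha-1$ lies in the image of $(\psi-\alpha)_*$ and hence $\partial(x)=0$ in $\pi_{-1}X(\alpha)$ (this is \Cref{lem:input-for-all-below-general}). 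Second, if $\overline{x}\ne 0$ then $\zeta\overline{x}\,\iota_{X(\alpha)}\ne 0$ by hypothesis (ii); were it a permanent cycle it would detect $\partial(x)=0$, a contradiction, so it must be the target of a differential. Finally, since $H^0(\GG_n,E_0X(\alpha))\cong\ZZ_p\{\iota_{X(\alpha)}\}$, the only possible source is $\iota_{X(\alpha)}$, forcing $d_s(\iota_{X(\alpha)})=a\,\overline{x}\zeta\,\iota_{X(\alpha)}$ with $a\in\ZZ_p^\times$.

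There is a second gap: your treatment of the case $\overline{x}=0$ (i.e., $x\in\KEn_s$) is not covered by the argument above, since that argument only produces a nonzero differential when $\overline{x}\ne 0$. The paper handles this by a separate contradiction: assuming $d_s(\iota_{X(\alpha)})=\zeta y\ne 0$ for some $y$, one uses the already-established surjectivity in (c) to find $\beta=1+z\in 1+\KEn_{s-1}$ with $\overline{z}\ne 0$ and $\phi_s(X(\beta))=-\zeta y$; then $\phi_s(X(\alpha\beta))=0$, yet $\alpha\beta\equiv 1+z$ modulo $\KEn_s$ has nonzero residue, contradicting the case (b) just proved. This forces the induction to intertwine statements (b), (c), and (d) at each stage rather than proving (3) outright first; your ``prove (3) first, then derive (1) and (2)'' structure cannot accommodate this without circularity. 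The paper's upward induction, carrying all four statements (a)--(d) simultaneously, is what makes the argument close.
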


\begin{proof}  

We set up an inductive argument, with the following induction hypothesis. For a fixed integer $2\leq s\leq N$ and
all integers $r$ with $2 \leq r \leq s$

\begin{enumerate}[(a)]
\item The homomorphism $\alpha \mapsto X(\alpha)$ defines
an injection $1+\KEn_{r-1} \to \kappa_r(\GG_n^1)$.
\smallskip

\item Let $x \in \KEn_{r-1}$ and $\alpha = 1 + x $. If
$0 \ne \overline{x} \in E_{r}^{r-1,r-1}(\GG_n^1,S^0)$ then
\[
d_{r}(\iota_{X(\alpha)}) = a\, \overline{x} \zeta
\]
for some $a\in \Z_p^{\times}$.
\smallskip

\item The map $\phi_r$ induces an isomorphism
\[
\kappa_r(\GG_n^1)/\kappa_{r+1}(\GG_n^1) \cong \zeta E_r^{r-1,r-1}(\GG_n^1, S^0).
\]
Furthermore, for any $0 \ne y \in E_r^{r-1,r-1}(\GG_n^1, S^0)$ there is a class $\beta \in 1+\KEn_{r-1}$ so
that $\phi_r(X(\beta)) = \zeta y$ and $\beta$  is non-trivial modulo $1+\KEn_r$.
\smallskip

\item Let $x \in \KEn_{r}$ and $\alpha = 1 + x $. Then $ d_{r}(\iota_{X(\alpha)}) = 0 $.
\end{enumerate}

To deduce the result from this, note that since $\KEn_N=0$ and $\kappa(\GG_n^1) = 1+\KEn$, (a)--(d) imply that
$\kappa_{N+1}(\GG_n^1)=0$. So for $s\geq N+1$, (1)--(3) are trivial statements.
For $2\leq s\leq N$, part (1) of the result can be deduced from (a) and (c), part (2) from (c) and, and part (3) from
(b) and (d). 

We now begin the induction argument. The base case is $s=2$. The statements follow from the sparseness of the 
spectral sequence. Specifically, we have that
\begin{align*}
E_2^{1,1}(\GG_n^1,S^0) &\cong H^1(\GG^1_n,E_1) = 0\\
E_2^{2,1}(\GG_n,S^0) &\cong H^2(\GG_n,E_1) = 0.
\end{align*}

We now proceed with the induction step. So assume the statements (a)--(d) hold for $s-1$. 

We first show (a). Since the homomorphism
\[
X(-)\colon 1 + \KEn \longr \kappa(\GG_n^1)
\]
is an isomorphism, we need only show $X(F_{s-1}) \subseteq \kappa_s(\GG_n^1)$. By (d) for $s-1$, we have 
that if $\alpha \in F_{s-1}$, then $d_r(\iota_{X(\alpha)}) = 0$ for all $r < s$, and the assertion follows.

We now prove (b). Let $X=X(\alpha)$ where $\alpha=1+x$ for $x \in \KEn_{s-1}$ and
$\overline{x} \ne 0 \in E_s^{s-1,s-1}(\GG_n^1,S^0)$. We have just shown that
$X(\alpha) \in \kappa_s(\GG_n^1)$. In particular 
$\iota_X$ survives to the $E_{s}$-page of the $\GG_n^1$-homotopy fixed point spectral sequence. By (ii) in our hypotheses,  $0 \ne  \overline{x} \zeta \iota_X$.
By \cref{lem:input-for-all-below-general}, the element $\overline{x} \zeta \iota_X$ must be hit by a differential;
otherwise, it would survive to a non-trivial element detecting the boundary of $\alpha-1$. Since
$H^0(\GG_n^1,E_0) \cong \ZZ_p$ generated by $\iota_X$, the only possibility is
\[
d_{s}(\iota_{X} )= a\, \overline{x} \zeta  \iota_{X}, \qquad a\in \Z_p^{\times}.
\]
This proves (b) and shows
\[
\phi_{s}(X(\alpha))=a\, \overline{x} \zeta \neq 0.
\]

We can now move to (c). By definition $\phi_s$ induces an injection
\[
\kappa_s(\GG_n^1)/\kappa_{s+1}(\GG_n^1) \longr E_s^{s,s-1}(\GG_n,S^0).
\]
Since $\zeta E_s^{s-1,s-1}(\GG_n^1,S^0) \subseteq  E_s^{s,s-1}(\GG_n,S^0)$, the map of (c) remains
an injection: we have only changed the target. So we need to show that it is onto. Let $0\ne y \in E_s^{s-1,s-1}(\GG_n^1,S^0)$ and chose
an $x \in \KEn_{s-1}$ with $\overline{x}= y$. Let $\alpha = 1 + x$. Then we have just shown
\[
\phi_{s}(X(\alpha))=a\, \overline{x} \zeta \neq 0
\]
for some $a \in \ZZ_p^\times$. Since we assumed in (iii) that every element of $E_{s}^{s-1,s-1}(\GG_n^1,S^0)$ has
finite order,   we can choose a positive integer $b$ coprime to $p$ such that $ba  \overline{x}  =  \overline{x}$. 
Since $\phi_s$ is a homomorphism,
\[
\phi_{s}(X(\alpha^b)) = b a\, \overline{x} \zeta = y\zeta
\]
as needed. Note we have proved the final statement of (c) as well: the class $\alpha^b = (1+x)^b \in 1+\KEn_{s-1}$ has
the property that the residue class of $\alpha^b-1$ in $E_s^{s-1,s-1}(\GG_n^1,S^0)$ is $by$ and, hence, non-zero. 

We are left with (d). The only case we need to prove is $r=s$. So suppose that
$x \in \KEn_{s}$ and $\alpha = 1 + x $. Then $d_r(\iota_X) = 0$ for $r < s$ and we have
\[
\phi_{s}(X(\alpha)) =  y\zeta
\]
for some $y \in E_s^{s-1,s-1}(\GG_n^1,S^0)$. We will show $y=0$ by contradiction. So assume $ y \ne 0$.

By (c), there is a class $\beta = 1 + z\in 1+\KEn_{s-1}$ so that the coset $\overline{z}$ of $z$ in
$E_s^{s-1,s-1}(\GG_n^1,S^0)$ is non-zero and
\[
\phi_{s}(X(\beta)) =  - y\zeta.
\]
Then $\phi_{s}(X(\alpha\beta)) = 0$, $\alpha\beta \in 1+\KEn_{s-1}$, and $\alpha\beta \equiv 1 + z$ modulo $\KEn_{s}$. This
contradicts (b). 

This finishes the induction step, and the result follows.
\end{proof}


\section{The $J$-construction}\label{sec:j-construction} 
We now come to a fundamental construction which allows us to produce invertible $K(n)$-local spectra from virtual 
representations of quotients of $\GG_n$.

Let $q:\GG_n \to H$ be a continuous map to a finite group. This
will usually be surjective.  Suppose we are given an action of $H$ on $L_{K(n)}S^k$  specified by a map of spaces
\[
f:BH \to \{k\} \times B\Gl_1(L_{K(n)}S^0) \subseteq \ZZ \times B\Gl_1(L_{K(n)}S^0).
\]
Here $\Gl_1(L_{K(n)}S^0)$ is simply the topological monoid of self-equivalences of $L_{K(n)}S^0$. 
We will write $S(f)$ for $L_{K(n)}S^k$ with the  action defined by $f$. We can form the spectrum $E \wedge S(f)$
with the diagonal $\GG_n$-action. 

It is straightforward to check that this action has a continuous refinement in the sense of \cite[Definition 2.5]{BBGS}. The requisite map 
\[ E \wedge S(f)  \to F_c({\GG_n}_+, E\wedge S(f))\]
to the continuous function spectrum \cite[Definition 2.2]{BBGS} can be built from the analogous maps for $E$ and $S(f)$: the first coming from \cite{DH}, the second from the fact that $H$ is finite.
Then we can define
\[
J(f) = J(q,f,\GG_n)=  (E \wedge S(f))^{h\GG_n}
\]
as the continuous homotopy fixed point spectrum, in the style of Devinatz-Hopkins; see \cite[Definition 2.8]{BBGS}.

The notation $J(f)$ is under-decorated since $J(f)$ depends on the map $q: \GG_n \to H$ as well
as the map the map $f$. In context, we hope that $q$ is clear. 
More generally, if $K \subseteq \GG_n$ is any closed subgroup we then define the $E^{hK}$-module spectrum
\begin{equation}\label{eq:defnJQFK}
J(q,f,K) = (E \wedge S(f))^{hK}.
\end{equation}

The following basic case explains the choice of $J$ for this notation. 

\begin{ex}\label{exam:rep-spheres} Suppose $H$ is finite and $V$ is a virtual representation of $H$ of dimension $k$.
Then the one-point compactification $S^V$ of $V$ has an $H$ action and has underlying sphere $S^k$. 
The localization $L_{K(n)}S^V$ inherits this action and we obtain
\[
J(V) = (E \wedge S^V)^{h\GG_n}. 
\] 
Write $RO(H)$ for the real representation ring and $RO(H)^\wedge$ for the
completion at the augmentation ideal. Then the induced map 
\[
RO(H) \to [BH,\ZZ \times B\Gl_1(L_{K(n)}S^0)]
\]
factors as 
\begin{align*}
RO(H) \to RO(H)^\wedge &\cong [BH,\ZZ \times BO]\\
&\to  [BH,\ZZ \times B\Gl_1(S^0)]\\
&\to  [BH,\ZZ \times B\Gl_1(L_{K(n)}S^0)].
\end{align*}
Then $J(V) = J(f)$ where $f$ is the image of $V$ under this map. This example is also discussed in \cite[Section 3]{BBHS} and \cite[Section 12.1]{BGHS}.
\end{ex} 

Because of this example, we make the following definition.

\begin{defn} We refer to $f \colon BH \to \ZZ \times B\Gl_1(L_{K(n)}S^0)$ as a $K(n)$-local spherical
representation of $H$ or simply as a {\it spherical representation.} The integer $k$ obtained by projection onto the $\Z$
factor is the \emph{virtual dimension} of this spherical representation. 
\end{defn}
In the rest of the section, we will want to study properties of this construction. But we first need a technical result in
$K(n)$-local homotopy theory that allows us to untwist certain homotopy fixed points. The result uses the language of 
Devinatz--Hopkins \cite{DH} which is also reviewed in depth in \cite{BBGS}. See also \cref{rem:mormodulehom}.
The initial version is a slight generalization on Devinatz--Hopkins's determination of $E_\ast E$, and is as follows.
 
 \begin{lem}[{\cite[Corollary 2.18]{BBGS}}]\label{prop:basic-j-00}
  Let $X$ be a spectrum with a $\GG_n$-action, dualizable in the $K(n)$-local category. Give
$E \wedge X$ the diagonal $\GG_n$-action and suppose this action is continuous. 
Then there is an equivalence
\[
E \wedge (E \wedge X)^{h\GG_n} \simeq E \wedge X,
\]
inducing an isomorphism of Morava modules $E_\ast (E \wedge X)^{h\GG_n} \cong E_\ast X$, where
$\GG_n$ acts on $E_\ast X \cong \pi_\ast L_{K(n)}(E \wedge X)$ diagonally. 

 \end{lem}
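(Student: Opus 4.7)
The plan is to deduce the equivalence from the fundamental Devinatz--Hopkins identity $E \wedge E \simeq F_c(\GG_n, E)$ by a shearing/absorption argument, with the $K(n)$-local dualizability of $X$ playing the essential role of letting $E \wedge (-)$ commute with continuous homotopy fixed points.

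First, I would set up the Devinatz--Hopkins semi-cosimplicial resolution: for a continuous $\GG_n$-spectrum $Y$ (in the sense of \cite{DH} and \cite{BBGS}), the continuous homotopy fixed points are given by a totalization
\[ Y^{h\GG_n} \simeq \Tot\bigl( F_c(\GG_n^{\bullet+1}, Y) \bigr), \]
with cofaces and codegeneracies built from the action on $Y$ and the multiplication on $\GG_n$. I would apply this to $Y = E \wedge X$ with the diagonal action, then smash the resulting totalization with $E$ on the outside. The first serious step is to commute $E \wedge (-)$ past $\Tot$; this is the technical heart of the argument and the main obstacle. It is precisely where the $K(n)$-local dualizability of $X$ is essential, as it forces a horizontal vanishing line in the relevant cosimplicial spectral sequence that guarantees strong convergence and permits the commutation.

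Next, I would simplify each cosimplicial level using the Devinatz--Hopkins absorption identity $E \wedge E \simeq F_c(\GG_n, E)$. Combined with continuity of the action and dualizability, this yields
\[ E \wedge F_c(\GG_n^{n+1}, E \wedge X) \simeq F_c(\GG_n^{n+1}, E \wedge E \wedge X) \simeq F_c(\GG_n^{n+2}, E \wedge X), \]
so the cosimplicial object after smashing with $E$ is an index-shift of the original, augmented in codegree $-1$ by $E \wedge X$ itself. The absorbed extra $E$-factor supplies a codegeneracy that splits this augmentation, so its totalization collapses to $E \wedge X$. The Morava module identification $E_\ast(E \wedge X)^{h\GG_n} \cong E_\ast X$ with the diagonal action on the target is then automatic by naturality: the equivalence is compatible with the $\GG_n$-action present at every level of the cosimplicial resolution, and the inherited action on $E_\ast X = \pi_\ast L_{K(n)}(E \wedge X)$ is the diagonal one.
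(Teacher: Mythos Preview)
The paper does not prove this lemma; it is cited directly from \cite[Corollary 2.18]{BBGS} with no accompanying argument. So there is no in-paper proof to compare against.

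Your sketch is the standard Devinatz--Hopkins argument and is correct in outline. The two genuine technical points are both present and correctly identified: (i) commuting $E \wedge (-)$ past the totalization of the cosimplicial resolution, which is where dualizability of $X$ enters via the horizontal vanishing line; and (ii) the absorption $E \wedge E \simeq F_c(\GG_n, E)$, which produces a shifted cosimplicial object with an extra codegeneracy, making the augmented object split. One small point worth tightening: your intermediate step $E \wedge F_c(\GG_n^{k+1}, E \wedge X) \simeq F_c(\GG_n^{k+1}, E \wedge E \wedge X)$ is itself a nontrivial commutation (of the smash with a continuous function spectrum over a profinite set), and it uses the same dualizability and $K(n)$-local finiteness that you invoked for the $\Tot$ step; in practice the cleanest route is to cite \cite[Proposition 2.17]{BBGS} directly, which gives $A \wedge (E\wedge Y)^{hK} \simeq (A \wedge E \wedge Y)^{hK}$ in one stroke and is exactly how the paper's proof of the next result (\Cref{prop:basic-j-0}) proceeds. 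With that in place your extra-degeneracy argument finishes the job, and the Morava-module statement follows as you say.
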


We will use the following upgrade.

\begin{prop}[{\bf Untwisting Equivalence}]\label{prop:basic-j-0} Let $X$ be a spectrum with a $\GG_n$-action, dualizable in the $K(n)$-local category. Give
$E \wedge X$ the diagonal $\GG_n$-action and suppose this action is continuous. 
Then for all closed subgroups $K \subseteq \GG_n$ the natural map
\[
(E \wedge X)^{h\GG_n} \longr (E \wedge X)^{hK}
\]
extends to an equivalence of $E^{hK}$-module spectra
\[
E^{hK} \wedge (E \wedge X)^{h\GG_n} \simeq (E \wedge X)^{hK}.
\]
\end{prop}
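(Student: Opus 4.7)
The plan is to construct $\varphi$ as the canonical $E^{hK}$-module extension of the natural forgetful map
\[
\iota \colon (E \wedge X)^{h\GG_n} \longrightarrow (E \wedge X)^{hK},
\]
using only the $E^{hK}$-module structure on the target. Since $\varphi$ is an $E^{hK}$-module map between $K(n)$-local spectra, it is an equivalence of $E^{hK}$-modules as soon as it is an equivalence of underlying spectra; in turn, this reduces to showing $E \wedge \varphi$ is an equivalence of $E$-module spectra.

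For the source of $E \wedge \varphi$, I would use the Devinatz--Hopkins identification $E \wedge E^{hK} \simeq \map(\GG_n/K, E)$ as $\GG_n$-equivariant $E$-modules (with $\GG_n$ acting by translation on $\GG_n/K$ and canonically on $E$), combined with \Cref{prop:basic-j-00}, which supplies $E \wedge (E \wedge X)^{h\GG_n} \simeq E \wedge X$ carrying the diagonal $\GG_n$-action. Together these produce
\[
E \wedge E^{hK} \wedge (E \wedge X)^{h\GG_n} \;\simeq\; \map\bigl(\GG_n/K,\, E \wedge X\bigr),
\]
with $\GG_n$ acting by translation on $\GG_n/K$ and diagonally on $E \wedge X$. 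For the target of $E \wedge \varphi$, the very argument that proves \Cref{prop:basic-j-00} in \cite[Corollary 2.18]{BBGS} applies verbatim with a closed subgroup $K$ in place of $\GG_n$ (this is, after all, the level of generality at which the Devinatz--Hopkins machinery is set up), yielding
\[
E \wedge (E \wedge X)^{hK} \;\simeq\; \map\bigl(\GG_n/K,\, E \wedge X\bigr)
\]
with the same action. Tracing $\varphi$ through these two equivalences---using that $\varphi$ restricted along the unit $S^0 \to E^{hK}$ is by construction the forgetful map $\iota$---shows that $E \wedge \varphi$ corresponds to the identity of $\map(\GG_n/K, E \wedge X)$, so is an equivalence.

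The main obstacle is the bookkeeping needed to keep the two diagonal $\GG_n$-actions in agreement and to confirm that the extension of \Cref{prop:basic-j-00} from $\GG_n$ to a general closed subgroup $K$ is valid in the setting of continuous $\GG_n$-actions on $K(n)$-locally dualizable spectra; both points are essentially contained in \cite{BBGS}, but the naturality of the identifications with respect to the $E^{hK}$-module structures needs to be unwound carefully so that the two descriptions of $\map(\GG_n/K, E\wedge X)$ are matched by the map $E \wedge \varphi$ and not some nontrivial automorphism.
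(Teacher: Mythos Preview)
Your approach is correct in outline, but the paper takes a more direct route that sidesteps exactly the bookkeeping you flag as an obstacle. Rather than smashing with $E$ and identifying both source and target with $\map(\GG_n/K, E\wedge X)$, the paper invokes \cite[Proposition 2.17]{BBGS}, which for any spectrum $A$ and any closed $K\subseteq \GG_n$ gives a $K(n)$-local equivalence $A \wedge (E\wedge Y)^{hK} \simeq (A \wedge E\wedge Y)^{hK}$ whenever $Y$ is dualizable with continuous diagonal action. Taking $A = (E\wedge X)^{h\GG_n}$ and $Y = S^0$ yields $(E\wedge X)^{h\GG_n} \wedge E^{hK} \simeq ((E\wedge X)^{h\GG_n} \wedge E)^{hK}$, and then \Cref{prop:basic-j-00} identifies the inside as $E\wedge X$ with its diagonal action, finishing the proof in one step.

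The advantage of the paper's argument is that it never has to match two separate identifications of $\map(\GG_n/K, E\wedge X)$ across the map $E\wedge\varphi$; the equivalence is produced directly as a composite of canonical equivalences. Your approach works too, and the extension of \Cref{prop:basic-j-00} to general $K$ that you need is indeed available in \cite{BBGS}, but you end up re-deriving a special case of \cite[Proposition 2.17]{BBGS} by hand and then have to argue that the induced automorphism of $\map(\GG_n/K, E\wedge X)$ is trivial. The paper's use of the general commutation result is both shorter and cleaner.
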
 

\begin{proof}
Let $A$ be any spectrum, $Y$ any $K(n)$-locally dualizable spectrum with a $\GG_n$-action such that the diagonal $\GG_n$-action on $E\wedge Y$ is continuous, and let $K $ be any closed subgroup of $ \GG_n$. Then by \cite[Proposition 2.17]{BBGS}, there is a $K(n)$-local equivalence
\[ A \wedge (E\wedge Y)^{hK} \simeq (A \wedge E\wedge Y)^{hK}.\]
For example, we could take $A = (E\wedge X)^{h\GG_n}$ and $Y = S^0$ to obtain
\[  (E\wedge X)^{h\GG_n} \wedge E^{hK} \simeq ((E\wedge X)^{h\GG_n} \wedge E)^{hK}.\]
Now it suffices to show that  $E \wedge X$ with its diagonal $\GG_n$-action is equivalent to $(E\wedge X)^{h\GG_n} \wedge E $, where the $\GG_n$-action on the latter is from the right-hand factor. This is immediate from \Cref{prop:basic-j-00}.
\end{proof}
 
We can now explore the consequences of this untwisting result for spherical representations.

\begin{prop}\label{prop:basic-j-1} Let $f:BH \to \ZZ \times B\Gl_1(L_{K(n)}S^0)$ be a spherical representation and
$S(f)$ the associated representation sphere. Suppose we are given a map $q\colon \GG_n \to H$ and
$J(f) = J(q,f,\GG_n)$. Then for all closed $K \subseteq \GG_n$ there is a natural equivalence
\[
\xymatrix{
E^{hK}\wedge J(f) = E^{hK} \wedge J(q,f,\GG_n)  \ar[r]^-\simeq &J(q,f,K)= (E \wedge S(f))^{hK} 
}
\]
of $E^{hK}$-module spectra,
where $K$ acts diagonally on $E \wedge S(f)$.
If, in addition $K$ is in the kernel of $q$, we have 
\[
E^{hK} \wedge J(f)  \simeq \Sigma^k E^{hK},
\]
where $k$ is the virtual dimension of $f$.
\end{prop}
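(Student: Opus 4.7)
The plan is to reduce both assertions to the Untwisting Equivalence (\Cref{prop:basic-j-0}) applied with $X = S(f)$, and then, for the second part, to observe that $K \subseteq \ker(q)$ forces the $K$-action on $S(f)$ to be trivial.

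For the first statement, I would take $X = S(f) \simeq L_{K(n)}S^k$. Since $S(f)$ is a $K(n)$-local sphere, it is dualizable in the $K(n)$-local category. The $\GG_n$-action on $S(f)$ is the pullback along $q \colon \GG_n \to H$ of the $H$-action coming from the spherical representation $f$. The only non-formal point is to verify that the diagonal $\GG_n$-action on $E \wedge S(f)$ admits a continuous refinement, but this is essentially the same argument as the one used in the setup of $J(q,f,\GG_n)$ itself: the requisite map $E \wedge S(f) \to F_c((\GG_n)_+, E \wedge S(f))$ is built from the continuous refinement on $E$ (from \cite{DH}) and from the fact that the $H$-action on $S(f)$ factors through a finite group. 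Once the hypotheses are in place, \Cref{prop:basic-j-0} applied to $X = S(f)$ gives
\[
E^{hK} \wedge (E \wedge S(f))^{h\GG_n} \simeq (E \wedge S(f))^{hK}
\]
as $E^{hK}$-modules. Unpacking the definitions of $J(q,f,\GG_n)$ and $J(q,f,K)$ in \eqref{eq:defnJQFK} yields exactly the stated equivalence, and naturality in $K$ is inherited from the naturality in the Untwisting Equivalence.

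For the second statement, suppose $K \subseteq \ker(q)$. Then the composite $K \hookrightarrow \GG_n \xrightarrow{q} H$ is trivial, so the induced $K$-action on $S(f)$ is trivial. In particular $S(f) \simeq \Sigma^k L_{K(n)}S^0$ as $K$-spectra, where $k$ is the virtual dimension of $f$. Consequently the diagonal $K$-action on $E \wedge S(f)$ reduces to the $K$-action on the $E$-factor alone, and taking continuous homotopy fixed points gives
\[
(E \wedge S(f))^{hK} \simeq \Sigma^k E^{hK}.
\]
Combining this with the equivalence from the first part produces the claimed $E^{hK}$-module equivalence $E^{hK} \wedge J(f) \simeq \Sigma^k E^{hK}$.

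The main obstacle, if there is one, is purely foundational: one needs to be sure that the continuous refinements of the $\GG_n$-actions involved are compatible enough to invoke \Cref{prop:basic-j-0}, and that the identification of $K$-spectra $S(f) \simeq \Sigma^k L_{K(n)}S^0$ (when $K \subseteq \ker(q)$) is genuinely $K$-equivariant and not merely after forgetting structure. Both points are handled by the existing setup for $J(q,f,\GG_n)$, so no new machinery is required; the proof is essentially a bookkeeping exercise on top of the Untwisting Equivalence.
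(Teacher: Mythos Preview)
Your proof is correct and follows essentially the same approach as the paper: both reduce the first statement to the Untwisting Equivalence (\Cref{prop:basic-j-0}) with $X = S(f)$, and both deduce the second statement from the fact that $K \subseteq \ker(q)$ makes the $K$-action on $S(f) \simeq S^k$ trivial. You simply spell out more of the verifications (dualizability, continuity) that the paper leaves implicit.
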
 

\begin{proof} This follows from \Cref{prop:basic-j-0} and the equation $J(f) = (E \wedge S(f))^{h\GG_n}$. If $K$
is in the kernel of $q$, then $K$ acts trivially on $S(f) \simeq S^k$, and the second statement
follows. 
\end{proof}

\begin{rem}\label{rem:basic-j-1b} 
Given a spherical $H$-representation $f$, the action of $H$ on the right factor of $E \wedge S(f)$ induces an action on the Morava module
$E_\ast S(f)$. Note that, as Morava modules, $E_\ast S(f) \cong E_\ast  S^k$.
We thus get a map
\[
\chi_f :H \longr \Aut_{\Mor}(E_\ast S^k) \cong \Aut_{\Mor}(E_\ast S^0) \cong \ZZ_p^\times
\]
where the automorphism group is the Morava module automorphisms. We refer to $\chi_f$ as the {\it character
defined by} $f$. Then, using the diagonal action of $\GG_n$ on $E \wedge S(f)$ we have a
$\GG_n$-isomorphism
\begin{equation}\label{eq:basic-j-1pic-for}
E_\ast S(f) = E_\ast S^k \otimes \ZZ_p\langle \chi_f \rangle = \Sigma^k E_\ast\langle \chi_f \rangle.
\end{equation}
Note that if $f$ is defined by a virtual real representation of $H$, as in  \Cref{exam:rep-spheres},
then $\chi_f$ is trivial or acts through $\{ \pm 1 \}$. We now have the following result. 
\end{rem}

\begin{prop}\label{rem:basic-j-1pic} Let $q\colon\GG_n \to H$ be a homomorphism and let $f$ be a 
spherical $H$-representation of virtual dimension $0$.  If $\chi_f$ is trivial, then $J(f) \in \kappa_n$.
\end{prop}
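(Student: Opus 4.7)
The plan is to show directly that $J(f)$ has $E$-homology isomorphic to $E_\ast S^0$ as a Morava module; this is enough, because by the Hopkins--Mahowald--Sadofsky criterion recalled in the introduction, a $K(n)$-local $X$ lies in $\Pic_n$ as soon as $E_\ast X$ is free of rank one over $E_\ast$, and it lies in $\kappa_n$ exactly when $E_\ast X \cong E_\ast S^0$ as Morava modules. So the whole task reduces to a computation of $E_\ast J(f)$.

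First I would note that $S(f)$ is a $K(n)$-local sphere and hence dualizable, and that the diagonal $\GG_n$-action on $E \wedge S(f)$ is continuous by the discussion preceding the definition of $J(f)$. These are exactly the hypotheses needed to apply the Untwisting Equivalence in the form of \Cref{prop:basic-j-00} to $X = S(f)$. That result immediately yields an isomorphism of Morava modules
\[
E_\ast J(f) = E_\ast (E \wedge S(f))^{h\GG_n} \;\cong\; E_\ast S(f),
\]
where on the right $\GG_n$ acts diagonally (i.e.\ through $q \colon \GG_n \to H$ on $S(f)$ and through the usual action on $E_\ast$).

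Next I would invoke \Cref{rem:basic-j-1b}, which identifies this diagonal Morava module as
\[
E_\ast S(f) \;\cong\; \Sigma^k E_\ast \langle \chi_f \rangle,
\]
where $k$ is the virtual dimension of $f$ and $\chi_f \colon H \to \ZZ_p^\times$ is the character associated to $f$. By the hypotheses of the proposition, $k = 0$ and $\chi_f$ is trivial, so this Morava module is simply $E_\ast$ with its standard $\GG_n$-action, i.e.\ $E_\ast S^0$. Combining, $E_\ast J(f) \cong E_\ast S^0$ as Morava modules, which by the opening remark shows $J(f) \in \kappa_n$.

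There is really no obstacle here beyond bookkeeping: all the substantive content has been packaged into \Cref{prop:basic-j-00} and \Cref{rem:basic-j-1b}. The one point worth being careful about is that the isomorphism in \Cref{prop:basic-j-00} is genuinely an isomorphism of Morava modules (not merely of $E_\ast$-modules), so that the triviality of $\chi_f$ and the vanishing of $k$ can be used to match not just the underlying graded module but also the $\GG_n$-action on the nose.
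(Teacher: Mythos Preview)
Your proof is correct and follows essentially the same approach as the paper: the paper's proof simply cites \Cref{prop:basic-j-00} and the formula \eqref{eq:basic-j-1pic-for} from \Cref{rem:basic-j-1b}, which are exactly the two ingredients you invoke. Your write-up is more explicit about why the Morava-module identification suffices, but the logical content is identical.
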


\begin{proof} This follows from \Cref{prop:basic-j-00} and \eqref{eq:basic-j-1pic-for}.
\end{proof}

Since $\ZZ \times B\Gl_1(L_{K(n)}S^0)$ is an
infinite loop space, the set $[BH,\ZZ \times B\Gl_1(L_{K(n)}S^0)]$ has the structure of an abelian group and
$S(f+g)$ is equivalent to $ S(f) \wedge S(g)$ with the diagonal $H$ action. This is compatible with the $J$-construction in the following sense.

\begin{prop}\label{prop:basic-j-2} Let $f,g\colon BH \to \ZZ \times B\Gl_1(L_{K(n)}S^0)$ be two maps and let $K \subseteq \GG_n$ 
be a closed subgroup. Then the natural map
\[
(E \wedge S(f))^{hK} \wedge_{E^{hK}} (E \wedge S(g))^{hK} \to (E \wedge S(f+g))^{hK}
\]
is an equivalence. 
\end{prop}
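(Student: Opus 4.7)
The plan is to reduce the assertion to showing that the natural pairing
\[
\mu : J(f) \wedge J(g) \longrightarrow J(f+g)
\]
is a $K(n)$-local equivalence, and then to verify this using the untwisting equivalences of \Cref{prop:basic-j-00} and \Cref{prop:basic-j-1}. First I would apply \Cref{prop:basic-j-1} to identify both the source and target of the map in the statement with $E^{hK}$-modules obtained by base-change from $S^0$ to $E^{hK}$. Explicitly, the source becomes
\[
(E^{hK} \wedge J(f)) \wedge_{E^{hK}} (E^{hK} \wedge J(g)) \simeq E^{hK} \wedge J(f) \wedge J(g),
\]
while the target becomes $E^{hK} \wedge J(f+g)$. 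By naturality of the untwisting equivalence with respect to the lax monoidal pairings on both sides, the map in the proposition is then identified with $\mathrm{id}_{E^{hK}} \wedge \mu$, where $\mu$ is the composite of the standard lax monoidal pairing for $(-)^{h\GG_n}$ with the internal multiplication $E \wedge E \to E$. Thus it suffices to prove that $\mu$ itself is a $K(n)$-local equivalence.

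To do this I would smash $\mu$ with $E$ and apply \Cref{prop:basic-j-00} on each factor. Writing $E \wedge J(f) \simeq E \wedge S(f)$ and likewise for $g$ and $f+g$, we obtain a chain of equivalences
\[
E \wedge J(f) \wedge J(g) \;\simeq\; E \wedge S(f) \wedge S(g) \;\simeq\; E \wedge S(f+g) \;\simeq\; E \wedge J(f+g),
\]
and tracking the pairings through these equivalences shows the composite is $E \wedge \mu$. Since $E_*J(f) \cong E_*S(f)$ and $E_*J(g) \cong E_*S(g)$ are free of rank one over $E_*$ (cf.\ \Cref{rem:basic-j-1b}), the $K(n)$-local K\"unneth isomorphism applies and the resulting identification is the one on $E_*$-homology coming from $\mu$; hence $\mu$ induces an isomorphism on $E_*$-homology. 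Because $E_*$ detects equivalences between $K(n)$-local spectra, $\mu$ is a $K(n)$-local equivalence. Smashing with $E^{hK}$ then gives the proposition.

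The main obstacle is the naturality step: verifying that the abstract map in the statement really does agree with $\mathrm{id}_{E^{hK}} \wedge \mu$ under the identifications of \Cref{prop:basic-j-1}. This amounts to a compatibility check between the base-change along $E^{hK} \to (E\wedge S(f))^{hK}$ and the lax monoidal structure on continuous homotopy fixed points, which requires some care in the foundational setup of continuous $\GG_n$-actions in the Devinatz--Hopkins framework as developed in \cite{BBGS}. Once this compatibility is in place, the rest is a mechanical application of the untwisting result and K\"unneth.
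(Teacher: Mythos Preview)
Your approach is correct and genuinely different from the paper's. The paper argues directly at the level of $K(n)_\ast$-homology: using the isomorphism $K(n)_\ast (E\wedge S(f))^{hK}\cong \map(\GG_n/K,K(n)_\ast S(f))$ from \eqref{eq:mormoduleiso}, it observes that $K(n)_\ast (E\wedge S(f))^{hK}$ is free of rank one over $K(n)_\ast E^{hK}$, so the Tor spectral sequence
\[
\Tor_p^{K(n)_\ast E^{hK}}(K(n)_\ast X,K(n)_\ast Y)_q \Longrightarrow K(n)_{p+q}(X\wedge_{E^{hK}}Y)
\]
collapses and the natural map is a $K(n)_\ast$-isomorphism. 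This sidesteps entirely the compatibility question you flagged.

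Your route---reducing via \Cref{prop:basic-j-1} to the case $K=\GG_n$, and then handling that case with \Cref{prop:basic-j-00}---is more conceptual and arguably explains \emph{why} the result holds (everything is base-changed from the absolute case). The cost, as you honestly note, is the naturality check that the given map becomes $\mathrm{id}_{E^{hK}}\wedge\mu$ under the untwisting equivalences; this is not difficult but does require unwinding the lax monoidal structure on continuous fixed points in the Devinatz--Hopkins framework. The paper's Tor argument is shorter precisely because it never needs to identify the map, only to know that any map between rank-one free modules inducing the right thing on generators is an isomorphism.
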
 

\begin{proof} 
It is sufficient to show that the map induces an isomorphism on $K(n)_\ast$ homology. 
As a consequence of  \eqref{eq:mormoduleiso} we have that
\[
K(n)_\ast  (E \wedge S(f))^{hK} \cong \map(\GG_n/K,K(n)_\ast S(f))
\]
and, hence, $K(n)_\ast  (E \wedge S(f))^{hK}$ is a free $K(n)_\ast E^{hK}$ module of rank $1$. 
We can apply the spectral sequence
\[
\Tor_p^{K(n)_\ast E^{hK}}(K(n)_\ast X,K(n)_\ast Y)_q \Longrightarrow K(n)_{p+q} (X \wedge_{E^{hK}} Y)
\]
with $X = (E \wedge S(f))^{hK}$ and $Y = (E \wedge S(g))^{hK}$. Since the higher $\Tor$ terms vanish,
the result follows. 
\end{proof}

\Cref{prop:basic-j-2} has the following immediate consequence.

\begin{prop}\label{prop:basic-j-3} Let $K \subseteq \GG_n$ be a closed subgroup and let
$q:\GG_n \to H$ be a map to a finite group. The $J$-construction defines a homomorphism
\[
J(q,-,K): [BH,B\Gl_1(L_{K(n)}S^0)] \to \Pic(E^{hK}).
\]
\end{prop}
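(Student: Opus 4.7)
The plan is to deduce this proposition almost immediately from \Cref{prop:basic-j-2}, together with the observation that the trivial spherical representation recovers $E^{hK}$ itself. The work splits into three routine checks: (i) that each $J(q,f,K)$ is an invertible $E^{hK}$-module, (ii) that the assignment $f \mapsto J(q,f,K)$ is multiplicative, and (iii) that it descends to homotopy classes of maps.

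First, I would handle invertibility. The constant map $0\colon BH \to \ZZ \times B\Gl_1(L_{K(n)}S^0)$ corresponds to the $H$-sphere $S(0) = S^0$ with trivial $H$-action, so $J(q,0,K) = (E\wedge S^0)^{hK} = E^{hK}$, which is the unit of $\Pic(E^{hK})$. Now apply \Cref{prop:basic-j-2} to the pair $(f,-f)$ in the abelian group $[BH,\ZZ \times B\Gl_1(L_{K(n)}S^0)]$ to obtain an equivalence
\[
J(q,f,K) \wedge_{E^{hK}} J(q,-f,K) \;\simeq\; J(q,0,K) \;\simeq\; E^{hK},
\]
which exhibits $J(q,f,K)$ as invertible with inverse $J(q,-f,K)$. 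This places the construction in the Picard group.

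For multiplicativity, the equivalence
\[
J(q,f,K) \wedge_{E^{hK}} J(q,g,K) \;\simeq\; J(q,f+g,K)
\]
is exactly the content of \Cref{prop:basic-j-2}. For well-definedness on homotopy classes, note that if $f \simeq g$ as pointed maps $BH \to \ZZ \times B\Gl_1(L_{K(n)}S^0)$, then the resulting action maps $BH \to B\Gl_1(L_{K(n)}S^0)$ are homotopic, hence $S(f)$ and $S(g)$ are equivalent as objects of the $K(n)$-local category with continuous $\GG_n$-action (through the quotient $q\colon \GG_n \to H$). Smashing with $E$ preserves this equivalence and its continuity, and the functoriality of the Devinatz--Hopkins continuous homotopy fixed points gives $J(q,f,K) \simeq J(q,g,K)$ as $E^{hK}$-module spectra.

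The only step that requires any vigilance is verifying that all the equivalences produced by \Cref{prop:basic-j-2} are compatible with the abelian group structure on $[BH,\ZZ \times B\Gl_1(L_{K(n)}S^0)]$ coming from its infinite loop space structure; but since that group structure is, by construction, the one induced by $S(f) \wedge S(g) \simeq S(f+g)$ as $H$-spectra, this is immediate. The main obstacle, such as it is, is really just bookkeeping about what structure has been set up in the earlier lemmas; no new homotopical input is needed beyond \Cref{prop:basic-j-2}.
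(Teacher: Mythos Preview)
Your proof is correct and follows essentially the same approach as the paper, which simply records this proposition as an immediate consequence of \Cref{prop:basic-j-2} without a separate proof. Your expansion of the details (invertibility via $J(f)\wedge_{E^{hK}} J(-f)\simeq J(0)\simeq E^{hK}$, multiplicativity, and homotopy invariance) is exactly what is intended; note only that the proposition as stated restricts to maps into $B\Gl_1(L_{K(n)}S^0)$ rather than $\ZZ\times B\Gl_1(L_{K(n)}S^0)$, i.e.\ to virtual dimension zero, though your argument works equally well in either case.
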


For later use we record  the following result, which can be proved
using a similar $\Tor$-spectral sequence, as in \Cref{prop:basic-j-2}. 

\begin{prop}\label{prop:shearing-iso} Let $S(f)$ be a $K(n)$-local spherical representation.
Then for any sequence of closed subgroups $K_1 \subseteq K_2 \subseteq \GG$ the natural map
\[
E^{hK_1} \wedge_{E^{hK_2}} (E \wedge S(f))^{hK_2} \longrightarrow (E \wedge S(f))^{hK_1}
\]
is an equivalence.
\end{prop}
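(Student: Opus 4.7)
The plan is to mimic the proof of \Cref{prop:basic-j-2} very closely. Since both sides of the map are $K(n)$-local spectra, it suffices to show that the natural map induces an isomorphism on $K(n)_{\ast}$-homology.

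To compute the left-hand side, I would first invoke the $K(n)$-local K\"unneth / Tor spectral sequence
\[
\Tor_p^{K(n)_\ast E^{hK_2}}\bigl(K(n)_\ast E^{hK_1},\, K(n)_\ast(E\wedge S(f))^{hK_2}\bigr)_q \Longrightarrow K(n)_{p+q}\bigl(E^{hK_1}\wedge_{E^{hK_2}}(E\wedge S(f))^{hK_2}\bigr).
\]
The key input, exactly as in \Cref{prop:basic-j-2}, is the isomorphism of Morava modules
\[
E_\ast (E\wedge S(f))^{hK_2} \cong \map(\GG_n/K_2,\, E_\ast S(f))
\]
from \eqref{eq:mormoduleiso}; reducing modulo the maximal ideal gives
\[
K(n)_\ast (E\wedge S(f))^{hK_2} \cong \map(\GG_n/K_2,\, K(n)_\ast S(f)),
\]
which is a free module of rank one over $K(n)_\ast E^{hK_2}\cong \map(\GG_n/K_2,K(n)_\ast)$, since $S(f)$ is $K(n)$-locally invertible. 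Therefore all higher $\Tor$ terms vanish and the spectral sequence collapses to
\[
K(n)_\ast E^{hK_1} \otimes_{K(n)_\ast E^{hK_2}} \map(\GG_n/K_2, K(n)_\ast S(f)),
\]
which, using the analogous identification $K(n)_\ast E^{hK_i}\cong \map(\GG_n/K_i,K(n)_\ast)$ and the freeness, simplifies to $\map(\GG_n/K_1, K(n)_\ast S(f)) \cong K(n)_\ast (E\wedge S(f))^{hK_1}$.

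The remaining task is to verify that this identification agrees with the map induced by the natural transformation in the statement. This is a naturality check: the natural map $(E\wedge S(f))^{hK_2}\to (E\wedge S(f))^{hK_1}$ corresponds under \eqref{eq:mormoduleiso} to the restriction $\map(\GG_n/K_2,-)\to\map(\GG_n/K_1,-)$ along $\GG_n/K_1\twoheadrightarrow\GG_n/K_2$, and the $E^{hK_2}$-module structure is compatible with the $\map(\GG_n/K_2,K(n)_\ast)$-module structure on the algebraic side. Chasing these identifications confirms that the comparison map realizes the base change above, finishing the proof.

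The only nontrivial step is setting up the Tor spectral sequence in the $K(n)$-local category with the correct flatness input; the rest is formal, and for this reason the authors simply remark that the proof is analogous to \Cref{prop:basic-j-2}. In particular there is no new homotopical obstacle once one has the Morava module identification \eqref{eq:mormoduleiso} and the fact that $S(f)$ is $K(n)$-locally invertible, which together guarantee the crucial freeness that kills the higher $\Tor$'s.
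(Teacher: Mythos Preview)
Your proposal is correct and follows exactly the approach the paper indicates: the authors simply remark that the result ``can be proved using a similar $\Tor$-spectral sequence, as in \Cref{prop:basic-j-2},'' and you have spelled out precisely that argument with the freeness of $K(n)_\ast(E\wedge S(f))^{hK_2}$ over $K(n)_\ast E^{hK_2}$ collapsing the spectral sequence.
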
 

\begin{rem}\label{rem:compare-ss}
By construction, we have a map of $\GG_n$-spectra $S(f) \to E \wedge S(f)$, where the $\GG_n$-action
on $S(f)$ is defined via the map $q:\GG_n \to H$. The $H$-equivariant homotopy theory may be of
interest in its own right, and we'd like to make a comparison.
 
For $K$ a closed subgroup of $\GG_n$, the map $S(f) \to E \wedge S(f)$ and the composition of the inclusion $K\subseteq \GG_n$ with the map $q:\GG_n \to H$ define a map of augmented
cosimplicial $\GG_n$-spectra
\[
\xymatrix{
S(f) \ar[r] \ar[d] & F(H^{\bullet +1},S(f))\ar[d]\\
E \wedge S(f) \ar[r] & F_c(K^{\bullet +1},E \wedge S(f))
}
\]
and hence a diagram of spectral sequences
\[
\xymatrix{
H^s(H,\pi_tS(f)) \ar@{=>}[r]\ar[d] & \pi_{t-s}S(f)^{hH} \ar[d]\\
H^s(K,E_tS(f) \ar@{=>}[r] & \pi_{t-s}J(q,f,K)
}
\]
\end{rem}

\begin{ex}\label{ex:for-sec-9} There are variants of these constructions, for example $H$ doesn't have to be a finite group. Consider the case
of the reduced determinant map $q=\zeta:\GG_n \to \GG_n/\GG_n^1 \cong \ZZ_p$. We can take $B\ZZ_p$ to be the $p$-completion of $S^1$, as was done in \cite{BBGS} in constructing the determinant sphere.  We set up notions of continuity of actions can as done in that paper. Then the above results will have analogous counterparts. For example, consider 
\[
f\in [B\ZZ_p,B\Gl_1(L_{K(n)}S^0)] \subseteq  \pi_1 B\Gl_1(L_{K(n)}S^0) \cong (\pi_0L_{K(n)}S^0)^\times.
\] 

If $f$ maps to $1 \in E_0S^0$ under the Hurewicz map, then $\chi_f$ is the trivial character
and \Cref{prop:basic-j-00} implies that
\[
E_\ast J(f) = E_\ast(E \wedge S(f))^{h\GG_n} \cong E_\ast S^0
\]
as a Morava module. Hence $J(f) \in \kappa_n$. Using \Cref{prop:shearing-iso}, we have an equivalence of 
$E^{h\GG_n^1}$-modules
\[
E^{h\GG_n^1} \wedge J(f) \simeq (E \wedge S(f))^{h\GG_n^1} \simeq E^{h\GG_n^1} \wedge S(f) 
\simeq E^{h\GG_n^1}
\]
since $\GG_n^1$ is the kernel of $\zeta$. We now apply \Cref{lem:old-is-new} and the fact
that $H^0(\GG_n^1,E_0) \cong \ZZ_p$ to conclude that $J(f) \in \kappa(\GG_n^1)$.
\end{ex}

We end with a result relating the $J$-construction and the subgroups $\kappa(K)$ discussed in \cref{sec:subgroupfilt}, analogous to \Cref{ex:for-sec-9} but involving spherical representations of finite groups as set up at the beginning of this section.

\begin{prop}\label{prop:when-in-kappa} 
Suppose that $K\subseteq \GG_n$ is a closed subgroup with the property that the edge homomorphism
\[
\pi_0E^{hK} \longrightarrow H^0(K,E_0)
\]
is onto.
Let $H$ be a finite group, and suppose we are given a homomorphism
$q:\GG_n \to H$. Let
 $f\colon BH \to B\Gl_1(L_{K(n)}S^0)$ be a spherical representation of virtual dimension 0
 with $\chi_f$ trivial. Further suppose that the composite $K \to \GG_n \to H$ is trivial. Then
\[
J(f) \in \kappa(K).
\]
\end{prop}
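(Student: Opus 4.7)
The plan is to combine two preceding results: Proposition \ref{rem:basic-j-1pic} to certify that $J(f)$ lies in $\kappa_n$ at all, and Proposition \ref{prop:basic-j-1} to produce an $E^{hK}$-module equivalence $E^{hK}\simeq E^{hK}\wedge J(f)$. The edge-homomorphism hypothesis on $K$ is then exactly the input needed to upgrade this equivalence to a genuine $\GG_n$-equivariant $E^{hK}$-orientation via Lemma \ref{lem:old-is-new}.

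In more detail, first I observe that $J(f)\in \kappa_n$. Indeed, $f$ has virtual dimension zero and $\chi_f$ is trivial, so Proposition \ref{rem:basic-j-1pic} applies and yields a Morava module isomorphism $E_\ast J(f)\cong E_\ast S^0$. Next, the hypothesis that the composite $K\to \GG_n\to H$ is trivial says that $K$ is contained in the kernel of $q$. Proposition \ref{prop:basic-j-1} therefore gives a natural equivalence of $E^{hK}$-module spectra
\[
E^{hK}\wedge J(f) \;\simeq\; (E\wedge S(f))^{hK} \;\simeq\; \Sigma^0 E^{hK} \;=\; E^{hK},
\]
since $f$ has virtual dimension $0$.

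To conclude, I invoke Lemma \ref{lem:old-is-new} with $X=J(f)$. Its hypotheses are precisely: (i) $X\in \kappa_n$, which we have just verified; (ii) an $E^{hK}$-module equivalence $E^{hK}\xrightarrow{\simeq} E^{hK}\wedge X$, which we have just constructed; and (iii) surjectivity of the edge homomorphism $\pi_0 E^{hK}\to H^0(K,E_0)$, which is assumed. The conclusion of that lemma is that $X$ admits a $\GG_n$-equivariant $E^{hK}$-orientation, i.e.\ $X\in \kappa(K)$, so $J(f)\in \kappa(K)$ as required.

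There is no real obstacle here, since both ingredients are already available; the only subtle point is to remember why the untwisting equivalence of Proposition \ref{prop:basic-j-1} is not by itself enough to land in $\kappa(K)$ as defined in Definition \ref{defn:subgroupfilt}: one must rigidify an abstract $E^{hK}$-module equivalence to an orientation coming from a $\GG_n$-invariant generator, and the edge-homomorphism hypothesis is used precisely for that rigidification step in Lemma \ref{lem:old-is-new}.
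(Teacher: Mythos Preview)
Your proof is correct and follows the same approach as the paper: first use Proposition~\ref{rem:basic-j-1pic} to get $J(f)\in\kappa_n$, then use Proposition~\ref{prop:basic-j-1} (with $K$ in the kernel of $q$ and virtual dimension $0$) to obtain an $E^{hK}$-module equivalence $E^{hK}\wedge J(f)\simeq E^{hK}$, and finally apply Lemma~\ref{lem:old-is-new} using the edge-homomorphism hypothesis. Your additional commentary on why the rigidification step is needed is accurate and helpful.
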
 

\begin{proof} By \Cref{rem:basic-j-1pic} we have $J(f) \in \kappa_n$.
We also have from \Cref{prop:basic-j-1} that
\[
E^{hK} \wedge J(f) \simeq (E \wedge L_{K(n)}S^0)^{hK} \simeq E^{hK}.
\]
Now apply \Cref{lem:old-is-new}. 
\end{proof}


\section{Truncating spectral sequences}\label{sec:trunc-ss}\label{subsec:trun-ss}
Here we explain a technique for taking apart some spectral sequences. This will be used in
\Cref{sec:subgroupsk2G21} and \Cref{sec:second-existence}, and any close reading could be
postponed until (or even if) it is needed. 

Let
\[
\cdots \longrightarrow X_s \longrightarrow X_{s-1} \longrightarrow \cdots \longrightarrow X_1 \longrightarrow X_0
\]
be a tower of fibrations in spectra. Write $\bX =\{X_s\}$ for this tower and $X = \holim X_s$ for the homotopy inverse limit.
If we write $F_s$ for the fiber of $X_s \to X_{s-1}$, with $X_{-1} = \ast$, then we have a spectral sequence
with
\[
E^{s,t}_1(\bX) = \pi_{t-s}F_s \Longrightarrow \pi_{t-s}X.
\]
Convergence is an issue which we won't address in this generality. 

\begin{ex}\label{rem:BKSS-cosmp} 
Such towers often arise from a Reedy fibrant cosimplicial spectrum
$A^{\bullet} = \{A^s\}$.  
Using the partial totalization functors $\mathrm{Tot}_s$, this writes $X=\Tot(A^\bullet)$
as a homotopy inverse limit  of a tower of fibrations $\bX= \{X_s \}$ 
with $X_s = \mathrm{Tot}_s(A^\bullet)$. The resulting spectral sequence is the Bousfield-Kan spectral
sequence. This begins with
\[
E^{s,t}_1(A^\bullet) = E^{s,t}_1(\bX) = \pi_{t-s}F_s \cong N^s \pi_t(A^{\bullet}) := N^s_t(A^\bullet),
\]
where $N^\bullet$ is the normalization functor on cosimplicial abelian groups. Write
\[
B^s_\ast(A^{\bullet}) \subseteq Z^s_\ast(A^{\bullet}) \subseteq N^s_\ast(A^{\bullet})
\]
for the coboundaries and cocyles of this cochain complex and
\[
\pi^s\pi_tA^{\bullet} = Z^s_t(A^{\bullet})/B^s_t(A^{\bullet}).
\]
The next page of the Bousfield-Kan spectral sequence then reads
\[
E_2^{s,t}(A^\bullet) \cong \pi^s\pi_tA^{\bullet} \Longrightarrow \pi_{t-s}\Tot(A^\bullet) = \pi_{t-s}X.
\]
\end{ex}

\begin{ex}\label{subsec:ANSS}
An important sub-example of \cref{rem:BKSS-cosmp} is the Adams-Novikov spectral sequence and its
variants. For us, the $K(n)$-local $E$-based Adams-Novikov spectral sequence is the most important, so we
expand some details about it here. When working in the $K(n)$-local category we write $X \wedge Y$ for
$L_{K(n)}(X \wedge Y)$.

If $Y$ is any spectrum, let $E^{\wedge \bullet}\wedge Y$ denote the standard cobar construction; that is, the augmented 
cosimplicial spectrum in the $K(n)$-local category
\begin{equation}\label{eq:ANSS-cobar-1}
\xymatrix{
Y \ar[r] & E \wedge Y \ar@<.75ex>[r] \ar@<-.75ex>[r] & \ar[l] E \wedge E \wedge Y \ar@<1.2ex>[r] \ar[r]\ar@<-1.2ex>[r]
& \ar@<.6ex>[l] \ar@<-.6ex>[l]\cdots\ .
}
\end{equation}
If $Y = E\wedge X$ for $X$ dualizable in the $K(n)$-local category, this becomes
\begin{equation}\label{eq:cobar-for-e}
\xymatrix{
E\wedge X \ar[r] & F_c({\GG_n}_+,E\wedge X) \ar@<.75ex>[r] \ar@<-.75ex>[r] &
\ar[l] F_c({\GG_n^2}_+,E\wedge X) \ar@<1.2ex>[r] \ar[r]\ar@<-1.2ex>[r]
& \ar@<.6ex>[l] \ar@<-.6ex>[l]\cdots\ ,
}
\end{equation}
where $F_c$ denotes the continuous fixed points as defined in \cite[Definition 2.2]{BBGS}, for example.
If $K \subseteq \GG_n$ is any closed subgroup, the Devinatz-Hopkins definition of $E^{hK}$ is
\[
(E\wedge X)^{hK} = \Tot\left( F_c({\GG_n}_+^{\bullet+1},E\wedge X)^K\right) \simeq \Tot\left( F_c({\GG_n}^{\bullet} \times \GG_n/K_+,E\wedge X) \right)
\]
and the associated Bousfield-Kan spectral sequence is isomorphic to a spectral sequence  
\[
E_2^{s,t}(K,X) = H^s(K,E_tX) \Longrightarrow \pi_{t-s}(E^{hK} \wedge X). 
\]
If $K = \GG_n$, then $E^{h\GG_n} \simeq L_{K(n)}S^0$ and this is the $K(n)$-local $E$-based
Adams-Novikov spectral sequence.   If $K$ is finite, then the spectral sequence is isomorphic to the classical
homotopy fixed point spectral sequence.  A concise reference for all this is
\cite[Section 2]{BBGS}, which is a synopsis of many sources, especially \cite[Appendix A]{DH}. 
\end{ex}

\begin{defn}\label{defn:trunc-towers-ss} Let $\bX$ be a tower of fibrations and fix an integer $0 \leq M \leq \infty$.
Define a new tower $\bX_{\leq M} = \{Y_s\}$
by
\[
Y_s =
\begin{cases}
X_s, & s \leq M;\\
X_M, & s \geq M.
\end{cases}
\]
\end{defn}

There is an evident map of towers $\bX = \{X_s\} \to \bX_{\leq M} = \{Y_s\}$ and the induced map on homotopy
inverse limits is the projection map $X \to X_M$. We will call the tower $\bX_{\leq M}$ the $M$th {\it truncated-tower}
and the resulting spectral sequence the $M$th {\it truncated spectral sequence.}

Note that the fiber of $Y_s \to Y_{s-1}$ is $F_s$ if $s \leq M$ and contractible if $s > M$. Write
\[
E_{r,\leq M}^{s,t}(\bX) = E_r^{\ast,\ast}(\bX_{\leq M})
\]
for the truncated spectral sequence. If $\bX$ is the tower of a cosimplicial spectrum $A^{\bullet}$ we will write 
\[
E_{r,\leq M}^{s,t}(\bX) = E_{r,\leq M}^{s,t}(A^\bullet).
\]
The following calculation is immediate.

\begin{lem}\label{lem:truncated-e2} Let $A^\bullet$ be a Reedy fibrant cosimplical spectrum. For the $M$th
truncated spectral sequence we have
\[
E_{2,\leq M}^{s,t}(A^\bullet) \cong
\begin{cases}
\pi^s\pi_t A^\bullet,& s < M;\\
N^M_t(A^\bullet)/B^M_t(A^\bullet),&s=M;\\
0,& s > M.
\end{cases}
\]
The induced map 
\[
E_2^{s,\ast}(A^\bullet) = \pi^s\pi_tA^\bullet \to E_{2,\leq M}^{s,t}(A^\bullet)
\]
is an isomorphism if $s <  M$, and zero for $s > M$. 
If $s = M$, this map is the standard injection. The cokernel is isomorphic to $N_\ast^M(A^\bullet)/Z^M_t(A^\bullet)$, which injects into $N_t^{M+1}(A^\bullet)\subseteq \pi_t A^{M+1}$.
\end{lem}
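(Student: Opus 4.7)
The plan is to compute the $E_1$ and $E_2$ pages of the truncated-tower spectral sequence directly from \Cref{defn:trunc-towers-ss}, and then compare it with the ordinary Bousfield--Kan spectral sequence by studying the map of towers $\bX \to \bX_{\leq M}$. Set $Y_s = (\bX_{\leq M})_s$ and write $F_s^{\leq M}$ for the fiber of $Y_s \to Y_{s-1}$. By the definition of the truncated tower we have $F_s^{\leq M} \simeq F_s$ for $s \leq M$ and $F_s^{\leq M} \simeq *$ for $s > M$. Under the identification $\pi_{t-s}F_s \cong N^s_t(A^\bullet)$ from \Cref{rem:BKSS-cosmp}, this gives
\[
E_{1,\leq M}^{s,t}(A^\bullet) \;\cong\;
\begin{cases}
N^s_t(A^\bullet), & s \leq M,\\
0, & s > M.
\end{cases}
\]

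Next I would identify the $d_1$ differential. The map of towers $\bX \to \bX_{\leq M}$ is the identity on $X_s$ for $s \leq M$, so on fibers it is the identity for $s \leq M$. Naturality of the exact couple therefore identifies the truncated $d_1$ with the original one whenever both source and target have $s \leq M$; the outgoing $d_1 \colon E_{1,\leq M}^{M,t} \to E_{1,\leq M}^{M+1,t+1}$ is automatically zero since the target is zero, and all higher $d_1$ vanish trivially. Taking $E_2 = \ker d_1 / \mathrm{im}\, d_1$ case-by-case then yields the three stated values: for $s < M$ both the incoming and outgoing differentials agree with the originals, giving $\pi^s \pi_t A^\bullet$; for $s = M$ the outgoing differential is zero so the kernel is all of $N^M_t(A^\bullet)$, while the incoming image is still $B^M_t(A^\bullet)$, yielding $N^M_t(A^\bullet)/B^M_t(A^\bullet)$; and for $s > M$ the groups vanish.

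For the comparison statement I would use that $\bX \to \bX_{\leq M}$ is the identity on $E_1$ in the range $s \leq M$ and zero for $s > M$, hence on $E_2$ it is an isomorphism for $s < M$ and zero for $s > M$. In bidegree $(M,t)$, the induced map is the canonical map
\[
Z^M_t(A^\bullet)/B^M_t(A^\bullet) \;\longr\; N^M_t(A^\bullet)/B^M_t(A^\bullet),
\]
i.e.\ the inclusion of cocycles into cochains modulo coboundaries, which is injective as claimed. Its cokernel is $N^M_t(A^\bullet)/Z^M_t(A^\bullet)$. Finally, the normalized differential $d \colon N^M_t(A^\bullet) \to N^{M+1}_t(A^\bullet)$ has kernel $Z^M_t(A^\bullet)$, so it descends to an injection $N^M_t(A^\bullet)/Z^M_t(A^\bullet) \hookrightarrow N^{M+1}_t(A^\bullet) \subseteq \pi_t A^{M+1}$.

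There is no serious obstacle here; the argument is mechanical once one has the right picture of the truncated tower. The only step that requires a little care is bookkeeping at $s = M$, where the outgoing $d_1$ collapses but the incoming one does not, so the $E_2$ term loses the cocycle condition while keeping the coboundary quotient. Everything else follows by naturality of the spectral sequence of a tower applied to the projection $\bX \to \bX_{\leq M}$.
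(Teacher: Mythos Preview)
Your argument is correct and is precisely the routine verification the paper has in mind; the paper itself gives no proof, simply declaring the calculation immediate before stating the lemma. Your unpacking of the $E_1$-page, the $d_1$ differentials via naturality of the map $\bX \to \bX_{\leq M}$, and the case-by-case analysis at $s<M$, $s=M$, $s>M$ is exactly what lies behind that word.
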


We also have a relative version of this construction. 

\begin{defn}\label{defn:trunc-towers-rel} Suppose we have two integers $M > K \geq 0$. Then we have 
a map of towers $\bX_{\leq M} \to \bX_{\leq (K-1)}$ and we let $\bX_K^M = \{X_K^s\}$ be the tower obtained
by taking level-wise fibers. Then we have
\[
X^s_K =
\begin{cases}
\hbox{fiber of $X_s \to X_{K-1}$}, & K \leq s \leq M;\\
\ast, & \mathrm{otherwise}.
\end{cases}
\]
\end{defn}
Note that
\[
\hbox{fiber of $X^s_K \to X^{s-1}_K$} =
\begin{cases}
F_s, & K \leq s \leq M;\\
\ast, & \mathrm{otherwise}.
\end{cases}
\]

\begin{lem}\label{lem:truncated-e2-bis} Let $A^\bullet$ be a Reedy fibrant cosimplical spectrum. For the relative truncated spectral sequence we have
\[
E_{2}^{s,t}(\bX_K^M) \cong
\begin{cases}
Z_t^{K}(A^\bullet) & s = K;\\
\pi^s\pi_t A^\bullet,& K < s < M;\\
N^M_t(A^\bullet)/B^M_t(A^\bullet),&s=M;\\
0,& \mathrm{otherwise}.
\end{cases}
\]
\end{lem}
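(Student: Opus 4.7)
The plan is to read off $E_2$ directly from the tower $\bX_K^M$, exploiting the fact that this truncated tower has a manifest description of its layer fibers from \Cref{defn:trunc-towers-rel}. By construction there, the fiber of $X_K^s \to X_K^{s-1}$ equals $F_s$ when $K \le s \le M$ and is contractible otherwise. Hence
\[
E_1^{s,t}(\bX_K^M) = \pi_{t-s} F_s \cong N^s_t(A^\bullet)
\]
in the range $K \le s \le M$, and vanishes outside this range.

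Next, I would identify the $d_1$-differential. The natural map of towers $\bX_K^M \to \bX_{\le M}$, arising because $X_K^s$ is by definition the fiber of $X_s \to X_{K-1}$, is a levelwise equivalence of layer fibers throughout the range $K \le s \le M$. Consequently, for $K \le s < M$ the $d_1$-differential of $\bX_K^M$ coincides with the restriction of the $d_1$-differential of $\bX_{\le M}$, which by \Cref{lem:truncated-e2} is identified with the normalized cochain differential $d \colon N^s_t(A^\bullet) \to N^{s+1}_t(A^\bullet)$.

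With $E_1$ and $d_1$ pinned down, the $E_2$-page is the cohomology of the truncated cochain complex $N^K_t \to N^{K+1}_t \to \cdots \to N^M_t$. For $K < s < M$, both the incoming and outgoing differentials are the standard Moore differentials, so $E_2^{s,t} = Z^s_t/B^s_t = \pi^s \pi_t A^\bullet$. At the left endpoint $s = K$ there is no incoming differential because $E_1^{K-1,t}(\bX_K^M) = 0$, so $E_2^{K,t}$ is just $\ker(d \colon N^K_t \to N^{K+1}_t) = Z^K_t(A^\bullet)$. At the right endpoint $s = M$ there is no outgoing differential, so $E_2^{M,t}$ is $\mathrm{coker}(d \colon N^{M-1}_t \to N^M_t) = N^M_t(A^\bullet)/B^M_t(A^\bullet)$. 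Outside $[K,M]$ the $E_1$-page already vanishes.

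The one point that requires a bit of care, and thus is the main potential obstacle, is the identification of the $d_1$-differential of $\bX_K^M$ with the corresponding differential of $\bX_{\le M}$. This should be formal: $\bX_K^M$ is constructed as a levelwise homotopy fiber of maps between Reedy fibrant towers, so the connecting maps between consecutive layer fibers are inherited from those of $\bX$ itself. Once this matching is in place the remainder is a routine cohomology computation of a bounded cochain complex with both endpoints truncated.
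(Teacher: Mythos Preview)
Your proof is correct and is exactly the natural argument the paper has in mind; the paper itself gives no proof for this lemma, treating it as immediate in the same spirit as \Cref{lem:truncated-e2}. Your write-up simply makes explicit the evident computation: identify $E_1$ from the fibers listed in \Cref{defn:trunc-towers-rel}, observe that the $d_1$'s agree with those of the ambient tower on the overlapping range, and take cohomology of the resulting bounded piece of the normalized cochain complex.
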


One practical application of these truncations is that they can be used to ``break up'' differentials and turn various $r$-cycles into permanent cycles. Specifically, we will use the following straightforward geometric boundary result. 

\begin{lem}\label{lem:geo-bound}
Let $\bX$ be a tower of fibrations, and let $0 \leq K < M < \infty$  be two integers. Consider the cofiber sequence of towers
\[ \bX_{K}^M \xrightarrow{f} \bX_{\leq M} \xrightarrow{g} \bX_{\leq K-1} \xrightarrow{\delta} \Sigma \bX_{K}^M,\]
with a corresponding cofiber sequence in the limit
\[ X_K^M \xrightarrow{f} X_{M} \xrightarrow{g} X_{K-1} \xrightarrow{\delta} \Sigma X_{K}^M. \]

Let $r \geq 1$, and suppose $x \in E_r^{s,t}(\bX_{\leq M})$ supports a differential
\[ d_r(x) = y \in E_r^{s+r, t+r-1}(\bX_{\leq M}),\]
where $0 \leq s < K \leq s+r$. Then
\begin{enumerate}
\item $g_*x \in E_r^{s,t}(\bX_{\leq K-1})$ is a permanent cycle representing a homotopy class $[g_* x]\in \pi_{t-s} X_{K-1}$, and
\item ${\delta}_* [g_*x] $ is detected by $ y \in E_r^{s+r,t+r-1} (\bX_{\leq M}) \cong E_r^{s+r, t+r-1} (\bX_K^M)$.
\end{enumerate}

Conversely, suppose $0\leq s < K$, and let $x \in E_r^{s,t}(\bX_{\leq M}) $ be such that $g_*x \in E_r^{s,t}(\bX_{\leq K-1})$ is a permanent cycle representing $[g_* x] \in \pi_{t-s} X_{K-1}$. If $\delta_*[g_*x] = 0 \in \pi_{t-s-1} X_K^M $, then $x$ is a permanent cycle.

\end{lem}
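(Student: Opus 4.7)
The plan is to deduce both parts from the naturality of the tower spectral sequence with respect to maps of towers, combined with the octahedral axiom for stable cofiber sequences.

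For part (1), I would first observe that the fibers of $\bX_{\leq K-1}$ are contractible in filtration $\geq K$, so $E_r^{p,*}(\bX_{\leq K-1}) = 0$ for $p \geq K$. The map $g$ of towers induces a map on spectral sequences compatible with differentials. Since $x$ supports the differential $d_r$, we have $d_j(x)=0$ for $1 \leq j < r$, and therefore $d_j(g_*x) = g_*d_j(x)=0$. For $j \geq r$, the target $E_j^{s+j,*}(\bX_{\leq K-1})$ vanishes because $s+j \geq s+r \geq K$. Hence $g_*x$ is a permanent cycle, and since $\bX_{\leq K-1}$ converges to $X_{K-1}$ it represents a class $[g_*x] \in \pi_{t-s} X_{K-1}$.

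For part (2), I would assume $s+r \leq M$ (else both $y$ and $\delta_*[g_*x]$ vanish by filtration). The hypothesis $d_j(x) = 0$ for $j < r$ furnishes a representative $\tilde x \in \pi_{t-s} X_{s+r-1}$ of $x$ whose image in $\pi_{t-s} X_{K-1}$ represents $[g_*x]$. The key step is to apply the octahedral axiom to the composable maps $X_M \to X_{s+r-1} \to X_{K-1}$, giving a fiber sequence $X_{s+r}^M \to X_K^M \to X_K^{s+r-1}$ and a map of cofiber sequences identifying $\delta$, precomposed with $X_{s+r-1} \to X_{K-1}$, with the suspended inclusion $\Sigma \iota$ applied to the finer connecting map $\delta_f \colon X_{s+r-1} \to \Sigma X_{s+r}^M$, where $\iota \colon X_{s+r}^M \to X_K^M$. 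Evaluating at $\tilde x$ yields $\delta_*[g_*x] = \iota_* \delta_f(\tilde x)$. By the geometric interpretation of the $d_r$-differential in $\bX_{\leq M}$ restricted to filtrations $\geq s+r$ (the spectral sequence of $\bX_{s+r}^M$), the class $\delta_f(\tilde x) \in \pi_{t-s-1} X_{s+r}^M$ is detected at filtration $s+r$ by $y$. Transferring via $\iota$ to $\bX_K^M$, whose $E_1$-term at filtration $s+r$ agrees with that of $\bX_{s+r}^M$, I conclude that $\delta_*[g_*x]$ is detected at filtration $s+r$ of $\bX_K^M$ by $y$.

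For the converse, I would use the long exact sequence of $X_K^M \to X_M \to X_{K-1}$ to produce, from $\delta_*[g_*x] = 0$, a lift $\alpha \in \pi_{t-s} X_M$ of $[g_*x]$. A short filtration argument shows $\alpha$ has filtration exactly $s$ in $\bX_{\leq M}$: its image in $\pi X_{s-1}$ agrees with that of $[g_*x]$, which vanishes since $[g_*x]$ has filtration $\geq s$; conversely, a class of filtration $>s$ maps under $g_*$ to filtration $>s$ in $\bX_{\leq K-1}$, contradicting that the leading term of $[g_*x]$ is $g_*x \neq 0$ at filtration $s$. The leading term $\bar\alpha \in E_\infty^{s,t}(\bX_{\leq M})$ then satisfies $g_*\bar\alpha = g_*x$ in $E_r^{s,t}(\bX_{\leq K-1})$. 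The last ingredient is injectivity of $g_* \colon E_r^{s,t}(\bX_{\leq M}) \to E_r^{s,t}(\bX_{\leq K-1})$ for $s<K$: boundaries into $(s,t)$ in both spectral sequences come from filtration $<s<K$ and therefore agree, while cycles in $\bX_{\leq M}$ embed into those in $\bX_{\leq K-1}$. Injectivity forces $\bar\alpha = x$, so $x$ is a permanent cycle. The main technical obstacle I anticipate is the octahedral bookkeeping in the second part, including the careful identification of $y$ across the spectral sequences of $\bX_{\leq M}$, $\bX_{s+r}^M$, and $\bX_K^M$.
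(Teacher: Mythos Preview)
Your arguments for parts (1) and (2) are correct. The paper proceeds by a direct diagram chase rather than invoking naturality and the octahedral axiom: it picks a lift $\tilde x\colon S^{t-s}\to X_{s+r-1}$ of $x$, observes that since $K-1\le s+r-1$ the composite $S^{t-s}\to X_{s+r-1}\to X_{K-1}$ already represents $[g_*x]$, and then chases two commuting squares relating the cofiber sequences for $X_M\to X_{K-1}$ and $X_{s+r}\to X_{K-1}$ to identify $\delta_*[g_*x]$ with the image of $y$ in $\Sigma X_K^{s+r}$. Your octahedral packaging encodes the same comparison; either route is fine.

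There is, however, a genuine gap in your converse. You produce $\alpha\in\pi_{t-s}X_M$ lifting $[g_*x]$ and take its leading term $\bar\alpha\in E_\infty^{s,t}(\bX_{\le M})$, then assert $g_*\bar\alpha=g_*x$ in $E_r^{s,t}(\bX_{\le K-1})$ and apply injectivity of $g_*$ on $E_r$. But $\bar\alpha$ lives in $E_\infty$, not $E_r$; what you actually know is that $g_*\bar\alpha$ and the image of $g_*x$ agree in $E_\infty^{s,t}(\bX_{\le K-1})$, since both detect $[g_*x]$. Lifting this equality back to $E_r$ would require that no differential $d_j$ with $r\le j\le s$ hits bidegree $(s,t)$ in $\bX_{\le K-1}$, which you have not arranged. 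Your injectivity of $g_*$ on $E_r$ is correct but cannot be applied, because the equation you feed into it is only available at $E_\infty$.

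The repair is short. One option: pick $E_1$-representatives $a$ for $\bar\alpha$ and $\xi$ for $x$; then $a-\xi\in B_\infty^{s,t}$ in $\bX_{\le K-1}$, and since boundaries into $(s,t)$ come from filtrations $<s<K$ where the towers agree, $a-\xi\in B_\infty^{s,t}(\bX_{\le M})\subseteq Z_\infty^{s,t}(\bX_{\le M})$, so $\xi=a-(a-\xi)\in Z_\infty$ and $x$ is a permanent cycle. A cleaner option, likely what the paper has in mind when it calls the converse ``straightforward'': your injectivity argument already shows $d_j(x)=0$ whenever $s+j<K$; if the first nonzero differential on $x$ were $d_{j_0}(x)=y_0\ne 0$ with $s+j_0\ge K$, then part (2) applied at $r=j_0$ would force $\delta_*[g_*x]$ to be detected by $y_0\ne 0$, contradicting $\delta_*[g_*x]=0$.
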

\begin{proof}
The proof is a chase of diagrams and definitions, such as those from \cite[VI.2]{GoerssJardine}, which is a gloss on \cite{BK-LectureNotes}. For simplicity of notation, in this proof we let $F_s$ denote the homotopy fiber of $(\bX_{\leq M})_s \to (\bX_{\leq M})_{s-1}$.

So, suppose $x \in E_r^{s,t}(\bX_{\leq M})$ is given; by definition, this means $x$ is represented by a map
\[ x: S^{t-s} \to F_s \]
which has a lift $\tilde x $ making the following diagram commute
\[
\xymatrix{
X_{s+r-1} \ar[r] & X_s \\
S^{t-s}\ar[u]^{\tilde x} \ar[r]^{x} & F_s \ar[u].
}
 \]
 In particular, since $K-1 \leq s+r-1 $, there are no obstructions to lifting $x$ to the intermediate term $X_{K-1} = \lim \bX_{\leq K-1}$, proving item (1).
 
 The formula $d_r(x) = y$ means that $y$ is represented by the (desuspension of the) composite
 \[  S^{t-s} \xrightarrow{\tilde x} X_{s+r-1} \to \Sigma F_{s+r}. \]

Now note that we have a commutative diagram of fiber sequences
\[ 
\xymatrix{
X_M \ar[r]\ar[d] & X_{K-1} \ar[r]\ar[d] & \Sigma X_K^M \ar[d]\\
X_{s+r} \ar[r] & X_{K-1} \ar[r] & \Sigma X_K^{s+r},
}
\]
and the right-hand bottom map gives the commutative diagram
\[
\xymatrix{
X_{s+r-1} \ar[r]\ar[d] & X_{K-1} \ar[d]\\
\Sigma F_{s+r} \ar[r] & \Sigma X_K^{s+r}.}
\]
Here, $y$ is represented by the image of $\tilde x $ along the left vertical map. On the other hand, $g_* x$ is represented by the image of $\tilde x $ along the top horizontal map. The image of either to the bottom corner is a class detecting $\delta_*[g_* x]$, proving (2).

The converse is straightforward.
\end{proof}

\part{$K(2)$-Local Computations at $p=2$}\label{part:two}

In this part of the paper, we get specific and work at $n=p=2$. The main goal  isto compute $\kappa_2$. To do this, we 
need to use the full arsenal of $K(2)$-local chromatic homotopy theory. Even this is not enough, as we
will also develop new tools, and call an academic family of collaborators. Every step of the computation has
required either new theory and deep computations. 

So, dear reader, get ready for a long and strenuous hike through the forest of intricacies that is $K(2)$-local 
chromatic homotopy theory at the prime 2!

\begin{figure}
\center 
\includegraphics[width=\textwidth]{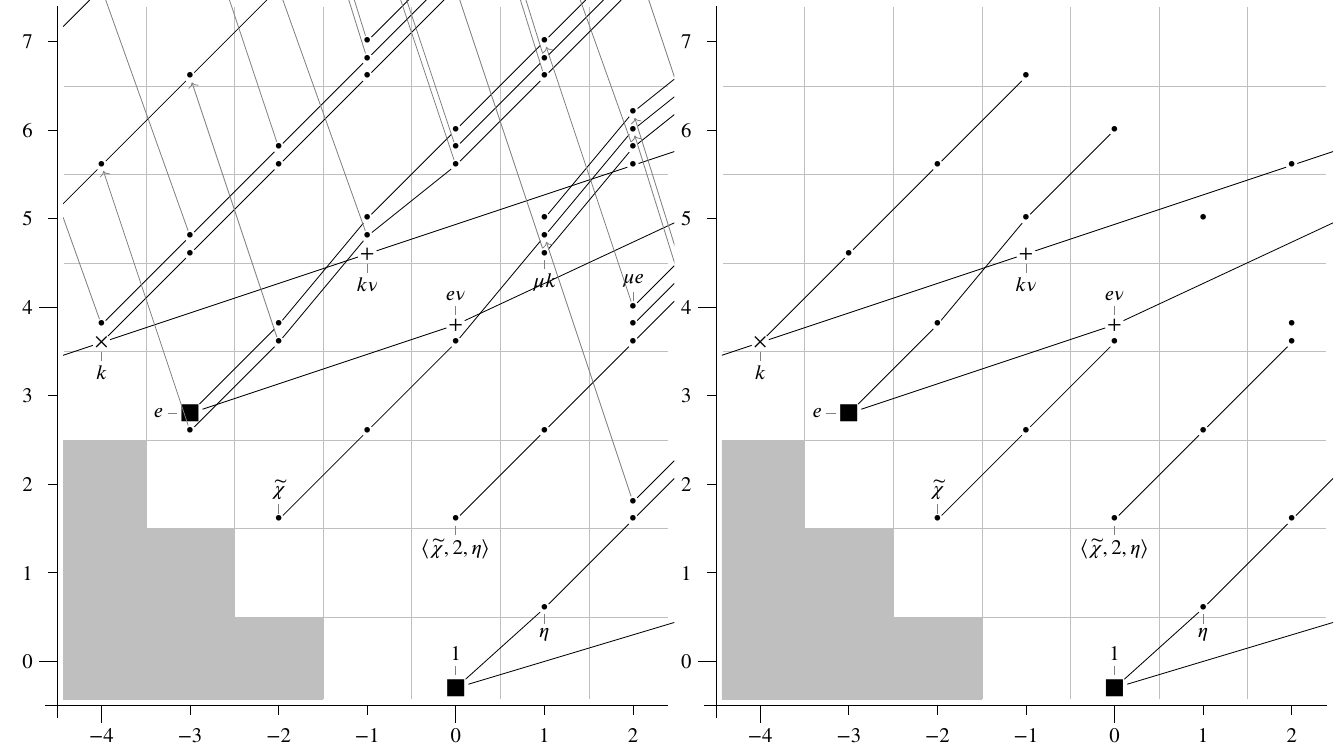}
\includegraphics[width=\textwidth]{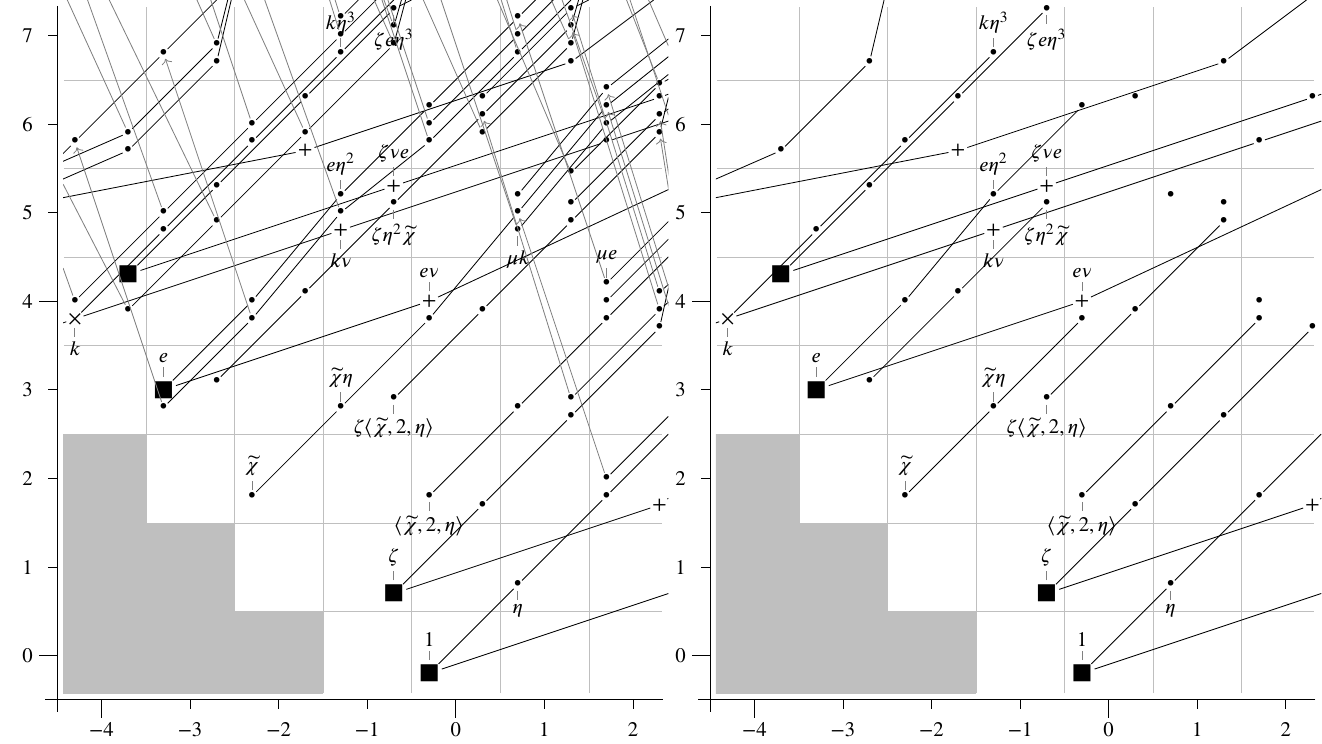}
\captionsetup{width=\textwidth}
\caption{The $E_3$ and $E_5$-pages of the homotopy fixed point spectral sequences $H^s(\GG_2^1, E_t) \Rightarrow \pi_{t-s}E^{h\GG_2^1}$ (top) and $H^s(\GG_2, E_t) \Rightarrow \pi_{t-s}E^{h\GG_2}$ (bottom). Here, $\blacksquare=\Z_{2}$, $\bullet=\Z/2$, $+=\Z/4$ and $\times=\Z/8$.
If, instead, we let $\blacksquare =\W$, $\bullet=\W/2$, $+=\W/4$ and $\times=\W/8$, then this is the $E_3$ and $E_5$-pages for $H^s(\mathbb{S}_2^1, E_t) \Rightarrow \pi_{t-s}E^{h\SS_2^1}$ (top) and $H^s(\mathbb{S}_2, E_t) \Rightarrow \pi_{t-s}E^{h\mathbb{S}_2}$ (bottom). These computations were done in \cite{BBGHPScoh}.}
\label{fig:HFPSSG2}
\end{figure}

 
\section{The subgroup filtration of $\kappa_2$}\label{rem:finite-subgrps}

In this section, we discuss the various subgroups of $\GG_2$ at $p=2$ and specify which subgroup filtration we will use to compute $\kappa_2$.

We customarily choose $\Gamma_2$ to be the formal  group a
supersingular elliptic curve $C$ defined over $\FF_2$; this has the advantage of direct access to the geometry
of elliptic curves, although this geometry will be tacit in this paper, subsumed into the body of work that has led to this 
paper (e.g. \cite{BeaudryTowards, BobkovaGoerss, BGH}). We then have an inclusion of the automorphism groups
\[
\xymatrix{
\Aut(\FF_4,C) \ar[r]^-{\subseteq} &\Aut(\FF_4,\Gamma_2) = \GG_2.
}
\]
The structure of $\Aut(\FF_4,C)$ is well understood. See, for example, \cite[Appendix A]{silverman}. 
We have $\Aut(\FF_4,C) \cong \Aut(C/\FF_4) \rtimes \Gal$, where we abbreviate the Galois group of $\FF_4/\FF_2$ as $\Gal$, and there is an isomorphism
\[
Q_8 \rtimes C_3 \cong \Aut(C/\FF_4),
\]
where $Q_8$ is the quaternion group of order $8$ and $C_3 = \Aut(Q_8)$ is cyclic of order $3$. This semidirect product is the binary tetrahedral group. 

\begin{defn}[{\bf The finite subgroups}]\label{defn:finite-subgrps} Let $p=2$ and $n=2$. Define the following finite
subgroups of $\GG_2$
\begin{align*}
G_{24} &= Q_8 \rtimes C_3 \cong \Aut(C/\FF_4) \\
G_{48} &= G_{24} \rtimes \Gal \cong \Aut(\FF_4,C) \\
C_4 &\subseteq G_{24} = \hbox{any of the cyclic groups of order $4$ in $Q_8$}\\
C_2 &\subseteq Q_8 = \hbox{the center of $Q_8$} \\
C_6 &= C_2 \times C_3 \subseteq G_{24}\ .
\end{align*}
\end{defn}

The groups $G_{48}$ and $G_{24}$ are maximal finite subgroups in $\GG_2$ and $\SS_2$ respectively.
See \cite{HennCent} for more on the finite subgroups of $\GG_2$. 

All of the homotopy fixed point spectral sequences $E^{hF}$ where $F$ are the subgroups of \Cref{defn:finite-subgrps}
have been heavily studied and, indeed, much of what follows depends on and is driven by that
knowledge. See \cite[Section 2]{BobkovaGoerss} for a pithy summary. Note that all of the choices
of $C_4$ are conjugate in $G_{24}$, so that the fixed point spectrum $E^{hC_4}$ is independent of the choice.

We record here two additional facts about the subgroups of $\GG_2$ which we use
later.

First, there is an open subgroup $K \subseteq \SS_2$ and a semi-direct product decomposition
\begin{equation}\label{eq:K-split-S2}
K \rtimes G_{24} \cong \SS_2. 
\end{equation}
In particular, $G_{24}$ is a quotient of $\SS_2$ as well as a subgroup. This can be found in many references;
for example, see \cite{BeaudryRes}. 

As in \cref{defn:zeta}, we also a have the closed subgroup $\GG_2^1$ which is the kernel of
$\zeta \colon \GG_2 \to \ZZ_2$. Similarly, $\SS_2^1$ is the kernel of the restriction of $\zeta$ to $\SS_2$.
If we let $\pi \in \SS_2$ be any class so that $\zeta(\pi) \in \ZZ_2$ is a topological generator, then $\pi$ defines a
section of $\zeta:\SS_2 \to \ZZ_2$ and we get semi-direct product decompositions
\[
\GG_2 \cong \GG_2^1\rtimes \ZZ_2 \quad\text{ and } \quad \SS_2 \cong \SS_2^1 \rtimes \ZZ_2,
\]
where $\SS_2^1 = \SS_2 \cap \GG_2^1$.

Any finite subgroup of $\SS_2$ is automatically in $\SS_2^1$; in particular, both $G_{24}$ and $\pi G_{24}\pi^{-1}$
are in $\SS_2^1$. They are evidently conjugate in $\SS_2$, but not in $\SS_2^1$. It is customary
to write 
\begin{equation}\label{eq:g24-prime}
G'_{24} =\pi G_{24}\pi^{-1}.
\end{equation}

In what follows, we will use a filtration of $\kappa_2$ coming from a sequence of subgroups of $\GG_2$ and the constructions of \cref{lem:incl-kappa-K}. 

\begin{defn}\label{defn:subgroup-filt} The \emph{subgroup filtration} of $\kappa_2$ is
\[
\kappa(\GG_2^1) \subseteq \kappa(G_{48}) \subseteq \kappa_2.
\]
\end{defn}

We can now give a road map for what is to come.

First, the associated graded of the subgroup filtration will be
\begin{align*}
\kappa(\GG_2^1)&\cong \Z/8 \oplus (\Z/2)^{ 2}\\
\kappa(G_{48})/\kappa(\GG_2^1) &\cong \Z/2\\
\kappa_2/ \kappa(G_{48}) &\cong \Z/8.
\end{align*}
The results giving these filtration quotients can be found in \Cref{thm:kappag21}, \Cref{thm:Qfirst}, and \Cref{thm:first-quotient}. Further, we show
that this filtration splits by comparing the subgroup filtration with the descent filtration, namely the one arising from the
Adams-Novikov Spectral Sequence. See \Cref{defn:descent-filt}. Thus, $\kappa_{2,r} \subseteq \kappa_2$
is the subset of elements so that $d_q(\iota_X) = 0$ for $q < r$ in the Adams-Novikov
Spectral Sequence for $X$ and for any choice of $\GG_2$-invariant generator
$\iota_X$ of $E_0X$.  If $G$ is a closed subgroup of $\GG_2$, let $\kappa_{r}(G)$ denote
the intersection $\kappa(G)\cap \kappa_{2,r}.$ 

Thus, we will have a diagram, where all the arrows are inclusions
\begin{equation}\label{eq:the-filtered-diag-kappa}
\xymatrix{
\kappa_2 & \ar[l] \kappa(G_{48}) & \ar[l] \kappa(\GG_2^1)\\
\kappa_{2,5} \ar[u]& \ar[l]  \ar[u] \kappa_5(G_{48}) &  \ar[l]  \ar[u] \kappa_5(\GG_2^1)\\
\kappa_{2,7}  \ar[u] &  \ar[l]  \ar[u] \kappa_7(G_{48}) & \ar[l]  \ar[u] \kappa_7(\GG_2^1).
}
\end{equation}
In the end we will show that the diagram of \eqref{eq:the-filtered-diag-kappa} maps isomorphically 
to following diagram of groups, again with all arrows inclusions.
\begin{equation}\label{eq:the-filtered-diag-kappa1}
\xymatrix{
(\ZZ/8)^2 \oplus (\ZZ/2)^3 & \ar[l] \ZZ/8 \oplus (\ZZ/2)^3 & \ar[l] \ZZ/8 \oplus (\ZZ/2)^2\\
(\ZZ/8)^2 \oplus \ZZ/2 \ar[u]& \ar[l]  \ar[u] \ZZ/8 \oplus \ZZ/2 &  \ar[l]_-=  \ar[u] \ZZ/8 \oplus \ZZ/2\\
(\ZZ/2)^2  \ar[u] &  \ar[l]  \ar[u] \ZZ/2 & \ar[l]_-=  \ar[u] \ZZ/2\ .
}
\end{equation}

We can then filter this diagram first vertically and then horizontally. Using \Cref{prop:kappaG48-filt} and 
\Cref{prop:kappa-filt} the associated graded becomes 
\begin{center}
\renewcommand{\arraystretch}{1.5}
\begin{tabular}{| c | c | c | c |} 
 \hline
\multicolumn{1}{|l|}{$s$} & \multicolumn{3}{c |}{} \\
\hline
\hline
3&& $\ZZ/2\{\wchi \eta\}$ & $\ZZ/2\{\zeta \langle \wchi, 2, \eta\rangle\}$\\ 
 \hline
5& $\ZZ/4\{k\nu\}$ &  & $\ZZ/4\{\zeta e\nu\} \oplus \ZZ/2\{\zeta\wchi\eta^2\}$\\
 \hline
7& $\ZZ/2\{k\eta^3\}$ & & $\ZZ/2\{\zeta e\eta^3\}$\\
 \hline
\end{tabular}
\renewcommand{\arraystretch}{1}
\end{center}

We have labelled the various generators here, using the following convention. Suppose $X \in \kappa_{2,r}$ and $
\iota_X \in H^0(\GG_2,E_0X) \cong \ZZ_2$ is a $\GG_2$-invariant generator. Then note that \Cref{defn:descent-filt} 
implies an inclusion 
\[ \kappa_{2,r}/\kappa_{2,r+1}  \longr E_r^{r,r-1}(S^0), \]
given by the formula
$
d_r(\iota_X) = \phi_r(X)\iota_X$.

Thus we have labelled the elements of the table by their names in $E_r^{r,r-1}(S^0)$, which in turn have names inherited 
from $ E_2^{r,r-1}(S^0) \cong H^r(\GG_2,E_{r-1})$.  The relevant part of the group cohomology $E_2^{r,r-1}(S^0) \cong 
H^r(\GG_2,E_{r-1})$, the $d_3$-differentials, and the resulting page $E_5^{r,r-1}(S^0)$ were computed in 
\cite{BBGHPScoh}, and are summarized in \Cref{fig:HFPSSG2}.

\begin{rem} It is instructive to compare these results with what happened at $p=3$ and height $n=2$. See
\cite{GHMRPicard}. There the analogous subgroup filtration collapses to
\[
\kappa(\GG_2^1) = \kappa(G_{24}) \subseteq \kappa_2
\]
and we had $\kappa(\GG_2^1) \cong \ZZ/3$ and $\kappa_2/\kappa(\GG_2^1) \cong \ZZ/3$. The descent
filtration was particularly simple
\[
0 = \kappa_{2,6} \subseteq \kappa_{2,5} = \kappa_2
\]
and
\[
\phi_5: \kappa_2 \longr H^5(\GG_2,E_4) \cong \ZZ/3 \times \ZZ/3
\]
was an isomorphism. This gave the splitting. 

The two summands of $\ZZ/8$ at $p=n=2$ are completely analogous to the two summands of $\ZZ/3$
at $p=3$; in both cases generators are detected in the decent filtration by $k\nu$ and $\zeta e\nu$, once we realize
the name of $\nu$ at $p=3$ is $\alpha$.

The existence of the extra summands of $\ZZ/2$ at $p=2$ is related to the fact that there is an extra homomorphism 
from $\SS_2$ to $\ZZ/2$ at the prime $2$, see \cite[Propositions 5.2, 5.3]{Henn-minicourse}. This is the same 
complication that gave rise to  the modification of the Chromatic Splitting Conjecture at $p=2$;
see \cite{BGH}. Finally, the class in $\kappa(G_{48})/\kappa(\GG_2^1) \cong \Z/2$ arises from
the $J$-construction and, as far as we know, cannot be produced in any other way. 
\end{rem}


\section{The $E^{h\GG_2^1}$-orientable elements of the exotic Picard group}\label{sec:subgroupsk2G21}

In this section we start with the calculation of $\kappa(\GG_2^1) \subseteq \kappa_2$, the subgroup of exotic invertible
elements which have an $E^{h\GG_2^1}$-orientation. The complexity of $H^\ast(\GG_2,E_\ast)$ makes this section technically forbidding. The key ideas are all in \cref{sec:third-graded}, and the key computational input comes from several sources. We refer to \cite{BBGHPScoh} for results about the cohomology $H^*(\GG_2,E_*)$ and $H^*(\GG_2^1, E_*)$. We also refer to \cite{BGH} for the computation of $\pi_* E^{h\GG_2^1}$ in a range. We freely use the notation established in these sources.

Recall that in \cref{prop:twist-constr}, we proved that
\[ \kappa(\GG_2^1) \cong H^1(\GG_2/\GG_2^1, 1+\KE)\]
where $\KE$ is the subgroup of $\pi_0E^{h\GG_2^1}$ of elements of positive Adams--Novikov filtration. 
We calculate the group $1+\KE$ in \Cref{prop:calc21} and show that the action of $\GG_2/\GG_2^1 \cong \ZZ_2$ is 
trivial  in \Cref{prop:inv-of-y} and \Cref{prop:enu-invariant}.  This allows us to determine $ \kappa(\GG_2^1)$ in 
\Cref{thm:kappag21}. In \cref{thm:comb-2-filts-fin}, we compute the descent filtration of $\kappa(\GG_2^1)$.
 
 \begin{lem}\label{lem:wandw0}
The unique non-zero class
\[
w \in \pi_{-2}E^{h\GG_2^1} \cong  \ZZ/2
\]
is detected by the non-zero class $\wchi \in H^2(\GG_2^1,E_0) \cong\Z/2$.
The class $w$ is the image of a class $w_0 \in \pi_{-2} L_{K(2)}S^0 $ of 
order $2$ under the map 
\[\pi_{-2} L_{K(2)}S^0 \longrightarrow \pi_{-2}E^{h\GG_2^1}.\]
Furthermore, 
\[\pi_{-2}L_{K(2)}S^0 \cong \ZZ/2 \oplus \ZZ/2,\] 
generated by $w_0$ and $\zeta w_0 \eta$. 
 \end{lem}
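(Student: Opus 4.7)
The plan is to combine the spectral sequence computations summarized in Figure~\ref{fig:HFPSSG21} with the long exact sequence in homotopy associated to the fiber sequence of Proposition~\ref{prop:fund-above},
\[
L_{K(2)}S^0 \xrightarrow{i} E^{h\GG_2^1} \xrightarrow{\psi-1} E^{h\GG_2^1}.
\]
For the first claim, inspection of Figure~\ref{fig:HFPSSG21} in stem $t-s=-2$ shows the nonzero class $\wchi \in H^2(\GG_2^1, E_0)$ is a permanent cycle and is the only nonzero contribution to $\pi_{-2}E^{h\GG_2^1}\cong\ZZ/2$; hence it detects the unique nonzero class $w$.

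For the second claim, the long exact sequence gives $w \in \mathrm{image}(i_*)$ if and only if $(\psi-1)_*(w) = 0$. By definition $\wchi$ is the Bockstein of $\chi \in H^1(\GG_2,\ZZ/2)$, which is defined on the full group $\GG_2$; consequently $\wchi$ is the restriction to $\GG_2^1$ of a global class in $H^2(\GG_2, E_0)$, and therefore its image in $H^2(\GG_2^1, E_0)$ is $\GG_2/\GG_2^1$-invariant. Thus $\psi_*\wchi = \wchi$, and since $\wchi$ is the unique detecting class for $w$ we conclude $(\psi-1)_*(w) = 0$. Any choice of preimage then yields $w_0 \in \pi_{-2}L_{K(2)}S^0$.

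For the last claim, I would extend the long exact sequence to
\[
\pi_{-1}E^{h\GG_2^1} \xrightarrow{\psi-1} \pi_{-1}E^{h\GG_2^1} \xrightarrow{\partial} \pi_{-2}L_{K(2)}S^0 \xrightarrow{i_*} \pi_{-2}E^{h\GG_2^1} \xrightarrow{\psi-1} \pi_{-2}E^{h\GG_2^1},
\]
and identify the outer terms from Figure~\ref{fig:HFPSSG21}: in stem $-1$ the relevant class is $w\eta$, detected by the product of $\wchi$ with the class in $H^1(\GG_2^1, E_2)$ detecting $\eta$. The same $\GG_2$-invariance argument gives $(\psi-1)(w\eta) = 0$, so $w\eta$ survives to generate the cokernel of $\psi-1$. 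Using the formula $\partial\circ i_* = \zeta$ recorded in Section~\ref{sec:third-graded}, we obtain $\partial(w\eta) = \zeta w_0 \eta$, yielding the short exact sequence
\[
0 \to \ZZ/2\{\zeta w_0 \eta\} \to \pi_{-2}L_{K(2)}S^0 \to \ZZ/2\{w\} \to 0.
\]

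The main obstacle is the extension problem, namely ruling out $2 w_0 = \zeta w_0 \eta$. My preferred approach is multiplicative: if $2 w_0 = \zeta w_0 \eta$, then multiplying by $\eta$ and using $2\eta = 0$ forces $\zeta w_0 \eta^2 = 0$ in $\pi_{-1}L_{K(2)}S^0$, which one can hope to contradict by locating the corresponding class in the appropriate bidegree of Figure~\ref{fig:HFPSSG21} and verifying it is a nonzero permanent cycle. A fallback strategy is to exhibit an explicit geometric representative for $w_0$ (for instance via a $J$-construction or comparison with Gross--Hopkins duality) that is manifestly $2$-torsion.
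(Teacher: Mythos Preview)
Your setup through the short exact sequence
\[
0 \to \ZZ/2\{\zeta w_0 \eta\} \to \pi_{-2}L_{K(2)}S^0 \to \ZZ/2\{w\} \to 0
\]
is correct and essentially matches the paper's argument. (In fact, since $\pi_{-1}E^{h\GG_2^1}$ and $\pi_{-2}E^{h\GG_2^1}$ are both $\ZZ/2$, the invariance of $w$ and $w\eta$ under $\psi$ is automatic; your cohomological justification is fine but unnecessary.)

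The genuine gap is the extension problem, and you acknowledge this yourself with ``one can hope to contradict.'' The paper resolves it by an entirely different route: rather than working in $\pi_{-1}L_{K(2)}S^0$, it maps to $\pi_{-2}L_{K(2)}V(0)$. If the extension were nontrivial then $\zeta w_0\eta = 2w_0$ would lie in the image of multiplication by $2$, hence die in $V(0)$. But \cite[Theorem 8.2.6]{BGH} shows both $\wchi$ and $\zeta\wchi\eta$ remain nonzero in $v_1^{-1}H^\ast(\GG_2,E_\ast V(0))$, and \cite[Theorem 8.3.5]{BGH} shows they detect nonzero classes in $\pi_\ast L_{K(1)}L_{K(2)}V(0)$, giving the contradiction. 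This bypasses any analysis of $\pi_{-1}L_{K(2)}S^0$, which in the paper is only computed \emph{after} this lemma (Corollary~\ref{cor:pi-1}).

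That said, your $\eta$-multiplication approach can be completed, and is arguably more self-contained. You need $\zeta w_0\eta^2 \ne 0$ in $\pi_{-1}L_{K(2)}S^0$. Since $\zeta$, $\eta$, and $\wchi$ are all permanent cycles in the $\GG_2$-ANSS (the last because you have already produced $w_0$), so is their product $\zeta\wchi\eta^2 \in E_\infty^{5,4}(\GG_2,S^0)$. It is nonzero at $E_5$ by \cite{BBGHPScoh} (cf.\ Figure~\ref{fig:HFPSSG2}), and it cannot be a boundary: the only possible sources are $E_r^{5-r,5-r}$ for $r=3,5$, namely $H^2(\GG_2,E_2)$ and $H^0(\GG_2,E_0)$, and the classes there ($\eta\zeta$, $\langle\wchi,2,\eta\rangle$, and $1$) are all permanent cycles. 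Hence $\zeta\wchi\eta^2 \ne 0$ at $E_\infty$, so the homotopy product $\zeta w_0\eta^2$ is nonzero, completing your argument. You should supply this verification rather than leave it as a hope.
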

 \begin{proof}
By \cite[Corollary 9.1.8]{BGH} we have that 
\begin{align*}
\pi_{-1}E^{h\GG_2^1} &\cong \ZZ/2\\
\pi_{-2}E^{h\GG_2^1} &\cong \ZZ/2
\end{align*}
generated by elements detected by  $\wchi\eta \in H^3(\GG_2^1,E_2)$ and $\wchi \in H^2(\GG_2^1,E_0)$
respectively. The latter class detects $w$. It then follows from the fiber sequence 
\[ L_{K(2)}S^0 \to E^{h\GG_2^1} \to E^{h\GG_2^1}\] 
that we have an exact sequence
\[
0 \longrightarrow \ZZ/2 \longrightarrow \pi_{-2}L_{K(2)}S^0 \longrightarrow \ZZ/2 \longrightarrow 0.
\]
The generator of the subgroup in this extension is detected by $\zeta\wchi\eta \in H^4(\GG_2,E_2)$, and there is
a class $w_0 \in \pi_{-2}L_{K(2)}S^0 $ detected by $\wchi \in H^2(\GG_2,E_0)$ which generates the quotient. To prove the lemma, it suffices to show this sequence splits, i.e. that $w_0$ has order 2.

If the sequence is not split then the class in the kernel becomes trivial when passing to $\pi_{-2}L_{K(2)}V(0)$. 
But, by \cite[Theorem 8.2.6]{BGH} both classes are non-zero under the  map
\[
H^\ast(\GG_2,E_\ast) \longrightarrow v_1^{-1}H^\ast(\GG_2,E_\ast V(0)).
\]
By \cite[Theorem 8.3.5]{BGH} they detect non-zero classes in $\pi_\ast L_{K(1)}L_{K(2)}V(0)$.
Thus the sequence is split as needed. 
 \end{proof}
 
\begin{lem}[{\cite[Corollary 9.1.8, 9.1.9]{BGH}}]\label{lem:name-the-elements} 
There is an isomorphism
\[\pi_0E^{h\GG_2^1} \cong \Z_2\{1\} \oplus \Z/8\{x\} \oplus \Z/4\{y\},\]
where the generators $y $ and $x$ have the following description.
\begin{enumerate}[1)]
\item 
The class $y\in \pi_{0}E^{h\GG_2^1} $ can be chosen to be either of the two elements in the Toda bracket
$\langle w, 2, \eta \rangle$, and is detected by the Massey product
\[
\langle \wchi, 2, \eta \rangle \in H^2(\GG_2^1,E_2).
\]
The class $2y$ is then detected by $\wchi\eta^2 \in H^4(\GG_2^1,E_4)$.

\item The class $x \in \pi_{0}E^{h\GG_2^1}$ is detected by $e\nu \in H^4(\GG_2^1,E_4)$. The class $4x$ 
is detected by $e\eta^3 \in H^{6}(\GG_2^1, E_6)$.
\end{enumerate}
In particular,
\begin{equation}\label{eq:what-is-KE}
\KE \cong \ZZ/8\{ x\} \oplus \ZZ/4 \{ y \}.
\end{equation}
\end{lem}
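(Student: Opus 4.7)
The plan is to combine the abstract computation of $\pi_0E^{h\GG_2^1}$ from~\cite{BGH} with a careful identification of the named generators via the homotopy fixed point spectral sequence summarized in Figure~\ref{fig:HFPSSG21}.

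First, \cite[Corollary 9.1.8]{BGH} gives the abstract isomorphism $\pi_0E^{h\GG_2^1} \cong \Z_2 \oplus \Z/8 \oplus \Z/4$. The unit lies in Adams--Novikov filtration $0$, is detected by $1 \in H^0(\GG_2^1,E_0) \cong \Z_2$, and generates the free summand. Since $\KE$ is by definition the kernel of the Hurewicz map $\pi_0 E^{h\GG_2^1} \to E_0 E^{h\GG_2^1}$, i.e.\ the elements of positive filtration, it equals the torsion subgroup, giving the displayed formula \eqref{eq:what-is-KE}.

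To identify $y$: by Lemma~\ref{lem:wandw0}, the class $w \in \pi_{-2}E^{h\GG_2^1}$ is detected by $\wchi \in H^2(\GG_2^1,E_0)$ and has order two, and $\eta$ itself has order two, so the bracket $\langle w, 2, \eta\rangle \subseteq \pi_0E^{h\GG_2^1}$ is defined. By Moss's convergence theorem applied to the homotopy fixed point spectral sequence, this bracket contains an element detected by the Massey product $\langle \wchi, 2, \eta\rangle \in H^2(\GG_2^1,E_2)$, which by the cohomology computations of~\cite{BBGHPScoh} is a nonzero permanent cycle generating a $\Z/4$ summand at $E_\infty$. We take $y$ to be this element. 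For $2y$: juggling and the relation $2\eta = 0$ give $2\langle \wchi, 2,\eta\rangle \ni \wchi \cdot \langle 2, \eta, 2\rangle \ni \wchi\eta^2$ modulo indeterminacy concentrated in higher filtration, and the cited computation of $H^*(\GG_2^1,E_*)$ confirms that $2y$ is detected in filtration $4$ by $\wchi\eta^2 \in H^4(\GG_2^1,E_4)$.

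For the identification of $x$: the class $e\nu \in H^4(\GG_2^1,E_4)$ is shown in~\cite{BGH,BBGHPScoh} to be a nonzero $d_3$-cycle (visible in Figure~\ref{fig:HFPSSG21}), and by \cite[Corollary 9.1.9]{BGH} it is a permanent cycle detecting an element of order $8$; take $x$ to be any such lift. Using the ring structure on $H^*(\GG_2^1,E_*)$ from~\cite{BBGHPScoh}, one has $4\cdot e\nu = e \eta^3$ in $E_\infty^{6,6}(\GG_2^1,S^0)$ up to a unit, so $4x$ is detected by $e\eta^3 \in H^6(\GG_2^1,E_6)$. The main obstacle is keeping track of multiplicative relations across pages of the spectral sequence, especially the identification of the Massey product $\langle\wchi,2,\eta\rangle$ and the divisibility relation $4e\nu \equiv e\eta^3$; both of these are routine given the ring-level calculations in~\cite{BBGHPScoh}, but require care to distinguish $E_2$-names from genuine $E_\infty$-detections.
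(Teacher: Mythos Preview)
The paper gives no proof of this lemma at all; it is stated as a direct citation of \cite[Corollary 9.1.8, 9.1.9]{BGH}. Your proposal is therefore not to be compared with the paper's argument but rather evaluated as an expository gloss on the cited result.

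Your outline is broadly correct and captures the right ingredients: the abstract group structure comes from \cite{BGH}, the identification of $y$ via the Toda bracket $\langle w,2,\eta\rangle$ and Moss convergence is the right mechanism, and the shuffle $\langle w,2,\eta\rangle\cdot 2 = w\cdot\langle 2,\eta,2\rangle \ni w\eta^2$ correctly explains why $2y$ is detected by $\wchi\eta^2$. One point of imprecision: your sentence ``$4\cdot e\nu = e\eta^3$ in $E_\infty^{6,6}(\GG_2^1,S^0)$'' is not literally meaningful, since $e\nu$ lives in bidegree $(4,4)$ and $e\eta^3$ in $(6,6)$. What is actually being asserted is a hidden multiplicative extension: the homotopy class $4x$, where $x$ is detected by $e\nu$, jumps filtration and is detected by $e\eta^3$. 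This is the content of the relation $\eta^3 = 4\nu$ in $\pi_3S^0_{(2)}$ propagated along $e$, but it is not an equality on any single page of the spectral sequence, and verifying it does require the detailed $E_\infty$ analysis carried out in \cite{BGH}. With that caveat, your sketch is an accurate unpacking of the citation.
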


\begin{cor}\label{cor:cohomologyG21filtrationvanishing}
In the spectral sequence
\[ E_2^{s,t}(\GG_2^1) = H^s(\GG_2^1,E_t) \Rightarrow \pi_{t-s}E^{h\GG_2^1},\]
we have
\begin{align*}
 E_{\infty}^{s,s}(\GG_2^1)  &= \begin{cases}
 \Z_2\{1\} & s=0 \\
 \Z/2\{\langle \wchi, 2,\eta\rangle\} & s=2 \\
 \Z/2\{\eta^2\wchi\}\oplus \Z/4 \{\nu e\} & s=4 \\
 \Z/2\{\eta^3e\}& s=6 \\
0 & \text{otherwise.}
\end{cases}
\end{align*}
Furthermore, $E_2^{0,0}(\GG_2^1)=E_{\infty}^{0,0}(\GG_2^1)$, and, for $0<s\leq 6$, we have $E_{s+1}^{s,s} = E_{\infty}^{s,s}(\G_2^1)$.
\end{cor}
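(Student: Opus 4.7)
The plan is to derive the corollary from \Cref{lem:name-the-elements} using that $E_\infty^{s,s}(\GG_2^1)$ records the filtration quotient $F^s\pi_0E^{h\GG_2^1}/F^{s+1}\pi_0E^{h\GG_2^1}$ of $\pi_0E^{h\GG_2^1}$ along the homotopy fixed point spectral sequence. First I would dispose of the easy vanishing: $E_2^{s,s}=H^s(\GG_2^1,E_s)=0$ whenever $s$ is odd, because $E_* \cong \W[[u_1]][u^{\pm 1}]$ is concentrated in even internal degrees; and $E_\infty^{s,s}=0$ for $s\geq 7$ follows from the uniform horizontal vanishing line of \Cref{rem:props-of-ANSS}, or directly from the cohomology calculation of \cite{BBGHPScoh}.

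Next I would handle the nonvanishing filtrations in turn. For $s=0$, the class $1 \in \pi_0E^{h\GG_2^1}$ generates a $\Z_2$-summand and is detected in filtration zero; since $E_2^{0,0} \cong \Z_2$ is torsion-free, no generator can die under a differential, so $E_2^{0,0}=E_\infty^{0,0}\cong \Z_2$. For $s=2,4,6$, \Cref{lem:name-the-elements} provides the generators $y$, the pair $(2y,x)$, and $4x$ of the respective filtration quotients, detected by $\langle \wchi, 2, \eta \rangle$; by $\wchi\eta^2$ together with $\nu e$; and by $e\eta^3$. Because these classes detect nontrivial homotopy classes, they are permanent cycles; tracking orders ($y$ has order $4$ with $2y$ in filtration $4$, while $x$ has order $8$ with $4x$ in filtration $6$) then delivers precisely the group structures claimed for $E_\infty^{s,s}$.

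Finally, to upgrade this to the equality $E_{s+1}^{s,s}=E_\infty^{s,s}$ for $0<s\leq 6$, I would invoke the explicit computation of the $E_3$-page and of the $d_3$ differentials from \cite{BBGHPScoh}, summarized in \Cref{fig:HFPSSG21}: the group at $(s,s)$ on the $E_{s+1}$-page for $s\in\{2,4,6\}$ already coincides with the group displayed as $E_\infty^{s,s}$ in the statement. Any potential incoming differential to $(s,s)$ past page $s$ would emanate from a position $(s-r,s-r+1)$ with $r\geq s+1$, hence from a nonexistent negative filtration; outgoing differentials must vanish because the permanent cycles identified above already fill $E_{s+1}^{s,s}$. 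The main obstacle in this argument is not the corollary itself, which amounts to careful bookkeeping against the spectral sequence filtration, but the prior input from \cite{BBGHPScoh} on the $E_3$-page and the $d_3$ computation.
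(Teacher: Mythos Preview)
Your proposal is correct and follows essentially the same approach as the paper: both combine the $E_5$-page computation of \cite{BBGHPScoh} with the permanent cycles supplied by \Cref{lem:name-the-elements}, and both defer the stabilization claim $E_{s+1}^{s,s}=E_\infty^{s,s}$ to that reference. One small point worth tightening: \Cref{rem:props-of-ANSS} only guarantees \emph{some} horizontal vanishing line, not specifically one at $s=7$, so your first justification for $E_\infty^{s,s}=0$ when $s\geq 7$ is incomplete as stated. The paper handles this more cleanly by a counting argument: once the permanent cycles at $s=0,2,4,6$ are identified and seen to assemble to all of $\pi_0 E^{h\GG_2^1}\cong\Z_2\oplus\Z/8\oplus\Z/4$, nothing can remain in higher filtration. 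Similarly, for $s=6$ your stabilization argument requires $E_7^{6,6}$, whereas \cite{BBGHPScoh} only gives $E_5$; this is a non-issue since $E_5^{6,6}=\Z/2\{\eta^3 e\}$ is already generated by a permanent cycle and the only possible incoming $d_5$ has source $\Z/2\{\eta\}$, but it is worth saying explicitly.
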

\begin{proof}
The $E_5$-page of the spectral sequence was computed in \cite[Section 8]{BBGHPScoh}. See \cref{fig:HFPSSG2} for a figure illustrating that computation. 
The classes listed are permanent cycles by \Cref{lem:name-the-elements}. For degree reasons, they cannot be hit by differentials. Therefore, they survive to the $E_{\infty}$-page and account for all of the classes in the homotopy group $\pi_0E^{h\GG_2^1}$. It follows that there can be no non-zero classes in higher filtration at $E_{\infty}$. The claim about the stabilization of $E_{s+1}^{s,s}(\GG_2^1)$ is immediate from the computation in \cite{BBGHPScoh}. 
\end{proof}

\begin{prop}\label{prop:calc21} There is an isomorphism
\[
1+\KE \cong \ZZ/8 \oplus (\ZZ/2)^2.
\]
\end{prop}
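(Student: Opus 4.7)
The plan is to compute $1+\KE$ inside the commutative ring $R = \pi_0 E^{h\GG_2^1}$. Since elements of $\KE$ are nilpotent by the horizontal vanishing line of \Cref{rem:props-of-ANSS}, the map $a \mapsto 1+a$ is a bijection of sets $\KE \to 1+\KE$, and combined with \Cref{lem:name-the-elements} this shows $1+\KE$ is an abelian $2$-group of order $32$. The isomorphism type is determined by the products $x^2$, $xy$, $y^2$ in $R$, so these are the targets.

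Using the Adams--Novikov filtrations of $x$ and $y$ (namely $4$ and $2$) together with \Cref{cor:cohomologyG21filtrationvanishing}, we immediately get $x^2 = 0$, as its filtration $\geq 8$ is above the vanishing line. The product $xy$ (filtration $\geq 6$) lies in $\ZZ/2\{\eta^3 e\} = \ZZ/2\{4x\}$, and $y^2$ (filtration $\geq 4$) can be written $y^2 = 2\epsilon y + \beta x$ with $\epsilon \in \{0,1\}$ and $\beta \in \ZZ/8$, corresponding to the associated graded decomposition at filtrations $4$ and $6$. The equation $2y^2 \in \mathrm{Fil}^6 R = \ZZ/2\{4x\}$ forces $\beta$ to be even for free. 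The remaining content, and heart of the proof, is to show $\epsilon = 1$ and $xy = 0$; I would obtain these by Massey product juggling starting from $y \in \langle \wchi, 2, \eta\rangle$, using the defining triviality conditions $2\wchi = 0$ and $2\eta = 0$, combined with the explicit description of $H^*(\GG_2^1, E_*)$ from \cite{BBGHPScoh}.

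Once $x^2 = 0$, $xy = 0$, and $y^2 = 2y + \beta x$ with $\beta$ even, the proof finishes by an elementary calculation. We have $(1+x)^n = 1 + nx$, so $1+x$ has order exactly $8$; and $(1+2y)^2 = 1 + 4y + 4y^2 = 1$ since $4y = 0$ and $4y^2 \in 8\KE = 0$, so $1+2y$ has order $2$. A short case analysis over the four even values $\beta \in \{0, 2, 4, 6\}$ shows that in each case the subgroup $\langle 1+x, 1+y\rangle$ has order $16$ and is isomorphic to $\ZZ/8 \oplus \ZZ/2$. Since elements of this subgroup have $y$-coefficient in $\{0,1\} \subseteq \ZZ/4$ while $1+2y$ has $y$-coefficient $2$, the element $1+2y$ is not in $\langle 1+x, 1+y\rangle$, and being of order $2$ it provides the third independent factor, giving $1+\KE \cong \ZZ/8 \oplus (\ZZ/2)^2$.

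The main obstacle is establishing $\epsilon = 1$: filtration arguments give $\beta$ even but leave $\epsilon$ undetermined, and $\epsilon = 0$ combined with $\beta \in \{0, 2\}$ would instead yield $\ZZ/8 \oplus \ZZ/4$. I expect $\epsilon = 1$ to follow from Massey product identities such as $\mu\eta = \wchi \langle 2, \eta, \eta\rangle$ in $H^3(\GG_2^1, E_3)$ and a subsequent computation identifying $\mu^2$ as a nontrivial multiple of $\wchi \eta^2$ in $H^4(\GG_2^1, E_4)$, where $\mu = \langle \wchi, 2, \eta\rangle$. The parallel argument ruling out $xy$ being detected by $\eta^3 e$ should follow similarly. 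This is the most delicate step, requiring the detailed cohomological data of \cite{BBGHPScoh}.
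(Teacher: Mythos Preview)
Your approach is essentially the same as the paper's: compute the products $x^2$, $xy$, $y^2$ in the ring $\pi_0 E^{h\GG_2^1}$ and then exhibit three generators of the right orders. There are two differences worth flagging.

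First, the paper does \emph{not} determine $xy$. It only records $xy = 4\varepsilon x$ for some unknown $\varepsilon \in \{0,1\}$ and proceeds regardless. Your stated requirement $xy = 0$ is unnecessary, and in fact your own case analysis does not use it: the relation $(1+y)^2 = 1 + 2y + y^2 = 1 + \beta x = (1+x)^\beta$ involves only $y^2$, so the structure of $\langle 1+x, 1+y\rangle$ is already pinned down, and the $y$-coefficient argument for $1+2y$ goes through unchanged. So you can simply drop the claim about $xy$.

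Second, for the crucial input $\epsilon = 1$ (equivalently $y^2 = 2y + 2\ell x$), the paper just cites \cite[Corollary 9.1.9]{BGH} rather than rederiving it via Massey products. Once one has $y^2 = 2y + 2\ell x$, the paper avoids your case analysis with a single observation: the element $1 + y - \ell x$ squares to $1$, since
\[
(1+y-\ell x)^2 = 1 + 2(y-\ell x) + (y-\ell x)^2 = 1 + 2y - 2\ell x + (2y + 2\ell x) - 2\ell\cdot xy = 1 + 4y - 8\ell\varepsilon x = 1,
\]
independently of $\varepsilon$. Then $1+x$, $1+y-\ell x$, $1+2y$ are the three generators. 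Your case analysis over $\beta$ would of course rediscover this element as $(1+y)(1+x)^{-\beta/2}$, but writing it down directly is cleaner.
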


\begin{proof}  In \cite[Corollary 9.1.9]{BGH}
it is shown that
$x^2 = 0$ and 
\[
y^2 = 2y + 2\l x
\]
for some integer $\l$ which only depends on its congruence class modulo $4$. Further,
$xy$ 
has Adams-Novikov filtration at least $6$, so we have
\[xy = 4\varepsilon x\]
for some $\varepsilon=0$ or $1$. Then
\begin{align*}
(1+x)^n &= 1 +nx\\
(1+y - \l x)^2&= 1. 
\end{align*} 
Thus $1+\KE \cong \ZZ/8 \oplus (\ZZ/2)^2$ generated by $1+x$, $1+ y - \l x$, and $1+2y$. 
\end{proof}

The next step is to show that the $\ZZ_2 \cong \GG_2/\GG_2^1$ action on $1+ \KE$ is trivial; this is equivalent to
showing that the action on $\KE$ is trivial. 

\begin{prop}\label{prop:inv-of-y} The class $y\in \pi_{0}E^{h\GG_2^1}$ is invariant under the 
action of $\GG_2/\GG_2^1$. 
\end{prop}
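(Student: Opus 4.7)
The plan is to realize $y$ as the image of a class from $\pi_0 L_{K(2)}S^0$ under the unit map $i\colon L_{K(2)}S^0\to E^{h\GG_2^1}$. By \Cref{prop:fund-above}, the fiber sequence
\[
L_{K(2)}S^0\longrightarrow E^{h\GG_2^1}\xrightarrow{\,\psi-1\,}E^{h\GG_2^1}
\]
gives a long exact sequence on homotopy identifying the image of $i_*\colon \pi_0 L_{K(2)}S^0 \to \pi_0 E^{h\GG_2^1}$ with the kernel of $\psi-1$; in other words, a class in $\pi_0 E^{h\GG_2^1}$ is $\GG_2/\GG_2^1$-invariant if and only if it lifts through $i_*$. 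Thus it suffices to produce such a lift for (some admissible choice of) $y$.

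To construct the lift, I will use the Toda bracket description $y\in\langle w,2,\eta\rangle$ from \Cref{lem:name-the-elements}(1). By \Cref{lem:wandw0}, $w=i_*(w_0)$ for a class $w_0\in\pi_{-2}L_{K(2)}S^0$ of order $2$; combined with $2\eta=0$, this ensures that the Toda bracket $\langle w_0,2,\eta\rangle\subseteq\pi_0 L_{K(2)}S^0$ is defined, hence non-empty. Picking any $\tilde y\in\langle w_0,2,\eta\rangle$, the naturality of Toda brackets under the ring map $i$ yields
\[
i_*(\tilde y)\in\langle i_*(w_0),2,\eta\rangle=\langle w,2,\eta\rangle\subseteq \pi_0 E^{h\GG_2^1}.
\]
By \Cref{lem:name-the-elements}(1), every element of $\langle w,2,\eta\rangle$ is an admissible choice of the generator $y$ of the $\Z/4$ summand, so $y:=i_*(\tilde y)$ is an invariant choice. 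The remaining admissible choice differs from this one by $2y=i_*(2\tilde y)$, which also lies in the image of $i_*$ and is therefore invariant; hence both representatives of $y$ are fixed by $\GG_2/\GG_2^1$, and the conclusion is independent of which element of the Toda bracket is selected.

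The serious input has already been absorbed into the preceding lemmas: \Cref{lem:wandw0} supplies the lift $w_0$ of $w$ together with $2w_0=0$, and \Cref{lem:name-the-elements}(1) supplies the Toda bracket presentation of $y$ together with the size of its indeterminacy. The step I expect to require the most care is thus the last bookkeeping point, namely that $\langle w,2,\eta\rangle$ really is a two-element set whose elements are both honest representatives of $y$; this is already guaranteed by \Cref{lem:name-the-elements}(1) together with the identification $\pi_{-1}E^{h\GG_2^1}\cong\Z/2\{\wchi\eta\}$ of \Cref{cor:cohomologyG21filtrationvanishing}, which forces the indeterminacy to be $\{0,2y\}$. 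Given these preparatory results, the non-emptiness of the source bracket and the naturality of Toda brackets under $i$ are standard, and the proof is complete.
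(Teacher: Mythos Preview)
Your proof is correct and follows essentially the same approach as the paper: lift $w$ to $w_0\in\pi_{-2}L_{K(2)}S^0$ via \Cref{lem:wandw0}, form the Toda bracket $\langle w_0,2,\eta\rangle$ in $\pi_0 L_{K(2)}S^0$, and push forward along $i_*$ to land in $\langle w,2,\eta\rangle$, which by \Cref{lem:name-the-elements}(1) contains $y$. Your additional bookkeeping on the indeterminacy is more explicit than the paper's version but not needed, since \Cref{lem:name-the-elements}(1) already declares either element of the bracket to be an admissible choice of $y$.
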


\begin{proof} The class $w$ of \cref{lem:wandw0}  is the image of a class $w_0$ of 
order $2$ under the map $\pi_{-2} L_{K(2)}S^0 \longrightarrow \pi_{-2}E^{h\GG_2^1}$. Once we have $w_0$ we can
form the Toda bracket $\langle w_0, 2, \eta \rangle$. So, by Part (1) of \Cref{lem:name-the-elements}, we see that 
$y$ is in the image of
\[
\pi_{0} L_{K(2)}S^0 \longrightarrow \pi_{0}E^{h\GG_2^1} .\qedhere
\]
\end{proof}

Our next goal is to prove the invariance of $x$. 

\begin{rem}\label{lem:inv-enu-1} 
In \cite[Lemma 7.2]{BBGHPScoh}, we proved that the action of $\GG_2/\GG_2^1$ on $H^*(\GG_2^1,E_t)$ is trivial in the range $0\leq t <12$. 
\end{rem}

The triviality of the action of $\GG_2/\GG_2^1$ in cohomology implies that the action of $\GG_2/\GG_2^1$ on $E_\infty^{s,s}(\GG_2^1)$ is trivial, but this is only the associated graded of $\pi_0E^{h\GG_2^1}$. In particular, this result is not enough to show the class $x$ is invariant, because $4x$ is detected in higher
filtration. So, we will use the technique of truncated spectral sequences, from \Cref{subsec:trun-ss} to discuss the invariance of $x$.

As in \Cref{subsec:ANSS} and \cite{DH}, the spectral  sequence 
\[
E_2^{s,t}=H^s(\GG_2^1,E_t) \Longrightarrow \pi_{t-s}E^{h\GG_2^1}
\]
is constructed as the $K(2)$-local $E$-based Adams-Novikov Spectral Sequence; that is, as
the Bousfield-Kan Spectral Sequence of the augmented cosimplicial $K(2)$-local spectrum 
\[
E^{h\GG_2^1} \to E^{\bullet +1} \wedge E^{h\GG_2^1} =A^{\bullet}. 
\]
Using the partial totalization functors $\mathrm{Tot}_n$, this writes $E^{h\GG_2^1}$ as a homotopy inverse limit 
of a tower of fibrations $\bX = \{X_s \}$
\[
\cdots \longrightarrow X_s \longrightarrow X_{s-1} \longrightarrow \cdots \longrightarrow X_1 \longrightarrow X_0
\]
with $X_s = \mathrm{Tot}_s(E^{\bullet +1} \wedge E^{h\GG_2^1})$. The limit $X $ is then $ E^{h\GG_2^1}$.
We will work with the truncated tower $\bX_{\leq 6}$ with $X_6 = \holim \bX_{\leq 6}$. See \Cref{subsec:trun-ss} for the 
definitions and notation. 

\begin{lem}\label{lem:truncate-at-6} In the Adams-Novikov tower $\bX$ under $X=E^{h\GG_2^1}$, the map
\[
\pi_0E^{h\GG_2^1} \to \pi_0X_6
\]
is an injection. 
\end{lem}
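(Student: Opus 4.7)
The plan is to identify the kernel of the map $\pi_0 E^{h\GG_2^1} \to \pi_0 X_6$ with a piece of the Adams--Novikov filtration on $\pi_0 E^{h\GG_2^1}$, and then invoke the previously established computation of the $E_\infty$-page to see that this piece vanishes.

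First, I would recall the general fact about towers of fibrations: if $\bX = \{X_s\}$ is the Adams--Novikov tower with $X = \holim X_s \simeq E^{h\GG_2^1}$, then the induced Adams--Novikov filtration on $\pi_0 X$ satisfies
\[
F^{s+1}\pi_0 X \;=\; \ker\bigl(\pi_0 X \longrightarrow \pi_0 X_s\bigr).
\]
In particular, the kernel of $\pi_0 E^{h\GG_2^1} \to \pi_0 X_6$ is precisely $F^7 \pi_0 E^{h\GG_2^1}$, the subgroup of elements of Adams--Novikov filtration at least $7$ in the stem $t-s = 0$.

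Next, I would appeal to \Cref{cor:cohomologyG21filtrationvanishing}, which gives the values of $E_\infty^{s,s}(\GG_2^1)$ for $s \leq 6$ and asserts that these terms vanish for $s > 6$. Since every element of $F^7\pi_0 E^{h\GG_2^1}$ is detected by a class in some $E_\infty^{s,s}(\GG_2^1)$ with $s \geq 7$, we conclude $F^7\pi_0 E^{h\GG_2^1} = 0$, and the stated injectivity follows.

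There is no real obstacle here; the work is entirely contained in \Cref{cor:cohomologyG21filtrationvanishing}, which in turn rests on the computation of the $E_5$-page from \cite{BBGHPScoh} together with the identification of permanent cycles carried out in \Cref{lem:name-the-elements}. The only point requiring minor care is the reindexing convention relating the truncation index in \Cref{defn:trunc-towers-ss} to the cohomological degree $s$, so that the cutoff at $X_6$ genuinely corresponds to killing classes of filtration $\geq 7$.
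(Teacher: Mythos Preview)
Your proposal is correct and follows essentially the same approach as the paper: both arguments use \Cref{cor:cohomologyG21filtrationvanishing} to conclude that $E_\infty^{s,s}(\GG_2^1) = 0$ for $s > 6$, which forces the filtration-$7$ piece of $\pi_0 E^{h\GG_2^1}$ to vanish. The paper phrases the conclusion via a reference to \Cref{lem:truncated-e2}, whereas you phrase it directly in terms of the Adams--Novikov filtration, but these amount to the same observation.
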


\begin{proof} In the Adams-Novikov spectral sequence
\[
H^s(\GG_2^1,E_t) \Longrightarrow \pi_{t-s}E^{h\GG_2^1}
\]
we have $E_\infty^{s,s} = 0$ for $s > 6$ by \cref{cor:cohomologyG21filtrationvanishing}. 
Now use \Cref{lem:truncated-e2}. 
\end{proof}

\begin{prop}\label{prop:enu-invariant} Let $x \in \pi_0E^{h\GG_2^1}$ be any class detected by
$e\nu \in H^4(\GG_2^1,E_4)$. Then $x$ is invariant under the action of $\GG_2/\GG_2^1$. 
\end{prop}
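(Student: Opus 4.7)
The strategy is to squeeze the obstruction $\psi x - x$ into filtration $6$ using triviality of the $\psi$-action on cohomology, and then eliminate the one remaining possibility by realizing $x$ (up to a $\ZZ_2$-unit) as the image of an invariant class from $\pi_0 L_{K(2)}S^0 = \pi_0 E^{h\GG_2}$.

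First, by \Cref{lem:truncate-at-6}, $\pi_0 E^{h\GG_2^1} \hookrightarrow \pi_0 X_6$, so it suffices to establish $\psi x = x$ in $\pi_0 X_6$. By \Cref{lem:inv-enu-1}, $\psi$ acts trivially on $H^s(\GG_2^1, E_t)$ for $t < 12$, and hence on $E_\infty^{s,s}(\GG_2^1)$ for $s \leq 6$ (using \Cref{cor:cohomologyG21filtrationvanishing}). Since $x$ is detected in filtration $4$ by $e\nu$, the class $\psi x - x$ has Adams--Novikov filtration at least $5$; since $E_\infty^{5,5}(\GG_2^1) = 0$ by \Cref{cor:cohomologyG21filtrationvanishing}, it in fact has filtration at least $6$. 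Combined with \Cref{lem:name-the-elements}, $F^6 \pi_0 E^{h\GG_2^1} = \ZZ/2\cdot\{4x\}$, and hence $\psi x \in \{x,\, 5x\}$.

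To eliminate $\psi x = 5x$, I would turn to the Lyndon--Hochschild--Serre spectral sequence for $1 \to \GG_2^1 \to \GG_2 \to \ZZ_2 \to 1$. It collapses because $\ZZ_2$ has cohomological dimension $1$, and \Cref{lem:inv-enu-1} makes the $\ZZ_2$-action on $H^4(\GG_2^1, E_4)$ trivial, so the restriction $H^4(\GG_2, E_4) \twoheadrightarrow H^4(\GG_2^1, E_4)$ is surjective. Pick a lift $\widetilde{e\nu}$ of $e\nu$. Reading the $E_3$- and $E_5$-pages in \Cref{fig:HFPSSG2} (as computed in \cite{BBGHPScoh}), one verifies that some choice of lift, after adjustment by an element of $\ker(\mathrm{res}) = \zeta H^3(\GG_2^1, E_4)$ if necessary, is a permanent cycle in the $K(2)$-local Adams--Novikov spectral sequence for $L_{K(2)}S^0 = E^{h\GG_2}$. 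Let $x' \in \pi_0 L_{K(2)}S^0$ be detected by this lift, and set $z := \iota(x') \in \pi_0 E^{h\GG_2^1}$. By the long exact sequence for the fiber sequence $L_{K(2)}S^0 \to E^{h\GG_2^1} \xrightarrow{\psi-1} E^{h\GG_2^1}$, $z$ is $\psi$-invariant; and it is detected by $\mathrm{res}(\widetilde{e\nu}) = e\nu$, so $z = x + \delta$ for some $\delta \in F^6 = \ZZ/2\cdot\{4x\}$. Since $\psi$ acts trivially on $F^6$ (as $F^6/F^7 = E_\infty^{6,6}$ and $F^7 = 0$), the element $\delta$ is invariant, and hence $\psi x = \psi z - \psi \delta = z - \delta = x$.

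The main obstacle will be the verification that a permanent-cycle lift of $e\nu$ exists in $H^4(\GG_2, E_4)$: the LHS argument only determines the lift modulo $\zeta H^3(\GG_2^1, E_4)$, and ruling out the possibility that every lift supports a nontrivial $d_3$ (or later) differential requires the explicit differential computations of \cite{BBGHPScoh} visible in \Cref{fig:HFPSSG2}.
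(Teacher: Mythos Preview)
Your reduction to $\psi x \in \{x,5x\}$ is correct and matches the paper's setup: pass to $\pi_0 X_6$, use that $\psi$ acts trivially on the cohomology in low internal degrees, and conclude that $\psi x - x$ lies in $F^6 = \ZZ/2\cdot\{4x\}$.

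The gap is in your second step. Producing a class $x' \in \pi_0 L_{K(2)}S^0 = \pi_0 E^{h\GG_2}$ whose restriction is detected by $e\nu$ is, via the fiber sequence $L_{K(2)}S^0 \to E^{h\GG_2^1} \xrightarrow{\psi-1} E^{h\GG_2^1}$, precisely the statement that some class in $\pi_0 E^{h\GG_2^1}$ detected by $e\nu$ is $\psi$-invariant. Combined with your own observation that all such classes differ by $\psi$-invariant elements of $F^6$, this is exactly what you are trying to prove; the argument is circular. If instead you try to verify directly that a lift $\widetilde{e\nu} \in H^4(\GG_2,E_4)$ is a permanent cycle, you need control over $d_5$, $d_7$, \ldots\ in the $\GG_2$-spectral sequence. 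The reference \cite{BBGHPScoh} (and \Cref{fig:HFPSSG2}) only determines $d_3$ and hence $E_5$; the later differentials on $E_5^{4,4}(\GG_2)$ are not computed there, and indeed the paper derives surjectivity of $\pi_0 L_{K(2)}S^0 \to \pi_0 E^{h\GG_2^1}$ only \emph{after} the present proposition (\Cref{cor:pi0isfixed}).

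The paper avoids this circularity by never leaving the $\GG_2^1$ side. It works in the truncated tower $\bX_{\leq 6}$ for $E^{h\GG_2^1}$, where the class $e \in H^3(\GG_2^1,E_0)$ (not $e\nu$) is automatically a permanent cycle since $d_3(e)=0$ and truncation at level $6$ kills all higher targets. This gives $z \in \pi_{-3}X_6$ detected by $e$; then $z\nu$ and $x$ differ by something in top filtration, which is invariant, so it suffices to show $z\nu$ is invariant. Now $\psi(z\nu) - z\nu = b\nu$ with $b = \psi z - z$ of filtration $\geq 5$ in $\pi_{-3}X_6$, hence detected in $H^5(\GG_2^1,E_2)$; since every class there is an $\eta$-multiple, $b\nu = 0$. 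This sidesteps entirely the need for information about the $\GG_2$-spectral sequence beyond $E_5$.
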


\begin{proof} Truncation is natural in towers of fibrations and the cobar construction
$X \to E^{\bullet +1} \wedge X $ is  natural in $X$. Let $\psi \colon E^{h\GG_2^1} \to E^{h\GG_2^1}$  be the
map induced by the action of a topological generator $\psi \in \GG_2/\GG_2^1$. This induces a map
\[
\psi: \bX_{\leq 6} \longr \bX_{\leq 6}
\]
of truncated towers and  hence induces a map of truncated spectral sequences. By \Cref{lem:truncate-at-6} it is sufficient
to show that $x \in \pi_0X_6$ is invariant; that is, $\psi(x) = x$. 

Te class $x$ is detected by 
\[e\nu \in H^4(\GG_2^1,E_4) \cong E_{2,\leq 6}^{4,4}(A^{\bullet}),\] 
and by Part (2) of \Cref{lem:name-the-elements}, we know that $4x$ is detected by
\[
e\eta^3 \in H^6(\GG_2^1,E_6) \subseteq E_{2,\leq 6}^{6,6}(A^{\bullet}).
\]

The advantage of working with $X_6$ is that this class represented by $e\eta^3$ is in fact a product of $\eta^3$ and a class detected by $e$. Indeed, we now turn to the class $e \in H^3(\GG_2^1,E_0)$, which is invariant by \Cref{lem:inv-enu-1}. It was shown in \cite[Proposition 8.3]{BBGHPScoh} that $d_3(e) = 0$ in the Adams-Novikov spectral sequence. By naturality, the same is true in the truncated spectral sequence, where longer differentials on $e$ are not possible. Thus there is a class $z \in \pi_{-3}X_6$ detected by $e$. We do not know that $z$ is invariant, but we may conclude
\[
\psi(z) = z+b
\] 
where $b$ has filtration $5$.

Now we have that $z\nu$ and $x$ are two classes of order $8$ in $\pi_0X_6$ which are detected by
$e\nu \in H^4(\GG_2^1,E_4)$. If we write
\[
v = z\nu-x 
\]
then $v \in E_{2,\leq 6}^{6,6}(A^{\bullet}) $ is torsion. But by \Cref{lem:truncated-e2}, we have an inclusion
\[H^6(\GG_2^1,E_6)\subseteq E_{2,\leq 6}^{6,6}(A^{\bullet}), \] 
whose cokernel is a summand of $\pi_\ast(A^{7})$, so is torsion-free. Thus $v$ must be detected in $H^6(\GG_2^1,E_6)$. The group $H^6(\GG_2^1,E_6)$ 
is invariant under the action of $\GG_2/\GG_2^1$, by \cite[Lemma 7.2]{BBGHPScoh}. Since this is the top filtration of $\pi_0X_6$, any class in $\pi_0 X_6$ detected by an element of $H^6(\GG_2^1,E_6) \subseteq E_{2,\leq 6}^{6,6}(A^\bullet)$ must be invariant. We conclude that $v$ is invariant, thus $z\nu$ is invariant if and only if $x$ is.

So finally, we show that $z\nu$ is invariant to complete the proof. Since $\psi(z) = z+b$ for $b$ detected by a class in $H^5(\GG_2^1,E_2)$, and $\nu $ is invariant, we have that $\psi(z\nu) = z\nu + b\nu$. We computed in \cite[Table 2]{BBGHPScoh} that all classes in $H^5(\GG_2^1,E_2)$ are multiples of $\eta$, thus $b$ must be a multiple of $\eta$. See also \cref{fig:HFPSSG2}. Since $\eta\nu = 0$, we get that $b\nu = 0$, and so $\psi(z\nu) = z\nu$ as needed. 
\end{proof} 

Now we combine the above results into the following consequence.

\begin{thm}\label{cor:pi0isfixed} The action of $\GG_2/\GG_2^1$ on 
$\pi_0E^{h\GG_2^1}$  is trivial. In particular, the map  $\pi_0 L_{K(2)}S^0 \to \pi_0E^{h\GG_2^1}$ defined by the unit of $\pi_0E^{\GG_2^1}$ is surjective.
\end{thm}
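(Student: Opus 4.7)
The plan is to assemble the theorem directly from the three preceding results, each of which handles one of the generators of $\pi_0 E^{h\GG_2^1}$. By \Cref{lem:name-the-elements}, we have
\[
\pi_0 E^{h\GG_2^1} \cong \Z_2\{1\} \oplus \Z/8\{x\} \oplus \Z/4\{y\},
\]
so to prove the action of $\GG_2/\GG_2^1$ is trivial, it suffices to check invariance on each of the three generators. The generator $1$ is invariant since it is the image of the unit under $L_{K(2)}S^0 \to E^{h\GG_2^1}$, and this map is $\GG_2$-equivariant (with trivial action on the source). Invariance of $y$ is exactly the content of \Cref{prop:inv-of-y}, and invariance of $x$ is exactly the content of \Cref{prop:enu-invariant}. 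Combining these three inputs gives the first statement.

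For the second statement, I would apply \Cref{prop:fund-above} to the case $n=2$ at $p=2$. That result provides a fiber sequence
\[
L_{K(2)}S^0 \xrightarrow{i} E^{h\GG_2^1} \xrightarrow{\psi - 1} E^{h\GG_2^1},
\]
where $\psi$ is a topological generator of $\GG_2/\GG_2^1 \cong \Z_2$. The associated long exact sequence in homotopy groups reads, in the relevant degrees,
\[
\pi_0 L_{K(2)}S^0 \xrightarrow{i_*} \pi_0 E^{h\GG_2^1} \xrightarrow{\psi_* - 1} \pi_0 E^{h\GG_2^1}.
\]
By the first part just proved, $\psi_*$ acts as the identity on $\pi_0 E^{h\GG_2^1}$, so $\psi_* - 1 = 0$, and therefore $i_*$ is surjective. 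This is the desired conclusion.

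Essentially no new work is required here: the entire theorem is a packaging step that collects \Cref{prop:inv-of-y} and \Cref{prop:enu-invariant} together with the structural fiber sequence. The hard work was done in the previous two propositions, especially \Cref{prop:enu-invariant}, where the truncated spectral sequence argument was needed to control the class $x$ beyond what $E_\infty$ sees.
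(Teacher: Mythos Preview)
Your proof is correct and follows essentially the same approach as the paper: the paper's own proof is simply a terser version that cites \Cref{prop:inv-of-y} and \Cref{prop:enu-invariant} together with invariance of the unit for the first claim, and the kernel description from the fiber sequence for the second.
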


\begin{proof}
The first claim follows from \Cref{prop:inv-of-y} and \Cref{prop:enu-invariant} and the fact that the unit is fixed. The second claim follows from the fact that the image of the unit map on $\pi_0$ is the kernel of $\psi-1$ for $\psi$ a topological generator of $\GG_2/\GG_2^1$.
\end{proof}

As an aside, this also allows us to compute $\pi_{-1}L_{K(2)}S^0$.

\begin{cor}\label{cor:pi-1}
There is an isomorphism
\[\pi_{-1}L_{K(2)}S^0 \cong  \ZZ_2\{\zeta\} \oplus \ZZ/8\{\zeta x\} \oplus \ZZ/4\{\zeta y\} \oplus \Z/2\{\eta w_0\}. \]
\end{cor}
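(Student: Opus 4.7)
The plan is to use the fiber sequence
\[
L_{K(2)}S^0 \longrightarrow E^{h\GG_2^1} \xrightarrow{\psi-1} E^{h\GG_2^1}
\]
of \Cref{prop:fund-above}, whose associated long exact sequence on $\pi_\ast$ reads
\[
\pi_0E^{h\GG_2^1} \xrightarrow{\psi-1} \pi_0E^{h\GG_2^1} \xrightarrow{\partial} \pi_{-1}L_{K(2)}S^0 \longrightarrow \pi_{-1}E^{h\GG_2^1} \xrightarrow{\psi-1} \pi_{-1}E^{h\GG_2^1}.
\]
First, by \Cref{cor:pi0isfixed}, the left-hand $\psi-1$ is zero, so $\partial$ is injective. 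Second, $\pi_{-1}E^{h\GG_2^1}\cong \ZZ/2$ is generated by a class detected by $\wchi\eta$ (\Cref{lem:wandw0}), and the action of $\GG_2/\GG_2^1$ on $H^s(\GG_2^1,E_t)$ for $t< 12$ is trivial by \Cref{lem:inv-enu-1}. Since this is the only filtration contributing to $\pi_{-1}E^{h\GG_2^1}$, the right-hand $\psi-1$ is also zero. Consequently we obtain a short exact sequence
\[
0 \longrightarrow \pi_0E^{h\GG_2^1} \xrightarrow{\partial} \pi_{-1}L_{K(2)}S^0 \longrightarrow \pi_{-1}E^{h\GG_2^1} \longrightarrow 0.
\]

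Using the identification $\pi_0E^{h\GG_2^1} \cong \ZZ_2\{1\}\oplus \ZZ/8\{x\}\oplus \ZZ/4\{y\}$ of \Cref{lem:name-the-elements}, and the fact that $\partial$ coincides with multiplication by $\zeta$ on the image of the unit (the standard description of $\zeta$ recorded just after \Cref{prop:fund-above}), the image of $\partial$ is the subgroup $\ZZ_2\{\zeta\}\oplus \ZZ/8\{\zeta x\}\oplus \ZZ/4\{\zeta y\}$ of $\pi_{-1}L_{K(2)}S^0$.

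It remains to exhibit a splitting. The candidate is $\eta w_0$, where $w_0\in \pi_{-2}L_{K(2)}S^0$ is the class of order $2$ produced in \Cref{lem:wandw0}. Its image in $\pi_{-1}E^{h\GG_2^1}$ is $\eta w$, which is detected by $\wchi\eta$ and is therefore the non-zero element. In particular $\eta w_0$ is non-zero, and since $2w_0=0$ we have $2\eta w_0=0$, so $\eta w_0$ has exact order $2$. Thus $\eta w_0$ splits the short exact sequence, yielding the claimed decomposition. The main (small) obstacle is the splitting, which hinges on knowing that $w_0$ lifts from $\pi_{-2}E^{h\GG_2^1}$ to an order-$2$ class in $\pi_{-2}L_{K(2)}S^0$; this is precisely the content of \Cref{lem:wandw0}.
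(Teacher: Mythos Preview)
Your proof is correct and follows essentially the same route as the paper: both use the fiber sequence of \Cref{prop:fund-above}, invoke \Cref{cor:pi0isfixed} for injectivity of $\partial$, and split with $\eta w_0$ via \Cref{lem:wandw0}. The only cosmetic difference is that you argue surjectivity onto $\pi_{-1}E^{h\GG_2^1}$ by showing $\psi-1=0$ there (which is automatic since the group is $\ZZ/2$), whereas the paper exhibits $w_0\eta$ directly as a preimage of $w\eta$.
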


\begin{proof} The
fiber sequence 
\begin{equation}\label{eq:stand-fibre-seq}
\xymatrix{
L_{K(2)}S^0 \ar[r] &  E^{h\GG_2^1}  \ar[r]^-{\psi-1} &  E^{h\GG_2^1}
}
\end{equation}
gives an exact sequence
\[
\xymatrix{
   \pi_0E^{h\GG_2^1} \ar[r]^-\partial & \pi_{-1}L_{K(2)}S^0 \ar[r] & \pi_{-1}E^{h\GG_2^1}
}
\]
which we prove is split short exact. For injectivity of $\partial$, we use that the kernel is the image of $\psi-1$ acting on $ \pi_0E^{h\GG_2^1} $, which is zero by \cref{cor:pi0isfixed}. 
By \cite[Corollary 9.1.8]{BGH}, we know that
$\pi_{-1}E^{h\GG_2^1} \cong \ZZ/2$ generated by $ w \eta$, which is the image of $w_0 \eta$ by \Cref{lem:wandw0}. Since $w_0\eta$ has order $2$, the sequence is split. Note that the map $\partial$ is multiplication by $\zeta$, giving the claim.
\end{proof}

Combining the results proved above, we get the following explicit identification of the group of $E^{h\GG_2^1}$-orientable elements of $\kappa_2$.

\begin{thm}\label{thm:kappag21}The twisting construction $\alpha \mapsto X(\alpha)$ of \cref{defn:super-e-defn} gives an isomorphism
\[
 \ZZ/8 \oplus (\ZZ/2)^2\cong 1 + \KE \cong \kappa(\GG_2^1).
\]
\end{thm}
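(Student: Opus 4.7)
The plan is to assemble the pieces already established rather than carry out any new computation. The twisting construction of \Cref{defn:super-e-defn} is already known to give an isomorphism $H^1(\GG_2/\GG_2^1, 1+\KE) \xrightarrow{\cong} \kappa(\GG_2^1)$ by \Cref{prop:twist-constr}, so the task reduces to identifying the cohomology group.

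First I would observe that the group $\KE$ is a subgroup of $\pi_0 E^{h\GG_2^1}$, so the action of $\GG_2/\GG_2^1$ on $\KE$, and hence on $1+\KE$, is the restriction of the action on $\pi_0E^{h\GG_2^1}$. By \Cref{cor:pi0isfixed}, that action is trivial. Thus $\GG_2/\GG_2^1 \cong \ZZ_2$ acts trivially on the finite group $1+\KE$, and the ``in particular'' clause of \Cref{prop:twist-constr} applies directly to give an isomorphism
\[
1+\KE \xrightarrow{\cong} \kappa(\GG_2^1), \qquad \alpha \longmapsto X(\alpha).
\]
(Concretely, for a profinite $\ZZ_2$-module $A$ with trivial action, $H^1(\ZZ_2, A) = A/(\psi-1)A = A$, so the cohomology degenerates.)

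Finally I would identify the source: by \Cref{prop:calc21} we have
\[
1+\KE \cong \ZZ/8 \oplus (\ZZ/2)^2,
\]
generated by $1+x$, $1+y-\ell x$, and $1+2y$ in the notation of that proof. Composing the two isomorphisms yields the stated identification $\ZZ/8 \oplus (\ZZ/2)^2 \cong 1+\KE \cong \kappa(\GG_2^1)$.

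There is no real obstacle remaining once \Cref{cor:pi0isfixed}, \Cref{prop:calc21}, and \Cref{prop:twist-constr} are in hand; the only small verification, which I would spell out for the reader, is the triviality of $H^1(\ZZ_2, -)$ with trivial coefficients on a finite module. All the technical work --- the computation of $\pi_0 E^{h\GG_2^1}$, the multiplicative structure yielding $x^2=0$ and the quadratic relation for $y$, and the invariance of both $x$ and $y$ under the $\ZZ_2$-action --- has been done in the previous subsection.
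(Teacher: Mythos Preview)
Your proposal is correct and follows essentially the same approach as the paper's own proof: invoke \Cref{cor:pi0isfixed} for triviality of the action, apply the ``in particular'' clause of \Cref{prop:twist-constr} to get $1+\KE \cong \kappa(\GG_2^1)$, and cite \Cref{prop:calc21} for the structure of $1+\KE$. The only difference is that you spell out the identification $H^1(\ZZ_2,A)=A$ for trivial coefficients, which the paper leaves implicit.
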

\begin{proof}
By  \Cref{cor:pi0isfixed}, the action of $\GG_2/\GG_2^1$ on $1 + \KE$ is trivial. Thus \cref{prop:twist-constr} gives the isomorphism between $\kappa(\GG_2^1)$ and $1 + \KE$. The identification of $1 + \KE$ is  \cref{prop:calc21}.
\end{proof}

Now we turn to identifying the descent filtration on $\kappa(\GG_2^1)$. To do that, we will use \Cref{lem:filtcompare}, which requires us to check one more technical condition. We check that condition in \Cref{prop:zetacompinjection}, for which we'll use the following input.

\begin{lem}\label{lem:keta3d5cycle}
In the homotopy fixed point spectral sequence for $G=\GG_2$ and $\GG_2^1$, the class 
\[\eta^3 k \in H^{7}(G, E_{6})\]
is a $d_5$-cycle. 
\end{lem}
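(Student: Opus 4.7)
The strategy is to exploit multiplicativity of the HFPSS together with the fact that $\eta \in \pi_1 L_{K(2)} S^0$ is a permanent cycle. Since $\eta$ is the image of the classical Hopf map under the unit $S^0 \to L_{K(2)} S^0$, it is detected by $h_1 \in H^1(G, E_2)$ and satisfies $d_r(\eta) = 0$ for every $r \geq 2$ and every closed $G \subseteq \GG_2$. By the Leibniz rule,
\[
d_5(\eta^3 k) = \eta^3 \cdot d_5(k) \in E_5^{12,10}(G).
\]
It thus suffices to show that $\eta^3 \cdot d_5(k) = 0$ in $E_5^{12,10}(G)$ for $G = \GG_2^1$ and $G = \GG_2$.

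First I would handle the case $G = \GG_2^1$. Using the explicit description of the $E_3$-page of the HFPSS in \cite{BBGHPScoh} (summarized in \Cref{fig:HFPSSG21}), the class $d_5(k)$ must land in $E_5^{9,4}(\GG_2^1)$, which is known by those computations. The plan is to consult this group and verify that either $d_5(k)$ is already zero, or that any nonzero value lies in the $\eta^3$-kernel. Concretely, the classes surviving to the $E_5$-page in bidegree $(9,4)$ are a short list of monomials in $e$, $k$, $\wchi$, $\eta$, $\nu$, $\zeta$, and the $\eta$-multiplication on each is read off directly from the chart in \Cref{fig:HFPSSG21}. One then checks that $\eta^3$ annihilates each possibility in bidegree $(12, 10)$, either because the $\eta$-tower truncates before that point or because the target monomial is itself zero on the $E_5$-page.

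The case $G = \GG_2$ follows by naturality. The restriction map
\[
\mathrm{res}\colon H^*(\GG_2, E_*) \longrightarrow H^*(\GG_2^1, E_*)
\]
is a map of multiplicative spectral sequences, and both $\eta$ and $k$ are defined compatibly upstairs and downstairs. So $\mathrm{res}\bigl(d_5(\eta^3 k)\bigr) = d_5(\eta^3 k)$ in $E_5^{12,10}(\GG_2^1)$, which we have just shown vanishes. Inspecting the computation of $H^{12}(\GG_2, E_{10})$ from \cite{BBGHPScoh}, the restriction is injective on the relevant summand (the kernel is generated by $\zeta$-multiples in low degrees, which are absent in this bidegree/page combination), so the differential vanishes upstairs as well.

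The main obstacle will be the concrete identification of $d_5(k)$ and the verification of the $\eta^3$-multiplication on its possible values; this is a bookkeeping exercise with the names and relations in the computations of \cite{BBGHPScoh}, rather than a conceptual difficulty. The only genuinely delicate point is to make sure the restriction argument in the $\GG_2$ case is applied in a bidegree where $\ker(\mathrm{res})$ is known to vanish, which is why one does the $\GG_2^1$ case first.
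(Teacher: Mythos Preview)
Your overall strategy---use the Leibniz rule to reduce to $d_5(\eta^3 k)=\eta^3\cdot d_5(k)$ and then check that $\eta^3$ annihilates every possible value of $d_5(k)\in E_5^{9,4}(G)$---is exactly what the paper does, and your treatment of $G=\GG_2^1$ matches the paper's.

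Where you diverge is in the $\GG_2$ case. The paper does \emph{not} use a restriction argument; it simply writes down $E_5^{9,4}(\GG_2)$ explicitly: it is $\ZZ/4\{\nu k^2,\ \zeta\nu ek\}\oplus\ZZ/2\{\eta^2 ek\}$, i.e.\ the $\GG_2^1$ answer plus the single extra generator $\zeta\nu ek$. Since $\eta\nu=0$ and $\eta^5 ek=d_3(\mu ek)$, every generator is annihilated by $\eta^3$ on the $E_5$-page, and the conclusion follows identically for both groups at once. Your proposed route instead requires you to verify that $\mathrm{res}\colon E_5^{12,10}(\GG_2)\to E_5^{12,10}(\GG_2^1)$ is injective. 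Your justification (``the kernel is generated by $\zeta$-multiples in low degrees, which are absent in this bidegree/page combination'') is not quite right: the kernel of restriction is \emph{always} $\zeta$-multiples, in every bidegree, so you would need to check that $\zeta\cdot E_5^{11,10}(\GG_2^1)=0$, which is an additional computation you have not done and which the paper's direct approach avoids entirely.
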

\begin{proof}
In \cite[\S8]{BBGHPScoh}, we compute the $E_5$-page $E_{5}^{s,t}(G)$ of both homotopy fixed point spectral sequences in a range. See Figures 12 and 13 of that reference. In particular, we show that $k \in E_2^{4,0}(G)$ is a $d_3$-cycle. Therefore,
\[d_5(\eta^3 k) = \eta^3 d_5(k).\]
But 
\[ d_5(k) \in E_5^{9, 4}(G) \cong \begin{cases} \Z/4\{\nu k^2\} \oplus \Z/2\{\eta^2 ek\}  & G = \GG_2^1\\
 \Z/4\{\nu k^2, \zeta\nu e k\} \oplus \Z/2\{\eta^2 ek\}  & G=\GG_2.
\end{cases}\]
Since $\eta^3\nu=0$ and we prove that $\eta^5 ek =  d_3(\mu e k)$, it follows that 
\[d_5(\eta^3k) \in \eta^3(E_5^{9, 4}(G))=0. \qedhere\] 
\end{proof}

\begin{prop}\label{prop:zetacompinjection}
For $2\leq s\leq 7$, the sequence
\[0 \to E_{s}^{s-1,s-1}(\GG_2^1) \xrightarrow{\zeta}   E_{s}^{s,s-1}(\GG_2) \xrightarrow{i_*} E_{s}^{s,s-1}(\GG_2^1) \to 0   \] 
is exact, where $i_*$ is the restriction.
\end{prop}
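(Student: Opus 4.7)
The plan is to first establish the analogous short exact sequence at the $E_2$-page, and then argue that it persists to the $E_s$-page for each $s$ in the range. The base statement at $E_2$ should come from the fiber sequence of \Cref{prop:fund-above}. Applying $H^\ast(\GG_2,-)$ to the associated short exact sequence of Morava modules
\[
0 \longrightarrow E_\ast \longrightarrow E_\ast E^{h\GG_2^1} \xrightarrow{(\psi-1)_\ast} E_\ast E^{h\GG_2^1} \longrightarrow 0
\]
and using Shapiro's lemma $H^\ast(\GG_2,E_\ast E^{h\GG_2^1})\cong H^\ast(\GG_2^1,E_\ast)$, I obtain the long exact sequence
\[
\cdots \to H^{s-1}(\GG_2^1,E_t) \xrightarrow{\psi-1} H^{s-1}(\GG_2^1,E_t) \xrightarrow{\zeta} H^s(\GG_2,E_t) \xrightarrow{i_\ast} H^s(\GG_2^1,E_t) \xrightarrow{\psi-1} \cdots,
\]
with connecting map identified with multiplication by $\zeta$, as in the discussion after \Cref{defn:what-is-wchi-anyway1}. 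By \Cref{lem:inv-enu-1}, the action of $\GG_2/\GG_2^1$ on $H^\ast(\GG_2^1,E_t)$ is trivial for $0 \leq t < 12$, so $\psi-1 = 0$ in this range. The long exact sequence thus breaks into the short exact sequence claimed, at $E_2 = E_3$ (since $d_2$ vanishes for parity of $t$).

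The next step is to reduce the range. For even $s \in \{2,4,6\}$, the internal degree $t = s-1$ is odd, and $E_t = 0$; all three terms in the sequence vanish and there is nothing to check. Only $s \in \{3,5,7\}$ remains. For $s=3$, the bound $t = 2 < 12$ places us in the range of \Cref{lem:inv-enu-1}, and since $E_3 = E_2$ here, the $E_2$-level short exact sequence \emph{is} the required sequence at $E_3 = E_s$.

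For $s=5$ and $s=7$ I would propagate the short exact sequence through the higher differentials. Both $\zeta$ and $i_\ast$ commute with $d_r$: the first because $\zeta \in E_2^{1,0}(\GG_2)$ is a permanent cycle, the second by naturality under the restriction map of spectra $L_{K(2)}S^0 \to E^{h\GG_2^1}$. Consequently, the $E_3$-level short exact sequence is a short exact sequence of $d_3$-complexes, and snake lemma yields a long exact sequence at $E_5$; similarly $d_5$ yields a long exact sequence at $E_7$. To upgrade these to short exact sequences, I need the relevant connecting homomorphisms to vanish. The cleanest verification is by direct inspection of the $E_5$-pages computed in \cite{BBGHPScoh} and summarized in \Cref{fig:HFPSSG2}: one identifies $E_5^{4,4}(\GG_2^1) \cong \Z/2\{\eta^2 \wchi\}\oplus \Z/4\{\nu e\}$ (matching \Cref{cor:cohomologyG21filtrationvanishing}), $E_5^{5,4}(\GG_2) \cong \Z/2\{\zeta\wchi\eta^2\}\oplus \Z/4\{\zeta \nu e\}\oplus \Z/?\{k\nu\}$, and $E_5^{5,4}(\GG_2^1) \cong \Z/?\{k\nu\}$, and checks that the claimed maps are an injection, a surjection, and the second factors through the quotient by $\zeta$-multiples. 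For $s=7$ I use \Cref{lem:keta3d5cycle} (so that $\eta^3 k$ survives to $E_7$ in both spectral sequences) together with the analogous reading of $E_7^{6,6}(\GG_2^1)$, $E_7^{7,6}(\GG_2)$ and $E_7^{7,6}(\GG_2^1)$ from \cite{BBGHPScoh}.

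The main obstacle is therefore the propagation step for $s=5$ and $s=7$. Abstractly, the SES of $d_r$-complexes only guarantees a LES at $E_{r+1}$; demonstrating vanishing of the connecting map requires genuine input, and I do not see a purely formal argument avoiding the explicit cohomology calculations of \cite{BBGHPScoh}. The expected sanity check is that in each relevant bidegree, every $d_r$-cycle in $E_r^{s,s-1}(\GG_2^1)$ admits a $d_r$-cycle lift to $E_r^{s,s-1}(\GG_2)$, which—via the description above—amounts to comparing the short lists of explicit generators and verifying that the classes $k\nu$ and $\eta^3 k$ (the elements of $\kappa(G_{48})/\kappa(\GG_2^1)$-type) lift to $d_r$-cycles in the $\GG_2$-spectral sequence.
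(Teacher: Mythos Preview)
Your approach is essentially the same as the paper's: establish the short exact sequence at $E_2$ from the triviality of the $\GG_2/\GG_2^1$-action (\Cref{lem:inv-enu-1}), then propagate through the differentials by direct inspection of the explicit $E_r$-pages computed in \cite{BBGHPScoh}, invoking \Cref{lem:keta3d5cycle} for the $s=7$ case. The paper is slightly more careful at $s=7$, also verifying that no $d_5$ enters the relevant bidegrees (the possible sources $\eta$, $\eta\zeta$, and $\langle\wchi,2,\eta\rangle$ are permanent cycles), so that $E_7 = E_5$ there; you should make that check explicit as well.
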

\begin{proof}
The triviality of the action of $\ZZ_2\cong \GG_2/\GG_2^1$ (see \cref{lem:inv-enu-1}) gives such an exact sequence on $E_2$-pages. In  \cite[\S8]{BBGHPScoh}, we computed the $d_3$-differentials in a range. See \cref{fig:HFPSSG2} above. From that explicit determination of $E_{r}^{*,t}(\GG_2)$ and $E_{r}^{*,t}(\GG_2^1)$ for $2\leq r \leq 5 $ and $0 \leq t \leq 10$, we conclude that we have an exact sequence
\[0 \to E_{r}^{s-1,t}(\GG_2^1) \xrightarrow{\zeta}   E_{r}^{s,t}(\GG_2) \xrightarrow{i_*} E_{r}^{s,t}(\GG_2^1)  \to 0  \] 
 in that same range.

In particular, our sequence is exact at the $E_s$-page for $2\leq s\leq 5$.  Since the $E_6$ and $E_7$ pages agree, it remains to check that this is also true at the $E_7$-page. At $E_5$, we have the short exact sequence
\[\xymatrix@R=1pc{
0 \ar[r]& E_{5}^{6,6}(\GG_2^1)\ar@{=}[d] \ar[r]^-{\zeta}   & E_{5}^{7,6}(\GG_2) \ar[r]^-{i_*}  \ar@{=}[d]  & E_{5}^{7,6}(\GG_2^1)  \ar@{=}[d] \ar[r] &0   \\
0 \ar[r]&  \Z/2\{\eta^3e\}  \ar[r]  &  \Z/2\{\zeta\eta^3e, \eta^3 k \} \ar[r]  &   \Z/2\{\eta^3 k \}  \ar[r] & 0 .
}\] 
From \Cref{lem:name-the-elements}, we know that $\eta^3e$ is a permanent cycle in the $\GG_2^1$-spectral sequence, and from \Cref{cor:pi0isfixed}, we get that it remains a permanent cycle in the $\GG_2$-spectral sequence. Hence $\zeta\eta^3e$ is also a permanent cycle. From \cref{lem:keta3d5cycle}, $\eta^3k$ is a $d_5$-cycle, hence it survives to the $E_7$-page.

The $d_5$ differentials that have $E_5^{6,6}(\GG_2^1)$, $E_5^{7,6}(\GG_2)$, and $E_5^{7,6}(\GG_2^1)$ as targets originate in
\[ E_{5}^{1,2}(\GG_2^1) = \Z/2\{\eta\}, \quad E_{5}^{2,2}(\GG_2) = \Z/2\{\langle \wchi, 2,\eta\rangle, \eta\zeta\}, \quad E_5^{2,2}(\GG_2^1) = \Z/2\{ \langle \wchi, 2,\eta\rangle\} ,\]
respectively. However, $\eta$, $\eta\zeta$ and $\langle \wchi, 2,\eta\rangle$ are permanent cycles. For $\langle \wchi, 2,\eta\rangle$, see \cref{lem:name-the-elements} above. Therefore, the exact sequence remains the same at the $E_7$-page.
\end{proof}

Finally, we apply \cref{lem:filtcompare} to study the filtration on $\kappa(\GG_n^1)$.
\begin{thm}\label{thm:comb-2-filts-fin} In the filtration
\[
0 \subseteq \kappa_7(\GG_2^1) \subseteq \kappa_5(\GG_2^1) \subseteq \kappa_3(\GG_2^1) = \kappa(\GG^1_2) \cong
\ZZ/8 \times (\ZZ/2)^2
\]
we have isomorphisms
\[
\xymatrix@R=10pt{
\kappa(\GG_2^1)/ \kappa_5(\GG_2^1) \ar[r]_-\cong^-{\phi_3} & \ZZ/2\{\zeta \langle \wchi, 2, \eta\rangle\} \\
\kappa_5(\GG_2^1)/ \kappa_7(\GG_2^1) \ar[r]_-\cong^-{\phi_5} & \ZZ/4\{\zeta e \nu\} \times \ZZ/2\{\zeta\wchi\eta^2\}\\
\kappa_7(\GG_2^1) \ar[r]_-\cong^-{\phi_7} & \ZZ/2\{\zeta e\eta^3\}.
}
\]
\end{thm}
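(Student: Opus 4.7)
The plan is to deduce this theorem as an instance of \Cref{lem:filtcompare} applied at $n=p=2$. Since $\KEn_s$ vanishes for $s\geq 7$ by \Cref{cor:cohomologyG21filtrationvanishing}, we take $N=7$ and must verify the four hypotheses of that lemma in the range $2\leq s\leq 7$. First, the triviality of the $\GG_2/\GG_2^1$-action on $\pi_0 E^{h\GG_2^1}$, and hence on $1+\KEn$, is exactly \Cref{cor:pi0isfixed}. Hypothesis (i), namely $E_\infty^{s-1,s-1}(\GG_2^1,S^0)\cong E_s^{s-1,s-1}(\GG_2^1,S^0)$, is part of \Cref{cor:cohomologyG21filtrationvanishing} (the statement that $E_{s'+1}^{s',s'}=E_\infty^{s',s'}$ for $0<s'\leq 6$, with $s'=s-1$). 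Hypothesis (ii), the short exact sequence involving multiplication by $\zeta$, is precisely \Cref{prop:zetacompinjection}. Finally, hypothesis (iii) asks that $E_s^{s-1,s-1}(\GG_2^1,S^0)$ be torsion for $2\leq s\leq 7$; combining (i) with \Cref{cor:cohomologyG21filtrationvanishing} shows these groups are finite (respectively $\Z/2$, $0$, $\Z/2\oplus\Z/4$, $0$, $\Z/2$), so certainly torsion.

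With the hypotheses in place, \Cref{lem:filtcompare} immediately yields isomorphisms
\[
\alpha \longmapsto X(\alpha) \colon \quad 1+\KEn_{s-1}\xrightarrow{\ \cong\ } \kappa_s(\GG_2^1) \qquad (s\geq 2),
\]
together with isomorphisms $\phi_s\colon \kappa_s(\GG_2^1)/\kappa_{s+1}(\GG_2^1)\cong \zeta E_s^{s-1,s-1}(\GG_2^1,S^0)$. Since $E_\infty^{3,3}(\GG_2^1,S^0)=0$ and $E_\infty^{5,5}(\GG_2^1,S^0)=0$, the collapse $\KEn_3=\KEn_4$ and $\KEn_5=\KEn_6$ translates into $\kappa_4(\GG_2^1)=\kappa_5(\GG_2^1)$ and $\kappa_6(\GG_2^1)=\kappa_7(\GG_2^1)$. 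In particular, the quotients $\kappa(\GG_2^1)/\kappa_5(\GG_2^1)$, $\kappa_5(\GG_2^1)/\kappa_7(\GG_2^1)$ and $\kappa_7(\GG_2^1)$ are isomorphic, via $\phi_3$, $\phi_5$ and $\phi_7$ respectively, to $\zeta E_3^{2,2}(\GG_2^1,S^0)$, $\zeta E_5^{4,4}(\GG_2^1,S^0)$ and $\zeta E_7^{6,6}(\GG_2^1,S^0)$, which by \Cref{cor:cohomologyG21filtrationvanishing} are $\Z/2\{\zeta\langle\wchi,2,\eta\rangle\}$, $\Z/4\{\zeta e\nu\}\oplus\Z/2\{\zeta\wchi\eta^2\}$, and $\Z/2\{\zeta e\eta^3\}$.

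To pin down the generators listed in the statement and see that the composite isomorphisms are literally given by the indicated classes, I would use Part (3) of \Cref{lem:filtcompare}, which asserts $d_s(\iota_{X(1+x)}) = a\,\overline{x}\,\zeta\iota_{X(1+x)}$ for some $a\in\Z_2^\times$ when $x\in\KEn_{s-1}$. Combined with \Cref{lem:name-the-elements}, which gives the detecting classes $y\leftrightarrow \langle\wchi,2,\eta\rangle$, $x\leftrightarrow e\nu$, $2y\leftrightarrow \wchi\eta^2$, $4x\leftrightarrow e\eta^3$, this yields $\phi_3(X(1+y))=a_1\zeta\langle\wchi,2,\eta\rangle$, $\phi_5(X(1+x))=a_2\zeta e\nu$, $\phi_5(X(1+2y))=a_3\zeta\wchi\eta^2$, and $\phi_7(X(1+4x))=a_4\zeta e\eta^3$ with $a_i\in\Z_2^\times$. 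Absorbing the units gives the named generators.

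The main obstacle is purely bookkeeping: one must chase the precise indexing in the hypothesis (i)--(iii) of \Cref{lem:filtcompare} against the exact range covered by \Cref{cor:cohomologyG21filtrationvanishing} and \Cref{prop:zetacompinjection}, and keep straight which $E_s$-terms have already collapsed to $E_\infty$. The conceptual content has already been established in the earlier sections of Part~II; no further spectral sequence input is needed.
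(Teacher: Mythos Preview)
Your proof is correct and follows essentially the same route as the paper: apply \Cref{lem:filtcompare} with $N=7$, citing \Cref{cor:pi0isfixed} for the trivial action, \Cref{cor:cohomologyG21filtrationvanishing} for hypotheses (i) and (iii), and \Cref{prop:zetacompinjection} for hypothesis (ii), then read off the filtration quotients from \Cref{cor:cohomologyG21filtrationvanishing}. Your additional paragraph using part (3) of \Cref{lem:filtcompare} and \Cref{lem:name-the-elements} to name the generators explicitly is a nice elaboration that the paper leaves implicit.
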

\begin{proof}
We apply  \cref{lem:filtcompare} with $N=7$. Conditions (i) and (ii) follow from \cref{cor:cohomologyG21filtrationvanishing}, while condition (iii) is checked in \cref{prop:zetacompinjection}. The identification of the filtration quotients follows from \cref{cor:cohomologyG21filtrationvanishing}.
\end{proof}


\section{Duality Resolutions}\label{subsec:TDSS}

A crucial tool for analyzing the rest of $\kappa_2$ is the Duality Resolution of the half sphere $E^{h\SS_2^1}$.
In this section we recall the basic material.

The algebraic duality resolution, first introduced
by Goerss--Henn--Mahowald--Rezk and developed in detail in \cite{BeaudryRes}, is a device for isolating the 
contributions of finite subgroups $F$ to the cohomology of $\SS_2^1$;
similarly the topological duality resolution of \cite{BobkovaGoerss} isolates the contributions of
$\pi_\ast E^{hF}$ to $\pi_\ast E^{h\SS_2^1}$. Some of the basic finite subgroups of $\GG_2$ 
were defined and discussed in \Cref{rem:finite-subgrps}. See \Cref{defn:finite-subgrps} and \eqref{eq:g24-prime} in particular. 

There  is a more recent variant of these ideas that works directly with $\GG_2^1$, rather than 
through $\SS_2^1$. See \cite{BBHRes}. However, we will appeal to  the results of \cite{BeaudryTowards}, \cite{BGH},
and \cite{BBGHPScoh} which use the older resolution.

The {\emph Algebraic Duality Resolution} is an augmented exact complex of continuous
$\ZZ_2[[\SS_2^1]]$-modules
\begin{equation}\label{eq:dual-res-ssn-mod}
0 \to \ZZ_2[[\SS_2^1/G'_{24}]] \to \ZZ_2[[\SS_2^1/C_6]] \to \ZZ_2[[\SS_2^1/C_6]] \to
\ZZ_2[[\SS_2^1/G_{24}]] \to \ZZ_2 \to 0.
\end{equation}
For use below, let us write
\begin{equation}\label{eq:groupsF}
F_{-1}=\mathbb{S}_2^1, \quad F_0 =G_{24}, \quad F_1=F_2=C_6, \quad F_{3}=G_{24}' .
\end{equation}
Thus $\ZZ_2[[\SS_2^1/F_p]]$ is a typical term in the Duality Resolution. 

We then induce \eqref{eq:dual-res-ssn-mod} up to a sequence of $\ZZ_2[[\GG_2]]$-modules
\begin{align}\label{eq:dual-res-ggn-mod}
0 \to \ZZ_2[[\GG_2/G_{24}]] \to &\ZZ_2[[\GG_2/C_6]] \to \ZZ_2[[\GG_2/C_6]]\\
&\to \ZZ_2[[\GG_2/G_{24}]] \to \ZZ_2[[\GG_2/\SS_2^1]] \to 0.\nonumber
\end{align}
Since $G_{24}'$ is conjugate to $G_{24}$ in $\GG_2$, we have dropped the distinction. We write
\[
d=d_p:\ZZ_2[[\GG_2/F_{p}]] \to \ZZ_2[[\GG_2/F_{p-1}]],\qquad 0 \leq p \leq 3.
\]

We next apply $\Hom_{\ZZ_2}(-,E_\ast)$ to the sequence \eqref{eq:dual-res-ggn-mod} to obtain an
exact sequence of Morava modules
\begin{align*}
0 \to  \map(\GG_2/\SS_2^1,E_\ast) \to  &\map(\GG_2/G_{24},E_\ast) \to \map(\GG_2/C_6, E_\ast)\\
&\to \map(\GG_2/C_6,E_\ast) \to \map(\GG_2/G_{24},E_\ast) \to 0.\nonumber
\end{align*}
By \cref{rem:mormodulehom} this is isomorphic to an exact sequence of Morava 
modules
\begin{align}\label{eq:dual-res-mor-mod}
0 \to  E_\ast E^{h\SS_2^1} \to  &E_\ast E^{hG_{24}}  \to E_\ast E^{hC_6} \to E_\ast E^{hC_6}  \to E_\ast E^{h\G_{24}}  \to 0.
\end{align}

The project of the Topological Duality Resolutions from \cite{ghmr} or \cite{BobkovaGoerss} is to realize 
such sequences as maps of spectra.

\begin{rem}\label{rem:d-is-delta} There is some ambiguity here; for example, the Morava module
$\map(\GG_2/G_{24},E_\ast)$ is $24$-periodic, but the spectrum $E^{hG_{24}}$ is only $8\cdot 24 = 192$
periodic.  In the language of \Cref{defn:alg-periodicity} we can produce an algebraic periodicity class 
\[
\Delta \in H^0(G_{24},E_{24})
\]
as the discriminant of a deformation of the unique supersingular elliptic curve over $\FF_4$. Then $\Delta^8$
is a permanent cycle and
\[
\pi_{192}E^{hG_{24}} \longr H^0(G_{24},E_{192})
\]
is an isomorphism. Thus $\Delta^8$ detects a topological periodicity class. We will use 
these classes in our constructions below.
\end{rem}

The main theorem of \cite{BobkovaGoerss} produces a sequence of spectra
\begin{equation}\label{eq:top_dual-res-def}
E^{h\SS_2^1} \to E^{hG_{24}} \to E^{hC_6} \to E^{hC_6} \to \Sigma^{48}E^{hG_{24}}
\end{equation} 
realizing the sequence \eqref{eq:dual-res-mor-mod} of Morava modules, and called the {\emph Topological Duality Resolution}. All the maps in \eqref{eq:top_dual-res-def}
are algebraic in the sense of \Cref{def:alg-mor} for $d=\Delta$. Furthermore, all compositions are zero and all
Toda brackets  in \eqref{eq:top_dual-res-def} contain zero, so the sequence refines to a tower. 

\begin{defn}[{\bf Duality Spectral Sequences}]\label{not:all-the-duality-ss} For $F_s$ as in \eqref{eq:groupsF}, we get the following spectral sequences.
\begin{enumerate}

\item If $X$ is any spectrum, then the spectral sequence of the tower
gives the {\emph Topological Duality spectral sequence} for $X$
\[
E_1^{s,t}(X) = \pi_t(E^{hF_s} \wedge X) \Longrightarrow \pi_{t-s}(E^{h\SS_2^1} \wedge X). 
\]

\item Let $M$ be a Morava module. The sequence of $\ZZ_2[[\mathbb{S}_2^1]]$-modules \eqref{eq:dual-res-ssn-mod}
and the Schapiro Isomorphism gives the {\emph Algebraic Duality spectral sequence}
\[
E_1^{p,q}(M) = H^p(F_q,M) \Longrightarrow H^{p+q}(\SS_2^1,M).
\]
for any appropriate $\ZZ_2[[\mathbb{S}_2^1]]$-module $M$.  
\end{enumerate}
\end{defn}

We now have two ways of calculating $\pi_\ast E^{h\SS_2^1}$ from $H^\ast(F_p,E_\ast)$ encoded in the two ways
around the following square
\[
\xymatrix@C=40pt{
H^s(F_p,E_t  X) \ar@{=>}[d]_{ADSS} \ar@{=>}[r]^-{HFPSS} & \pi_{t-s}(E^{hF_p}\wedge X) \ar@{=>}[d]^{TDSS} \\
H_c^{s+p}(\SS_2^1,E_t X) \ar@{=>}[r]_-{ANSS} & \pi_{t-s-p}(E^{h\SS_2^1}\wedge X).
}
\]
We hope the acronyms are clear. In general these two methods have a relatively complicated relationship, but under 
certain hypotheses we can deduce some information. The hypotheses of the following result are crafted to ensure
there are no exotic jumps of filtration in related Adams-Novikov Spectral Sequences. The proof
is exactly the same as for \cite[Lemma 2.5.11]{BGH}.

\begin{lem}\label{lem:chasing-detectors} Let $x \in \pi_{n} (E^{h\SS_2^1}\wedge X)$. Suppose
\begin{enumerate}

\item the class $x$ is detected by $\alpha \in H^{p}(\SS_2^1,E_tX)$ in the ANSS, so $n=t-p$;

\item the class $x$ is detected by $y \in \pi_{t}(E^{hF_p}\wedge X)$ in the TDSS; and,

\item the class $y$ is detected by $\beta \in H^{0}(F_p,E_tX)$ in the HFPSS.
\end{enumerate}
Then $\alpha$ is detected by $\beta$ in the ADSS.
\end{lem}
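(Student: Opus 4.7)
The plan is to realize both the ANSS for $E^{h\SS_2^1}\wedge X$ and the TDSS as arising from a single bifiltered spectrum, and then read the compatibility of detection off the resulting bispectral sequence. Concretely, I would apply the Adams-Novikov cobar construction $E^{\wedge \bullet +1}\wedge(-)$ level-wise to the topological duality tower which converges to $E^{h\SS_2^1}\wedge X$. This produces a tower of cosimplicial $K(2)$-local spectra, which we can filter and totalize in two orders. Totalizing the cosimplicial direction first recovers the duality tower and gives the TDSS, with each column $\pi_t(E^{hF_s}\wedge X)$ computed by its own HFPSS $H^s(F_p,E_tX)\Rightarrow \pi_{t-s}(E^{hF_p}\wedge X)$. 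Totalizing the tower direction first gives, at the $E_1$-level, the algebraic duality complex $\map(\GG_2/F_\bullet,E_\ast X)$ whose homology is $H^\ast(\SS_2^1,E_\ast X)$ by \eqref{eq:dual-res-mor-mod}; the outer spectral sequence is then the ANSS for $E^{h\SS_2^1}\wedge X$, and the ADSS appears precisely as the inner spectral sequence at this second iteration.

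With the bispectral sequence in hand, I would translate each hypothesis into a statement about the bi-filtration. Hypothesis (2) places $x$ in topological duality filtration $\geq p$, with its leading symbol being $y\in \pi_t(E^{hF_p}\wedge X)$ in bidegree $(p,\ast)$. Hypothesis (3) then sharpens this inside the $p$-th column to HFPSS filtration $0$, so that $x$ is detected in total bidegree $(p,0)$ by the class $\beta\in H^0(F_p,E_tX)$, viewed as sitting in ADSS position $E_1^{0,p}$. Hypothesis (1) records that, when the bispectral sequence is re-indexed with ANSS outer and ADSS inner, the total ANSS filtration of $x$ equals $p$ and is detected by $\alpha\in H^p(\SS_2^1,E_tX)$. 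The convergence of two spectral sequences from a common bifiltration then forces $\alpha$ to be represented in the ADSS by the image of $\beta$ under the edge homomorphism $H^0(F_p,E_tX)\to H^p(\SS_2^1,E_tX)$; in particular $\beta$ is an ADSS-permanent cycle detecting $\alpha$, which is the claim.

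The main obstacle is excluding exotic jumps between the two filtrations when passing from the bispectral sequence back to the individual ANSS and TDSS. The worry is that $\beta$ could a priori be an ADSS boundary, hit by a class in a column with $q<p$ and nontrivial HFPSS filtration, in which case the bispectral representative of $x$ would lie in a genuinely higher AN filtration than $p$. This is precisely what the sharpness of hypotheses (1) and (3) rules out: the fact that the total AN filtration of $x$ is exactly $p$, combined with the fact that the HFPSS filtration of $y$ is exactly $0$, pins the bispectral representative of $x$ to the corner $(p,0)$ of the $E_1$-page and prevents such a jump. Once these sharpness statements are in place, the remainder is a diagram chase through cosimplicial and tower boundaries, carried out exactly as in the proof of \cite[Lemma 2.5.11]{BGH}, which identifies the ANSS detector of $x$ with the image of its TDSS-HFPSS detector under the ADSS edge map.
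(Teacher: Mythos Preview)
Your proposal is correct and takes essentially the same approach as the paper: the paper gives no independent argument but simply says the proof is exactly that of \cite[Lemma 2.5.11]{BGH}, and you have sketched precisely that bifiltration/bispectral-sequence argument (and cite the same lemma). Your discussion of why the sharpness hypotheses (1) and (3) rule out exotic filtration jumps is exactly the point the paper alludes to when it says the hypotheses are ``crafted to ensure there are no exotic jumps of filtration.''
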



\section{An upper bound for the $G_{48}$-orientable elements}

We continue to examine the subgroup filtration
\[
\kappa(\GG_2^1) \subseteq \kappa(G_{48}) \subseteq \kappa_2.
\]
of $\kappa_2$ given in \Cref{defn:subgroup-filt}. In this section we give an upper bound on the filtration quotient
$\kappa(G_{48})/\kappa(\mathbb{G}_2^1)$. This group is the group of elements in $\kappa_2$ which have an
$E^{hG_{48}}$-orientation, but cannot be given  an $E^{h\GG_2^1}$-orientation. We will show that
there is an injective homomorphism $\kappa(G_{48})/\kappa(\mathbb{G}_2^1) \to \ZZ/2$, hence this group can 
be at most of order $2$. In the next section we will show that this group has order 2, by constructing and examining a 
non-zero element in $\kappa(G_{48})/\kappa(\mathbb{G}_2^1)$. In some sense, these two sections contain some of
the main innovations of this paper, as it is here we depart considerably from the program laid out in
\cite{GHMRPicard} at the prime $3$. 

Recall from \Cref{defn:subgroupfilt} that $\kappa(K) \subseteq \kappa_2$ is the subgroup of invertible spectra
$X$ with an orientation class $z \in \pi_0(E^{hK} \wedge X)$; then $z$ extends to an equivalence
$\varphi_z: E^{hK} \simeq E^{hK} \wedge X  $  of $E^{hK}$-modules and defines a $\GG_2$-invariant
generator $\iota_X \in E_0X$. As in \Cref{defn:descent-filt}, we introduce a homomorphism
\begin{equation}\label{eq:phi31}
\phi_3^1: \kappa_2 \to  H^{3}(\GG_2^1, E_2) \cong \Z/2\{\wchi\eta\}
\end{equation}
by the equation
\[
d_3(\iota_X) = \phi_3^1(X)\iota_X
\]
where $d_3$ is the differential in the homotopy fixed point spectral sequence
\[
H^s(\GG_2^1,E_tX) \Longrightarrow \pi_{t-s}(E^{h\GG_2^1} \wedge X).
\]
We will prove that $\kappa(\GG_2^1)$ is the kernel of the composition
\[
\xymatrix{
\kappa(G_{48}) \ar[r]^-{\subseteq} & \kappa_2 \ar[r]^-{\phi_3^1} & H^{3}(\GG_2^1, E_2) \cong \Z/2\{\wchi\eta\}
}
\]
and thus obtain an injective group  homomorphism $ \kappa(G_{48})/\kappa(\GG_2^1) \rightarrow \Z/2$. 
The names of various elements in cohomology were explained in Section 7. See \Cref{fig:HFPSSG2}.
 
The main idea is this. Suppose we are  given $X\in \kappa(G_{48})$. If we can factor the orientation class  
\[
\xymatrix{
&E^{h\GG_2^1} \wedge X \ar[d]\\
S^0 \ar@{-->}[ur] \ar[r]_-z & E^{hG_{48}} \wedge X
}
\]
then we would have $X \in \kappa(\GG_2^1)$. Note that $\kappa(\GG_2^1) \cong \kappa(\SS_2^1) $ by \Cref{prop:galois-doesnt-matter}, so it is equivalent to lift to $E^{h\SS_2^1}$. We will examine the obstructions to this latter lifting the using the
Topological Duality resolution.

\subsection{Untwisting the Topological Duality Resolution} \label{sec:untwistduality}

A key observation for this section is that if $X \in \kappa(G_{48})$ then, the first two pages of 
the Topological Duality spectral sequences for $S^0$ and for $X$ agree. The claim for the first page is clear, and the point of this 
subsection is to prove the statement for the second pages.

The Topological Duality resolution was defined and discussed in \Cref{subsec:TDSS}. In the next result,
the groups $F_i \subseteq \GG_2$ are the finite subgroups that appear in the Duality
resolution: $F_0=F_3=G_{24}$ and $F_1=F_2=C_6$. The resolution itself is
\[
\xymatrix{
E^{h\SS_2^1} \ar[r] & E^{hG_{24}} \ar[r]^-{d} & E^{hC_6} \ar[r]^-{d}  &
	E^{hC_6}  \ar[r]^-{d} & \Sigma^{48}E^{hG_{24}}\ .
}
\]
To prove our untwisting result, we need some facts about the Hurewicz homomorphism.

\begin{lem}\label{lem:ag-duality-hurew}
Let $H_1, H_2 \in \{C_6, G_{24}\}$.
The Hurewicz map
\[ \pi_tF(E^{hH_1}, E^{hH_2}) \to  \Hom_{\Mor}(  E_0E^{hH_1}, E_tE^{hH_2})\]
is 
\begin{enumerate}
\item injective if $t\equiv 0$ modulo $48$, and 
\item bijective if $t=0$.
\end{enumerate}
\end{lem}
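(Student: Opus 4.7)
The plan is to use the descent (equivalently, the $E$-based Adams) spectral sequence
\[ E_2^{s,t} = \Ext_{\Mor}^{s,t}(E_* E^{hH_1},\, E_* E^{hH_2}) \Longrightarrow \pi_{t-s} F(E^{hH_1}, E^{hH_2}), \]
whose edge homomorphism onto $E_2^{0,t} \cong \Hom_{\Mor}(E_0 E^{hH_1},\, E_t E^{hH_2})$ is exactly the Hurewicz map from the statement. Using the identification $E_* E^{hH_i} \cong \map(\GG_2/H_i, E_*)$ from \Cref{rem:mormodulehom}, together with Shapiro's lemma applied on the $H_1$-induced side, the $E_2$-page decomposes as
\[ E_2^{s,t} \cong \prod_{g \in H_2 \backslash \GG_2 / H_1} H^s\bigl(H_2 \cap g H_1 g^{-1},\, E_t\bigr). \]
Each summand is the $E_2$-page of a homotopy fixed point spectral sequence for a finite subgroup of $\GG_2$, and these have been thoroughly analyzed in \cite{BBGHPScoh,BobkovaGoerss} and the preceding sections.

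For bijectivity at $t = 0$, surjectivity onto $E_\infty^{0,0}$ is automatic because the $(0,0)$-position is never the target of a differential; it then suffices to match $E_\infty^{0,0}$ with $E_2^{0,0}$ and with $\Hom_{\Mor}(E_0 E^{hH_1}, E_0 E^{hH_2})$, which is tautological. Injectivity requires $F^1 \pi_0 F(E^{hH_1}, E^{hH_2}) = 0$, equivalently $E_\infty^{s,s} = 0$ for every $s \geq 1$. This is a statement about the $0$-stem of the HFPSS for each finite subgroup $H_2 \cap gH_1g^{-1}$ appearing above; for the candidate intersections (subgroups of $G_{24}$), the requisite vanishing is recorded in the analog of \Cref{cor:cohomologyG21filtrationvanishing}, which assembles the known permanent cycles in filtration $s=s$ and accounts for all contributions to $\pi_0$.

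For injectivity at $t = 48m$ with $m \neq 0$, I would propagate the vanishing from $t=0$ via the algebraic periodicity class $\Delta^2 \in H^0(G_{24}, E_{48})$ arising from the discriminant of the universal deformation (cf.\ \Cref{rem:d-is-delta}). Since $\Delta$ is invertible in $E_*$ and the action of $\Delta^{2m}$ is compatible with restriction to any finite subgroup of $G_{24}$, multiplication by $\Delta^{2m}$ gives an isomorphism $E_2^{s,t} \to E_2^{s, t+48m}$ that commutes with the spectral sequence differentials, hence induces an isomorphism $E_\infty^{s,t} \to E_\infty^{s,t+48m}$. Taking $t = s$, the vanishing $E_\infty^{s,s} = 0$ for $s \geq 1$ established above implies $E_\infty^{s, s+48m} = 0$ for $s \geq 1$, which forces $F^1 \pi_{48m} F = 0$ and yields the desired injectivity. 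Bijectivity does not propagate because $\Delta^2$ is not a permanent cycle in the HFPSS for $E^{hG_{24}}$---the topological period is $192 = 8 \cdot 24$, not $48$---so $\Ext^{0,48m}$ acquires classes outside the image of the edge map.

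The main obstacle is the vanishing of $E_\infty^{s,s}$ for $s \geq 1$ in the $0$-stem of the assembled spectral sequence. The $E_2$-page is accessible from known cohomology calculations, but one must verify that across all double-coset summands every positive-filtration contribution to $\pi_0$ gets killed by a differential. The saving grace is that this is a $0$-stem HFPSS statement for finite subgroups of $G_{48}$, all of which have been analyzed in depth; a secondary technicality is checking that the continuous double-coset decomposition of $\map(\GG_2/H_1, E_*)$ interacts correctly with the spectral sequence, but this is standard once $H_1$ and $H_2$ are both finite, hence open-closed, in $\GG_2$.
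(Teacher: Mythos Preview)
Your overall strategy---reduce via the double-coset decomposition to the homotopy fixed point spectral sequences for finite subgroups $F \subseteq G_{24}$, then check that the edge homomorphism $\pi_t E^{hF} \to H^0(F,E_t)$ is injective or bijective---is exactly the paper's approach. The paper packages this using the identification $F(E^{hH_1},E^{hH_2}) \simeq E[\![\GG_2/H_1]\!]^{hH_2}$ from \cite{ghmr}, but the substance is the same.

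There is, however, a genuine gap in your propagation step for $t \equiv 0 \pmod{48}$. You claim that multiplication by $\Delta^{2m}$ gives an isomorphism $E_2^{s,t} \to E_2^{s,t+48m}$ that \emph{commutes with the spectral sequence differentials}. This is false: $\Delta^2$ is only an algebraic periodicity class, not a permanent cycle, and the Leibniz rule gives
\[
d_5(\Delta^2 \cdot x) = d_5(\Delta^2)\cdot x + \Delta^2 \cdot d_5(x) = 2\Delta\,\kappabar\nu \cdot x + \Delta^2\, d_5(x),
\]
with $2\Delta\,\kappabar\nu \ne 0$ in $E_5$ (compare \Cref{rem:setting-up-dr}, where $\Delta^{-1}\kappabar\nu$ has order~$4$). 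So multiplication by $\Delta^2$ does not commute with $d_5$, and your argument that $E_\infty^{s,s} = 0$ implies $E_\infty^{s,s+48m} = 0$ breaks down. Put differently: the fact that $\Delta^2 \in H^0(G_{24},E_{48})$ is invariant does \emph{not} produce a $G_{24}$-equivariant map $\Sigma^{48}E \to E$ of spectra---the nonvanishing of $d_5(\Delta^2)$ is precisely the obstruction.

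The paper's fix is more direct and avoids this trap entirely: rather than propagate from $t=0$, it simply cites the known HFPSS computations (\cite{BobkovaGoerss}, \cite{tbauer}, \cite{tmfbook}, etc.) to observe that for every $F \subseteq G_{24}$ and every $t \equiv 0 \pmod{48}$, all classes in $\pi_t E^{hF}$ have Adams--Novikov filtration~$0$. This is a finite check across a handful of well-understood spectral sequences, and it immediately gives both the injectivity for $t \equiv 0 \pmod{48}$ and the bijectivity at $t=0$.
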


\begin{proof}
By \cite[Proposition 2.7]{ghmr} we have a commutative diagram
\[\xymatrix{
\pi_tE[\![\GG_2/H_1]\!]^{hH_2} \ar[r] \ar[d]^-{\cong}& (E_t[\![\GG_2/H_1]\!])^{H_2} \ar[d]^-{\cong}\\
\pi_tF(E^{hH_1}, E^{hH_2}) \ar[r] & \Hom_{\Mor}(E_0E^{hH_1}, E_tE^{hH_2})
}\]
where the horizontal maps are the Hurewicz homomorphism. The top arrow is a limit of Hurewicz maps 
\begin{align}\label{eq:hurF}
\pi_t E^{hF} \to H^0(F, E_t)
\end{align}
for various subgroups $F \subseteq G_{24}$ (see \cite[Sec. 2]{BobkovaGoerss}, \cite{BBHS}, \cite{tbauer}, 
\cite{tmfbook} and \cite{MR} for computation of $\pi_* E^{hF}$). The maps \eqref{eq:hurF} are all injective under our 
condition that $t\equiv 0\mod 48$. Indeed, any class in $\pi_{t} E^{hF}$ has Adams--Novikov filtration $0$ in these 
cases. Furthermore, when $t=0$, the Hurewicz maps  \eqref{eq:hurF}  are isomorphisms for $F \subseteq G_{24}$. This 
proves the claims.
\end{proof}

Now let $X \in \kappa(G_{48})$ and let
\[
z \in \pi_0(E^{hG_{48}}\wedge X)
\]
be a choice of $E^{hG_{48}}$-orientation. For $0 \leq i \leq 3$ we get an induced orientation
\[
z \in \pi_0(E^{hF_i} \wedge X)
\]
and hence an equivalence $\varphi_i : E^{hF_i} \to E^{hF_i} \wedge X$ of $E^{hF_i}$-modules. 

 \begin{prop}\label{prop:untwist-the-TDSS-1} The following diagram commutes up to homotopy 
 \[
\xymatrix@C=30pt{
E^{hG_{24}} \ar[r]^-{d} \ar[d]_{\varphi_0} & E^{hC_6} \ar[r]^-{d} \ar[d]_{\varphi_1}  &
	E^{hC_6}  \ar[r]^-{d} \ar[d]^{\varphi_2} & \Sigma^{48}E^{hG_{24}} \ar[d]^{\Sigma^{48} \varphi_3}\\
E^{hG_{24}} \wedge X \ar[r]^-{d\wedge 1_X} & E^{hC_6} \wedge X \ar[r]^-{d\wedge 1_X} &
	E^{hC_6} \wedge X   \ar[r]^-{d\wedge 1_X} & \Sigma^{48}E^{hG_{24}} \wedge X.
}
\]
\end{prop}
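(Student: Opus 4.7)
The plan is to deduce each of the three square commutations from two tools already in place: \Cref{lem:orientations-nat-2}, which provides $E_*$-homology untwisting along algebraic maps, and \Cref{lem:ag-duality-hurew}, which detects homotopy classes of maps between the fixed-point spectra $E^{hF_i}$ via their effect on $E_*$-homology in degrees congruent to $0$ modulo $48$.

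First I would set up the naturality input. Since $X \in \kappa(G_{48})$ and each $F_i \subseteq G_{48}$, we may take $g_1 = g_2 = e$ in the hypotheses of \Cref{lem:orientations-nat-2}, so that the induced orientations $z \in \pi_0(E^{hF_i} \wedge X)$ are exactly the restrictions of the chosen $E^{hG_{48}}$-orientation and the $\varphi_i$ of the present proposition coincide with the $\varphi_{g_i z}$ there. Since each connecting map $d$ in the Topological Duality Resolution is algebraic for the discriminant periodicity class $\Delta$ (as recorded after \eqref{eq:top_dual-res-def}), \Cref{lem:orientations-nat-2} applies square by square and shows that each of the three squares commutes after applying $E_*$.

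Next I would translate this algebraic information into a homotopy-level statement by measuring the obstruction. For each $i$, set
\[
\delta_i := (d \wedge 1_X) \circ \varphi_i - \Sigma^k \varphi_{i+1} \circ d \colon E^{hF_i} \longrightarrow \Sigma^k E^{hF_{i+1}} \wedge X,
\]
where $k = 0$ for the first two squares and $k = 48$ for the third. Because $\varphi_{i+1}$ is an equivalence of $E^{hF_{i+1}}$-module spectra, we may compose $\delta_i$ with its $E^{hF_{i+1}}$-module inverse to obtain a class $\widetilde\delta_i \in [E^{hF_i}, \Sigma^k E^{hF_{i+1}}] \cong \pi_{-k} F(E^{hF_i}, E^{hF_{i+1}})$. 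The first step guarantees that $E_*(\widetilde\delta_i) = 0$.

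Finally, since $-k \in \{0, -48\}$ is divisible by $48$, \Cref{lem:ag-duality-hurew} shows that the Hurewicz homomorphism on $\pi_{-k} F(E^{hF_i}, E^{hF_{i+1}})$ is injective (indeed, bijective when $k=0$). Hence $\widetilde\delta_i = 0$, so $\delta_i = 0$, proving the desired commutativity. The main subtlety of the argument is that it depends essentially on the shift in the last map of the duality resolution being exactly $48$: if it were any non-multiple of $48$, the Hurewicz map might fail to be injective in the relevant degree and the strategy of detecting the difference homologically would break down.
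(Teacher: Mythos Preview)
Your proof is correct and follows essentially the same approach as the paper: reduce to checking commutativity on $E_*$-homology via \Cref{lem:ag-duality-hurew} (using the equivalences $\varphi_i$ to identify the target with a mapping space between fixed-point spectra), and then verify $E_*$-commutativity by \Cref{lem:orientations-nat-2}. The paper states this in two sentences while you spell out the intermediate step of composing with $\varphi_{i+1}^{-1}$ and tracking the degree, but the substance is the same.
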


\begin{proof} By \Cref{lem:ag-duality-hurew} and the fact that $E^{hF}\wedge X \simeq E^{hF}$ for $F=C_6, G_{24}$,
we need only check that the diagram commutes after we apply $E_\ast(-)$.
This follows from \Cref{lem:orientations-nat-2}.
\end{proof}

Note we have made no claim about extending \cref{prop:untwist-the-TDSS-1} to the augmentation from
$E^{h\SS_2^1} \to E^{hG_{24}}$; indeed, the existence of a map completing the diagram
\[
\xymatrix{
E^{h\SS_2^1} \ar[r] \ar@{-->}[d] & E^{hG_{24}} \ar[d]^{\varphi_0}\\
E^{h\SS_2^1} \wedge X \ar[r] & E^{hG_{24}} \wedge X
}
\]
 is equivalent to the assertion that 
$X \in \kappa(\SS_2^1)$.

As an immediate consequence of  \cref{prop:untwist-the-TDSS-1}, we have the following conclusion.
\begin{cor}\label{cor:tdss-agrees}  Let $X \in \kappa(G_{48})$ and let $z \in \pi_0(E^{hG_{48}}\wedge X)$
be a choice of an $E^{hG_{48}}$-orientation. Then $z$ determines an isomorphism between the $E_2$-term of the 
Topological Duality spectral sequence for $S^0$ and the $E_2$-term of the Topological Duality
spectral sequence for $X$.  
\end{cor}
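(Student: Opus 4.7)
The plan is to deduce this statement directly from \Cref{prop:untwist-the-TDSS-1} in two short steps.

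First, I would identify how the orientation $z$ produces an isomorphism of $E_1$-pages. By construction, the $E_1$-term of the Topological Duality spectral sequence is
\[
E_1^{s,t}(X) = \pi_t(E^{hF_s} \wedge X),
\]
and in particular $E_1^{s,t}(S^0) = \pi_t E^{hF_s}$. The orientation $z$ restricts to an $E^{hF_s}$-orientation for each $0 \leq s \leq 3$, and each such restricted orientation extends to an equivalence $\varphi_s \colon E^{hF_s} \xrightarrow{\simeq} E^{hF_s} \wedge X$ of $E^{hF_s}$-modules by \Cref{prop:detect-kappa-ANSS-bis}. Taking homotopy groups, the maps $(\varphi_s)_\ast$ collectively assemble into an isomorphism of bigraded abelian groups
\[
E_1^{\ast,\ast}(S^0) \xrightarrow{\cong} E_1^{\ast,\ast}(X).
\]

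Second, I would verify that this isomorphism commutes with the $d_1$-differentials. The $d_1$-differentials of the Topological Duality spectral sequence for $X$ are induced, up to a sign convention built into the construction of the spectral sequence from the tower, by smashing the resolution maps $d \colon E^{hF_s} \to E^{hF_{s+1}}$ (with appropriate suspensions) with the identity on $X$. But \Cref{prop:untwist-the-TDSS-1} gives precisely the commutativity, up to homotopy, of each square
\[
\xymatrix{
E^{hF_s} \ar[r]^-{d} \ar[d]_{\varphi_s} & E^{hF_{s+1}} \ar[d]^{\varphi_{s+1}} \\
E^{hF_s} \wedge X \ar[r]_-{d\wedge 1_X} & E^{hF_{s+1}} \wedge X,
}
\]
with the final square carrying the $\Sigma^{48}$ suspension on the target as in \Cref{subsec:TDSS}. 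Applying $\pi_t$ to these diagrams shows that the isomorphism on $E_1$-pages is a chain map with respect to $d_1$, and so descends to an isomorphism on $E_2$-pages.

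The main ``obstacle'' has already been handled: it is precisely \Cref{prop:untwist-the-TDSS-1}, whose proof in turn relied on showing that the resolution maps $d$ are algebraic in the sense of \Cref{def:alg-mor} (recalled from \Cref{subsec:TDSS}) and invoking \Cref{lem:orientations-nat-2}. Given that input, the present statement is essentially a formal consequence, requiring only the observation that the $E_1$-isomorphism defined by $z$ is compatible with the construction of $d_1$ from the resolution. No auxiliary calculation is needed.
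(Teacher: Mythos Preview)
Your proposal is correct and matches the paper's approach exactly: the paper states this corollary as an immediate consequence of \Cref{prop:untwist-the-TDSS-1}, and your two steps (isomorphism on $E_1$-pages via the $\varphi_s$, then compatibility with $d_1$ from the commuting squares) are precisely the unpacking of that implication.
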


\subsection{Analyzing the obstruction} Let $X \in \kappa(G_{48})$ and let $z \in \pi_0(E^{hG_{48}} \wedge X)$
be an $E^{hG_{48}}$-orientation. If $F \subseteq G_{48}$ is any subgroup, we write
$z \in \pi_0(E^{hF} \wedge X)$ for the induced $E^{hF}$- orientation. If $F$ is the trivial subgroup we will also write
$\iota_X \in E_0X$ for the induced $\GG_2$-invariant generator.

We now examine the obstructions to lifting $z:S^0 \to E^{hG_{24}} \wedge X$
up the duality resolution tower
\[
\xymatrix{
&E^{h\SS_2^1} \wedge X\ar[d]\\
&X_2 \ar[r] \ar[d]& \Sigma^{46} E^{hG_{24}}\\
&X_1 \ar[r] \ar[d]& \Sigma^{-1} E^{hC_{6}}\wedge X\\
S^0 \ar[r]_-z \ar@{-->}[ur] \ar@{-->}[uur] \ar@{-->}[uuur]&
		E^{hG_{24}} \wedge X \ar[r] & E^{hC_{6}}\wedge X\ .
}
\]
This is equivalent to computing $d_r(z)$ in the Topological Duality spectral sequence
\[
E_1^{s,t}(X) = \pi_tE^{hF_s}\wedge X \Longrightarrow \pi_{t-s}E^{h\SS_2^1}\wedge X.
\]
Note that this is a spectral sequence of $\WW$-modules as $\pi_0E^{h\mathbb{S}_2^1}$ acts on the spectral sequence.

We first record some results of \cite{BGH} about the Topological Duality spectral sequence for $S^0$. See Remark 9.1.5, Theorem 9.1.7, and
especially Figure 6 of \cite{BGH}, which we also included in \cref{fig:ADSSE2}.

\begin{lem}[{\cite[\S 9]{BGH}}]\label{lem:TDSSsphereSmall}
In the Topological Duality spectral sequence for $S^0$, we have
\begin{enumerate}

\item $E_2^{0,0}(S^0) \cong \W$ generated by a class detecting the unit element of $\pi_0E^{h\SS_2^1}$.

\item $E_2^{2,0}(S^0) \cong \WW/2$ generated by a class $\overline{b}_0$ detecting the class $\wchi$. 

\item $E_2^{2,1}(S^0) \cong \WW/2$ generated by class $\overline{b}_0\eta$ detecting the class $\wchi\eta$.

\item $E_2^{3,2}(S^0) = 0$.
\end{enumerate}
\end{lem}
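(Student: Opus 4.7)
The plan is to extract these statements from the analysis of the Topological Duality spectral sequence (TDSS) for the sphere in Section 9 of \cite{BGH}, especially Remark 9.1.5, Theorem 9.1.7, and Figure 6, reproduced as \Cref{fig:ADSSE2}. The three main steps are: identify the $E_1$-page in the relevant bidegrees, compute the $d_1$-differentials, and use \Cref{lem:chasing-detectors} to verify detection.

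First I would list $E_1^{s,t}(S^0) = \pi_t E^{hF_s}$ in low bidegrees, where $F_0 = G_{24}$, $F_1 = F_2 = C_6$, and the $s = 3$ column contributes $\pi_{t-48} E^{hG_{24}}$ because of the $48$-fold suspension in \eqref{eq:top_dual-res-def}. Using the well-known calculations of $\pi_* E^{hG_{24}}$ (the $K(2)$-localization of $tmf$) and $\pi_* E^{hC_6}$ from \cite{tbauer,tmfbook,BBHS,MR}, I would record that $\pi_0 E^{hG_{24}} \cong \pi_0 E^{hC_6} \cong \W$, that $\pi_1 E^{hC_6} \cong \W/2$ (generated by $\eta$), that $\pi_2 E^{hC_6} = 0$, and that $\pi_{-46} E^{hG_{24}} = 0$. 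The last two vanishings yield part (4) at once, since $E_2^{3,2}$ sits in the complex $E_1^{2,2} \to E_1^{3,2}$ which is $0 \to 0$.

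Next, I would analyze the relevant $d_1$-differentials using the explicit formulas for the boundary maps in the duality resolution proven in \cite{BobkovaGoerss} and \cite{BeaudryRes}. The unit of $\pi_0 E^{hG_{24}}$ is a permanent cycle that must detect the unit of $\pi_0 E^{h\SS_2^1} \cong \W$. Since this accounts for all of $\pi_0 E^{h\SS_2^1}$ by \cite[Theorem 9.1.7]{BGH}, there can be no higher TDSS filtration on the unit, so $d_1: E_1^{0,0} \to E_1^{1,0}$ vanishes on the unit and $E_2^{0,0} \cong \W$, giving (1). For parts (2) and (3), the relevant $d_1: E_1^{1,t} \to E_1^{2,t}$ for $t = 0, 1$ works out to be multiplication by $2$ (up to a unit of $\W$) on the generator $1$ and on $\eta$ respectively; this identifies the cokernels as copies of $\W/2$ and names generators $\overline{b}_0$ and $\overline{b}_0\eta$.

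Finally, for the detection statements, I would apply \Cref{lem:chasing-detectors}. The class $\overline{b}_0 \in E_\infty^{2,0}(S^0)$ detects a nonzero element of $\pi_{-2} E^{h\SS_2^1}$ in TDSS filtration $2$. Matching with the Adams--Novikov spectral sequence, whose $E_2$-term $H^2(\SS_2^1, E_0) \cong \W/2$ is generated by $\wchi$ by \cite{BBGHPScoh}, forces $\overline{b}_0$ to correspond to $\wchi$; the same argument in one higher stem handles $\overline{b}_0 \eta$ and $\wchi \eta$. The main obstacle will be keeping track of filtrations across the three interrelated spectral sequences (TDSS, ANSS, HFPSS), which is exactly what \Cref{lem:chasing-detectors} was designed to handle, so the work reduces to careful extraction from \cite{BGH}.
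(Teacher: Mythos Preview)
Your overall strategy---read the statements off from the computations in \cite[\S 9]{BGH}---is exactly what the paper does; indeed, the paper gives no proof at all beyond the citation to Remark~9.1.5, Theorem~9.1.7, and Figure~6 of \cite{BGH}. So at the level of approach there is nothing to compare.

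However, your sketch of the extraction contains factual errors that would make the argument fail as written. You assert that $\pi_0 E^{hG_{24}} \cong \pi_0 E^{hC_6} \cong \W$, but these are power series rings over $\W$ (for instance $\pi_0 E^{hG_{24}} \cong \W[[j]]$, cf.\ the description of $H^0(G_{48},E_\ast)$ in \eqref{eq:coho-g48}). Consequently the $d_1$-differentials on the $t=0$ and $t=1$ lines are not simply ``multiplication by $2$ on $\W$''; one genuinely needs the explicit analysis of the resolution maps carried out in \cite{BeaudryRes,BobkovaGoerss,BGH}. You also write $\pi_0 E^{h\SS_2^1} \cong \W$, which is false: by \Cref{lem:name-the-elements} and Galois descent it is $\W \oplus \W/8 \oplus \W/4$, so ``the unit accounts for all of $\pi_0$'' is not a valid argument for $E_2^{0,0} \cong \W$. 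Finally, the detection statements in (2) and (3) are assertions about the \emph{Algebraic} Duality spectral sequence (that $\overline{b}_0$ detects $\wchi$ there), established directly in \cite[Lemma~5.2.10]{BGH}; \Cref{lem:chasing-detectors} goes in a different direction and is not the tool for this step. None of this is fatal---the correct statements are all in the cited references---but your write-up should simply cite them rather than sketch an argument with wrong inputs.
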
 

\begin{figure}[h]
\center
\includegraphics[width=0.5\textwidth]{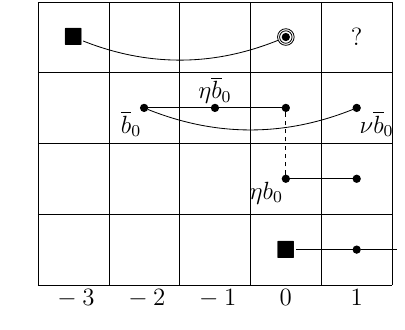}
\captionsetup{width=\textwidth}
\caption{The $E_2$-page of the Topological Duality spectral sequence for $S^0$. The vertical axis is the
Topological Duality spectral sequence filtration $s$ and the horizontal axis is $t-s$. Further,
$\blacksquare = \WW$, $\bullet=\F_4$, and the circled bullet represents $E_0^{3,3} \cong \WW/8$.
Horizontal lines are $\eta$-multiplications and curved lines are $\nu$-multiplications.}
\label{fig:ADSSE2}
\end{figure}

\begin{prop}\label{prop:keylemobstruct}
Let $X \in \kappa(G_{48})$ and let $E_r^{s,t}(X)$ be the Topological Duality spectral sequence
\[
E_1^{s,t}(X) = \pi_t(E^{hF_s}\wedge X) \Longrightarrow \pi_{t-s}(E^{h\SS_2^1}\wedge X).
\]
Let $z \in \pi_0(E^{hG_{24}}\wedge X) = E_1^{0,0}$ be the induced $E^{hG_{24}}$-orientation.
Then 
\begin{enumerate}
\item $d_1(z) = 0$,
\item
$d_2(z) \in E_2^{2,1} = \WW/2\{\eta\overline{b}_0z\}$,
\item if $d_2(z) = 0$, then $z$ is a permanent cycle and $X \in \kappa(\GG_2^1) \cong \kappa(\SS_2^1)$, and
\item if $d_2(z) \ne 0$, then $\pi_{-1}(E^{h\SS_2^1} \wedge X) = 0$.
\end{enumerate}
\end{prop}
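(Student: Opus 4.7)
My plan is to leverage \Cref{cor:tdss-agrees}: an $E^{hG_{48}}$-orientation $z$ of $X$ produces compatible orientations $\varphi_i : E^{hF_i} \xrightarrow{\simeq} E^{hF_i}\wedge X$ intertwining the duality resolution differentials, and hence an isomorphism $E_2^{s,t}(S^0) \xrightarrow{\cong} E_2^{s,t}(X)$ sending $1 \mapsto z$. With this in hand, all four parts reduce to explicit computations in the sphere's TDSS recorded in \Cref{lem:TDSSsphereSmall} and \Cref{fig:ADSSE2}.

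For (1), I would note that in the sphere's TDSS the unit $1 \in E_1^{0,0} = \pi_0 E^{hG_{24}}$ satisfies $d_1(1)=0$, since it lifts through the augmentation $E^{h\SS_2^1} \to E^{hG_{24}}$. Because \Cref{prop:untwist-the-TDSS-1} makes $d_1$ commute with the orientations, this yields $d_1(z)=0$. Part (2) is then immediate from the $E_2$-isomorphism together with \Cref{lem:TDSSsphereSmall}(3): the unique generator of $E_2^{2,1}(X) \cong \WW/2$ is $\overline{b}_0 \eta z$.

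For (3), suppose $d_2(z)=0$. Higher differentials $d_r(z)$ for $r\geq 3$ land in a subquotient of $E_2^{r,r-1}(X)$; this vanishes for $r\geq 4$ because the TDSS has only four columns, and for $r=3$ because $E_2^{3,2}(S^0)=0$ by \Cref{lem:TDSSsphereSmall}(4). Hence $z$ is a permanent cycle, lifting to some $\tilde z\in \pi_0(E^{h\SS_2^1}\wedge X)$; its image in $E_0 X$ equals $\iota_X$, a $\GG_2$-invariant generator, so $\tilde z$ is an $E^{h\SS_2^1}$-orientation of $X$, placing $X\in\kappa(\SS_2^1)=\kappa(\GG_2^1)$ by \Cref{prop:galois-doesnt-matter}.

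Part (4) will be the main obstacle. The plan is to show that every $E_\infty^{s,s-1}(X)$ contributing to $\pi_{-1}(E^{h\SS_2^1}\wedge X)$ vanishes. Via the $E_2$-isomorphism with the sphere, I would read off from \Cref{fig:ADSSE2} and \Cref{lem:TDSSsphereSmall} that the only non-vanishing entry in the column $t-s=-1$ is $E_2^{2,1}(X)\cong \WW/2\{\overline{b}_0\eta z\}$, since $E_2^{0,-1}(X)$ vanishes (as $\pi_{-1}E^{hG_{24}}=0$), $E_2^{1,0}(X)$ vanishes by inspection, and $E_2^{3,2}(X)=0$ by \Cref{lem:TDSSsphereSmall}(4). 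Under the hypothesis $d_2(z)\ne 0$, the generator $\overline{b}_0\eta z$ is a boundary, so $E_3^{2,1}(X)=0$, and the claim $\pi_{-1}(E^{h\SS_2^1}\wedge X)=0$ follows. The technical crux is ensuring $E_2^{1,0}(S^0)=0$, which I expect must be extracted from the explicit $E_2$-page pictured in \Cref{fig:ADSSE2}.
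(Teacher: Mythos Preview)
Your approach for (1)--(3) matches the paper's: invoke the isomorphism $E_2^{*,*}(S^0)\cong E_2^{*,*}(X)$ from \Cref{cor:tdss-agrees} and then read off the needed vanishing from \Cref{lem:TDSSsphereSmall} and \Cref{fig:ADSSE2}.

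For (4) there is a gap. You write that if $d_2(z)\neq 0$ then ``the generator $\overline{b}_0\eta z$ is a boundary, so $E_3^{2,1}(X)=0$.'' But $\WW = W(\FF_4)$, so $\WW/2\cong\FF_4$ has four elements, not two. The hypothesis $d_2(z)\neq 0$ only says that one nonzero element of $\FF_4$ lies in the image of $d_2$; as a purely additive statement this would only force the image to contain a $\ZZ/2$, leaving $E_3^{2,1}\cong\ZZ/2$. What you need is that the TDSS is a spectral sequence of $\WW$-modules (via the action of $\WW\subseteq\pi_0 E^{h\SS_2^1}$, as the paper notes just before \Cref{lem:TDSSsphereSmall}), so the image of $d_2$ is a nonzero $\WW$-submodule of the simple $\WW$-module $\FF_4$, hence all of it. The paper phrases this as: $\pi_{-1}(E^{h\SS_2^1}\wedge X)$ is a $\WW$-module, and since the smallest nonzero $\WW$-module is $\FF_4$, once anything in the column is killed the whole group must vanish. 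Your identification of the ``technical crux'' as the vanishing of $E_2^{1,0}(S^0)$ misplaces the difficulty; the $\WW$-linearity is the actual content of (4).
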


\begin{proof} \Cref{prop:untwist-the-TDSS-1} implies that we have an isomorphism
\[
E_2^{\ast,\ast}(S^0) \cong E_2^{\ast,\ast}(X)
\]
sending $a$ to $az$. Then (1)--(3) follow from \Cref{lem:TDSSsphereSmall} and \Cref{fig:ADSSE2}.
For point (4)  note that $\pi_{-1}(E^{h\SS_2^1} \wedge X)$ is a $\pi_0E^{h\SS_2^1}$-module
and hence a $\WW$-module. The smallest non-zero $\WW$ module is $\FF_4$; hence
if $d_2(z) \ne 0$, we must have $\pi_{-1}(E^{h\SS_2^1} \wedge X) = 0$.
\end{proof}

We also have that following result, which we will use in the proof of \Cref{prop:keypropd2meansd3}.

\begin{lem}\label{lem:keylemobstruct-1} Let $X \in \kappa(G_{48})$.  Let $z \in \pi_0(E^{hG_{24}}\wedge X)
= E_1^{0,0}$ be the induced $E^{hG_{24}}$-orientation and $\iota_X \in E_0X$ the induced
$\GG_2$-invariant generator. Then we have
\[
\pi_{-2}E^{h\SS_2^1} \wedge X \cong \WW/2.
\]
The generator $y$ is detected by
\[
\overline{b}_0\, z \in E_2^{0,2}(X)
\]
in the Topological Duality spectral sequence and by
\[
\wchi\,\iota_X \in H^2(\SS_2^1,E_0X)
\]
in the Adams-Novikov spectral sequence. 
\end{lem}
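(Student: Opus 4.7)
The plan is to compute $\pi_{-2}(E^{h\SS_2^1} \wedge X)$ via the Topological Duality spectral sequence, leveraging the untwisting afforded by the orientation $z$, and then to translate the resulting TDSS-detection of the generator into an ANSS-detection using the detection-compatibility principle of \Cref{lem:chasing-detectors}.

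First I would invoke \Cref{cor:tdss-agrees}: the orientation $z$ induces an isomorphism of $E_2$-pages of the TDSS between $S^0$ and $X$, given by multiplication by $z$. In combination with \Cref{lem:TDSSsphereSmall} and \Cref{fig:ADSSE2}, this shows that in total degree $-2$ the only non-vanishing $E_2$-group (in the relevant range $0 \leq s \leq 3$) is $E_2^{2,0}(X) \cong \WW/2 \cdot \overline{b}_0 z$ (corresponding to $\overline{b}_0$ in the $S^0$ spectral sequence); the terms $E_2^{s,s-2}(X)$ for $s \in \{0,1,3\}$ all vanish by inspection of the computation for $S^0$. Because the resolution has only four stages, any outgoing $d_r$ from bidegree $(2,0)$ lands in filtration $s \geq 4$ and thus vanishes, while the only candidate incoming differential $d_2 \colon E_2^{0,-1}(X) \to E_2^{2,0}(X)$ originates in $\pi_{-1}E^{hG_{24}} = 0$. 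Hence $\overline{b}_0 z$ is a permanent cycle, and it detects a generator $y$ of $\pi_{-2}(E^{h\SS_2^1} \wedge X) \cong \WW/2$.

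To identify the ANSS-detection of $y$, I would apply \Cref{lem:chasing-detectors} with $(p,t)=(2,0)$ and $F_p = C_6$. Condition (2) of that lemma is precisely the TDSS-detection of $y$ by $\tilde y = \overline{b}_0 z \in \pi_0(E^{hC_6} \wedge X)$ just established. For condition (3), multiplicativity of the HFPSS for $C_6$, combined with the detections of $z$ by $\iota_X \in H^0(C_6, E_0 X)$ and of $\overline{b}_0$ by itself in $H^0(C_6, E_0)$, shows that $\tilde y$ is HFPSS-detected by $\beta = \overline{b}_0\,\iota_X \in H^0(C_6, E_0 X)$. The lemma then asserts that the ANSS-detector $\alpha$ of $y$ is ADSS-detected by $\beta$; combining this with \Cref{lem:TDSSsphereSmall}(2), which identifies $\overline{b}_0$ as the ADSS-detector of $\wchi$ for $S^0$, and with the $\iota_X$-multiplicativity of the ADSS, we conclude $\alpha = \wchi\,\iota_X \in H^2(\SS_2^1, E_0 X)$, as desired.

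I expect the main obstacle to be the careful bookkeeping of filtrations across the four spectral sequences (TDSS, ANSS, HFPSS, ADSS) and verifying their mutual compatibility, in particular that the ANSS-filtration of $y$ really matches the TDSS-filtration $p = 2$ so that \Cref{lem:chasing-detectors} applies with the intended bidegree. The fact that the Topological Duality resolution has only four stages makes the differential-vanishing analysis essentially automatic, so the remaining substance lies in the multiplicative dictionary and in the application of \Cref{lem:chasing-detectors}.
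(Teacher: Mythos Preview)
Your proposal is correct and follows the same route as the paper: first read off $\pi_{-2}$ and the TDSS-detection from \Cref{lem:TDSSsphereSmall} and \Cref{fig:ADSSE2} via the $z$-multiplication isomorphism of \Cref{cor:tdss-agrees}, and then invoke \Cref{lem:chasing-detectors} together with the fact that $\overline{b}_0$ detects $\wchi$ in the Algebraic Duality spectral sequence (the paper cites \cite[Lemma 5.2.10]{BGH} for this) to obtain the ANSS-detection by $\wchi\,\iota_X$. The filtration-matching concern you flag is handled just as tersely in the paper; it is resolved by noting that $H^0(\SS_2^1,E_{-2}X)=0$ and $E_{-1}=0$ force the ANSS filtration of $y$ to be at least $2$, while the TDSS/HFPSS data (filtration $2$ with a filtration-$0$ detector) preclude anything higher.
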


\begin{proof} All but the last statement follow from \Cref{lem:TDSSsphereSmall} and \Cref{fig:ADSSE2}. 
The last statement follows from \Cref{lem:chasing-detectors} with $\beta = \overline{b}_0\,z$
and the fact that $\overline{b}_0$ detects $\wchi$ in the Algebraic Duality spectral sequence, as shown in \cite[Lemma 5.2.10]{BGH}.
\end{proof}

\Cref{prop:keylemobstruct} defines a function
\[
\kappa(G_{48}) \longrightarrow \WW/2
\]
sending $X$ to $d_2(z)$.  Because the Topological Duality spectral sequence does not have good
multiplicative properties it is not clear if this function is a homomorphism. We would also like to cut down the image
to $\ZZ/2$. To do all this, we relate it to a differential in the Adams--Novikov spectral sequence. 

\begin{prop}\label{prop:keypropd2meansd3}
Let $X \in \kappa(G_{48})$ and let $z \in \pi_0(E^{hG_{24}}\wedge X)$ be an $E^{hG_{24}}$-orientation induced by an $E^{hG_{48}}$-orientation
of $X$. Let 
\[\iota_X \in H^0(\GG_2,E_0X) \subseteq H^0(\GG_2^1,E_0X) \subseteq E_0X\] 
be the $\GG_2$-invariant generator determined
by $z$. 

Then $d_2(z) \neq 0$ in the Topological Duality spectral sequence computing $\pi_*(E^{h\SS_2^1}\wedge X)$ if and only if
\[
d_3(\iota_X) = \wchi \eta\, \iota_X \in H^3(\GG_2^1,E_2) \cong \ZZ/2\{\wchi \eta \iota_X\} 
\]
in the Adams-Novikov spectral sequence computing $\pi_*(E^{h\mathbb{G}_2^1}\wedge X)$.
\end{prop}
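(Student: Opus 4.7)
The plan is to prove the two directions separately, using \Cref{prop:keylemobstruct} as the bridge between the Topological Duality spectral sequence (TDSS) for $\pi_*(E^{h\SS_2^1}\wedge X)$ and the Adams--Novikov spectral sequence (ANSS) for $\pi_*(E^{h\GG_2^1}\wedge X)$, together with \Cref{prop:galois-doesnt-matter} to toggle between $\SS_2^1$ and $\GG_2^1$.

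For the direction $d_3(\iota_X) = \wchi\eta\,\iota_X \Longrightarrow d_2(z)\neq 0$, I would argue by contrapositive. If $d_2(z)=0$ in the TDSS, then Part (3) of \Cref{prop:keylemobstruct} yields $X \in \kappa(\SS_2^1)=\kappa(\GG_2^1)$, and then \Cref{prop:detect-kappa-ANSS} forces $\iota_X$ to be a permanent cycle in the $\GG_2^1$-ANSS. In particular, $d_3(\iota_X)=0$, contradicting the hypothesis.

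For the converse $d_2(z)\neq 0 \Longrightarrow d_3(\iota_X)=\wchi\eta\,\iota_X$, Part (4) of \Cref{prop:keylemobstruct} supplies the crucial vanishing $\pi_{-1}(E^{h\SS_2^1}\wedge X)=0$. The $\GG_2$-invariant isomorphism $E_\ast \cong E_\ast X$ determined by $\iota_X$ identifies the $E_2$-term of the $\SS_2^1$-ANSS for $X$ with $H^\ast(\SS_2^1,E_\ast)\cdot \iota_X$; by \Cref{fig:HFPSSG2} and \cite{BBGHPScoh}, the only class at total degree $-1$ and low filtration is $\wchi\eta\,\iota_X$ at $(s,t)=(3,2)$. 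Since $\pi_{-1}=0$, this class must die. A $d_r$ landing at $(3,2)$ must originate at $(3-r,3-r)$, which for $r\geq 2$ forces $r=3$ and source $(0,0)=H^0(\SS_2^1,E_0X)=\ZZ_2\{\iota_X\}$. Combined with the fact that $\wchi\eta$ is a permanent cycle in the ANSS for $S^0$ and the module structure of the ANSS for $X$ over it, $\wchi\eta\,\iota_X$ cannot support a nonzero differential unless $d_3(\iota_X)$ is itself already nonzero. Hence the only way to kill $\wchi\eta\,\iota_X$ is $d_3(\iota_X)=\wchi\eta\,\iota_X$. Galois invariance of $\iota_X$ constrains $d_3(\iota_X)\in H^3(\SS_2^1,E_2X)^{\Gal}=\FF_2\{\wchi\eta\,\iota_X\}$, so equality holds, and naturality of the ANSS under the restriction $E^{h\GG_2^1}\to E^{h\SS_2^1}$, combined with injectivity of $H^3(\GG_2^1,E_2X)\hookrightarrow H^3(\SS_2^1,E_2X)$ on the generator, transports the equality back to the $\GG_2^1$-ANSS.

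The main obstacle is the step verifying that $\wchi\eta\,\iota_X$ cannot be killed by supporting a differential prior to $d_3(\iota_X)$ being nonzero; this reduces to analyzing the multiplicative structure of the $\SS_2^1$-ANSS, using the Leibniz rule $d_r(\wchi\eta\,\iota_X)=\wchi\eta\cdot d_r(\iota_X)$ for the first nontrivial $r$ on $\iota_X$, and constraining the targets at total degree $-2$ via the known computation $\pi_{-2}(E^{h\SS_2^1}\wedge X)\cong \WW/2$ from \Cref{lem:keylemobstruct-1}.
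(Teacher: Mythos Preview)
Your overall strategy matches the paper's proof: one direction by contrapositive via \Cref{prop:keylemobstruct}(3) and \Cref{prop:detect-kappa-ANSS}, the other via $\pi_{-1}(E^{h\SS_2^1}\wedge X)=0$ from \Cref{prop:keylemobstruct}(4), with a Galois transfer between $\SS_2^1$ and $\GG_2^1$ at the end.

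There is, however, a genuine gap at exactly the point you flag as ``the main obstacle.'' Your Leibniz argument only shows that if $\wchi\eta\,\iota_X$ supports a nonzero $d_r$ then $d_r(\iota_X)\neq 0$ for that $r$; it does not force $r=3$. If $d_3(\iota_X)=0$ but $d_r(\iota_X)\neq 0$ for some $r>3$, then $\wchi\eta\,\iota_X$ could still die at page $r$ as the source of $\wchi\eta\cdot d_r(\iota_X)$, and your proposed constraint via $\pi_{-2}\cong\WW/2$ does not visibly exclude this. So the conclusion $d_3(\iota_X)=\wchi\eta\,\iota_X$ is not established.

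The clean fix is already contained in \Cref{lem:keylemobstruct-1}, which you cite but do not fully exploit. That lemma says the generator $y\in\pi_{-2}(E^{h\SS_2^1}\wedge X)$ is \emph{detected by} $\wchi\,\iota_X$ in the $\SS_2^1$-ANSS; hence $\wchi\,\iota_X$ is a permanent cycle. Multiplying by the permanent cycle $\eta$ shows $\wchi\eta\,\iota_X$ is itself a permanent cycle, so it cannot support any differential. Since $y\eta=0$ (because $\pi_{-1}=0$), the permanent cycle $\wchi\eta\,\iota_X$ must be a boundary. A $d_r$ hitting bidegree $(3,2)$ has source $(3-r,3-r)$, forcing $r=3$ and the source to be $H^0(\SS_2^1,E_0X)$; thus $d_3(\iota_X)$ is a nonzero multiple of $\wchi\eta\,\iota_X$. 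The injection $H^3(\GG_2^1,E_2X)\hookrightarrow H^3(\SS_2^1,E_2X)$ onto Galois invariants then pins this down to $d_3(\iota_X)=\wchi\eta\,\iota_X$ in the $\GG_2^1$-ANSS. This is precisely the paper's argument, and it replaces your Leibniz-and-$\pi_{-2}$ paragraph by a single line.
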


\begin{proof}  We use \Cref{lem:keylemobstruct-1}. Let $y \in \pi_{-2}(E^{h\SS_2^1}\wedge X)$ be the class detected
by $\overline{b}_0 z$ in the Topological Duality spectral sequence and $\wchi\iota_X$ in the Adams-Novikov 
spectral sequence. Suppose
\[
d_2 (z) = \alpha\, \overline{b}_0\eta z \in E_2^{2,1}(X)  \cong \W/2\{\overline{b}_0\eta z\}, \qquad 0 \ne \alpha \in \WW/2.
\]
Then point (4) of \Cref{prop:keylemobstruct} gives that
\[
y \eta =0 \in  \pi_{-1}(E^{h\mathbb{S}_2^1}\wedge X).
\]
However, 
\[
0 \ne  \wchi  \eta\iota_X \in H^3(\SS_2^1,E_2X) \cong H^3(\SS_2^1,E_2).
\]
See \cref{fig:HFPSSG2}.
Because $\wchi \eta \iota_X$ detects $y\eta = 0$, we must have (for degree reasons) that $d_3(\iota_X)$ is a non-zero 
multiple of $\wchi\eta \iota_X$. 

Since we have a map of spectral sequences
\begin{align}\label{eq:diagramS2G2}
\xymatrix{
H^s(\GG_2^1,E_tX) \ar@{=>}[r]\ar[d]  & \pi_{t-s}E^{h\GG_{2}^{1}} \wedge X\ar[d]\\
H^s(\SS_2^1,E_tX) \ar@{=>}[r] & \pi_{t-s}E^{h\SS_2^1} \wedge X.\\
}
\end{align}
and the map on $E_2$-terms is an injection onto the Galois invariants we exactly have
\[
d_3(\iota_X) = \wchi\eta\, \iota_X
\]
as needed. 

For the converse, if $d_2(z)=0$, then $X \in \kappa(\GG_2^1)$ by \cref{prop:keylemobstruct} so $\iota_X$
is a permanent cycle in the Adams-Novikov Spectral Sequence for $E^{h\mathbb{G}_2^1}\wedge X$  by 
\Cref{prop:detect-kappa-ANSS}.
\end{proof}

For the next result, we recall that the homomorphism $\phi_3^1$ was defined in \eqref{eq:phi31} ({\it{c.f.}} 
\Cref{defn:descent-filt}). It is given by the formula
\[
d_3(\iota_X) = \phi_3^1(X)\iota_X
\]
where $\iota_X \in E_0X$ is any $\GG_2$-invariant generator and $d_3$ is the differential in the spectral sequence
\begin{align}\label{eq:tmp24-1}
H^s(\GG_2^1, E_tX) \Longrightarrow \pi_{t-s} E^{h\GG_2^1}\wedge X.
\end{align}

\begin{thm}\label{thm:upper-bound-kg48-etc}
The composite
\[
\kappa(G_{48}) \to   \kappa_2 \xrightarrow{\phi_3^1}   H^{3}(\GG_2^1, E_2) \cong \Z/2\{\eta \wchi \} 
\]
 induces an injective homomorphism
\[
\kappa(G_{48})/\kappa(\GG_2^1) \to \Z/2 \ .
\]
\end{thm}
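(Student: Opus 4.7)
The plan is to show directly that the kernel of the restriction of $\phi_3^1$ to $\kappa(G_{48})$ is exactly $\kappa(\GG_2^1)$; once this is established, the first isomorphism theorem gives the desired injective homomorphism on the quotient. The substantive work has already been done in \Cref{prop:keylemobstruct} and \Cref{prop:keypropd2meansd3}, so the present argument is essentially a bookkeeping exercise assembling those results.

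First I would verify that $\phi_3^1$ makes sense as a group homomorphism on all of $\kappa_2$. For any $X \in \kappa_2$, a choice of $\GG_2$-invariant generator $\iota_X$ induces an isomorphism of spectral sequences $H^s(\GG_2^1, E_t) \cong H^s(\GG_2^1, E_tX)$. Since $H^2(\GG_2^1, E_1) = 0$ (visible in \Cref{fig:HFPSSG21}), we have $d_2(\iota_X) = 0$, so $d_3(\iota_X)$ is defined. Well-definedness of $\phi_3^1(X)$ follows because any two $\GG_2$-invariant generators differ by a unit in $\ZZ_2^\times$ (\Cref{rem:really-basic-siso}), which acts trivially on $H^3(\GG_2^1, E_2) \cong \Z/2$. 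The homomorphism property repeats the argument of \Cref{lem:desfilt}(1): $\iota_X \wedge \iota_Y$ is a $\GG_2$-invariant generator of $E_0(X \wedge Y)$ and the Leibniz rule gives $\phi_3^1(X \wedge Y) = \phi_3^1(X) + \phi_3^1(Y)$.

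For the kernel computation, the inclusion $\kappa(\GG_2^1) \subseteq \ker \phi_3^1$ is immediate: if $X \in \kappa(\GG_2^1)$, then \Cref{prop:detect-kappa-ANSS} shows that $\iota_X$ is a permanent cycle in the $\GG_2^1$-HFPSS, so $d_3(\iota_X) = 0$. For the reverse inclusion, suppose $X \in \kappa(G_{48})$ satisfies $\phi_3^1(X) = 0$. Choose an $E^{hG_{48}}$-orientation $z$ of $X$ with induced $\GG_2$-invariant generator $\iota_X$. Since $d_3(\iota_X) = 0$ while $\wchi \eta\, \iota_X$ is the unique non-zero element of $H^3(\GG_2^1,E_2) \cong \Z/2$, the contrapositive of \Cref{prop:keypropd2meansd3} forces $d_2(z) = 0$ in the Topological Duality spectral sequence for $X$. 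Then \Cref{prop:keylemobstruct}(3) gives that $z$ lifts to an $E^{h\SS_2^1}$-orientation of $X$, so $X \in \kappa(\SS_2^1) = \kappa(\GG_2^1)$ by \Cref{prop:galois-doesnt-matter}.

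The main obstacle has been the translation between the Topological Duality differential $d_2(z)$ and the Adams--Novikov differential $d_3(\iota_X)$ encoded in \Cref{prop:keypropd2meansd3}; once that identification is in hand, the present theorem amounts to reading it in both directions and verifying that the resulting assignment is additive.
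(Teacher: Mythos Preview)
Your proposal is correct and follows essentially the same approach as the paper's proof: both directions of the kernel identification use \Cref{prop:detect-kappa-ANSS} and the biconditional of \Cref{prop:keypropd2meansd3} together with \Cref{prop:keylemobstruct}(3), just as the paper does. Your additional remarks on well-definedness and additivity of $\phi_3^1$ are correct but already implicit in the paper's setup at \eqref{eq:phi31}; one terminological nit is that you invoke the ``contrapositive'' of \Cref{prop:keypropd2meansd3} when you are really using the reverse implication of its if-and-only-if statement.
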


\begin{proof} Let $X \in \kappa(\GG_2^1)$. Then by \Cref{prop:detect-kappa-ANSS} the class
$\iota_X$ is a permanent cycle in the spectral sequence of \eqref{eq:tmp24-1}.
Therefore, $\phi_3^1(\iota_X)=0$ and so $\kappa(\GG_2^1)$ is in the kernel of $\phi_3^1$.

Now, suppose that $X \in \kappa(G_{48})$ is in the kernel of the composite. Then $d_3(\iota_X)=0$ in the
spectral sequence of \eqref{eq:tmp24-1}.  Then \Cref{prop:keypropd2meansd3} implies that $d_2(z)=0$ in the Topological Duality spectral sequence for $X$. By  \Cref{prop:keylemobstruct},
 $X \in \kappa(\mathbb{G}_2^1)$.
\end{proof}


\section{The $G_{48}$-orientable elements of the Picard group}\label{sec:second-existence}

In this section we compute $\kappa(G_{48})$, the group of $E^{hG_{48}}$-orientable elements in $\kappa_2$. The main result
is \Cref{prop:kappaG48}. We begin by finding and analyzing a non-trivial element in
$\kappa(G_{48})/\kappa(\GG_2^1)$; that is,
we will find a spectrum $Q$ which has an $E^{hG_{48}}$-orientation that cannot be refined to
an $E^{h\GG_2^1}$-orientation. We will use \Cref{thm:upper-bound-kg48-etc}; thus
we need to find $Q \in \kappa(G_{48})$ so that $d_3(\iota_Q) \ne 0$ in the spectral sequence
\[
E_2(\GG_2^1, Q)=H^s(\GG_2^1,E_tQ) \Longrightarrow \pi_{t-s}(E^{h\GG_2^1} \wedge Q).
\]
To do this we will use the $J$-construction as in \Cref{sec:j-construction} and some classical $C_2$-equivariant homotopy theory.  
  
Let $\sigma$ be the one dimensional real sign representation of $C_2$. We have an isomorphism
\[
\ZZ[\sigma]/(\sigma^2-1) \cong RO(C_2)
\]
describing the real representation ring of $C_2$. The augmentation $RO(C_2) \to \ZZ$ sends a 
representation to its virtual dimension; the augmentation ideal $I(C_2)$ is of rank $1$ over $\ZZ$ 
generated by $\sigma -1$. 

Now take $\chi \colon \GG_2 \to (\Z_2/4)^{\times} \cong C_2$ to be the surjective
homomorphism of \cref{defn:chi-defined}, whose kernel we called $\GG_2^0$. Given any virtual representation
$V \in RO(C_2)$, we get an action of $\GG_2$ on $S^V$ by restriction along $\chi$. 
Recall from \Cref{exam:rep-spheres} and \Cref{prop:basic-j-3} that we get a homomorphism
\[ J: RO(C_2) \to \Pic_2,\]
defined by the formula
\[
J(V) = (E \wedge S^V)^{h\GG_2}.
\]
This uses the diagonal action of $\GG_2$ on the smash product.

Applying \Cref{prop:basic-j-1}, we have that if $K \subseteq \GG_2$ is a closed subgroup, then 
\[ E^{hK} \wedge J(V) = E^{hK} \wedge (E \wedge S^V)^{h\GG_2} \simeq (E\wedge S^V)^{hK},\]
so that in particular, if $K$ is in the kernel of $\chi$, then we obtain an equivalence
\[ E^{hK} \wedge J(V) \simeq \Sigma^{\dim V} E^{hK}.\]

Let $V \in I(C_2)$ and let $j = j_V \in \pi_0S^V \cong \ZZ$ a chosen generator. Define $\iota = \iota_V \in E_0S^V$
to the the image of $j$ under the  Hurewicz map. We have an isomorphism of Morava modules
\[
\pi_\ast (E \wedge J(V)) \cong \pi_\ast (E \wedge S^V) \cong E_\ast \otimes_{E_0} E_0S^V
\]
where $\GG_2$ acts diagonally on $E_\ast \otimes_{E_0} E_0S^V$. In particular, 
if $V \in I(C_2)^2 = (2(\sigma-1))$, $\iota_V$ is a $\GG_2$-invariant generator and $J(V) \in \kappa_2$. 
Compare \Cref{rem:basic-j-1b}.

\begin{lem}\label{lem:G48-reps}
If $V \in I(C_2)$, then there is an equivalence of $E^{hG_{48}}$-modules 
\[
E^{hG_{48}} \simeq E^{hG_{48}} \wedge J(V).
\]
If $V \in I(C_2)^2$, then $J(V) \in \kappa(G_{48})$.  
\end{lem}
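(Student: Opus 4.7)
The plan is to apply \Cref{prop:basic-j-1} for the first claim and \Cref{prop:when-in-kappa} for the second, where the central input in both cases is that $G_{48}$ lies in the kernel of $\chi\colon \GG_2 \to C_2$. Establishing this containment is the main obstacle, although it is elementary once set up correctly.

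First, I would show $\chi|_{G_{48}} = 1$, in fact $\det|_{G_{48}} = 1$. By \Cref{defn:det-defined-here} the Galois factor of $\GG_2 \cong \SS_2 \rtimes \Gal$ is killed by $\det$, so $\det(G_{48}) = \det(G_{24})$. The group $G_{24} = Q_8 \rtimes C_3$ has abelianization $C_3$: the commutator subgroup of $Q_8$ is its center, so $Q_8^{\mathrm{ab}} \cong (\Z/2)^2$, on which $C_3$ acts freely by cyclic permutation of the three nontrivial elements, giving $(Q_8^{\mathrm{ab}})_{C_3} = 0$ and $G_{24}^{\mathrm{ab}} \cong C_3$. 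Since $\Z_2^\times \cong \{\pm 1\} \times (1+4\Z_2)$ has no element of order $3$, any homomorphism $G_{24} \to \Z_2^\times$ is trivial. Hence $\chi|_{G_{48}} = 1$.

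For the first claim, \Cref{prop:basic-j-1} applied with $K = G_{48}$, $H = C_2$, $q = \chi$, and $f$ the spherical representation determined by $V$ gives an $E^{hG_{48}}$-module equivalence
\[
E^{hG_{48}} \wedge J(V) \simeq \Sigma^{\dim V} E^{hG_{48}}.
\]
When $V \in I(C_2)$ the virtual dimension of $V$ is zero, and the desired equivalence follows.

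For the second claim, I would invoke \Cref{prop:when-in-kappa} with the same $K$, $H$, $q$. The composite $G_{48} \hookrightarrow \GG_2 \xrightarrow{\chi} C_2$ is trivial by the computation above, and the edge homomorphism $\pi_0 E^{hG_{48}} \to H^0(G_{48}, E_0)$ is surjective, as recorded in \Cref{rem:old-is-new-at-2}. It remains to verify that the character $\chi_V$ of \Cref{rem:basic-j-1b} is trivial. In $RO(C_2) = \Z[\sigma]/(\sigma^2 - 1)$ one has $(\sigma - 1)^2 = -2(\sigma - 1)$, so $I(C_2)^2 = 2\,I(C_2)$; hence any $V \in I(C_2)^2$ has the form $V = 2k(\sigma - 1)$. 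The action of the generator of $C_2$ on $H_0 S^V \cong \Z$ is then multiplication by $(-1)^{2k} = 1$, so $\chi_V$ is trivial. By \Cref{rem:basic-j-1pic} this gives $J(V) \in \kappa_2$, and then \Cref{prop:when-in-kappa} yields $J(V) \in \kappa(G_{48})$.
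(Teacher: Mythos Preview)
Your proposal is correct and follows essentially the same approach as the paper. The paper likewise argues that $G_{48}$ lies in the kernel of $\chi$ (via the nonexistence of nontrivial homomorphisms $G_{24} \to \ZZ_2^\times$), then invokes \Cref{prop:basic-j-1} for the first claim and \Cref{lem:old-is-new} for the second; your use of \Cref{prop:when-in-kappa} simply packages that last step, and your explicit computation of $G_{24}^{\mathrm{ab}}$ fills in what the paper leaves to a citation.
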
 

\begin{proof} Note that $G_{24} \subseteq \SS_2$ is in the kernel of the determinant (\Cref{defn:det-defined-here}), 
since there are no non-trivial group homomorphisms $G_{24} \to \ZZ_2^\times$. (See, for example,
Remark 5.1.6 of \cite{BGH} for this last fact.) Then $G_{48}$ is in the kernel of $\chi$, and the first statement
follows. The second  statement follows from the first by \Cref{lem:old-is-new}, since $\iota_V$
gives $E_\ast J(V)$ a $\GG_2$-invariant generator by the above discussion.
\end{proof} 

Now we come to the star of this section.

\begin{defn}\label{defn:Qdefined} Let $Q \in \kappa(\GG_2^1)$ be the spectrum
\begin{equation*}\label{eq:Qdefined}
Q =J (2\sigma-2) = (E \wedge S^{2\sigma-2})^{h\GG_2} \ . 
\end{equation*}
\end{defn}

By \Cref{lem:G48-reps}, we know that $Q$ is an element of $\kappa(G_{48})$, and we will show that its class in the quotient $  \kappa(G_{48})/\kappa(\GG_2^1)$
is non-trivial. Using similar methods, we will also analyze $2Q \simeq J(4\sigma-4)$ and will find that it is zero in $\kappa_2$.

For any virtual $C_2$ representation $V$, there is a $C_2$-equivariant map
\[
S^V \longr E^{h\GG_2^0} \wedge S^V,
\]
hence a map
\[
(S^V)^{hC_2} \longr (E^{h\GG_2^0} \wedge S^V)^{hC_2} \simeq J(V).
\]
In the next subsection, we will use standard methods in equivariant stable homotopy theory to give a partial analysis of 
$(S^V)^{hC_2}$, which will help us analyze $ Q$ and apply \Cref{thm:upper-bound-kg48-etc}.

\subsection{Some $C_2$-homotopy theory for representation spheres.}\label{sec:stunted} In this section we write down some classical
homotopy theory for the homotopy fixed points $(S^V)^{hC_2}$, where $V$ is a virtual representation
of the cyclic group of order $2$.

There is a small and  geometric model for $EC_2$; namely, $S^\infty = \cup S^m$ with the antipodal action.
In a coincidence forced on us by the fact that $C_2$ is a very small group, $S^\infty$ is equivariantly homeomorphic
to the geometric realization of the standard simplicial model for $EC_2$ obtained from the bar construction. Thus for any $C_2$-spectrum $X$, its homotopy fixed points can be expressed as
\[
X^{hC_2} \simeq \Tot(F(C_2^{\bullet + 1},X)) \simeq F(\Sigma_+S^\infty,X).
\]
Indeed, for each $m$ there is an equivalence
$ \Tot_m(F(C_2^{\bullet + 1},X)) \simeq F_{C_2}(\Sigma^\infty_+S^m,X)$. We could use either of these descriptions to construct the homotopy fixed point spectral sequence
\[
H^s(C_2,\pi_tX) \Longrightarrow \pi_{t-s}X^{hC_2},
\]
as the Bousfield-Kan spectral sequence of the respective towers giving $X^{hC_2}$ as a homotopy limit
\[
X^{hC_2} \simeq  \holim  \Tot_m(F(C_2^{\bullet + 1},X)) \simeq \holim_m F_{C_2}(\Sigma^\infty_+S^m,X).\]

If $V$ is a virtual $C_2$-representation, then $S^V$ is dualizable with 
Spanier-Whitehead dual $S^{-V}$; thus we have
\[
(S^{V})^{hC_2} \simeq \holim_m F(\Sigma_+^\infty S^m \wedge_{C_2} S^{-V},S^0).
\]
From this we get that the basic objects of study are the spectra
\[
\Sigma_+^\infty S^m \wedge_{C_2} S^{n\sigma}, \quad n,m\in \Z, \ m\geq 0
\]
and their Spanier-Whitehead duals. As above, $\sigma$ is the sign representation. 

In the following we will confuse the (pointed) projective spaces $\RP^n$ with their suspension spectra. We will do this throughout,
letting context indicate whether we are working with the space or the spectrum.

Let $\xi$ be the tautological line bundle  over $\RP^{\infty}$ and let $T$ denote the Thom spectrum functor; we
arrange our conventions for this functor so that if $\gamma$ is a bundle of virtual dimension $k$, then the Thom class
is in $H^k(T(\gamma),\FF_2)$. By \cite[Theorem 1.8 of Chapter 16]{Husemoller}, we have that for all $n \in \ZZ$, the Thom spectrum of $n\xi$ is identified as
\begin{align}\label{eq:thom}
\Sigma^\infty_+ S^\infty \wedge_{C_2} S^{n\sigma} \simeq T(n\xi) \simeq \RP^{\infty}_{n}.
\end{align}
Here, if $n \geq 0$, then $\RP^\infty_n = \RP^\infty/\RP^{n-1}$ is the (suspension spectrum of) truncated projective space; if $n < 0$, this formula is the 
definition of $\RP^\infty_n$. Note that if $n=0$, $\RP^{\infty}_0 \simeq \Sigma^\infty_+\RP^\infty$, while for $n=1$ we have $\RP^{\infty}_1 \simeq \Sigma^\infty \RP^\infty $, which we write as $\RP^\infty$ according to the convention above.

If $\xi_m$ denotes the restriction of $\xi$ to $\RP^m$, then 
\begin{align}\label{eq:thom-m}
\Sigma^\infty_+ S^m \wedge_{C_2} S^{n\sigma}  \simeq T(n\xi_m) \simeq \RP^{m+n}_n,
\end{align}
where again this formula may be needed to define the truncated projective space. Note again that
$H_k(\RP^{m+n}_n,\FF_2) \ne 0$ only for $n \leq k \leq m+n$. Colloquially, we say that this spectrum
has bottom cell in dimension $n$ and top cell in dimension $m+n$.

The virtual bundle $\xi_m -1$ on $\RP^m$ has finite order; indeed, this order is a number $c_m$ related
to the Adams vector field number. The original source for this result is \cite{AdamsVectorFields}, but it can be found
conveniently in \cite[Chapter 16, Theorem 12.7]{Husemoller}, with an explicit formula in \cite[Chapter 16, Remark 11.1]{Husemoller}. This gives \emph{James Periodicity} for truncated projective spaces
\begin{equation}\label{eq:james-per}
\Sigma^{c_m}\RP^{m+n}_n \simeq \RP^{c_m+ m+n}_{c_m+n}.
\end{equation}
In particular, the following classical result will help calculate differentials in the $C_2$-homotopy fixed point spectral sequence of a suitable representation sphere.
\begin{lem}\label{lem:james-shift} For all $n$, there are equivalences of spectra 
\[
\Sigma^4 \RP_n^{n+2} \simeq \RP_{n+4}^{n+6}, \quad \quad \Sigma^4 \RP_n^{n+3} \simeq \RP_{n+4}^{n+7}, \quad \text{ and }\quad \Sigma^8 \RP_n^{n+7} \simeq \RP_{n+8}^{n+15}.
\]
In addition, we have an equivalence of spectra
\[
\RP^3 \simeq \RP^2 \vee S^3.
\]
\end{lem}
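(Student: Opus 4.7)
The plan is to treat the four equivalences in two groups. Statements (1)--(3) are three specific instances of the James periodicity formula \eqref{eq:james-per}
\[
\Sigma^{c_m}\RP^{m+n}_n \simeq \RP^{c_m+m+n}_{c_m+n},
\]
where $c_m$ is the order of $\xi_m-1$ in $\widetilde{KO}(\RP^m)$. I would open by recalling the Adams--Walker computation $\widetilde{KO}(\RP^m)\cong \ZZ/2^{\phi(m)}$, where $\phi(m) = \#\{\,0 < j \leq m : j \equiv 0,1,2,4 \pmod 8\,\}$, so that $c_m = 2^{\phi(m)}$. A direct count then gives $\phi(2)=\phi(3)=2$ and $\phi(7)=3$, i.e.\ $c_2=c_3=4$ and $c_7=8$. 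Plugging in $(m,c_m)=(2,4),(3,4),(7,8)$ into \eqref{eq:james-per} yields the three claimed equivalences. I expect this part to be essentially routine, with no obstacle beyond correctly referencing the classical statement of James periodicity.

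For statement (4), I would argue that the top cell of $\RP^3$ splits off stably. Consider the cofiber sequence of finite CW-spectra
\[
\RP^2 \longrightarrow \RP^3 \longrightarrow S^3 \stackrel{\alpha}{\longrightarrow} \Sigma \RP^2,
\]
so that the stable splitting $\RP^3 \simeq \RP^2 \vee S^3$ is equivalent to the vanishing of the connecting class $\alpha \in \pi_3^s(\Sigma \RP^2)= \pi_2^s(\RP^2)$. I would compute $\pi_2^s(\RP^2)$ via the long exact sequence induced by $S^1 \xrightarrow{2} S^1 \to \RP^2$, obtaining
\[
\ZZ/2\{\eta\} = \pi_2^s(S^1) \longrightarrow \pi_2^s(\RP^2) \longrightarrow \pi_2^s(S^2) \xrightarrow{\,2\,} \pi_2^s(S^2),
\]
which yields $\pi_2^s(\RP^2)\cong \ZZ/2$, generated by $\eta$ pushed forward along the bottom cell inclusion. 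Composing the attaching map with the projection $\RP^2 \to S^2$ gives the attaching map of the top cell of $\RP^3/\RP^1$, which, since the odd-cell attaching maps of $\RP^\infty$ are stably $0$, is null; hence $\alpha$ lies in the image of $\pi_2^s(S^1) \to \pi_2^s(\RP^2)$.

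The main obstacle is therefore to rule out the remaining possibility that $\alpha$ equals the nonzero class $\eta \in \pi_2^s(\RP^2)$. For this I would use the Lie group structure $\RP^3 \cong SO(3)$: the double cover $S^3 \to \RP^3 = SO(3)$ exhibits $\RP^3$ as a principal $C_2$-bundle, and the induced classifying map $\RP^3 \to B C_2 = \RP^\infty$ provides (after passing to stable homotopy) a retraction onto $\RP^2 \subset \RP^\infty$ that splits the inclusion $\RP^2 \hookrightarrow \RP^3$. The existence of this retraction forces $\alpha=0$, completing the argument. An alternative, more computational, route would be to exhibit a lift of $\mathrm{id}_{\RP^2}$ to a map $\RP^3 \to \RP^2$ directly by mod--$2$ cohomology obstruction theory, using that $\mathrm{Sq}^1\colon H^2(\RP^3;\FF_2) \to H^3(\RP^3;\FF_2)$ is zero.
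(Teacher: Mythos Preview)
Your treatment of the three James periodicity equivalences is correct and essentially identical to the paper's proof: both cite the order of $\xi_m-1$ to get $c_2=c_3=4$ and $c_7=8$, then apply \eqref{eq:james-per}.

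For the splitting $\RP^3 \simeq \RP^2 \vee S^3$, however, your main argument has a genuine gap. The classifying map of the principal $C_2$-bundle $S^3 \to \RP^3$ is simply the standard inclusion $\RP^3 \hookrightarrow \RP^\infty$, and this map does \emph{not} stably factor through $\RP^2$: the composite $\RP^3 \to \RP^\infty \to \RP^\infty/\RP^2 = \RP^\infty_3$ is nonzero on $H_3(-;\FF_2)$, since it is just the collapse $\RP^3 \to S^3$ followed by the bottom-cell inclusion. So no retraction onto $\RP^2$ is produced this way, and the step ``the existence of this retraction forces $\alpha=0$'' is unjustified. Your alternative via $\mathrm{Sq}^1$ is also insufficient: the vanishing of $\mathrm{Sq}^1$ on $H^2(\RP^3;\FF_2)$ does not distinguish between the two possibilities $\alpha=0$ and $\alpha=i_*\eta$ in $\pi_2^s(\RP^2)\cong\ZZ/2$, and the relevant obstruction group for an unstable extension is $H^3(\RP^3;\pi_2(\RP^2))$ with $\pi_2(\RP^2)=\ZZ$, where the obstruction is in fact nonzero.

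Your instinct to invoke $\RP^3 \cong SO(3)$ is exactly right, but the paper uses it differently and more directly: a compact Lie group is parallelizable, hence $\RP^3$ is stably framed, and for any stably framed closed $n$-manifold the collapse map to the top cell $M \to S^n$ admits a stable section (by Atiyah duality, it is $S$-dual to the inclusion of the basepoint $S^0 \to M_+$). Equivalently, the stable tangent bundle of $\RP^3$ is $4\xi_3$, which is trivial since $c_3=4$. Either version immediately gives the stable splitting without needing to analyze $\alpha$ directly.
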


\begin{proof}  By \cite[Chapter 16, Theorem 12.7]{Husemoller} the virtual bundles $\xi_2-1$ on $\RP^2$, as well as $\xi_3-1$ on $\RP^3$ have order $c_2=4 = c_3$, while $\xi_7 -1$ on $\RP^7$ has order $c_m = 8$. So, the first three statements follow from James Periodicity \eqref{eq:james-per}. 
For the last statement, note
that $\RP^3$ is the manifold underlying the Lie group $SO(3)$; 
hence $\RP^3$ is stably parallelizable
and stably the top cell splits off. That $\RP^3$ is stably parallelizable could also be proved using the fact that
the stable tangent bundle of $\RP^3$ is isomorphic to $4\xi_3$, hence trivial.
\end{proof}

We will need the following formula for the Spanier-Whitehead dual of $\RP^k_n$
\begin{equation}\label{eq:SW-duals-projs}
D\RP^k_n = F(\RP^k_n,S^0) \simeq \Sigma \RP^{-n-1}_{-k-1},
\end{equation}
which can found as \cite[Theorem 6.1]{Atiyah}, or can easily be proved given what we've said so far.
Indeed, if $n=0$ this follows from Atiyah Duality and the fact that the stable normal bundle of $\RP^k$ is
$-(k+1)\xi+1$. For general $n$ and $k$ we use James Periodicity, as in \eqref{eq:james-per}. 

The above material may be assembled to give a formula for the homotopy fixed points. We will only need it for
$n=0$, $2$, and $4$, but the result is easy to state for all $n$. 

\begin{prop}\label{prop:basic-homotopy-fixed}  Let $n \in \ZZ$. Then the $\Tot$-tower of the cosimplicial spectrum
$F_{C_2}(C_2^{\bullet + 1},S^{n\sigma})$ is equivalent to the tower of fibrations
\[
\xymatrix{
\cdots \ar[r] & \Sigma\RP^{n-1}_{n-m-1} \ar[r] & \Sigma\RP^{n-1}_{n-m} \ar[r] & \cdots \ar[r] &\Sigma\RP^{n-1}_{n-2} \ar[r] &
\Sigma\RP^{n-1}_{n-1}\\
&S^{n-m} \ar[u]&S^{n-m+1}\ar[u]&&S^{n-1}\ar[u]&S^{n} \ar[u]^\simeq
}
\]
where the maps from the successive fibers are given by the inclusion of the bottom cell. We have
\[
\Tot_m F_{C_2}(C_2^{\bullet + 1},S^{n\sigma}) \simeq \Sigma \RP_{n-m-1}^{n-1}
\]
and there is an equivalence
\[
(S^{n\sigma})^{hC_2} \simeq \holim_i \Sigma \RP^{n-1}_{-i}\ . 
\]
\end{prop}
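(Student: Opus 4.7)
The plan is to prove the three assertions in sequence, using the skeletal filtration of $EC_2$, the equivariant Thom spectrum description \eqref{eq:thom-m}, and Atiyah duality for truncated projective spaces \eqref{eq:SW-duals-projs}.

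First I would identify $\Tot_m$. The excerpt already notes that the realization of the bar model $(EC_2)_s = C_2^{s+1}$ is equivariantly homeomorphic to $S^\infty$ with antipodal action, and its $m$-skeleton realizes to $S^m$. The standard skeletal interpretation of partial totalizations of the cobar cosimplicial object then yields
\[
\Tot_m F_{C_2}(C_2_+^{\bullet+1}, S^{n\sigma}) \simeq F_{C_2}((S^m)_+, S^{n\sigma}),
\]
and applying the contravariant functor $F_{C_2}(-, S^{n\sigma})$ to the skeletal cofiber sequence
\[
(S^{m-1})_+ \to (S^m)_+ \to (S^m)_+/(S^{m-1})_+ \simeq \Sigma^m (C_2)_+
\]
produces the tower of fibrations, with fiber of $\Tot_m \to \Tot_{m-1}$ equal to
\[
F_{C_2}(\Sigma^m (C_2)_+, S^{n\sigma}) \simeq \Omega^m F_{C_2}((C_2)_+, S^{n\sigma}) \simeq \Omega^m S^n \simeq S^{n-m}.
\]

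Next I would identify $F_{C_2}((S^m)_+, S^{n\sigma})$ with $\Sigma\RP^{n-1}_{n-m-1}$. Since $S^{n\sigma}$ is $C_2$-equivariantly dualizable with dual $S^{-n\sigma}$, the tensor--hom adjunction together with the fact that $S^0$ carries the trivial $C_2$-action gives
\[
F_{C_2}\bigl((S^m)_+, S^{n\sigma}\bigr) \simeq F_{C_2}\bigl((S^m)_+, F(S^{-n\sigma}, S^0)\bigr) \simeq F\bigl((S^m)_+ \wedge_{C_2} S^{-n\sigma}, S^0\bigr).
\]
The Thom identification \eqref{eq:thom-m} applied with $-n$ in place of $n$ gives $(S^m)_+ \wedge_{C_2} S^{-n\sigma} \simeq \RP^{m-n}_{-n}$, and then Atiyah duality \eqref{eq:SW-duals-projs} with $(k,n) \mapsto (m-n, -n)$ yields $D(\RP^{m-n}_{-n}) \simeq \Sigma\RP^{n-1}_{n-m-1}$. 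When $m=0$ this reduces to $\Sigma\RP^{n-1}_{n-1} = S^n$, in agreement with the base $F_{C_2}((C_2)_+, S^{n\sigma}) \simeq S^n$ of the tower.

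The remaining step, and the main obstacle, is naturality: I must check that under the duality identifications the tower map $\Sigma\RP^{n-1}_{n-m-1} \to \Sigma\RP^{n-1}_{n-m}$ is the suspension of the quotient collapsing the bottom cell, so that the fiber $S^{n-m}$ is included as that bottom cell, as claimed. Dualizing, this reduces to verifying that $(-)\wedge_{C_2} S^{-n\sigma}$ carries the skeletal cofiber sequence $(S^{m-1})_+ \to (S^m)_+ \to \Sigma^m (C_2)_+$ to the cofiber sequence $\RP^{m-n-1}_{-n} \to \RP^{m-n}_{-n} \to S^{m-n}$ attaching the top cell, which is transparent from the Thom spectrum interpretation of \eqref{eq:thom-m} applied cell by cell. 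Once this is in place, the final statement follows from $\Tot \simeq \holim_m \Tot_m$, since as $m$ varies the spectra $\Sigma\RP^{n-1}_{n-m-1}$ range precisely over $\Sigma\RP^{n-1}_{-i}$.
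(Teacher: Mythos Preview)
Your proposal is correct and follows essentially the same approach as the paper: identify $\Tot_m$ with $F_{C_2}((S^m)_+,S^{n\sigma})$, dualize to $F(\RP^{m-n}_{-n},S^0)$ via \eqref{eq:thom-m}, and apply Atiyah duality \eqref{eq:SW-duals-projs}. In fact you are more careful than the paper, which does not spell out the fiber identification or the naturality of the tower maps under these equivalences.
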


\begin{proof} 
By the discussion at the beginning of this subsection, we have an equivalence \[ \Tot_m(F(C_2^{\bullet + 1},S^{n\sigma})) \simeq F_{C_2}(\Sigma^\infty_+S^m,S^{n\sigma}).\] We can analyze the right-hand side using \eqref{eq:thom-m} to obtain
\begin{align*}
F_{C_2}(\Sigma_+^\infty S^m,S^{n\sigma}) &\simeq F(\Sigma_+^\infty S^m \wedge_{C_2} S^{-n\sigma},S^0) \simeq F(\RP^{m-n}_{-n},S^0),
\end{align*}
while \eqref{eq:SW-duals-projs} identifies this with $ \Sigma \RP_{n-m-1}^{n-1} $ as needed.
\end{proof}

\begin{rem}\label{rem:whatif-add-triv} So far these formulas have only been for multiples of the sign representation,
but we have $S^{n\sigma+k} = S^{n\sigma} \wedge S^k$ with the trivial action on $S^k$. This forces a shift 
into the $\Tot$-tower; for example, we have
\[
(S^{n\sigma+k})^{hC_2} \simeq \holim_i \Sigma^{1+k} \RP^{n-1}_{-i}\ .
\]
\end{rem}

{\textbf{Now let $n$ be even and we implicitly $2$-complete all spectra.}}

The requirement that $n$ be even implies that the action of $C_2$ on $H_n(S^{n\sigma},\Z_2)$ is trivial, so the
action on $\pi_\ast S^{n\sigma}\cong \pi_\ast S^n$ is also trivial. 
Choose a generator
\[
j_n \in H^0(C_2,\pi_nS^{n\sigma}) \cong \ZZ_2.
\]
We are interested in the fate of this class in the homotopy fixed point spectral sequence
\begin{equation}\label{eq:C2-hfpss}
E_2^{s,t}=H^s(C_2,\pi_tS^{n\sigma}) \Longrightarrow \pi_{t-s}(S^{n\sigma})^{hC_2}.
\end{equation}
This is the spectral sequence for the tower of fibrations in \Cref{prop:basic-homotopy-fixed}. 
Since the class $j_n$ is represented by the generator of
\[
\pi_n\Sigma\RP^{n-1}_{n-1} \cong \pi_n S^n \cong \ZZ_2,
\]
differentials on it come down to whether or not the projection map 
\[
\Sigma\RP^{n-1}_{n-m-1} \to S^n
\]
to the top cell has a splitting.

\newcommand{\wchihfs}{g}
\newcommand{\chihfs}{h}
\newcommand{\ahfs}{\alpha}
 
We now give a calculation of the $E_2$-term  of \eqref{eq:C2-hfpss} in a range. Since we are assuming $n$ 
is even, the $E_2$-term is a free module of
rank one on $j_n \in H^0(C_2,\pi_nS^{n\sigma})$ over the ring
$
H^\ast(C_2,\pi_\ast S^0) $. 
Let $\eta \in \pi_1S^0$ and $\nu \in \pi_3S^0$ be the standard generators and let $\chihfs \in H^1(C_2,\ZZ/2)$ 
and $\wchihfs \in H^2(C_2,\ZZ_2)$ be the generators of the group cohomology rings. Then $\wchihfs$ is the Bockstein on $\chihfs$, and reduces
to $\chihfs^2 \in H^2(C_2,\ZZ/2)$. Finally, let $\ahfs = \chihfs\eta \in H^1(C_2,\pi_1S^0)$, and $\beta= h\nu^2 \in H^1(C_2,\pi_3 S^0)$.
The following is now a standard calculation. Compare \Cref{fig:HC2S0}.

\begin{figure}[h]
\center
\includegraphics[width=\textwidth]{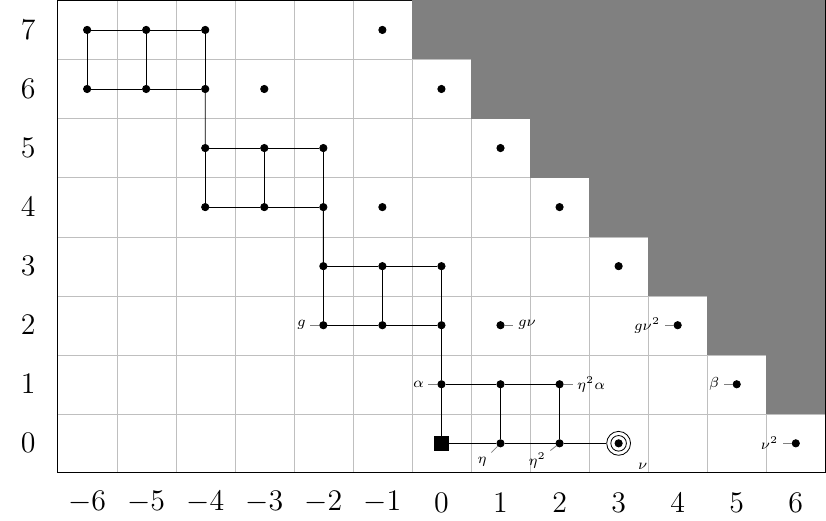}
\captionsetup{width=\textwidth}
\caption{The cohomology ring $H^s(C_2,\pi_t S^0)$ in a range. The vertical axis is $s$ and the horizontal axis is $t-s$.}
\label{fig:HC2S0}
\end{figure}

\begin{prop}\label{lem:low-comps-fpsp} There is a map of bigraded rings
\begin{align*}
\varphi \colon \ZZ_2[\wchihfs, \eta, \ahfs,\nu, \beta]
\longr\ H^\ast(C_2,\pi_\ast S^0),
\end{align*}
where the  $(s,t)$-bidegrees of the generators are
\[
|\wchihfs|=(2,0) \quad |\eta| = (0,1) \quad |\ahfs| =(1,1)\quad |\nu|=(0,3) \quad |\beta| = (1,6).
\] 
The ideal
\[I=(2\eta,  \ 8\nu, \ 2\nu^2,  \ \eta \nu, \ \eta^3-4\nu, \ 2\ahfs, \ 2\wchihfs, \ \eta^2 \wchihfs-\ahfs^2, \ \nu \ahfs,  \ 2\beta) \]
 is in the kernel of $\varphi$. 
The map
\[\ZZ_2[\wchihfs, \eta, \ahfs,\nu, \beta]/I \longr\ H^\ast(C_2,\pi_\ast S^0) \]
induced by $\varphi$
is an isomorphism in bidegrees $(s,t)$ with $t \leq 6$. 
 \end{prop}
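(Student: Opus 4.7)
The plan is to reduce to a direct computation of group cohomology with trivial coefficients, then match explicit generators. Since $n$ is even, the $C_2$-action on $\pi_\ast S^{n\sigma}\cong \pi_{\ast-n}S^0$ is trivial, so the $E_2$-page identifies (up to a shift in $t$) with $H^\ast(C_2,\pi_\ast S^0)$ where $C_2$ acts trivially. The standard $2$-periodic resolution of the trivial $C_2$-module $\ZZ$ shows that, for any trivial $C_2$-module $M$,
\[
H^0(C_2,M)=M,\qquad H^{2k+1}(C_2,M)={}_2M,\qquad H^{2k}(C_2,M)=M/2M\ \text{for}\ k\geq 1.
\]
Combined with the $2$-local stable stems $\pi_0=\ZZ_2$, $\pi_1=\ZZ/2\{\eta\}$, $\pi_2=\ZZ/2\{\eta^2\}$, $\pi_3=\ZZ/8\{\nu\}$, $\pi_4=\pi_5=0$, and $\pi_6=\ZZ/2\{\nu^2\}$, this determines $H^s(C_2,\pi_t S^0)$ in every bidegree with $t\leq 6$.

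To define $\varphi$ one would send $\wchihfs$ to the generator of $H^2(C_2,\ZZ_2)$, send $\eta$ and $\nu$ to the tautological generators of $H^0(C_2,\pi_\ast S^0)$, set $\ahfs=\chihfs\eta\in H^1(C_2,\pi_1S^0)$, and set $\beta=\chihfs\nu^2\in H^1(C_2,\pi_6S^0)$. Multiplicativity is inherited from the cup product together with the ring structure on $\pi_\ast S^0$. The relations in $I$ are then routine: $2\eta$, $8\nu$, $2\nu^2$, $\eta\nu$ and $\eta^3-4\nu$ are already relations in $\pi_\ast S^0$; $2\wchihfs=2\ahfs=2\beta=0$ because these classes lie in $H^{\geq 1}$ of a $2$-torsion module (for $\ahfs,\beta$) or in $H^2(C_2,\ZZ_2)\cong \ZZ/2$ (for $\wchihfs$); the identity $\ahfs^2=\wchihfs\eta^2$ comes from $\ahfs^2=\chihfs^2\eta^2$ together with the fact that the mod-$2$ reduction $H^2(C_2,\ZZ_2)\to H^2(C_2,\ZZ/2)$ sends $\wchihfs$ to $\chihfs^2$ (equivalently, $Sq^1\chihfs=\chihfs^2$) and that $\eta^2$ is already $2$-torsion; and $\nu\ahfs=\chihfs(\nu\eta)=0$.

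To conclude the isomorphism in the range $t\leq 6$, I would perform a finite bidegree-by-bidegree check: for each $(s,t)$ with $t\leq 6$, enumerate the monomials in $\wchihfs,\eta,\ahfs,\nu,\beta$ of that bidegree, reduce modulo $I$ to obtain a single generator, and verify that its image in $H^s(C_2,\pi_tS^0)$ is a generator of the correct order. The subtlest case is $t=3$, where $\pi_3=\ZZ/8$ is not $2$-torsion: one must verify that $\ahfs\eta^2=\chihfs\eta^3=\chihfs\cdot 4\nu$ generates $H^1(C_2,\ZZ/8)={}_2\ZZ/8$, and that $\wchihfs\nu$ generates $H^2(C_2,\ZZ/8)\cong\ZZ/2$; these follow from the identification ${}_2\ZZ/8=\ZZ/2\{4\}$ and from the periodicity isomorphism $M/2\xrightarrow{\sim}H^2(C_2,M)$ given by cup with $\wchihfs$. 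The main obstacle is not conceptual but purely organizational: keeping track of which monomial in the presentation corresponds to which cohomology class across this finite range.
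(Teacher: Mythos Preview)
Your proposal is correct and is precisely the ``standard calculation'' the paper alludes to without proof: the paper simply states the result and refers to the accompanying figure, so you have supplied the argument the authors omit. Your use of the periodic resolution to identify $H^s(C_2,M)$ for trivial $M$, together with the known $2$-local stems through $\pi_6$, and the explicit check of each relation in $I$, is exactly the expected verification; the only cosmetic point is that the opening remark about $S^{n\sigma}$ is unnecessary, since the proposition concerns $H^\ast(C_2,\pi_\ast S^0)$ directly.
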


We now come to our first calculations with \eqref{eq:C2-hfpss}, which will be key to \Cref{thm:Qfirst} and \Cref{thm:2Q}.

The following proposition will be useful because the homotopy fixed point spectral sequences for $C_2$ acting on $S^V$ for any representation $V$ are modules over the spectral sequence for the $C_2$ acting trivially on $S^0$.

\begin{prop}\label{prop:zeroSphere}
Let $n \equiv 0 \mod 8$. Then in the homotopy fixed point spectral sequence
\[
E_2^{s,t}=H^s(C_2,\pi_tS^{n\sigma}) \Longrightarrow \pi_{t-s}(S^{n\sigma})^{hC_2}
\]
we have $d_r(g^2 j_n)=0$ for $r\leq 3$.
\end{prop}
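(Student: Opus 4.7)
The plan is to reduce the claim to a statement about the stable splitting of stunted real projective spaces, which will follow from Thom-spectrum trivializations provided by the torsion in $\widetilde{KO}^0(\RP^3)$.

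First I would exploit the module structure of this spectral sequence over the spectral sequence for $S^0$ (with trivial $C_2$-action): since $g^2 \in H^4(C_2, \pi_0 S^0)$ is a permanent cycle there, the Leibniz rule gives
\[
d_r(g^2 j_n) \;=\; g^2 \cdot d_r(j_n),
\]
and it suffices to prove that $d_r(j_n) = 0$ for $r = 2, 3$. By \Cref{prop:basic-homotopy-fixed}, this spectral sequence is the one associated with the tower $\Tot_m = \Sigma\RP^{n-1}_{n-m-1}$, and $j_n$ is represented by the identity $S^n \to \Tot_0 = S^n$. The vanishing of $d_r(j_n)$ for $r \leq 3$ is therefore equivalent to lifting this identity to $\Tot_3 = \Sigma\RP^{n-1}_{n-4}$, i.e.\ to producing a section of the top-cell projection $\Sigma\RP^{n-1}_{n-4} \to S^n$.

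To produce the section I would use the Thom-spectrum identification of \eqref{eq:thom-m}, $\RP^{n-1}_{n-4} \simeq T((n-4)\xi_3)$, where $\xi_3$ is the tautological line bundle on $\RP^3$. Since $\xi_3 - 1$ generates $\widetilde{KO}^0(\RP^3) \cong \ZZ/4$, the hypothesis $8 \mid n$ (and indeed $4 \mid n$ would suffice here) gives $(n-4)(\xi_3 - 1) = 0$, so $(n-4)\xi_3$ is stably isomorphic to the trivial bundle of rank $n-4$. Combined with the stable splitting $\RP^3 \simeq \RP^2 \vee S^3$ from \Cref{lem:james-shift}, this yields
\[
\Sigma\RP^{n-1}_{n-4} \;\simeq\; \Sigma^{n-3}\RP^3_+ \;\simeq\; S^{n-3} \vee \Sigma^{n-3}\RP^2 \vee S^n,
\]
and in particular $S^n$ splits off as a wedge summand.

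The main point requiring care will be checking that, under this splitting, the $S^n$ summand really provides a section of the tower's top-cell projection. This should follow from a homology argument: the top-cell projection $\Sigma\RP^{n-1}_{n-4} \to S^n$ is an isomorphism on $H_n$, the other two wedge summands $S^{n-3}$ and $\Sigma^{n-3}\RP^2$ contribute nothing in degree $n$, and hence the restriction to the $S^n$ summand must be a degree $\pm 1$ self-map, which is an equivalence. This produces the required lift of $j_n$ to $\Tot_3$ and establishes the vanishing of the differentials.
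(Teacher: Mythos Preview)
Your reduction via the Leibniz rule has a gap that swallows the entire content of the proposition. Writing $d_r(g^2 j_n) = g^2\cdot d_r(j_n)$ is only valid if you already know $d_r(g^2)=0$ for $r\le 3$ in the base spectral sequence for $S^0$ with trivial $C_2$-action. You assert that $g^2$ is a permanent cycle there, but you do not justify it, and that assertion is precisely the $n=0$ instance of the proposition you are trying to prove. Your geometric argument, applied at $n=0$, only shows the triviality $d_r(j_0)=d_r(1)=0$; it says nothing about $g^2$. So after your reduction you have merely transported the problem to $n=0$ without solving it.

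The paper avoids this circularity by working directly with the filtration-$4$ class rather than reducing to filtration $0$. It identifies $g^2 j_n$ with the bottom cell of $\Tot_4=\Sigma\RP^{8k-1}_{8k-5}$ and asks whether that cell is hit by the quotient map from $\Tot_7=\Sigma\RP^{8k-1}_{8k-8}$. James periodicity (now using $c_7=8$, which is where the hypothesis $8\mid n$ is genuinely needed) identifies this with the quotient $\RP^7_0\to\RP^7_3$, and then the same splitting $\RP^3\simeq\RP^2\vee S^3$ you invoke provides the lift $S^3\to\RP^3\hookrightarrow\RP^7_0$. The ingredients are the same as yours; the point is that they must be deployed at the bottom cell of $\Tot_4$, not at the top cell of $\Tot_3$. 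Your argument that $d_r(j_n)=0$ for $r\le 3$ when $4\mid n$ is correct and useful elsewhere (compare the paper's \Cref{prop:second-truncate}), but it is not what is being asked here.
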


\begin{proof} Write $n=8k$. We use the tower of \Cref{prop:basic-homotopy-fixed}. The class $g^2j_n$
is the residue class of the bottom cell of $\Sigma\RP^{8k-1}_{8k-5}$ and we are asking if it is in the 
image of the quotient map
\[
\Sigma\RP^{8k-1}_{8k-8} \longr \Sigma\RP^{8k-1}_{8k-5}.
\]
From \Cref{lem:james-shift} we have that this map is a (de-)suspension of the 
quotient map
\[
\RP^{7}_{0} \longr \Sigma\RP^{7}_{3}.
\]
If $S^3 \to \RP^3$ splits off the top the cell then the composition
\[
S^3 \longr \RP^3 \longr \RP^{7}_{0} \longr \Sigma\RP^{7}_{3}.
\]
is non-zero in homology, as needed. 
\end{proof}

\begin{prop}\label{prop:first-truncate} Let $n \equiv 2$ modulo $4$. Then in the homotopy
fixed point spectral sequence
\[
E_2^{s,t}=H^s(C_2,\pi_tS^{n\sigma}) \Longrightarrow \pi_{t-s}(S^{n\sigma})^{hC_2}
\]
we have differentials
\[
d_2(j_n) =\eta \wchihfs j_n \quad \text{ and } \quad d_2(\wchihfs j_n) = 0.
\]
\end{prop}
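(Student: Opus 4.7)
The plan is to exploit the Tot tower from \Cref{prop:basic-homotopy-fixed}, which identifies $\Tot_m \simeq \Sigma \RP^{n-1}_{n-m-1}$, and to compute the $d_2$ differentials by tracking cell attachments in $\Tot_2$. First, applying James periodicity (\Cref{lem:james-shift}) reduces the case $n \equiv 2 \pmod 4$ to a concrete model: iterating $\Sigma^4$-periodicity identifies $\Tot_2$ with an appropriate shift of $\Sigma \RP^5_3$. The essential structural fact to establish is that $\RP^5/\RP^2 \simeq (S^3 \cup_2 e^4) \cup_\eta e^5$, where the $\eta$-attaching of the top cell is forced by the non-vanishing of $Sq^2 \colon H^3(\RP^5/\RP^2;\F_2) \to H^5(\RP^5/\RP^2;\F_2)$, itself a consequence of $Sq^2(x^3) = \binom{3}{2}x^5 = x^5$ in $H^*(\RP^\infty;\F_2)$.

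For $d_2(j_n) = \eta\wchihfs j_n$: since $\Tot_1 = \Sigma\RP^{n-1}_{n-2}$ splits stably as $S^{n-1} \vee S^n$ (the top attaching has degree $0$ when $n-1$ is odd), the class $j_n \in \pi_n S^n$ lifts to some $\tilde{j}_n \in \pi_n \Tot_1$. The obstruction to lifting further to $\Tot_2$ is computed from the cofiber sequence $S^{n-2} \to \Tot_2 \to \Tot_1$: the connecting map $\Tot_1 \to \Sigma S^{n-2}$ restricted to the top-cell wedge summand is precisely the $\eta$-attaching identified above, so $\delta_*\tilde{j}_n = \eta \in \pi_{n-1}S^{n-2} \cong \pi_1 S^0$. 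Under the identification $E_2^{2,n+1} \cong H^2(C_2, \pi_1 S^0) \cong \Z/2\{\eta\wchihfs j_n\}$, this obstruction yields the claimed differential.

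For $d_2(\wchihfs j_n) = 0$: the class $\wchihfs j_n$ sits in filtration $2$, detected on the bottom cell $S^{n-2}$ of $\Tot_2$. A nonzero $d_2$ on it would require an $\eta$-propagation from the filtration-$2$ cell down to a filtration-$4$ cell in $\Tot_4$, but the parity of the cell dimensions for $n \equiv 2 \pmod 4$ combines with the attaching pattern in the truncated projective space so that no such $\eta$-class is produced — the relevant attaching maps in $\Tot_4 / \Tot_2$ are degree-$2$ in the positions that matter, with $\eta$-secondary attachings only appearing between cells already accounted for by $d_2(j_n)$. The main obstacle will be making this precise and reconciling it with the naive Leibniz computation $d_2(\wchihfs j_n) = \wchihfs \cdot d_2(j_n) = \wchihfs^2 \eta j_n$; the reconciliation requires careful bookkeeping at the $E_2^{4,n+1}$ slot, either showing that the target class is already a $d_2$-boundary or exploiting a specific relation in $H^*(C_2, \pi_* S^0)$ that causes the product to vanish in the cochain-level identification for the $S^{n\sigma}$ spectral sequence.
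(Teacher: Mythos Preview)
Your argument for $d_2(j_n) = \eta\wchihfs j_n$ is correct and essentially identical to the paper's: both use the splitting of $\Tot_1$ to lift $j_n$, then read the obstruction from the non-trivial $\mathrm{Sq}^2$ on the relevant stunted projective space.

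The second claim, $d_2(\wchihfs j_n) = 0$, is where your proposal has a genuine gap, and you acknowledge it yourself. Your sketch (``parity of cell dimensions\ldots attaching maps are degree-$2$ in the positions that matter'') is too vague to constitute an argument, and the Leibniz computation you worry about is a real issue: the target $\eta\wchihfs^2 j_n \in E_2^{4,n+1}$ is a non-zero class, and neither of your proposed escape routes (showing it is already a $d_2$-boundary, or finding a relation killing it) is carried out or even plausible from what you have written. The module structure over the trivial-action $S^0$ spectral sequence does not obviously rescue this, since one would then need to know $d_2(\wchihfs)$ there and arrange a cancellation.

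The paper's approach is entirely different and avoids the Leibniz bookkeeping altogether. It argues geometrically: the class $\wchihfs j_n$ is represented by the inclusion of the bottom cell $S^{4k} \to \Sigma\RP^{4k+1}_{4k-1} = \Tot_2$, and the question is whether this map lifts to $\Tot_4 = \Sigma\RP^{4k+1}_{4k-3}$. The key input is the stable splitting $\RP^3 \simeq \RP^2 \vee S^3$ from \Cref{lem:james-shift}, which via James periodicity gives $\RP^{4k-1}_{4k-3} \simeq S^{4k-1} \vee \Sigma^{4k-4}\RP^2$. The split-off sphere $S^{4k-1}$ maps into $\RP^{4k+1}_{4k-3}$ and then onto the bottom cell of $\RP^{4k+1}_{4k-1}$ (non-trivially in $H_{4k-1}$), so the required lift exists. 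This is a direct construction, not a cohomological vanishing argument, and it is what you are missing.
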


\begin{proof} Write $n=4k+2$. We can analyze these differentials as obstructions to lifting along the tower in \Cref{prop:basic-homotopy-fixed}. Note that we have a splitting $\RP^{4k+1}_{4k} \simeq S^{4k} \vee S^{4k+1}$, which gives a lift of $j_n$ as a map $S^{4k+2 } \to \Sigma \RP^{4k+1}_{4k}$, and $d_2(j_n)$ is the obstruction to lifting this further to $\Sigma\RP^{4k+1}_{4k-1}$. Now note that 
\[
0 \ne \mathrm{Sq}^2: H^{4k-1}( \RP^{4k+1}_{4k-1},\F_2) \longr  H^{4k+1} (\RP^{4k+1}_{4k-1},\F_2).
\]
This implies that
\[
\pi_{4k+2}\Sigma\RP^{4k+1}_{4k-1} \longr \pi_{4k+2}\Sigma\RP^{4k+1}_{4k+1} 
\]
is not surjective, since the non-trivial $\mathrm{Sq}^2$ implies that the top cell of $\RP^{4k+1}_{4k-1}$ does not split off. This gives $d_2(j_n) \ne 0$ and, by \Cref{lem:low-comps-fpsp} there is only one non-zero class in
$H^2(C_2,\pi_{n+1}S^{n\sigma})$. 

For the second statement, let $x: S^{4k} \to \Sigma \RP^{4k+1}_{4k-1}$ be the inclusion of the bottom
cell. Then we are asking if $x$ factors through $\Sigma\RP_{4k-3}^{4k+1}$. To see that it does, note that the 
composition
\[
\RP^{4k-1}_{4k-3} \to \RP^{4k+1}_{4k-3} \to  \RP^{4k+1}_{4k-1}
\]
is non-zero in $H_{4k-1}(-,\FF_2)$. Now use that
\[
\RP^{4k-1}_{4k-3} \simeq \Sigma^{4k-4}\RP^3 \simeq S^{4k-1} \vee \Sigma^{4k-4}\RP^2
\]
to see that $x$ can be factored as the resulting splitting $S^{4k} \to \Sigma \RP^{4k-1}_{4k-3} $ followed by the inclusion $\Sigma \RP^{4k-1}_{4k-3}  \to \Sigma \RP^{4k+1}_{4k-3} $.
\end{proof}

By a similar method, we get the following result, crucial to \Cref{thm:2Q}. 

\begin{prop}\label{prop:second-truncate} Let $n \equiv 4$ modulo $8$. Then in the homotopy
fixed point spectral sequence
\[
H^s(C_2,\pi_tS^{n\sigma}) \Longrightarrow \pi_{t-s}(S^{n\sigma})^{hC_2}
\]
we have $d_r(j_n) = 0$ for $r \leq 3$ and
\[
0 \ne d_4(j_n) =\nu \wchihfs^2 j_n.
\]
Moreover, $d_r(g^2 j_n) =0$ for $r\leq 3$.
\end{prop}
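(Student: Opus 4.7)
The plan is to analyze the tower of fibrations from \Cref{prop:basic-homotopy-fixed}, where $\Tot_m = \Sigma \RP^{n-1}_{n-m-1}$ and the fiber of $\Tot_m \to \Tot_{m-1}$ is $S^{n-m}$. The class $j_n$ corresponds to the identity on $\Tot_0 = S^n$, so $d_r(j_n) = 0$ precisely when $j_n$ lifts to $\Tot_r$, and otherwise $d_r(j_n)$ is computed as the composition of the current lift with the Puppe connecting map $\Tot_{r-1} \to \Sigma F_r$. Throughout, set $n = 8k+4$.

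To show $d_r(j_n) = 0$ for $r \leq 3$, I would construct an explicit lift of $j_n$ to $\Tot_3 = \Sigma \RP^{8k+3}_{8k}$. Iterating the James periodicity $\Sigma^4 \RP^{m+3}_m \simeq \RP^{m+7}_{m+4}$ of \Cref{lem:james-shift} gives $\Sigma\RP^{8k+3}_{8k} \simeq \Sigma^{8k+1}(\RP^3)_+$; combining the disjoint basepoint splitting $(\RP^3)_+ \simeq \RP^3 \vee S^0$ with the splitting $\RP^3 \simeq \RP^2 \vee S^3$ of \Cref{lem:james-shift} yields
\[
\Tot_3 \simeq \Sigma^{8k+1}\RP^2 \vee S^n \vee S^{n-3},
\]
and the inclusion of the $S^n$ summand is the desired lift $\tilde{j}_n$. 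The statement about $g^2 j_n$ is then formal: $g$ is the Bockstein of $h$, hence a permanent cycle in the trivial-action $C_2$-spectral sequence for $S^0$, so Leibniz gives $d_r(g^2 j_n) = g^2 d_r(j_n) = 0$ for $r \leq 3$.

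The main content, and the main obstacle, is showing $d_4(j_n) \neq 0$. By \Cref{lem:low-comps-fpsp} the group $H^4(C_2, \pi_3 S^0) \cdot j_n \cong \Z/2$ is generated by $\nu g^2 j_n$, so it suffices to show $\tilde{j}_n$ does not lift to $\Tot_4 = \Sigma \RP^{8k+3}_{8k-1}$. Equivalently, the top cell $S^n$ must fail to split off $\Tot_4$ as a wedge summand. The obstruction to splitting is the composition of the top-cell attaching map with the projection to the bottom cell $S^{n-4}$ of $\Tot_4$, which is an element $\alpha \in \pi_{n-1}S^{n-4} = \pi_3 S^0$ that agrees with the $S^n$-component of the Puppe connecting map $\Tot_3 \to S^{n-3}$.

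To pin $\alpha$ down as an odd multiple of $\nu$, I would use Steenrod operations. Writing $\mathrm{Sq}^i \xi^j = \binom{j}{i}\xi^{j+i}$ in $H^*(\RP^\infty;\F_2)$, we need $\binom{8k-1}{4} \bmod 2$. For $k \geq 1$ Lucas's theorem gives $\binom{8k-1}{4} \equiv \binom{7}{4} \equiv 1 \pmod 2$, since the last three binary digits of $8k-1$ are $111$ and $4 = 100_2$; the $k=0$ case is handled by the James-periodic identification $\Sigma^8 \RP^3_{-1} \simeq \Sigma \RP^{11}_7$, where directly $\binom{7}{4} = 35$ is odd. Thus $\mathrm{Sq}^4$ non-trivially connects the bottom and top cohomology classes of $\RP^{8k+3}_{8k-1}$, so the attaching map $\alpha$ is detected by $\mathrm{Sq}^4$ and is therefore an odd multiple of $\nu \in \pi_3 S^0$. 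Since any odd multiple of $\nu$ gives the unique non-zero class in $H^4(C_2, \pi_3 S^0)$, this forces $d_4(j_n) = \nu g^2 j_n$. The most delicate point, where I expect to spend the most effort, is the rigorous translation from the $\mathrm{Sq}^4$-detection to the equality $\alpha \equiv \nu \pmod{2\nu}$; a clean way is to compare with the Adams spectral sequence identification of $\nu$ as the unique Hopf-invariant-one element detected by $h_2 \leftrightarrow \mathrm{Sq}^4$.
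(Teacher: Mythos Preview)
Your arguments for $d_r(j_n)=0$ when $r\leq 3$ and for $d_4(j_n)=\nu g^2 j_n$ are essentially the paper's: both of you split $\RP^{8k+3}_{8k}$ via James periodicity and the stable splitting of $\RP^3$, and both of you use the nonvanishing of $\mathrm{Sq}^4$ on $H^{8k-1}(\RP^{8k+3}_{8k-1};\FF_2)$ to see the top cell does not split off, hence $d_4(j_n)\neq 0$. Your extra detail with Lucas's theorem and the $k=0$ shift is fine.

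There is a genuine gap in your treatment of $d_r(g^2 j_n)=0$ for $r\leq 3$. The assertion ``$g$ is the Bockstein of $h$, hence a permanent cycle in the trivial-action $C_2$-spectral sequence for $S^0$'' is false: being a Bockstein is a statement about group cohomology and carries no information about survival in the homotopy fixed point spectral sequence. In fact $g\in E_2^{2,0}$ cannot survive. By the Segal conjecture for $C_2$ (Lin's theorem), $(S^0)^{hC_2}$ is $2$-adically equivalent to $S^0\vee\Sigma^\infty_+ BC_2$, so $\pi_{-2}(S^0)^{hC_2}=0$ after $2$-completion; since $g$ lies in the lowest nonzero filtration contributing to $\pi_{-2}$ and cannot be the target of any differential (all potential sources lie in $H^*(C_2;\pi_{<0}S^0)=0$), it must support one. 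So your Leibniz step $d_r(g^2 j_n)=g^2\,d_r(j_n)$ is unjustified: you need $g^2$ to be a $d_3$-cycle in the $S^0$ spectral sequence, and you cannot deduce this from properties of $g$. The paper handles this by a separate geometric argument (\Cref{prop:zeroSphere}): for $n\equiv 0\bmod 8$ the bottom cell $S^{8k-4}$ of $\Sigma\RP^{8k-1}_{8k-5}$ lifts to $\Sigma\RP^{8k-1}_{8k-8}$ because, after the James shift to $\RP^7_0\to\RP^7_3$, the top-cell splitting $S^3\to\RP^3\to\RP^7_0$ provides the lift. Once that is in hand, the module structure gives $d_r(g^2 j_n)=g^2\,d_r(j_n)=0$ for $r\leq 3$, exactly as you intended.
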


\begin{proof} Write $n= 8k+4$. Proving that $d_r(j_n)=0$ for $r\leq 3$ amounts to lifting the bottom cell $S^{8k+4} \to \Sigma\RP_{8k+3}^{8k+3}$ to $\Sigma \RP_{8k}^{8k+3}$. But by \Cref{lem:james-shift}, we have
\[
\RP^{8k+3}_{8k} \simeq \Sigma^{8k}\RP^3_0 \simeq S^{8k} \vee \Sigma^{8k}\RP^2 \vee S^{8k+3},
\]
so there are no obstruction to the needed lift.
For the $d_4$ calculation, notice that 
\[
0 \ne \mathrm{Sq}^4: H^{8k-1} (\RP^{8k+3}_{8k-1},\F_2) \longr  H^{8k+3}( \RP^{8k+3}_{8k-1},\F_2).
\]
It follows
\[
\pi_{8k+4}\Sigma\RP^{8k+3}_{8k} \longr \pi_{8k+4}\Sigma\RP^{8k+3}_{8k+3} \cong \ZZ_2
\]
is surjective but
\[
\pi_{8k+4}\Sigma\RP^{8k+4}_{8k-1} \longr \pi_{8k+4}\Sigma\RP^{8k+3}_{8k+3}
\]
is not, implying that $d_4(j_n) \ne 0$. By \Cref{lem:low-comps-fpsp}  
\[
\nu\wchihfs^2 j_n \in H^4(C_2,\pi_{n+3}S^{n\sigma})
\]
is the only non-zero class. Since $d_4(j_n) \ne 0$, the class $\wchihfs^2\nu j_n$ cannot be the image of 
$d_2$ or $d_3$ and the result follows. 

To see that $d_r( g^2 j_n) = 0$ for $r\leq 3$, note from \Cref{prop:zeroSphere} that $g^2$ is a $d_3$-cycle in the homotopy fixed point spectral sequence for the trivial $C_2$-action on $S^0$. Using the module structure over that spectral sequence, we get the claim.
\end{proof}

\subsection{A non-trivial element in $\kappa_2(G_{48})/\kappa(\GG_2^1)$} \label{sec:theQ}
After the above interlude on stunted projective spaces and the calculation of differentials in some classical $C_2$-homotopy fixed points, we are ready to return to the study of our invertible spectrum $Q$ from \Cref{defn:Qdefined}.

We set the stage for the proof of \Cref{thm:Qfirst}, to connect it to the material in \Cref{sec:stunted}. We will use the same set-up for \Cref{thm:2Q} as well.

Let $V \in I(C_2)$. In \Cref{rem:compare-ss}
we used the map $S^V \to E \wedge S^V$ to construct a map of augmented cosimplicial spectra
\begin{equation}\label{eq:ss-to-be-trunc}
\xymatrix{
S^V \ar[r] \ar[d] & F(C_2^{\bullet +1},S^V)^{C_2}\ar[d]\\
E \wedge S^V \ar[r] & F_c(\GG_2^{\bullet +1},E \wedge S^V)^{\GG_2}
}
\end{equation}
Suppose $V = n\sigma - n$. Let $Y_m = \Tot_mF_c(\GG_2^{\bullet +1},E \wedge S^V)^{\GG_2}$.
Then, by \Cref{prop:basic-homotopy-fixed} and \Cref{rem:whatif-add-triv} we have a commutative diagram of towers
\[
\xymatrix{
\Sigma^{1-n}\RP^{n-1}_{n-m-1} \ar[r] \ar[d] & Y_m \ar[d]\\
\Sigma^{1-n}\RP^{n-1}_{n-m} \ar[r] & Y_{m-1}\ ,
}
\]
which gives the diagram of spectral sequences
\begin{equation}\label{eq:mapOfSS}
\xymatrix{
E_2^{s,t}(C_2,S^V) = H^s(C_2,\pi_tS^V) \ar@{=>}[r]\ar@<7ex>[d] & \pi_{t-s}(S^V)^{hC_2} \ar[d]\\
E_2^{s,t}(\GG_2,E\wedge S^V)  = H^s(\GG_2,E_tS^V) \ar@{=>}[r] & \pi_{t-s}J(V).
}
\end{equation}
Now suppose $V \in I(C_2)^2$ and fix a choice of $C_2$-invariant generator $j_V \in \pi_0S^V$, which defines
a $\GG_2$-invariant generator $\iota_V \in E_0S^V$. Then we
have a commutative diagram in group cohomology
\[
\xymatrix
{H^*(C_2, \pi_*S^0) \ar[rr] \ar[d]^-{\cong}_-{(j_V)_*}  & & H^*(\GG_2, E_*) \ar[d]^-{\cong}_{(\iota_V)_*}   \\
H^*(C_2, \pi_*S^{V}) \ar[rr] & & H^*(\GG_2, E_*S^{V}).
}
\]

Note that the bottom spectral sequence is isomorphic to the Adams--Novikov spectral sequence for $J(V)$, by \Cref{prop:basic-j-00}. We will be interested in the faith of $\iota_V$. 

To be specific, recall the map
\[
\phi_3^1: \kappa(G_{48}) \to H^3(\GG_2^1,E_2) \cong \ZZ/2
\]
from \eqref{eq:phi31} ({\it{c.f.}} \Cref{defn:descent-filt}). For the element $Q = J(2\sigma-2) $ from \Cref{defn:Qdefined} of $\kappa_{G_{48}}$, we can understand $\phi_3^1(Q)$ by first analyzing the faith of $\iota_{2\sigma-2}$ in the homotopy fixed point spectral sequence $E_r^{*,*}$ through \eqref{eq:mapOfSS}. Then we can restrict to the Adams--Novikov spectral sequence for $E^{h\GG_2^1}\wedge Q$, which is the $\GG_2^1$ homotopy fixed point spectral sequence for $E\wedge S^{2\sigma -2}$ and study the outcome.

In the next subsection, a similar argument will determine that $2 Q $, equivalent to $ J(4\sigma - 4)$ by \Cref{prop:basic-j-2}, is a trivial element in the Picard group. In that case, we will be checking that $\iota_{4\sigma-4}$ cannot support differentials.

In \Cref{thm:upper-bound-kg48-etc}, we proved that $\phi_3^1$ gives an injection
\[
\kappa(G_{48})/\kappa(\GG_2^1) \longr   \ZZ/2,
\]
and now we are ready to prove surjectivity.

\begin{thm}\label{thm:Qfirst} We have
\[
0 \ne \phi_3^1(Q) \in H^3(\GG_2^1,E_2) \cong \ZZ/2
\]
and hence a short exact sequence
\[
\xymatrix{
0 \ar[r] & \kappa(\GG_2^1) \ar[r] & \kappa(G_{48}) \ar[r]^-{\phi_3^1} & \ZZ/2 \ar[r] & 0
}
\]    
\end{thm}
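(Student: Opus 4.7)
The plan is to combine \Cref{lem:G48-reps} and \Cref{thm:upper-bound-kg48-etc} with a comparison of spectral sequences coming from the $J$-construction. By \Cref{lem:G48-reps}, $Q = (E \wedge S^{2\sigma - 2})^{h\GG_2}$ lies in $\kappa(G_{48})$, so by \Cref{thm:upper-bound-kg48-etc} it suffices to show $\phi_3^1(Q) \neq 0$. Equivalently, we need $d_3(\iota_Q) = \wchi \eta \cdot \iota_Q$ in the Adams-Novikov spectral sequence for $E^{h\GG_2^1} \wedge Q$, which by \Cref{prop:basic-j-1} is identified with the $\GG_2^1$-homotopy fixed point spectral sequence for $E \wedge S^{2\sigma - 2}$; under this identification $\iota_Q$ corresponds to the generator $\iota_V$ of $E_0 S^{2\sigma - 2} \cong E_0$ lifted from $j_V \in \pi_0 S^{2\sigma - 2}$.

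To detect this differential, I would exploit the map from the $C_2$-homotopy fixed point spectral sequence of $S^{2\sigma - 2}$, induced by the unit $S^V \to E \wedge S^V$ together with the restriction $\chi|_{\GG_2^1}: \GG_2^1 \twoheadrightarrow C_2$ (this restriction is surjective since the reduced norm $\det \colon \SS_2 \to \ZZ_2^\times$ hits $-1$). Under the resulting map $\Phi$ of Bousfield-Kan spectral sequences, we have $\Phi(j_V) = \iota_V$, $\Phi(g) = \wchi$, and $\Phi(\eta) = 0$ in $H^0(\GG_2^1, E_1) = 0$ because $E_1 = 0$. Then by \Cref{prop:first-truncate} applied with $n = 2$, the source has $d_2(j_V) = \eta g j_V$, a nonzero class in $H^2(C_2, \pi_1 S^V)$.

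Of course $\Phi(\eta g j_V) = 0$ at the $E_2$-level in the target, so the source $d_2$ does not directly descend as a $d_2$ on $\iota_V$. However, $\eta \in \pi_1 L_{K(2)}S^0$ has Adams-Novikov filtration $1$ in the target, where it is detected by $h_1 \in H^1(\GG_2^1, E_2)$. The heuristic conclusion is that the source $d_2$ should propagate as a $d_3$ on $\iota_V$ with value $\wchi h_1\, \iota_V = \wchi \eta\, \iota_V$, the unique nonzero element of $H^3(\GG_2^1, E_2)$. This would finish the proof.

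To make this filtration shift rigorous, I plan to employ the truncation technology of \Cref{subsec:trun-ss} together with the geometric boundary principle \Cref{lem:geo-bound}. The relation $d_2(j_V) = \eta g j_V$ translates to a statement about the obstruction to lifting $j_V$ past the second partial totalization in the source cosimplicial tower. Transferring this via $\Phi$ into the corresponding truncated target tower, and using that $\eta$ is realized in filtration $1$ there rather than $0$, should identify the corresponding obstruction to lifting $\iota_V$ past the third partial totalization in the target as exactly $\wchi h_1\, \iota_V$. This bookkeeping between source $E_2$-filtration and target $E_3$-filtration of $\eta$---together with a careful matching of the two Bousfield-Kan towers and their truncations---is the principal obstacle in the plan.
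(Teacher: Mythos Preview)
Your approach is essentially the paper's: compare the $C_2$-homotopy fixed point spectral sequence for $S^{2\sigma-2}$ (where \Cref{prop:first-truncate} gives $d_2(j)=\eta g j$) with the target HFPSS via the map of cosimplicial spectra from \Cref{rem:compare-ss}, and use the truncation machinery of \Cref{subsec:trun-ss}. The only substantive difference is that the paper runs the comparison with the $\GG_2$-spectral sequence and then restricts to $\GG_2^1$, whereas you go directly to $\GG_2^1$; either works.

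Where your write-up is vague---and diverges from the paper---is the final ``filtration shift'' step. You propose to track how the source $d_2$ becomes a target $d_3$ by invoking \Cref{lem:geo-bound} and bookkeeping the filtration of $\eta$. That lemma, however, concerns a single tower decomposed by a cofiber sequence; it does not directly transport differentials between two different spectral sequences with a shift. The paper sidesteps this entirely with a cleaner device: in the $\leq 3$ truncation of the source, $gj$ is a permanent cycle detecting a class $[gj]\in\pi_{-2}\Sigma^{-1}\RP^1_{-2}$, and the differential $d_2(j)=\eta gj$ forces the \emph{homotopy} relation $\eta\,[gj]=0$. Push this relation forward by the map of truncated towers: $f_\ast(gj)=\wchi\iota$ detects a class $[\wchi\iota]\in\pi_{-2}\Tot_3$, and functoriality gives $\eta\,[\wchi\iota]=0$. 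But in the target, $\eta$ has Adams--Novikov filtration $1$, so $\eta\,[\wchi\iota]$ is detected by $\eta\wchi\iota\in H^3$, which is nonzero; hence it must be killed, and $d_3(\iota)=\eta\wchi\iota$ is the only possibility. This replaces your proposed obstruction-tracing by a one-line homotopy argument, and is what you should aim for when filling in the ``principal obstacle.''
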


\begin{proof}[Proof of \Cref{thm:Qfirst}.] Let $V = 2\sigma -2$. 
We will show that in the  spectral sequence $E_r^{s,t}(\GG_2,E\wedge S^V)$, 
there is a non-trivial differential
\[
d_3(\iota_V ) =\eta \wchi\iota_V. 
\]
Because the restriction of $\eta \wchi\iota_V$ is non-zero in $H^*(\GG_2^1, E_*S^V)$, this forces the same differential in the homotopy fixed point spectral sequence $E_r^{s,t}(\GG_2^1,E\wedge S^V)$.  The result then follows from \Cref{thm:upper-bound-kg48-etc}. 

From here on, we write $j = j_V$, $\iota = \iota_V$, we abbreviate $E_r^{s,t}(S^V) = E_r^{s,t}(C_2,S^V)$ and  $E_r^{s,t}(E\wedge S^V) = E_r^{s,t}(\GG_2,E\wedge S^V) $, and we refer to \Cref{lem:low-comps-fpsp} for the definitions of elements in $H^\ast(C_2,\pi_\ast S^{V})$. 

We apply the truncation construction of \Cref{subsec:trun-ss} to the map of towers \eqref{eq:ss-to-be-trunc}, and let
\[
f_\ast \colon  E_{*, \leq 3}^{*,*}(S^{V})  \to E_{*, \leq 3}^{*,*}(E \wedge S^{V})
\]
be the resulting map of truncated spectral sequences. We truncate up to $3$ since we are interested in $d_3(\iota)$. Then we have 
a commutative diagram
\begin{equation}\label{eq:trun-diag-e2}
\xymatrix{
H^s(C_2,\pi_tS^{V}) \ar[r] \ar[d] &  H^s(\GG_2,E_tS^{V}) \ar[d]\\
E_{2, \leq 3}^{s,t}(S^{V})  \ar[r]^-{f_\ast} & E_{2, \leq 3}^{s,t}(E \wedge S^{V}),
}
\end{equation}
where the vertical maps are injections for $s \leq 3$ and isomorphisms for $s < 3$. See \Cref{lem:truncated-e2}. 
The truncated spectral sequence $E_{*, \leq 3}^{*,*}(S^{V})$ converges to $\pi_\ast \Sigma^{-1}\RP^1_{-2}$
by \Cref{prop:basic-homotopy-fixed} and \Cref{rem:whatif-add-triv}. 

From \Cref{prop:first-truncate} we get that $\wchihfs j$ is a permanent cycle in the truncated spectral sequence
\[
E_{2, \leq 3}^{*,*}(S^{V}) \Longrightarrow \pi_{t-s}\Sigma^{-1}\RP^1_{-2}
\]
detecting a non-zero element of $\pi_{-2} \Sigma^{-1}\RP^1_{-2}$ which we will denote by $[\wchihfs j]$. 
Furthermore, also by \Cref{prop:first-truncate} we have the differential
\[
d_2(j) =\eta \wchihfs  j
\]
and so we conclude that $\eta [\wchihfs j]=0$ in $\pi_{-1} \Sigma^{-1} \RP_{-2}^1$.

We now turn to some corresponding elements in the truncated spectral sequence
\[
E_{2, \leq 3}^{*,*}(E \wedge S^{V}) \Longrightarrow \pi_{t-s}\Tot_3(Q).
\]
Referring to \eqref{eq:trun-diag-e2} we see that on $E_2$-terms, $f_*(\wchihfs j)$ is the class
\[
\wchi\iota \in H^2(\GG_2,E_0S^{V}),\
\]
which is immediate from the definition of the class $\wchi \in H^2(\GG_2, E_0)\cong H^2(\GG_2,E_0S^V) $ as the Bockstein on $\chi$; see \Cref{defn:chi-defined} or \cite[(1.7)]{BBGHPScoh}.
Now, the class $\wchi\iota \in E_{r,\leq 3}^{*,*}(E\wedge S^{V})$ is not hit by a differential for degree reasons; see \cite[Table 3]{BBGHPScoh}. Therefore, it detects a non-trivial class in $[\wchi\iota] \in \pi_{-2}\Tot_3(Q)$.

Since $\eta [\wchihfs j]=0$ we must have $\eta [\wchi \iota]= 0$.  But $ \eta  [\wchi \iota]$ is detected by 
\[
\eta\wchi\iota  \in H^3(\GG_2,E_2S^{V}) \cong H^3(\GG_2, E_2)
\]
again by \cite[\S7]{BBGHPScoh}, or \cref{fig:HFPSSG2}.
So, $\eta\wchi\iota$ must be killed by a differential
and $d_3(\iota ) =\eta \wchi\iota $  is the only possibility. 
\end{proof}

\subsection{The group $\kappa(G_{48})$}	
In this section we apply the same type of analysis as in the previous section in order to prove that $Q \in \kappa(G_{48})$ has order 2. This will imply a splitting 
\[ \kappa(G_{48}) \cong \kappa(\GG_2^1) \oplus \ZZ/2. \]
While the idea of the argument is essentially the same as for \Cref{thm:Qfirst}, it is somewhat harder to accomplish as there are longer differentials to keep track of.

We start with the diagram of augmented cosimplicial spectra from \eqref{eq:ss-to-be-trunc}, with $V = 4\sigma -4$. Again we abbreviate
\begin{align*}
 j &= j_{4\sigma-4} \in H^0(C_2,\pi_0S^{4\sigma - 4})\\
 \iota &= \iota_{4\sigma-4} \in H^0(\GG_2,E_0S^{4\sigma-4}).
\end{align*} 

The goal is to show that $\iota$ is a $d_7$-cycle in the $\GG_2$ spectral sequence. Recalling \Cref{defn:descent-filt}, this would imply that $2Q$ is an element of $\kappa_{2,8}$, so altogether in $\kappa_8(\GG_2^1)$, which is trivial by \Cref{thm:comb-2-filts-fin}.

\begin{figure}
\center
\includegraphics[width=\textwidth]{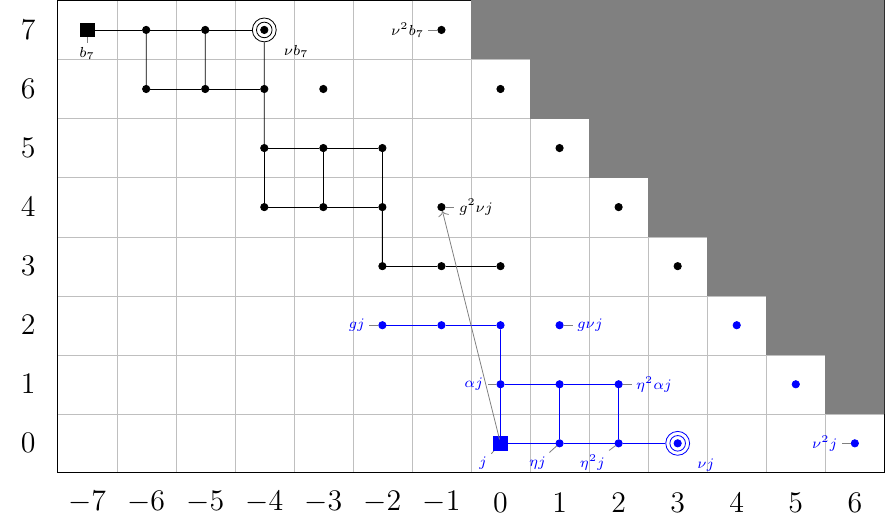}
\captionsetup{width=\textwidth}
\caption{The spectral sequence for the cofiber sequence of towers $\bX^7_3 \longr \bX_{\leq 7} \longr \bX_{\leq 2}$. Classes in {\color{blue} blue} are in the spectral sequence for $\bX_{\leq 2}$, classes in black are in that of $\bX^7_3$. The classes combined give the $E_2$-page of the spectral sequence of $\bX_{\leq 7}$. A $\blacksquare$ is a copy of $\Z_{2}$, a $\bullet$ is a $\Z/2$. The circled $\bullet$ represent $\Z/8$. We have only drawn one differential in the spectral sequence of $\bX_{\leq 7}$.}
\label{fig:HC2S0hfpss4sig4_ALL}
\end{figure}

The first step is an examination of some useful truncations of the $C_2$-fixed point spectral sequence of $S^{4\sigma -4}$, whose outcome is \Cref{lem:chasing-truncs}. Since we are ultimately interested in $d_r(\iota)$ for $r \leq 7$, we will study the 7-truncation. By \Cref{prop:second-truncate} we know that in the homotopy fixed point spectral sequence
\[
H^s(C_2,\pi_t S^{4\sigma-4}) \Longrightarrow \pi_{t-s}(S^{4\sigma-4})^{hC_2}
\]
we have $d_4(j) = \nu g^2 j$; we will study a relative truncation in order to analyze the implications of this differential.

The Tot tower $\bX$ for $(S^{4\sigma-4})^{hC_2}$ is a $(-4)$-desuspension of the tower given in
\Cref{prop:basic-homotopy-fixed}. The $7$-truncation $\bX_{\leq 7}$ of this tower is given by the tower
\[
\xymatrix@C=20pt{
\Sigma^{-3}\RP^{3}_{-4}\ar[r] & \Sigma^{-3}\RP^{3}_{-3} \ar[r] & \Sigma^{-3}\RP^{3}_{-2} \ar[r] & \Sigma^{-3}\RP^{3}_{-1} \ar[r] & \cdots \ar[r] &
 \Sigma^{-3}\RP^{3}_{3}\\
S^{-7}\ar[u] & S^{-6} \ar[u]&S^{-5}\ar[u]&S^{-4}\ar[u]&&S^{0} \ar[u]^\simeq
}
\]
and the relative truncation $\bX^7_3$ is given by
\begin{equation}\label{eq:X36}
\xymatrix{
\Sigma^{-3}\RP^{0}_{-4} \ar[r] & \Sigma^{-3}\RP^{0}_{-3} \ar[r] & \Sigma^{-3}\RP^{0}_{-2} \ar[r] & \Sigma^{-3}\RP^{0}_{-1} \ar[r] & \Sigma^{-3}\RP^{0}_{0}\\
S^{-7} \ar[u]& S^{-6} \ar[u]&S^{-5}\ar[u]&S^{-4}\ar[u]&S^{-3} .\ar[u]^\simeq
}
\end{equation}
We have a fiber sequence of very short towers
\[
\bX^7_3 \longr \bX_{\leq 7} \longr \bX_{\leq 2}
\]
which, after taking inverse limits (which amounts to taking the top space in these finite towers) gives the
standard cofiber sequence
\[
 \Sigma^{-3}\RP^0_{-4} \to \Sigma^{-3}\RP^3_{-4} \to \Sigma^{-3}\RP^3_{1} \to  \Sigma^{-2}\RP^0_{-4}.
\]
For example, $\Tot(\bX^7_3) \simeq \Sigma^{-3}\RP^0_{-4}$. Note $\Sigma^{-3}\RP^3_{1} = \Sigma^{-3}\RP^3 \simeq S^0 \vee \Sigma^{-3}\RP^2$. 

From \Cref{lem:truncated-e2}, for $N = 2$ we get an identification
\[
E_2^{s,t}(\bX_{\leq 2}) =
\begin{cases} 
H^s(C_2,\pi_tS^0),& s \leq 2;\\
0,&s > 2,
\end{cases}
\]
while for $N=7$, we get 
\[ 
E_2^{s,t}(\bX_{\leq 7}) =
\begin{cases} 
H^s(C_2,\pi_tS^0),& s \leq 6;\\
\pi_tS^0, & s = 7;\\
0,&s > 7.
\end{cases}
\]
In addition, by \Cref{lem:truncated-e2-bis}, we have
\[
E_2^{s,t}(\bX_3^7) =
\begin{cases} 
H^s(C_2,\pi_tS^0),& 3 \leq s \leq 6;\\
\pi_tS^0, & s = 7;\\
0,&\mathrm{otherwise}.
\end{cases}
\]
\Cref{fig:HC2S0hfpss4sig4_ALL} gives a concise summary of the $E_2$-terms of the respective spectral sequences: the part in blue depicts the spectral sequence for $\bX_{\leq 2}$, the part in black depicts the one for $\bX_3^7$, and everything together gives the spectral sequence for $\bX_{\leq 7}$. Differentials are not depicted in this figure, other than the $d_4$-differential on $j$, amounting to the image of $j$ under the boundary map.

Using the nomenclature of \Cref{lem:low-comps-fpsp} we now have the following lemma, illustrated in \Cref{fig:HC2S0hfpss4sig4_ALL}.

\begin{lem}\label{lem:chasing-truncs}\label{lem:boundaryuptofiltration} In the spectral sequence
\[
E_2^{s,t}(\bX_{\leq 2}) \Longrightarrow \pi_{t-s}\Sigma^{-3}\RP^3
\]
the class $j$ is a permanent cycle detecting a homotopy class $[j_2]\in \pi_0\Sigma^{-3}\RP^3$, which
is non-zero in homology.

The class $g^2 j$ is a non-trivial permanent cycle in the spectral sequence 
\[
E_2^{s,t}(\bX_{3}^7) \Longrightarrow \pi_{t-s}\Sigma^{-3}\RP^{0}_{-4}.
\]

The image $\delta_*[j_2]$ of $[j_2]$ in $\pi_{-1} \Sigma^{-3}\RP^0_{-4}$ is detected by the
class $ [\nu g^2 j]$ in the spectral sequence
\[
E_2^{s,t}(\bX_3^7) \Longrightarrow \pi_{t-s}\Sigma^{-3}\RP^{0}_{-4}.
\]
More precisely, $\delta_*[j_2] = \nu ( [g^2 j] + a[\nu b_7])$, where $a \in \ZZ/2$, and $b_7 \in E_2^{7,0}(\bX_3^7)$. 
\end{lem}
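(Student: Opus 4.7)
The plan is to combine the cofiber sequence of towers $\bX_3^7 \to \bX_{\leq 7} \to \bX_{\leq 2}$ with the differentials $d_r(j) = 0$ for $r \leq 3$ and $d_4(j) = \nu g^2 j$ coming from \Cref{prop:second-truncate}, and then to apply the geometric boundary result \Cref{lem:geo-bound} to transfer the $d_4$-differential in $\bX_{\leq 7}$ into homotopy information about both $\bX_{\leq 2}$ and $\bX_3^7$.

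Parts (1) and (2) are essentially range arguments. For (1), I would observe that in $\bX_{\leq 2}$ only $d_2$ and $d_3$ could act on $j \in E_2^{0,0}$, and both vanish by \Cref{prop:second-truncate}; hence $j$ is a permanent cycle. I would then take $[j_2]$ to be the inclusion of the $S^0$ summand in the wedge decomposition $\Sigma^{-3}\RP^3 \simeq S^0 \vee \Sigma^{-3}\RP^2$ of \Cref{lem:james-shift}, which is visibly non-zero on $H_0(-;\F_2)$. For (2), the class $g^2 j \in E_2^{4,0}(\bX_3^7)$ sits at bidegree $(s,t)=(4,0)$, so outgoing $d_r$'s with $r \geq 4$ land outside $s \leq 7$, while $d_2$ and $d_3$ vanish by \Cref{prop:second-truncate}; incoming differentials would originate at $s < 3$, outside $\bX_3^7$. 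Thus $g^2 j$ survives to $E_\infty^{4,0}(\bX_3^7)$ and detects a nontrivial $[g^2 j] \in \pi_{-4}\Sigma^{-3}\RP^0_{-4}$.

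Part (3) is the substance. I would apply \Cref{lem:geo-bound} with $x = j \in E_2^{0,0}(\bX_{\leq 7})$, $r = 4$, $M = 7$, $K = 3$ to conclude that $g_\ast j = [j_2]$ is a permanent cycle in $\bX_{\leq 2}$ and that $\delta_\ast [j_2] \in \pi_{-1}\Sigma^{-3}\RP^0_{-4}$ is detected by $\nu g^2 j \in E_4^{4,3}(\bX_3^7)$. Because the $S^{4\sigma-4}$ spectral sequence is a module over the one for $S^0$ and $\nu$ is a permanent cycle in the latter, the product $\nu[g^2 j] \in \pi_{-1}\Sigma^{-3}\RP^0_{-4}$ is likewise detected by $\nu \cdot g^2 j$. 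Hence the difference $\delta_\ast [j_2] - \nu [g^2 j]$ has filtration at least $5$ in $\bX_3^7$.

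To finish, I would rule out filtrations $5$ and $6$: by \Cref{lem:truncated-e2-bis} and \Cref{lem:low-comps-fpsp}, $E_2^{5,4}(\bX_3^7) = H^5(C_2, \pi_4 S^0)$ and $E_2^{6,5}(\bX_3^7) = H^6(C_2, \pi_5 S^0)$ both vanish since $\pi_4 S^0 = \pi_5 S^0 = 0$. The correction must therefore live in $E_\infty^{7,6}(\bX_3^7)$, which is generated by $\nu^2 b_7 = \nu \cdot (\nu b_7)$. Since both $b_7$ and $\nu b_7$ sit at the top filtration $s=7$ of $\bX_3^7$, they are automatically permanent cycles, so $\nu[\nu b_7]$ is detected by $\nu^2 b_7$; thus $\delta_\ast [j_2] - \nu [g^2 j] = a\,\nu[\nu b_7]$ for some $a \in \Z/2$, giving the formula. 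The main obstacle I anticipate is precisely this last identification: at $s = 7$ the $E_2$-page is $N^7_\ast/B^7_\ast$ rather than the group cohomology $H^7(C_2, \pi_\ast S^0)$ (compare \Cref{lem:truncated-e2-bis}), so one must pin down $b_7$ via the structure encoded in \Cref{fig:HC2S0hfpss4sig4_ALL} and verify that $E_2^{7,6}(\bX_3^7)$ really is generated by $\nu^2 b_7$.
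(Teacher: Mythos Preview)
Your approach is correct and essentially identical to the paper's: both apply \Cref{lem:geo-bound} with $K=3$, $M=7$, $r=4$, $x=j$, $y=\nu g^2 j$, then use the vanishing of $\pi_4 S^0$ and $\pi_5 S^0$ to force the correction into filtration $7$, and finally identify that correction as a multiple of $\nu^2 b_7$.

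There is one point where you are slightly looser than the paper. You write that $b_7$ and $\nu b_7$ ``sit at the top filtration $s=7$ \ldots, [so] they are automatically permanent cycles, so $\nu[\nu b_7]$ is detected by $\nu^2 b_7$.'' Being a permanent cycle at the top filtration does not by itself guarantee that $\nu b_7$ detects a \emph{nonzero} homotopy class $[\nu b_7]$; you still need to rule out that $\nu b_7$ is a boundary. The paper handles this by an order count: $E_2^{7,3}(\bX_3^7)\cong \pi_3 S^0 \cong \ZZ/8$ with generator $\nu b_7$, while the only possible incoming differentials $d_2$ and $d_3$ have sources $E_2^{5,2}(\bX_3^7)\cong H^5(C_2,\pi_2 S^0)$ and $E_2^{4,1}(\bX_3^7)\cong H^4(C_2,\pi_1 S^0)$, each of order at most $2$, so they cannot kill the generator $\nu b_7$. (Your concern about $E_2^{7,\ast}(\bX_3^7)$ being $N^7/B^7$ rather than $H^7$ is legitimate but harmless here: as the paper records just before the lemma, this group is simply $\pi_t S^0$, so $E_2^{7,6}(\bX_3^7)\cong\ZZ/2\{\nu^2 b_7\}$ and $E_2^{7,3}(\bX_3^7)\cong\ZZ/8\{\nu b_7\}$.) With that order argument added, your proof is complete and matches the paper's.
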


\begin{proof} 
We apply \Cref{lem:geo-bound} to the tower $\bX$ with $K=3$, $M=7$, $r=4$, $x = j$, and $y = \nu g^2 j$, as we know 
from \Cref{prop:second-truncate} that $d_4(j)=\nu g^2 j$. This gives us that $\delta_*[j_2]$ is indeed detected by $[\nu 
g^2 j]$. Since there is nothing in filtrations 5 and 6 contributing to $\pi_{-1} \Sigma^{-3}\RP_{-4}^0$, we conclude that
$\delta_*[j_2] = [ \nu  \wchihfs^2 j] + [z_7]$,
for some $z_7 \in E_2^{7,6}(\bX_3^7)$. 

First, we note that the class $[\nu g^2 j]\in \pi_{-1}\Sigma^{-3}\RP_{-4}^0$ is the $\nu$-multiple of $[g^2 j]$, which follows 
from the fact that $[g^2 j] $ is a non-trivial permanent cycle in the spectral sequence for $\bX_3^7$, again by 
combination of \Cref{prop:second-truncate} and \Cref{lem:geo-bound}. 

Second, $z_7 = a \nu^2 b_7$, where $b_7$ represents the bottom cell of $\Sigma^{-3} \RP_{-4}^0$, and $a \in \ZZ/2$. 
Note that $\nu b_7$ cannot be hit by a differential: while there are potentially non-trivial $d_2$ and $d_3$ with targets in 
bidegree $(s,t) = (7,3)$, they both have sources in groups of order at most 2, thus cannot conspire to kill the generator
$\nu b_7$ of $E_2^{7,3}(\bX_3^7) \cong \ZZ/8$. Consequently, $[z_7] = a \nu [\nu b_7]$, and all together,
$\delta_*[j_2] $ is a multiple of $\nu$.
\end{proof}

\begin{thm} \label{thm:2Q} The spectrum  $2Q = Q\wedge Q$ is trivial in $\kappa_2$.
\end{thm}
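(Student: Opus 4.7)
The plan is to show that $2Q \simeq J(4\sigma-4)$ is equivalent to $L_{K(2)}S^0$ in $\kappa_2$ by verifying that the $\GG_2$-invariant generator $\iota = \iota_{4\sigma-4} \in E_0 S^{4\sigma-4}$ is a permanent cycle in the $\GG_2$-homotopy fixed point spectral sequence for $E \wedge S^{4\sigma-4}$ with no higher differentials possible.

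The first step is reduction. By \Cref{prop:basic-j-2}, $Q \wedge Q \simeq J(4\sigma-4) = 2Q$. Since $Q \in \kappa(G_{48})$ and $\phi_3^1$ is a homomorphism into $\ZZ/2$ (\Cref{thm:upper-bound-kg48-etc}), $\phi_3^1(2Q) = 2\phi_3^1(Q) = 0$, so $2Q \in \kappa(\GG_2^1)$. In view of \Cref{thm:comb-2-filts-fin}, it then suffices to check $d_r(\iota) = 0$ in the $\GG_2$-homotopy fixed point spectral sequence for $r \in \{3,5,7\}$; sparsity ($E_t=0$ for $t$ odd) kills even-length differentials on $\iota$, and \Cref{rem:props-of-ANSS} handles $r > 7$.

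To establish these vanishings, compare with the $C_2$-homotopy fixed point spectral sequence of $S^{4\sigma-4}$ through the map of augmented cosimplicial spectra \eqref{eq:ss-to-be-trunc} and apply the truncation machinery of \Cref{subsec:trun-ss}, mirroring the strategy of \Cref{thm:Qfirst}. By \Cref{prop:second-truncate}, $d_r(j) = 0$ for $r \leq 3$ on the $C_2$ side and $d_4(j) = \nu \wchihfs^2 j$; however, the $E_2$-target of this $d_4$ on the $\GG_2$ side sits in $H^4(\GG_2, E_3) = 0$, so it does not directly transfer. Rather, $j$ survives to a permanent cycle $[j_2] \in \pi_0 \Sigma^{-3}\RP^3$ in the $2$-truncation, and \Cref{lem:chasing-truncs} identifies the relative boundary $\delta_*[j_2] \in \pi_{-1}\Sigma^{-3}\RP^0_{-4}$ as a non-zero $\nu$-multiple. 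By naturality of truncation, the analogous structural conclusion holds on the $\GG_2$ side: any non-zero $d_r(\iota)$ for $r \in \{5, 7\}$ must be detected by a $\nu$-multiple in the appropriate filtration quotient.

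The hard part is to confront this $\nu$-multiplicity with the explicit differential targets enumerated by \Cref{thm:comb-2-filts-fin}. Two of the three candidates, $\ZZ/2\{\zeta\wchi\eta^2\} \subseteq E_5^{5,4}(\GG_2, S^0)$ and $\ZZ/2\{\zeta e\eta^3\} = E_7^{7,6}(\GG_2, S^0)$, are $\eta$-multiples, and the relation $\eta\nu = 0$ in $\pi_\ast L_{K(2)}S^0$ rules out any $\nu$-multiple obstruction hitting them. The remaining target $\ZZ/4\{\zeta e\nu\}$ of $\phi_5$ is itself a $\nu$-multiple, so parity alone does not suffice; to exclude it, one exploits the $\GG_2/\GG_2^1$-invariance of the class $x \in \pi_0 E^{h\GG_2^1}$ detected by $e\nu$ (\Cref{prop:enu-invariant}) together with the $\zeta$-exact sequence of \Cref{prop:zetacompinjection}. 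Since $2Q \in \kappa(\GG_2^1)$ forces $d_5(\iota)|_{\GG_2^1} = 0$, the invariance of $x$ combined with the module structure over the sphere spectral sequence upgrades this to vanishing of the full $\zeta e\nu$-coefficient at the $\GG_2$ level. Combining, $\iota$ is a permanent cycle on all pages, and \Cref{prop:detect-kappa-ANSS-bis} yields an $E^{h\GG_2}$-module equivalence $L_{K(2)}S^0 \simeq 2Q$, as required.
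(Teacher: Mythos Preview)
Your overall strategy matches the paper's: reduce to showing $\iota = \iota_{4\sigma-4}$ is a $d_7$-cycle in the $\GG_2$-homotopy fixed point spectral sequence (so $2Q \in \kappa_8(\GG_2^1) = 0$), and attack this via the truncation comparison with the $C_2$-side from \Cref{lem:chasing-truncs}. However, the execution has two genuine gaps.

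First, the assertion that ``any non-zero $d_r(\iota)$ for $r \in \{5,7\}$ must be detected by a $\nu$-multiple'' is not justified by ``naturality of truncation'' alone. What \Cref{lem:chasing-truncs} gives is that $\delta_*[j_2] = \nu([g^2 j] + a[\nu b_7])$ in $\pi_{-1}\Sigma^{-3}\RP^0_{-4}$, hence $\delta_*[\iota_2]$ is a $\nu$-multiple in $\pi_{-1}Y_3^7$. But a $\nu$-multiple in homotopy need not be \emph{detected} by a $\nu$-multiple in the spectral sequence when filtration jumps occur---and a jump is precisely what happens here. The paper's actual argument feeds in the key algebraic fact (which you never invoke) that $\wchi^2 = 0$ in $H^4(\GG_2,E_0)$, so $f_*(g^2 j) = \wchi^2\iota = 0$ at filtration $4$ and $f_*[g^2 j]$ is forced into filtration $6$, landing in $H^6(\GG_2,E_2)$. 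Since every class there is an $\eta$-multiple, $\nu$-multiplication annihilates it, pushing $\nu f_*[g^2 j]$ to filtration $\geq 8$, i.e.\ to zero. Together with $f_*[\nu b_7] = 0$ (filtration $7$, but $E_2^{7,3}(\bY_3^7)=0$), this gives $\delta_*[\iota_2] = 0$ directly, and then the converse part of \Cref{lem:geo-bound} finishes.

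Second, and more seriously, your argument for excluding the target $\zeta e\nu$ is circular. Knowing $2Q \in \kappa(\GG_2^1)$ says $\iota$ is a permanent cycle in the $\GG_2^1$-spectral sequence, but $\zeta e\nu$ already restricts to zero there, so ``$d_5(\iota)|_{\GG_2^1}=0$'' carries no information about the $\zeta e\nu$-coefficient. The invariance of $x \in \pi_0E^{h\GG_2^1}$ (\Cref{prop:enu-invariant}) is a statement about the $\GG_2/\GG_2^1$-action on homotopy groups of $E^{h\GG_2^1}$; it does not bear on a differential on $\iota_{2Q}$ in the spectral sequence for the unrelated spectrum $2Q$. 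There is no ``upgrading'' mechanism of the sort you describe. Without the $\wchi^2 = 0$ input, nothing in your argument rules out $\phi_5(2Q) = \zeta e\nu$.
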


\begin{proof} 
Write
\[
A^\bullet = \{(E^{\bullet +1} \wedge S^{4\sigma-4})^{h\GG_2}\}
\]
for the cosimplicial spectrum giving the homotopy fixed point spectral sequence. 
Write $\bY_\bullet $ for the associated tower. As discussed in the beginning of this subsection, it suffices to show that $d_r(\iota) = 0$ for $r\leq 7$ in the associated spectral sequence, i.e. the $\GG_2$ homotopy fixed point spectral sequence.
For that, it suffices to show that $\iota$ is a permanent cycle in the truncated spectral sequence for $\bY_{\leq 7}$, and that is what we will prove.

Consider the truncated towers
\[
\bY_3^7 \to \bY_{\leq 7} \to \bY_{\leq 2}.
\]

Then \eqref{eq:ss-to-be-trunc} gives us a diagram
\begin{equation}\label{eq:diagr-trunc-xy}
\xymatrix{
\Sigma^{-3}\RP^0_{-4} \ar[r] \ar[d]_f & \Sigma^{-3}\RP^3_{-4} \ar[r] \ar[d]_f & \Sigma^{-3}\RP^3\ar[d]^f\\
Y_3^7 \ar[r] & Y_7 \ar[r] & Y_2
}
\end{equation}
where $Y_3^7 = \holim \bY_3^7$ and so on. 
We have a diagram of spectral sequences
\[
\xymatrix{
E_2^{s,t}(\bX_{\leq 2}) \ar[d]_{f_\ast} \ar@{=>}[r]& \pi_{t-s} \Sigma^{-3}\RP^3 \ar[d]^{f_\ast}\\
E_2^{s,t}(\bY_{\leq 2}) \ar@{=>}[r] & \pi_{t-s} Y_2.\\
}
\]
By \Cref{lem:chasing-truncs}, we have that $j \in H^0(C_2,S^{4\sigma - 4})$ is a permanent cycle in the top spectral sequence, giving a non-trivial class $[j_2] \in \pi_0 \Sigma^{-3} \RP^3$, which is in fact a choice of splitting of the top cell. This implies that $f_*(j) = \iota$ is also a permanent cycle in the bottom spectral sequence, giving a homotopy class of $[\iota_2] \in \pi_0 Y_2$.

We also have a diagram of spectral sequences
\[
\xymatrix{
E_2^{s,t}(\bX_3^7) \ar[d]_{f_\ast} \ar@{=>}[r]& \pi_{t-s} \Sigma^{-3}\RP^0_{-4} \ar[d]^{f_\ast}\\
E_2^{s,t}(\bY_3^7) \ar@{=>}[r] & \pi_{t-s} Y_3^7,\\
}
\]
and we next study the image of $\delta_*[j_2] \in \pi_{-1}\Sigma^{-3}\RP_{-4}^0$ under $f_*$. By \Cref{lem:boundaryuptofiltration}, $\delta_*[j_2]=\nu ( [\wchihfs^2 j] + a[\nu b_7]) $. 

First, note that since $f_*([\nu b_7]) $ has filtration 7, it must actually be zero as $E_2^{7,3}(\bY_3^7) = 0$. This implies that 
\[f_*(\delta_*[j_2] ) = \nu f_* ([\wchihfs^2 j]). \]

From Theorem 6.1.6 and Proposition 5.3.1 of \cite{BGH} we have that
\[
0 = \wchi^2 \in  H^4(\GG_2,\W)\cong H^4(\GG_2, E_0)
\]
and hence 
\[
0 = \wchi^2 \iota = f_\ast (\wchihfs^2 j) \in H^4(\GG_2,E_0S^{4\sigma-4}) = E_2^{4,0}(\bY_3^7).
\]
This implies that $f_\ast [\wchihfs^2 j]$ must be detected in filtration greater than $4$ in the spectral sequence for $Y_3^7$. Thus $f_\ast [\wchihfs^2 j]$ could be detected in $E_2^{s,s-4}(\bY_3^7)$, with $4<s\leq 7$. By \Cref{lem:truncated-e2-bis}, and since $\pi_1 E = 0 = \pi_3 E $, the only such group which is non-zero is the case $s=6$, given by
\[ E_2^{6,2}(\bY_3^7) = H^6(\GG_2, E_2 ).\]
Thus $f_\ast [\wchihfs^2 j]$ must have filtration 6. 
From \cite[Table 2]{BBGHPScoh}, we know that $\nu$-multiplication 
\[H^6(\GG_2, E_2) \xrightarrow{\nu} H^7(\GG_2, E_6)\] 
is trivial (since every class in $H^6(\GG_2, E_2)$ is a multiple of $\eta$), thus $\nu f_\ast [\wchihfs^2 j] = f_\ast ( \nu  [\wchihfs^2 j]  )$ must be in higher filtration, i.e. in filtration at least 8. However, there is nothing in filtration 8 in the homotopy of $Y_3^7$, so $f_\ast ( \nu  [\wchihfs^2 j]  ) = 0$.

Therefore, the image of $[\iota_2 ] := f_*[j_2] $ in $\Sigma Y_3^7$ is 
\[ \delta_*[\iota_2] = \delta_*f_*[j_2] = f_*(\nu [\wchihfs^2 j]) = 0.\]
Now we apply the second part of \Cref{lem:geo-bound} to conclude that $\iota$ is a permanent cycle in the spectral sequence for $Y_7$ as needed.
\end{proof} 

Now the following calculation of $\kappa(G_{48})$ is immediate.

\begin{cor}\label{prop:kappaG48} The short exact sequence 
\[
\xymatrix{
0 \ar[r] & \kappa(\GG_2^1)  \ar[r] & \kappa(G_{48})  \ar[r]^-{\phi_3^1} & \ZZ/2  \ar[r] & 0
}
\]
splits and there is  an isomorphism
\[
\kappa(G_{48}) \cong \kappa(\GG_2^1) \oplus \Z/2 \cong \ZZ/8 \oplus (\ZZ/2)^3.
\]
\end{cor}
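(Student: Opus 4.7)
The plan is short and direct: a splitting is already almost in hand from the preceding two theorems. The short exact sequence in question was established in \Cref{thm:Qfirst}, where the class $Q = J(2\sigma - 2) \in \kappa(G_{48})$ was exhibited with $\phi_3^1(Q) \neq 0$ generating $\ZZ/2$. To split the sequence, it suffices to produce a section $\ZZ/2 \to \kappa(G_{48})$, and the natural candidate is $1 \mapsto Q$.

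The key step is verifying that this really is a section, i.e.\ that $Q$ has order exactly $2$ in $\kappa(G_{48})$. Since $\phi_3^1(Q) \neq 0$ already, $Q$ is non-trivial, so we only need $2Q = 0$ in $\kappa(G_{48})$. But $\kappa(G_{48})$ is a subgroup of $\kappa_2$, and \Cref{thm:2Q} gives precisely that $2Q = Q \wedge Q$ is trivial in $\kappa_2$. This is the one step with real content, and all the work was done in \Cref{thm:2Q} itself via the $J$-construction analysis on $S^{4\sigma-4}$ and the truncated spectral sequence argument showing $\iota_{4\sigma-4}$ is a permanent cycle through the $E_7$-page. (As noted after \Cref{defn:descent-filt} and using \Cref{thm:comb-2-filts-fin}, this forces $2Q \in \kappa_8(\GG_2^1) = 0$.)

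With the splitting established, the homomorphism
\[
\kappa(\GG_2^1) \oplus \ZZ/2 \longrightarrow \kappa(G_{48}), \qquad (\alpha, n) \longmapsto \alpha + nQ
\]
is an isomorphism by the standard splitting lemma for short exact sequences of abelian groups. To finish, I would substitute the computation $\kappa(\GG_2^1) \cong \ZZ/8 \oplus (\ZZ/2)^2$ from \Cref{thm:kappag21} to obtain
\[
\kappa(G_{48}) \cong \bigl(\ZZ/8 \oplus (\ZZ/2)^2\bigr) \oplus \ZZ/2 \cong \ZZ/8 \oplus (\ZZ/2)^3,
\]
which is the desired identification. The main obstacle was never in this corollary itself but rather in \Cref{thm:Qfirst} and \Cref{thm:2Q}; at this stage the proof is purely a bookkeeping assembly of those two results.
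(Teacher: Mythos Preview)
Your proof is correct and takes essentially the same approach as the paper, which simply declares the corollary ``immediate'' after \Cref{thm:Qfirst} and \Cref{thm:2Q}. You have spelled out precisely why it is immediate: $Q$ maps to the generator of $\ZZ/2$ and has order $2$ by \Cref{thm:2Q}, so it provides the splitting.
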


We end this section with an analysis of the descent filtration of $\kappa(G_{48})$.

\begin{thm}\label{prop:kappaG48-filt} In the filtration
\[
0 \subseteq \kappa_7(G_{48}) \subseteq \kappa_5(G_{48}) \subseteq \kappa_3(G_{48}) = \kappa(G_{48}) \cong
\ZZ/8 \times (\ZZ/2)^3
\]
we have isomorphisms
\begin{align*}
\kappa_{5}(G_{48}) &= \kappa_5(\GG_2^1) \cong \ZZ/8 \oplus \ZZ/2\\
\kappa_{7}(G_{48}) &= \kappa_7(\GG^1_{2}) \cong \ZZ/2\\
\kappa_s(G_{48}) &= \kappa_s(\GG_2^1) = 0,\qquad s > 7. 
\end{align*}
Furthermore,
\[
\xymatrix@R=10pt{
\kappa(G_{48})/ \kappa_5(G_{48}) \ar[r]_-\cong^-{\phi_3} &
\ZZ/2\{\eta\wchi\} \times \ZZ/2\{\zeta \langle \wchi, 2, \eta\rangle\} \\
\kappa_5(G_{48})/ \kappa_7(G_{48}) \ar[r]_-\cong^-{\phi_5} & \ZZ/4\{\zeta e \nu\} \times \ZZ/2\{\zeta\wchi\eta^2\}\\
\kappa_7(G_{48}) \ar[r]_-\cong^-{\phi_7} & \ZZ/2\{\zeta e\eta^3\}.
}
\]
\end{thm}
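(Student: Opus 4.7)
The plan is to combine the splitting from \Cref{prop:kappaG48}, which identifies $\kappa(G_{48}) \cong \kappa(\GG_2^1) \oplus \ZZ/2\{Q\}$, with the descent filtration computation of $\kappa(\GG_2^1)$ already established in \Cref{thm:comb-2-filts-fin}. The key point is that the extra $\ZZ/2$ summand generated by $Q$ lives entirely at descent filtration exactly $3$, so the higher pieces of the filtration on $\kappa(G_{48})$ will coincide with those on $\kappa(\GG_2^1)$.

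First I would prove that $\kappa_s(G_{48}) = \kappa_s(\GG_2^1)$ for all $s \geq 5$. The inclusion $\kappa_s(\GG_2^1) \subseteq \kappa_s(G_{48})$ is immediate from $\kappa(\GG_2^1) \subseteq \kappa(G_{48})$ and \Cref{defn:descent-filt}. For the reverse inclusion, suppose $X \in \kappa_s(G_{48})$ with $s \geq 5$. Then $\phi_3(X) = 0$ in $H^3(\GG_2, E_2)$, so $\phi_3^1(X)$, which is the image of $\phi_3(X)$ under the restriction $H^3(\GG_2, E_2) \to H^3(\GG_2^1, E_2)$, is also zero. By the argument in \Cref{thm:upper-bound-kg48-etc} (essentially \Cref{prop:keypropd2meansd3} and \Cref{prop:keylemobstruct}), this forces $X \in \kappa(\GG_2^1)$, hence $X \in \kappa_s(\GG_2^1)$.

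Once this equality of filtrations is established, the isomorphisms
\[
\kappa_5(G_{48}) \cong \ZZ/8 \oplus \ZZ/2 \quad\text{and}\quad \kappa_7(G_{48}) \cong \ZZ/2,
\]
as well as the identifications of $\phi_5$ and $\phi_7$, follow directly from \Cref{thm:comb-2-filts-fin}; similarly, $\kappa_s(G_{48}) = 0$ for $s > 7$ is immediate. It remains only to compute the top filtration quotient $\kappa(G_{48})/\kappa_5(G_{48})$ via $\phi_3$. The splitting of \Cref{prop:kappaG48} identifies this quotient as
\[
\kappa(\GG_2^1)/\kappa_5(\GG_2^1) \,\oplus\, \ZZ/2\{Q\} \cong \ZZ/2 \oplus \ZZ/2,
\]
using \Cref{thm:comb-2-filts-fin} for the first summand. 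On the first summand, $\phi_3$ hits $\zeta\langle \wchi, 2, \eta\rangle$, since the map $\phi_3$ restricted to $\kappa(\GG_2^1)$ agrees with the $\GG_2^1$-descent invariant detected in the exact sequence of \Cref{prop:zetacompinjection}. On the $Q$-summand, by \Cref{thm:Qfirst} the restriction $\phi_3^1(Q) = \eta\wchi \in H^3(\GG_2^1, E_2)$ is non-zero, and since $\eta\wchi \in H^3(\GG_2, E_2)$ restricts non-trivially (it is a Galois-invariant class already visible in the $\GG_2$-cohomology of \Cref{fig:HFPSSG2}), we conclude $\phi_3(Q) = \eta\wchi$.

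The main obstacle I anticipate is keeping straight the interplay between the $\GG_2$- and $\GG_2^1$-homotopy fixed point spectral sequences, specifically verifying that $\phi_3$ is injective on the displayed quotient. Concretely, one must rule out the possibility that some nontrivial combination $aQ + y$ with $y \in \kappa(\GG_2^1)$ has $\phi_3 = 0$; this is handled by observing that the restriction map on $H^3(\GG_2, E_2) \to H^3(\GG_2^1, E_2)$ kills $\zeta\langle \wchi, 2, \eta\rangle$ but sends $\eta\wchi$ to a non-zero class, so $\phi_3^1(Q) \neq 0$ forces $Q$ (and its non-trivial multiples) to survive in the $\eta\wchi$-coordinate, making $\phi_3$ injective on $\kappa(G_{48})/\kappa_5(G_{48}) \cong (\ZZ/2)^2$.
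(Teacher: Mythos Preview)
Your proposal is correct and follows essentially the same approach as the paper. Both arguments hinge on establishing $\kappa_5(G_{48}) = \kappa_5(\GG_2^1)$ and then invoking \Cref{thm:comb-2-filts-fin}; the paper does this via a diagram chase comparing $\phi_3$ on $\kappa(\GG_2^1)$ and $\kappa(G_{48})$ together with an order count, while you give the slightly more direct element-level argument using the kernel characterization from \Cref{thm:upper-bound-kg48-etc}.

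One small remark: your deduction that $\phi_3(Q) = \eta\wchi$ exactly does not quite follow from $\phi_3^1(Q) = \eta\wchi$ alone, since restriction $H^3(\GG_2,E_2) \to H^3(\GG_2^1,E_2)$ kills $\zeta\langle\wchi,2,\eta\rangle$; a priori you only get $\phi_3(Q) \equiv \eta\wchi$ modulo that class. This is harmless for the theorem (you still get surjectivity of $\phi_3$ onto $H^3(\GG_2,E_2)$, hence the isomorphism), and in any case the proof of \Cref{thm:Qfirst} actually computes the $d_3$ in the $\GG_2$-spectral sequence first, giving $\phi_3(Q) = \eta\wchi$ on the nose.
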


\begin{proof} Consider the diagram
\[
\xymatrix{
0 \ar[r]& \kappa_5(\GG_2^1)\ar[d]^{\cap} \ar[r] & \kappa(\GG_2^1)\ar[r]^-{\phi_3}\ar[d]^{\cap} &
H^3(\GG_2,E_2) \ar[d]^{=}\\
 0 \ar[r]& \kappa_5(G_{48}) \ar[r] & \kappa(G_{48})\ar[r]^-{\phi_3} & H^3(\GG_2,E_2)
}
\]
where both $\phi_3$ mean their restrictions to $\kappa(G_{48})$ and $\kappa(\GG_2^1)$.
Both horizontal sequences are exact by \cref{lem:desfilt}. By \cite[\S7]{BBGHPScoh} (see also \cref{fig:HFPSSG2}),
\[
H^3(\GG_2,E_2) \cong \Z/2\{\zeta \langle \wchi, 2, \eta\rangle\}\oplus \Z/2\{\eta\widetilde{\chi}\}.
\]
From \cref{thm:comb-2-filts-fin}, we have that $\phi_3(\kappa(\GG_2^1)) = \Z/2\{\zeta \langle \wchi, 2, \eta\rangle\}$ and \cref{thm:Qfirst} shows that
$\phi_3(Q) = \eta\widetilde{\chi}$. By chasing in the above diagram,
we see that
\[
\kappa_5(G_{48}) = \kappa_5(\GG_2^1).
\] 
We then conclude that $\kappa_s (G_{48}) = \kappa_s(\GG_2^1)$ for $s \geq 5$, and 
the calculation of $\phi_5$ and $\phi_7$ follows from \cref{thm:comb-2-filts-fin}.
\end{proof}


\section{Picard elements detected by $E^{hG_{48}}$, and the calculation of $\kappa_2$}

Now that we have determined the subgroup $\kappa(G_{48}) \subseteq \kappa_2$, it remains to compute the quotient and study the extension in order to understand $\kappa_2$ completely.

In this section, we will show that $\kappa_2/\kappa(G_{48}) $ is $\ZZ/8$ in two steps: first, we show that the upper 
bound on this quotient is $\ZZ/8$ in \Cref{prop:upper-bnd-top}, and then we exhibit a generator of order 8 in 
\Cref{thm:first-quotient}. Then, in \Cref{kappa25:split} we show that the extension problem is trivial and finally, we 
conclude the paper by analyzing the descent filtration on $\kappa_2$ in \Cref{thm:kappa}.

The elements of $\kappa_2$ detected by $\tmf$ are those $X\in \kappa_2$ which do not have an $\tmf$-orientation in 
the sense of \Cref{defn:subgroupfilt}. We will be looking at the obstructions for having such an orientation. By 
\Cref{prop:detect-kappa-ANSS}, those obstructions come in the form of differentials on a $\GG_2$-invariant generator 
\[\iota_X \in H^0(\GG_2, E_0X )\subseteq H^0(G_{48}, E_0 X) \] 
in the homotopy fixed point spectral sequence 
\begin{align}\label{eq:G48-X-SS}
E_2^{s,t}(G_{48},X)\cong H^s(G_{48}, E_tX) \Rightarrow \pi_{t-s} (\tmf \wedge X).
\end{align}
We have an isomorphism of this $E_2$-page with $E_2^{s,t}(G_{48},S^0)$ determined by the choice of $\iota_X$. To 
understand the possible fates of $\iota_X$, we'll need a detailed analysis of the homotopy fixed point spectral sequence 
for $\tmf$, over which \eqref{eq:G48-X-SS} is a module. We recall some details about that spectral sequence in the first 
subsection.

\subsection{The homotopy fixed point spectral sequence for $\tmf$.} 

The complete calculation of homotopy fixed point spectral sequence $E_*^{*,*}(G_{48},S^0)$ can be found in \cite{DuanKongLiLuWang}. This is closely related to any calculation of the homotopy groups of various version of 
topological modular forms spectrum, since the $K(2)$-localization of $tmf$ at $p=2$ is $\tmf$. See, for example, 
\cite{tmfbook,tbauer,TmfAt2,BrunerRognes-tmf}. A basic point is that this is a spectral sequence of rings; in particular the 
differentials satisfy the Leibniz rule.

The cohomology ring $H^\ast(G_{48},E_\ast)$ is described in Theorem 2.15 and the accompanying Figure
5 of \cite{BobkovaGoerss} and we content ourselves with a summary here. There are elements
\begin{align*}
c_4 &\in H^0(G_{48}, E_8) &c_6 \in H^0(G_{48}, E_{12})\\
\Delta& \in H^0(G_{48}, E_{24}) &j\in H^0(G_{48}, E_0)
\end{align*}
so that 
\[
E_2^{0,t} = H^0(G_{48}, E_t) = \ZZ_2[[j]][c_4, c_6, \Delta^{\pm 1}]/(c_4^3-c_6^2-(12)^3\Delta, c_4^3-j\Delta)
\]
and 
\begin{equation}\label{eq:coho-g48}
H^\ast(G_{48},E_\ast) \cong
H^0(G_{48},E_\ast)[\eta,\nu,\mu,\epsilon,\kappa,\kappabar]/R,
\end{equation}
where $R$ is a rather elaborate list of relations. The classes
\begin{align*}
\eta &\in H^1(G_{48}, E_2) &\nu \in H^1(G_{48}, E_{4})\\
\epsilon& \in H^2(G_{48}, E_{10}) &\kappa\in H^2(G_{48}, E_{16})\\
\kappabar &\in H^4(G_{48}, E_{24})
\end{align*}
are named for the elements of $\pi_\ast S^0$ that they detect. The class $\mu \in H^1(G_{48}, E_{6})$ has
the property that $d_3(\mu) = \eta^4$. Crucial among these generators is $\kappabar \in
H^4(G_{48},E_{24})$, 
and one important property it has is that multiplication by
$\kappabar$ on $H^*(G_{48}, E_*)$ is onto in cohomological degree greater than 4 and has no annihilators of positive cohomological dimension.

If $r$ is even, then $d_r=0$. The odd differentials form a complicated but well-understood pattern. 
Crucially, $\Delta^8$ is a permanent cycle and the entire spectral
sequence is periodic of period 192 in degree $t$. 

Here and elsewhere
we will use the standard convention of writing elements by their names at $E_2$ even though they may
no longer be products at $E_r$. 

The $d_3$ differentials are determined by
\begin{equation}\label{eq:diffs48:3}
\begin{aligned}
d_3(\mu) &=\eta^4, & & &
d_3(c_6)&=c_4\eta^3
\end{aligned}
\end{equation}
and the fact that all other generators are $d_3$-cycles. 

The $d_5$ and $d_7$ differentials are determined by the formulas
\begin{equation}\label{eq:diffs48:5-7}
\begin{aligned}
&d_5(\Delta) =\overline{\kappa} \nu, & \\
& d_7(4\Delta) =\overline{\kappa}\eta^3, \quad &d_7(2\Delta^{2}) = \Delta\overline{\kappa} \eta^3, \\
& d_7(4\Delta^{3}) = \Delta^2\overline{\kappa} \eta^3,\quad\quad & d_7(\Delta^4) =\Delta^3 \overline{\kappa} \eta^3,
\end{aligned}
\end{equation}
the multiplicative structure, and the fact that they vanish on all the remaining generators. 

Next are the $d_9$ differentials, regarding which we need only the following
\begin{equation}\label{eq:diffs48:9-23}
\begin{aligned}
d_9(\Delta^2 \epsilon)&=\kappabar^2 \kappa\eta, & \quad\quad
d_9(\Delta^6 \epsilon)&=\Delta^4\kappabar^2 \kappa\eta.
\end{aligned}
\end{equation}

In fact, there are considerably more differentials, 
but those are not needed for our arguments below. We will focus here on the calculation in a  small range, which we present in \Cref{fig:hfpss-tmf}, and some key properties summarized in the following result. 

\begin{prop}\label{rem:notes-on-tmf-ss} In the homotopy fixed point spectral sequence
\begin{equation}\label{eq:tmf-hfpss}
E_2^{s,t}(G_{48}, S^0)=H^s(G_{48},E_t) \Longrightarrow \pi_{t-s}E^{hG_{48}}
\end{equation}
we have the following:
\begin{enumerate}

\item The non-zero differentials that originate on the  zero line are $d_3$, $d_5$ and $d_7$. 

\item The spectral sequence collapses at the $r=24$ page and
\[
E^{s,\ast}_\infty(G_{48},S^0) =0\quad\mathrm{if}\quad s \geq 23. 
\]
\item\label{item3:notes-on-tmf-ss} For $r>7$, we have $E_r^{r,r-1} (G_{48},S^0) = 0$.
\item\label{item4:notes-on-tmf-ss} For $s>0$, $E_\infty^{s,s} (G_{48},S^0) = 0$.
\item The  spectrum $E^{hG_{48}}$ is 192-periodic, with a class $\Delta^8\in \pi_{192} E^{hG_{48}}$ a periodicity
generator. Thus the composition 
\[
\Sigma^{192} E^{hG_{48}} \xra{\Delta^8 \wedge \iota} \tmf \wedge \tmf \xra{m} \tmf,
\]
where $m$ is the ring spectrum multiplication, is an equivalence.
\end{enumerate}
\end{prop}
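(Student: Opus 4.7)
The plan is to deduce all five assertions from the (by now standard) complete description of the homotopy fixed point spectral sequence for $E^{hG_{48}}$, as recorded in \cite{DuanKongLiLuWang} and also available in the many equivalent computations of $\pi_\ast \tmf$ at the prime $2$ (\cite{tmfbook, tbauer, TmfAt2, BrunerRognes-tmf}). The idea is that (1)--(5) are extractions from data that is already in hand, once one assembles the differentials \eqref{eq:diffs48:3}--\eqref{eq:diffs48:9-23}, the Leibniz rule, and the fact that $E_2^{0,\ast}(G_{48},S^0)$ is generated as listed above. So the role of the proof is mostly to read off what we need in a focused way; I would organize it part by part.

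For (1), I would argue that the generators of $H^0(G_{48},E_\ast)$ are $c_4,c_6,\Delta^{\pm 1},j$. By \eqref{eq:diffs48:3}, $d_3(c_6)=c_4\eta^3\ne 0$ while $c_4, \Delta, j$ are $d_3$-cycles. Then \eqref{eq:diffs48:5-7} gives $d_5(\Delta)=\overline\kappa\nu$ and the $d_7$ on $4\Delta$, etc. Beyond $r=7$, everything on the zero line is either a permanent cycle or has already been killed; a survivor on the zero line with $r\geq 9$ would have to be a polynomial in $c_4$ and $\Delta^8$ (after reducing modulo the relations and prior differentials), and these are all permanent cycles since they detect genuine homotopy classes (e.g.\ $c_4$ detects a non-trivial element and $\Delta^8$ detects the periodicity class of (5)). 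So no further differentials originate on $s=0$.

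For (2), (3), and (4), I would observe that the uniform horizontal vanishing line for the $K(n)$-local $E$-based Adams--Novikov spectral sequence (\Cref{rem:props-of-ANSS}), together with the explicit determination of the $d_9, d_{11}, \ldots, d_{23}$ differentials in the references above, forces $E_\infty^{s,\ast}=0$ for $s\geq 23$ and gives collapse at $E_{24}$; this is (2). For (3), I would inspect the $(r,r-1)$ column on the $E_r$-page for $r\geq 8$: using the ring structure \eqref{eq:coho-g48} and the recorded differentials, every class in $E_r^{r,r-1}(G_{48},S^0)$ with $r>7$ is either of the form $\eta^a \overline\kappa^b \cdot (\text{zero line})$ or involves $\nu, \epsilon, \kappa, \mu$ in a way that has already been killed by a $d_3, d_5, d_7$, or $d_9$. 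For (4), the diagonal classes $(s,s)$ with $s>0$ all lie in positive cohomological degree in $H^\ast(G_{48},E_s)$, and the tables in \cite{DuanKongLiLuWang} (or equivalently \cite{tbauer, BrunerRognes-tmf}) show that each such class is either hit by a differential or supports one, from which $E_\infty^{s,s}=0$ for $s>0$.

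For (5), the key input is that $\Delta^8\in H^0(G_{48},E_{192})$. By the stated differentials on powers of $\Delta$ in \eqref{eq:diffs48:5-7}, one has $d_5(\Delta^8)=8\Delta^7 \overline\kappa\nu=0$ and $d_7(\Delta^8)=8\Delta^7\overline\kappa \eta^3 = 0$ after accounting for the factor $8$, and all higher differentials vanish on $\Delta^8$ for filtration reasons (the targets $E_r^{r,r-1+192}$ for $r\geq 9$ on a shifted copy of the spectral sequence are zero by (3) combined with $192$-periodicity on the $E_2$-page, cf.\ \cite{BobkovaGoerss}). Hence $\Delta^8$ is a permanent cycle. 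It detects a unit in $\pi_{192} E^{hG_{48}}$ since it is an invertible generator of $E_\infty^{0,192}$ and this is the only non-vanishing filtration contributing to $\pi_{192}E^{hG_{48}}$ from $s=0$; multiplication by the corresponding homotopy class then induces an equivalence $\Sigma^{192}E^{hG_{48}}\xrightarrow{\simeq} E^{hG_{48}}$ of $E^{hG_{48}}$-modules, as claimed.

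The main obstacle, and the only part that requires real work rather than bookkeeping, is verifying (3) and (4) cleanly: one must check, for each $r$ with $8\leq r\leq 23$ and each relevant pair $(s,t)$, that every class in the specified cohomological position either supports or is hit by one of the known differentials. In practice this is done by organizing the $E_2$-term as a module over $\ZZ_2[[j]][\overline\kappa]$ and using the Leibniz rule to propagate the differentials of \eqref{eq:diffs48:3}--\eqref{eq:diffs48:9-23} across the whole range. I would simply invoke the tabulation in \cite{DuanKongLiLuWang} at this point, since reproducing it here would dwarf everything else we need.
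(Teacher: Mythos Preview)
Your approach is correct and is essentially what the paper relies on, though the paper itself gives no proof at all for this proposition: it is stated purely as a summary of known facts extracted from the complete computation of the spectral sequence in \cite{DuanKongLiLuWang} (and the equivalent $\tmf$ computations in \cite{tmfbook,tbauer,TmfAt2,BrunerRognes-tmf}), with \Cref{fig:hfpss-tmf} supplied as visual evidence in a small range. Your part-by-part extraction from the recorded differentials \eqref{eq:diffs48:3}--\eqref{eq:diffs48:9-23} and the Leibniz rule is exactly the kind of bookkeeping the paper is implicitly invoking; your acknowledgment that (3) and (4) ultimately require tabulating the full module structure and deferring to \cite{DuanKongLiLuWang} is honest and matches the paper's stance. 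One small cleanup in your argument for (5): rather than writing $d_5(\Delta^8)=8\Delta^7\overline\kappa\nu$ and $d_7(\Delta^8)=8\Delta^7\overline\kappa\eta^3$, it is cleaner to note from \eqref{eq:diffs48:5-7} that $\Delta^4$ is already a $d_5$-cycle (since $4\overline\kappa\nu=0$), and then $d_7(\Delta^8)=2\Delta^4\,d_7(\Delta^4)=2\Delta^7\overline\kappa\eta^3=0$ because $2\eta=0$.
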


\begin{figure}
\includegraphics[height = 0.9\textheight]{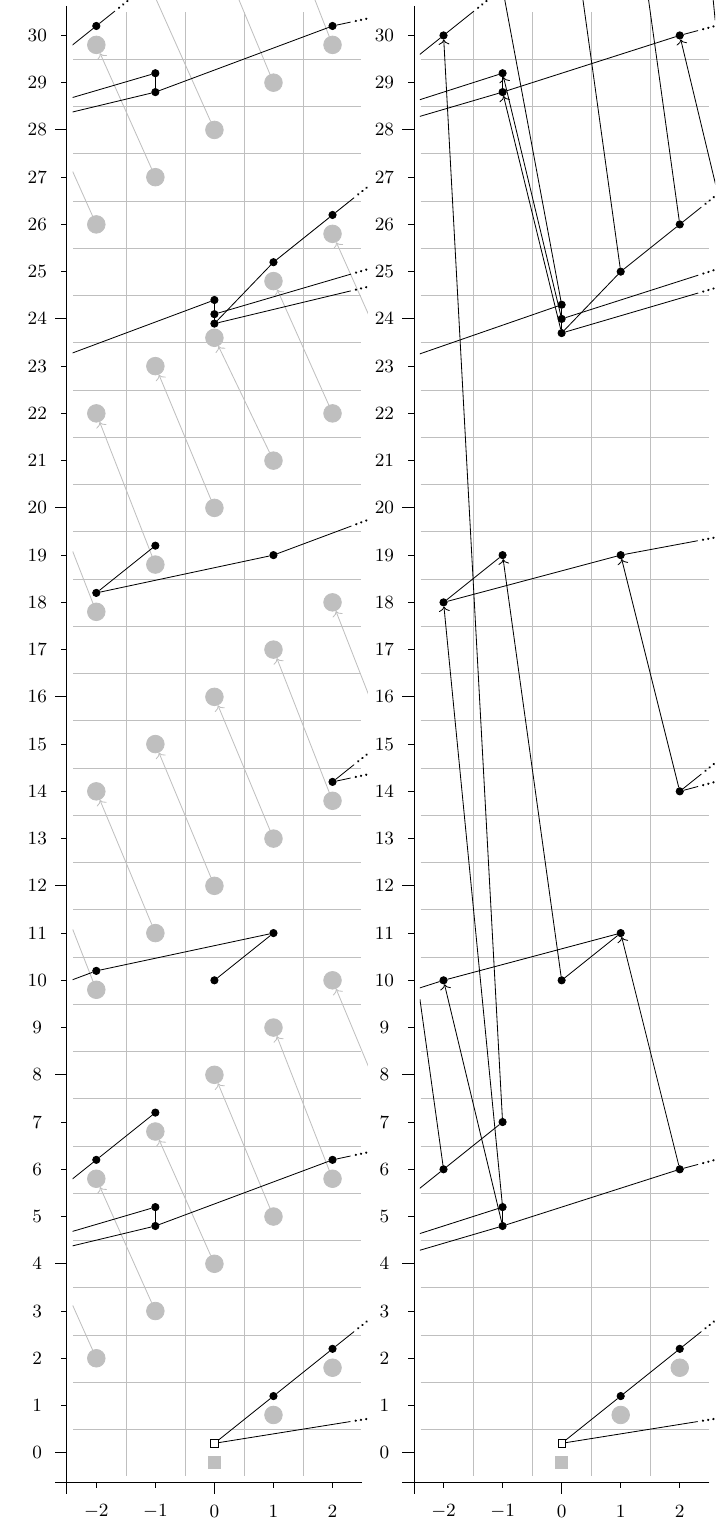}
\captionsetup{width=\textwidth}
\caption{The homotopy fixed point spectral sequence  \eqref{eq:tmf-hfpss} for $\pi_*E^{hG_{48}}$ in a small range. The $x$-axis is $t-s$ and the $y$-axis is $s$. The circles represent $j\FF_2[[j]]$ and bullets represent $\FF_2$. $n$ bullets connected by a vertical line represent $\ZZ/{2^n}$. Lines of slope 1 and 1/3 represent multiplication by $\eta$ and $\nu$, respectively.}
\label{fig:hfpss-tmf}
\end{figure}

\subsection{Establishing an upper bound for $\kappa_2/\kappa(G_{48})$}

In this subsection we will produce an injective homomorphism $\kappa_2/\kappa(G_{48}) \to \ZZ/8$, see \Cref{prop:upper-bnd-top}. 

So we fix $X \in \kappa_2$, let $\iota_X \in E_0X$ be a $\GG_2$-invariant generator for $X$, 
and we examine what can happen to $\iota_X$ in the spectral sequence \eqref{eq:G48-X-SS}. The first result handles $d_3$.  

\begin{lem}\label{lem:top-bound-1} In the homotopy fixed point spectral sequence
\[
H^s(G_{48},E_t X) \Longrightarrow \pi_{t-s}(E^{hG_{48}} \wedge X)
\]
we have $d_3(\Delta^k\iota_X) = 0$ for all $k$.
\end{lem}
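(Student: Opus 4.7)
The plan is to reduce the assertion to the vanishing of $d_3(\iota_X)$ using the Leibniz rule, and then to handle $d_3(\iota_X)$ by comparison with the $\GG_2$-spectral sequence.

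\textbf{Step 1 (Reduce to $k=0$).} The homotopy fixed point spectral sequence for $E^{hG_{48}} \wedge X$ is a module over the homotopy fixed point spectral sequence for $E^{hG_{48}}$. By the formulas for $d_3$ recalled in \eqref{eq:diffs48:3}, only $\mu$ and $c_6$ support $d_3$; in particular $\Delta^k$ is a $d_3$-cycle in $E^{*,*}_{*}(G_{48}, S^0)$ for every $k$. The Leibniz rule therefore yields
\[
d_3(\Delta^k \iota_X) = d_3(\Delta^k) \cdot \iota_X + \Delta^k \cdot d_3(\iota_X) = \Delta^k \cdot d_3(\iota_X),
\]
so it suffices to show $d_3(\iota_X) = 0$.

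\textbf{Step 2 (Lift to $\GG_2$).} Because $X \in \kappa_2$, the generator $\iota_X$ is $\GG_2$-invariant, hence lifts to a class in $H^0(\GG_2, E_0 X)$, which we continue to denote by $\iota_X$. The inclusion $G_{48} \subseteq \GG_2$ induces a map of homotopy fixed point spectral sequences commuting with differentials, so $d_3(\iota_X) \in H^3(G_{48}, E_2 X)$ is the restriction of $d_3(\iota_X) \in H^3(\GG_2, E_2 X)$. It thus suffices to show that this latter class restricts to zero.

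\textbf{Step 3 (Compute the restriction).} Using the isomorphism of Morava modules $E_* \cong E_* X$ determined by $\iota_X$, we have $d_3(\iota_X) = c \cdot \iota_X$ for some $c \in H^3(\GG_2, E_2)$. From \cite{BBGHPScoh} (see \Cref{fig:HFPSSG2}),
\[
H^3(\GG_2, E_2) \cong \ZZ/2\{\eta \wchi\} \oplus \ZZ/2\{\zeta \langle \wchi, 2, \eta \rangle\}.
\]
Both generators restrict trivially to $H^3(G_{48}, E_2)$: the homomorphism $\zeta \colon \GG_2 \to \ZZ_2$ is zero on the finite group $G_{48}$ (as $\ZZ_2$ is torsion-free), and $\chi \colon \GG_2 \to \ZZ/2$ vanishes on $G_{48}$ by the observation in \Cref{lem:G48-reps} that $G_{48} \subseteq \ker(\chi)$; since restriction commutes with the Bockstein, $\wchi$ also restricts to zero on $G_{48}$. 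Consequently $c$ restricts to zero, so $d_3(\iota_X) = 0$ in the $G_{48}$-spectral sequence for $X$, and Step~1 finishes the argument.

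The content of the argument is genuinely in Step~3; the main point to watch out for is that we do not need to pin down which class $c$ actually is (which would be hard, since it depends on $X$), only that the entire target group of possibilities maps to zero under restriction. This is precisely the reason for passing up to the larger group $\GG_2$: the target $H^3(\GG_2, E_2)$ is small enough to enumerate, and every generator is visibly built out of classes (namely $\zeta$ and $\wchi$) that vanish on $G_{48}$.
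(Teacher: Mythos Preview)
Your proof is correct and takes a genuinely different route from the paper's own argument.

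The paper works entirely inside the $G_{48}$-spectral sequence: it identifies the target group $H^3(G_{48},E_2X)$ as $\FF_2[[j]]\,\eta^3\frac{c_4c_6}{\Delta}\,\iota_X$, writes $d_3(\iota_X)=f(j)\,\eta^3\frac{c_4c_6}{\Delta}\,\iota_X$, applies $d_3$ a second time using $d_3(c_6)=c_4\eta^3$ and $c_6^2/\Delta\equiv j\pmod 2$, and extracts from $d_3^2=0$ the functional equation $f(j)+jf(j)^2=0$, forcing $f=0$. This is the same squaring trick that governs the $d_3$ in the Picard spectral sequence of \cite{MathewStojanoska}.

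Your argument instead lifts $\iota_X$ to the $\GG_2$-spectral sequence, where the target $H^3(\GG_2,E_2)$ is only $(\ZZ/2)^2$ with generators built from $\zeta$ and $\wchi$, both of which die under restriction to $G_{48}$. The paper's approach has the virtue of being self-contained to the $G_{48}$ level and not invoking the computation of $H^\ast(\GG_2,E_\ast)$ from \cite{BBGHPScoh}; your approach trades that independence for a more conceptual explanation of \emph{why} the differential vanishes---namely, all possible obstructions come from characters of $\GG_2$ that are trivial on the finite subgroup. Either is perfectly acceptable, and since the paper freely uses \cite{BBGHPScoh} elsewhere, your route is not really more expensive in practice.
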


\begin{proof} Since $\Delta$ is a $d_3$ cycle (see \eqref{eq:diffs48:3}), it suffices to prove that 
\[ d_3(\iota_X) \in H^3(G_{48}, E_4X) \cong \FF_2[[j]] \eta^3\frac{c_4 c_6}{\Delta} \iota_X\]
is zero. 
A priori, we have
\[
d_3(\iota_X) = f(j) \eta^3\frac{c_4 c_6}{\Delta} \iota_X
\]
for some power series $f(j) \in \FF_2[[j]]$. We apply $d_3$ again, and use the fact that $\frac{c_6^2}{\Delta} \equiv j \mod 2$ to obtain that $f(j)$ satisfies
\[
f(j) + j f(j)^2 = 0.
\]
This implies that $f(j)=0$.
\end{proof} 

\begin{rem}
The reader is encouraged to compare this computation of $d_3(\iota_X)$ with the computation of $d_3$ in the Picard spectral sequence in \cite[Theorem 8.2.2]{MathewStojanoska}.
\end{rem}

\begin{rem}\label{rem:setting-up-dr}\label{rem:diff-patt-top} Now that we know $d_3(\iota_X) = 0$,
\eqref{eq:coho-g48} and the differential patterns of \eqref{eq:diffs48:3}, \eqref{eq:diffs48:5-7} and \eqref{eq:diffs48:9-23} (depicted in \Cref{fig:hfpss-tmf}) give that
\[
d_5(\iota_X) = a\Delta^{-1}\kappabar\nu\iota_X,\qquad \text{ for some } a \in \ZZ/4
\]
and if $a = 0$, 
\[
d_7(\iota_X) = b\Delta^{-1}\kappabar\eta^3\iota_X, \qquad  \text{ for some }  b \in \ZZ/2.
\]
\end{rem}
We use this observation to prove the next result, which finds that some $\Delta$-multiple of $\iota_X$ must be a $d_7$-cycle.

\begin{lem}\label{lem:top-bound-2} Let $X \in \kappa_2$ and let $\iota_X \in E_0X$ be a $\GG_2$-invariant
generator. Then there is an integer $k$ so that 
\[
d_5(\Delta^k\iota_X) = 0 = d_7(\Delta^k\iota_X).
\]
\end{lem}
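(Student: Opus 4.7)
The plan is to use the Leibniz rule in the spectral sequence, combined with the known differentials on powers of $\Delta$, to arrange that some twist $\Delta^k\iota_X$ is simultaneously a $d_5$- and a $d_7$-cycle. The integer $k$ will be determined modulo $8$.

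From \Cref{rem:diff-patt-top}, $d_5(\iota_X)=a\Delta^{-1}\overline{\kappa}\nu\,\iota_X$ for some $a\in\ZZ/4$. Combining this with the formula $d_5(\Delta)=\overline{\kappa}\nu$ from \eqref{eq:diffs48:5-7}, the Leibniz rule gives
\[
d_5(\Delta^k\iota_X)=(k+a)\,\Delta^{k-1}\overline{\kappa}\nu\,\iota_X.
\]
As $\overline{\kappa}\nu$ has order $4$ in $H^*(G_{48},E_*)$, any integer $k_0\equiv -a\pmod{4}$ makes $\Delta^{k_0}\iota_X$ a $d_5$-cycle.

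Next, $\Delta^{k_0}\iota_X$ is a $G_{48}$-invariant generator of $E_{24k_0}X$ surviving to $E_7$. Exactly as in \Cref{rem:diff-patt-top}, the only possibility is
\[
d_7(\Delta^{k_0}\iota_X)=b\,\Delta^{k_0-1}\overline{\kappa}\eta^3\iota_X
\]
for some $b\in\ZZ/2$. If $b=0$, I take $k=k_0$. Otherwise I replace $k_0$ by $k_0+4$; since $\Delta^4$ is itself a $d_5$-cycle and $d_7(\Delta^4)=\Delta^3\overline{\kappa}\eta^3$ by \eqref{eq:diffs48:5-7}, the Leibniz rule at the $E_7$-page yields
\[
d_7(\Delta^{k_0+4}\iota_X)=\Delta^3\overline{\kappa}\eta^3\cdot\Delta^{k_0}\iota_X+\Delta^4\cdot b\,\Delta^{k_0-1}\overline{\kappa}\eta^3\iota_X=(1+b)\,\Delta^{k_0+3}\overline{\kappa}\eta^3\iota_X,
\]
which vanishes since $b=1$ and $\overline{\kappa}\eta^3$ has order $2$.

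The only point requiring care is that each application of the Leibniz rule is legal at the appropriate page: $\iota_X$ and $\Delta^k$ are $d_3$-cycles by \Cref{lem:top-bound-1} and \eqref{eq:diffs48:3}, so Leibniz is valid at $E_5$; and $\Delta^4$ and $\Delta^{k_0}\iota_X$ are $d_5$-cycles by construction, so Leibniz is valid at $E_7$. Everything else reduces to a routine computation using the known $d_5$- and $d_7$-patterns on the zero line, so no further obstacle is expected.
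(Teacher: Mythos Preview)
Your proof is correct and follows essentially the same approach as the paper: choose $k_0\equiv -a\pmod 4$ to kill $d_5$ via the Leibniz rule, then shift by $\pm 4$ if necessary to kill $d_7$ using $d_7(\Delta^4)=\Delta^3\overline{\kappa}\eta^3$. The only cosmetic difference is that the paper takes $k_0=-a$ and shifts by $-4$ rather than $+4$, and is slightly less explicit about invoking Leibniz at each page.
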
 

\begin{proof} In the homotopy fixed point spectral sequence
for $E^{hG_{48}}$ we have $d_5(\Delta) = \kappabar\nu$, see \eqref{eq:diffs48:5-7}. Then
\[
d_5(\Delta^k) =
\begin{cases}
k\Delta^{k-1}\kappabar\nu,& k \not\equiv 0\ \mathrm{modulo}\ 4;\\
0, &k\equiv 0\ \mathrm{modulo}\ 4.
\end{cases}
\]
and
\[
d_7(\Delta^k) = \Delta^{k-1}\kappabar\eta^3,\qquad k \equiv 4\ \mathrm{modulo}\ 8.
\]
We use this and the differential pattern of \Cref{rem:diff-patt-top}. Suppose $d_5(\iota_X) = a \Delta^{-1}\kappabar\nu \iota_X$
with $0 \leq a \leq 3$; then $d_5(\Delta^{-a}\iota_X) = 0$, and
\[
d_7(\Delta^{-a}\iota_X) = c\Delta^{-a-1}\kappabar\eta^3\iota_X, \qquad c\in \ZZ/2.
\]
If $c=0$ we are done. If not, $d_5(\Delta^{-a-4}\iota_X) = 0$ as well and
\[
d_7(\Delta^{-a-4}\iota_X) = 0. \qedhere
\]
\end{proof}

Let $\iota_X$ be a $\GG_2$-invariant generator for $X$. By \Cref{lem:top-bound-2}, we can find an integer $k$ such that $\Delta^k \iota_X$ is a $d_7$-cycle in the homotopy fixed point spectral sequence $E_r^{s,t}(G_{48},X)$ \eqref{eq:G48-X-SS}. It turns out that this implies $\Delta^k \iota_X$ is a permanent cycle, which has the following consequence.

\begin{prop}\label{prop:all-are-modules}\label{lem:top-bound-3} 
Let $X \in \kappa_2$, and let $k$ be an integer such that $\Delta^k \iota_X$ is a $d_7$-cycle. Then $\Delta^k\iota_X$ a permanent cycle extending to an equivalence of $\tmf$-modules
$\Sigma^{24k} E^{hG_{48}} \simeq E^{hG_{48}} \wedge X$. 
\end{prop}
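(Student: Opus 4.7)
The plan is to reduce this to an argument modeled on \Cref{prop:detect-kappa-ANSS-bis}, using the class $\beta := \Delta^k \iota_X$ in place of a $G_{48}$-invariant generator. The main obstacle is that $\iota_X$ itself may not survive the spectral sequence, so we cannot directly invoke \Cref{prop:detect-kappa-ANSS-bis}; instead, we must work with the shifted class $\beta$ throughout and be careful about the module structure.

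\textbf{Step 1 (Low differentials vanish on $\beta$).} By \Cref{lem:top-bound-1}, $d_3(\iota_X)=0$, and since $\Delta$ is a $d_3$-cycle in the spectral sequence for $S^0$ (see \eqref{eq:diffs48:3}), the Leibniz rule gives $d_3(\beta)=0$. The hypothesis that $\beta$ is a $d_7$-cycle means $\beta$ survives to the $E_7$-page and $d_7(\beta)=0$; together with $d_3(\beta)=0$ and $d_4 = d_6 = 0$, this forces $d_5(\beta)=0$ as well. Hence $d_r(\beta)=0$ for all $r\le 7$.

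\textbf{Step 2 (Identifying the $E_r$-pages via $\beta$).} The $G_{48}$-homotopy fixed point spectral sequence for $X$ is a module over the analogous spectral sequence for $S^0$. Since $\iota_X$ is an $E_0$-module generator of $E_\ast X$ and $\Delta^k$ is a unit in $H^0(G_{48},E_\ast)$, multiplication by $\beta$ defines an $E_2^{*,*}(G_{48},S^0)$-linear isomorphism of bigraded groups
\[
\mu_\beta\colon E_2^{*,*-24k}(G_{48}, S^0) \xrightarrow{\cong} E_2^{*,*}(G_{48}, X),
\qquad a\mapsto a\cdot \beta.
\]
By Step 1 and the Leibniz rule, this isomorphism is compatible with $d_r$ for $r\le 7$: for any $a$ in the source, $d_r(a\beta)=d_r(a)\beta+a\,d_r(\beta)=d_r(a)\beta$. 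Thus $\mu_\beta$ descends to an isomorphism $E_8^{*,*-24k}(G_{48}, S^0) \cong E_8^{*,*}(G_{48}, X)$.

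\textbf{Step 3 (High differentials vanish).} We now argue inductively that $d_r(\beta)=0$ for all $r\ge 8$. Suppose $d_q(\beta)=0$ for all $q<r$, so that $\mu_\beta$ has descended to an isomorphism on the $E_r$-page. Then $d_r(\beta)$ lies in $E_r^{r,\,r-1+24k}(G_{48},X)$, which via $\mu_\beta^{-1}$ is identified with $E_r^{r,r-1}(G_{48},S^0)$. By item~\ref{item3:notes-on-tmf-ss} of \Cref{rem:notes-on-tmf-ss} this group vanishes for $r>7$. Hence $d_r(\beta)=0$ and $\mu_\beta$ descends further to $E_{r+1}$. By induction $\beta$ is a permanent cycle and $\mu_\beta$ induces an isomorphism $E_\infty^{*,*-24k}(G_{48}, S^0) \cong E_\infty^{*,*}(G_{48}, X)$.

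\textbf{Step 4 (From permanent cycle to $\tmf$-module equivalence).} Since $\beta$ is a permanent cycle in bidegree $(0,24k)$, it detects a homotopy class $z \in \pi_{24k}(E^{hG_{48}}\wedge X)$. Extending $z$ along the $E^{hG_{48}}$-module structure on the target yields an $E^{hG_{48}}$-module map
\[
\varphi_z\colon \Sigma^{24k}E^{hG_{48}} \longrightarrow E^{hG_{48}}\wedge X.
\]
Exactly as in the proof of \Cref{prop:detect-kappa-ANSS-bis}, the induced map on the $E_\infty$-pages of the respective $G_{48}$-homotopy fixed point spectral sequences agrees with $\mu_\beta$, which is an isomorphism by Step~3. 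Since both spectral sequences converge strongly (using the horizontal vanishing line of \Cref{rem:notes-on-tmf-ss}(2)), $\varphi_z$ is an equivalence of $E^{hG_{48}}$-modules, hence of $\tmf$-modules, completing the proof.
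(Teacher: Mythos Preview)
Your proof is correct and follows essentially the same approach as the paper: both use the module isomorphism $\mu_\beta$ to identify $E_r^{r,r-1+24k}(G_{48},X)$ with $E_r^{r,r-1}(G_{48},S^0)$, then invoke \Cref{rem:notes-on-tmf-ss}\eqref{item3:notes-on-tmf-ss} to kill all differentials for $r>7$. Your Step~1 is slightly redundant (the hypothesis ``$\beta$ is a $d_7$-cycle'' already entails that $\beta$ survives to $E_7$, hence $d_3(\beta)=d_5(\beta)=0$), and your Step~4 spells out the equivalence more explicitly than the paper does, but the substance is the same.
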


\begin{proof} 
To see that $\Delta^k \iota_X$ is a permanent cycle, we use an adaptation of the usual proof that $\Delta^8$ is permanent cycle in the spectral
sequence for $\tmf$. If $\Delta^k\iota_X$ is a $d_r$-cycle, then we have an isomorphism of
$E_r^{*,*}(G_{48},S^0)$-modules
\[
E_r^{*,*}(G_{48},X) \cong E_r^{*,*}(G_{48},S^{24k}) \cong E_r^{*,*-24k} (G_{48},S^{0}). 
\]
We know this is the case for $r=7$.
 The image of $\Delta^k\iota_X$ under $d_r$ then lives in the group
\[
E_r^{r,r+24k-1}(G_{48},X) \cong E^{r,r-1} _r(G_{48},S^0),
\]
but from \Cref{rem:notes-on-tmf-ss}\eqref{item3:notes-on-tmf-ss}, we know when $r> 7$, the groups $E_r^{r,r-1}(G_{48},S^0) $ vanish. Thus we conclude that we cannot have any further non-trivial differentials on $\Delta^k\iota_X$.
\end{proof} 

As a first corollary of this result, we get a bound on the descent filtration on $\kappa_2$.
\begin{cor}\label{lem:kappa8is0}
The subgroup $\kappa_{2,8} \subset \kappa_2$ is trivial.
\end{cor}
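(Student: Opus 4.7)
The plan is to show that an element $X \in \kappa_{2,8}$ is automatically $E^{hG_{48}}$-orientable, and then to appeal to the calculation of the descent filtration of $\kappa(G_{48})$ from \Cref{prop:kappaG48-filt}.

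First, suppose $X \in \kappa_{2,8}$ and fix a $\GG_2$-invariant generator $\iota_X \in E_0X$. By definition, $d_r(\iota_X) = 0$ for $2 \leq r \leq 7$ in the Adams-Novikov spectral sequence for the full group $\GG_2$. The restriction map to the $G_{48}$-homotopy fixed point spectral sequence is a map of spectral sequences, and $\iota_X$ (viewed as a $G_{48}$-invariant class through the inclusion $H^0(\GG_2,E_0X) \subseteq H^0(G_{48},E_0X)$) is the image of $\iota_X \in E_2^{0,0}(\GG_2,X)$. So $\Delta^0 \iota_X = \iota_X$ is a $d_7$-cycle in the $G_{48}$-spectral sequence.

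Next, I apply \Cref{prop:all-are-modules} with $k=0$: any such $d_7$-cycle is in fact a permanent cycle, using the vanishing of $E_r^{r,r-1}(G_{48},S^0)$ for $r>7$ from \Cref{rem:notes-on-tmf-ss}\eqref{item3:notes-on-tmf-ss}. By \Cref{prop:detect-kappa-ANSS}, the fact that $\iota_X$ is a permanent cycle in the $G_{48}$-homotopy fixed point spectral sequence is precisely the condition for $X$ to lie in $\kappa(G_{48})$. Hence $\kappa_{2,8} \subseteq \kappa(G_{48})$, and by the definition $\kappa_r(G_{48}) = \kappa(G_{48}) \cap \kappa_{2,r}$, we have $\kappa_{2,8} \subseteq \kappa_8(G_{48})$.

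Finally, \Cref{prop:kappaG48-filt} gives $\kappa_s(G_{48}) = 0$ for $s > 7$; in particular $\kappa_8(G_{48}) = 0$. Combining with the inclusion above yields $\kappa_{2,8} = 0$. There is no real obstacle here — the work was already done in establishing \Cref{prop:all-are-modules} (which is where the differential pattern on $\Delta$-powers was exploited) and in \Cref{prop:kappaG48-filt} (which pinned down the descent filtration on the subgroup $\kappa(G_{48})$); this corollary is essentially their combination.
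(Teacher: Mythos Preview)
Your proof is correct and follows essentially the same approach as the paper: restrict the $\GG_2$-invariant generator to the $G_{48}$-spectral sequence, apply \Cref{prop:all-are-modules} with $k=0$ to get a permanent cycle, deduce $X \in \kappa_8(G_{48})$, and conclude using \Cref{prop:kappaG48-filt}. The only minor addition you make is the explicit citation of \Cref{prop:detect-kappa-ANSS}, which the paper leaves implicit.
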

\begin{proof}
Suppose $X \in \kappa_{2,8}$, so that by definition, $d_r(\iota_X) = 0$ for $r\leq 7$ in the homotopy fixed point spectral sequence $E_r^{s,t}(\GG_2, X)$. Then we get that the image of $\iota_X$ in the homotopy fixed point spectral sequence $E_r^{s,t}(G_{48}, X)$ is also a $d_7$-cycle, but then \Cref{prop:all-are-modules} shows that $\iota_X$ is a permanent cycle in the latter spectral sequence. Hence, $X \in \kappa_{8}(G_{48}) = \kappa_{2,8}\cap \kappa(G_{48})$. But from \Cref{prop:kappaG48-filt}, we know that the group $\kappa_{8}(G_{48})$ is trivial, implying that $X$ is trivial.
\end{proof}

Note that since $\tmf$ is $24\times 8 = 192$-periodic, the integer $k$ in \Cref{prop:all-are-modules} is only
well-defined modulo $8$. 

\begin{prop}\label{prop:upper-bnd-top} Define a homomorphism $\kappa_2 \to \ZZ/8$ by sending $X$ to $k$, where
$E^{hG_{48}} \wedge X \simeq \Sigma^{24k} E^{hG_{48}}$. This gives an injection
\[
\kappa_2/\kappa(G_{48}) \longrightarrow \ZZ/8.
\]
\end{prop}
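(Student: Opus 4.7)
The plan is to reinterpret $X \mapsto k$ as the restriction to $\kappa_2$ of the natural group homomorphism
\[
\kappa_2 \longrightarrow \Pic(E^{hG_{48}}), \qquad X \longmapsto E^{hG_{48}} \wedge X,
\]
then identify its image with a cyclic subgroup of order $8$ and its kernel with $\kappa(G_{48})$. \Cref{lem:top-bound-3} shows that for any $X \in \kappa_2$ and any $\GG_2$-invariant generator $\iota_X$, there exist an integer $k$ and an equivalence $E^{hG_{48}} \wedge X \simeq \Sigma^{24k} E^{hG_{48}}$ of $E^{hG_{48}}$-modules, so the image lies in the subgroup generated by $[\Sigma^{24} E^{hG_{48}}]$. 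By the topological periodicity $192 = 8\cdot 24$ of $E^{hG_{48}}$ (see \Cref{rem:notes-on-tmf-ss}), this subgroup is cyclic of order exactly $8$, and correspondingly $k$ is well-defined modulo $8$; independence from the choice of $\iota_X$ follows from \Cref{rem:really-basic-siso}, since two such generators differ by a unit in $\ZZ_2^\times$.

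For the homomorphism property, I would use that $E^{hG_{48}}$ is a commutative ring spectrum, so
\[
E^{hG_{48}} \wedge (X \wedge Y) \simeq (E^{hG_{48}} \wedge X) \wedge_{E^{hG_{48}}} (E^{hG_{48}} \wedge Y) \simeq \Sigma^{24(k_X + k_Y)} E^{hG_{48}},
\]
which gives $k_{X \wedge Y} \equiv k_X + k_Y \pmod{8}$.

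For the kernel, $X$ maps to zero if and only if $E^{hG_{48}} \wedge X \simeq E^{hG_{48}}$ as $E^{hG_{48}}$-modules. Applying \Cref{lem:old-is-new} — whose hypothesis that the edge homomorphism $\pi_0 E^{hG_{48}} \to H^0(G_{48}, E_0)$ is surjective is exactly the content of \Cref{rem:old-is-new-at-2} in this case — any such equivalence promotes to a $\GG_2$-equivariant orientation of $X$, so $X \in \kappa(G_{48})$. The converse follows from \Cref{prop:detect-kappa-ANSS-bis}.

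The main technical obstacle has already been overcome in \Cref{lem:top-bound-3}, where the vanishing $E_r^{r,r-1}(G_{48}, S^0) = 0$ for $r > 7$ from \Cref{rem:notes-on-tmf-ss}(3) was leveraged to promote $d_7$-cycles to permanent cycles. What remains for the present proposition — checking independence of choices, multiplicativity under smash product, and the promotion of a module-level equivalence to a $\GG_2$-equivariant orientation — is essentially bookkeeping once that existence result is in hand.
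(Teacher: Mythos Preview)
Your proof is correct and follows essentially the same approach as the paper. The paper's proof is much terser---it only spells out the kernel computation, invoking \Cref{lem:old-is-new} with the input of \Cref{rem:notes-on-tmf-ss}(4) (rather than your \Cref{rem:old-is-new-at-2}, though both suffice)---while leaving well-definedness and the homomorphism property implicit from the preceding discussion and \Cref{lem:top-bound-3}; your version simply makes these routine checks explicit.
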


\begin{proof} Suppose $X \in \kappa_2$ goes to $0$; that is, we have an equivalence of $E^{hG_{48}}$-module
spectra $E^{hG_{48}} \simeq E^{hG_{48}} \wedge X$. Then \Cref{lem:old-is-new}, with the input of \Cref{rem:notes-on-tmf-ss}\eqref{item4:notes-on-tmf-ss} implies that $X $ is in $\kappa(G_{48})$. 
\end{proof} 

\begin{rem}
Let $\Pic^0(E^{hG_{48}})$ denote the Picard group of invertible $E^{hG_{48}}$-modules $N$ so that
$E_\ast N$ is in even degrees. This is equivalent to the Picard group $\Pic^0_{G_{48}}(E)$ of invertible $K(2)$-local $E$-modules
$M$ in  $G_{48}$-spectra so that $\pi_\ast M$ is in even degrees. Then $\pi_0M$ is an invertible $E_0$-module
equipped with a compatible action of $G_{48}$. Thus we have a map
\[
\Pic^0(E^{hG_{48}}) \to H^1(G_{48},E_0^\times).
\]
Using methods similar to \cite[Proposition 5.2]{Karamanov} at the prime 3, or the uncompleted version in \cite[Appendix B]{MathewStojanoska},
 one can show that the target is isomorphic to $\ZZ/12$ generated by
the image of $\Sigma^2 E^{hG_{48}}$. In particular, the map is onto and we have a short exact sequence
\[
0 \to K \to \Pic^0(E^{hG_{48}}) \to H^1(G_{48},E_0^\times) \to 0.
\]
Using the techniques of \cite{MathewStojanoska} and \cite{HeardMathewStojanoska},
and using the same input we used above, one can also show 
that $K$ is $\ZZ/8$ generated by $\Sigma^{24}E^{hG_{48}}$; in particular, $\Pic^0(\tmf) \cong \ZZ/96$
and hence $\Pic(E^{hG_{48}}) \cong \ZZ/192$ generated by $\Sigma E^{hG_{48}}$. \Cref{thm:first-quotient} below
shows that the  induced map $\kappa_2/\kappa(G_{48}) \to K$ is an isomorphism.

Note that $E^{hG_{48}}\simeq L_{K(2)}TMF$ and the analogous result for the Picard group of $TMF$-modules is \cite[Theorem 8.2.2]{MathewStojanoska}.
\end{rem}

\subsection{A generator for $\kappa_2/\kappa(G_{48})$.}
In the previous section we established that the quotient $\kappa_2/{\kappa(G_{48})}$ is at most $\ZZ/8$.
The goal of this section is to show that it is exactly $\ZZ/8$ by showing that there exists $P \in \kappa_2$ such
that $P \wedge\tmf\simeq \Sigma^{-24}\tmf$.

The invertible spectrum $P$ that we'll use for this purpose comes from Gross-Hopkins duality \cite{HopkinsGross,GrossHopkins2}. This seminal work implies that at each height $n$, the spectrum $I_n$ obtained as the Brown-Comenetz dual of the $n$-th monochromatic layer of the sphere, is an invertible object in the $K(n)$-local category; a more hands-on presentation of this fact can be found in \cite{StrickGrossHop}. For each $n$, there is a decomposition of $I_n$ as a smash product
\begin{align}\label{eq:define-Pn}
I_n \simeq S^{n^2-n}\wedge S\langle \det \rangle \wedge P_n, 
\end{align}
where $S\langle \det \rangle $ is the determinant sphere, as constructed in \cite{BBGS}, for example, and $P_n$ is an element of $\kappa_n$. The invertibility of $I_n$ then implies that for any $K(n)$-local spectrum $Z$, we have
\begin{align}\label{eq:Gross-Hopkins-relation}
 I_n Z = F(Z, I_n) \simeq F(Z,L_{K(n)}S^0) \wedge I_n \simeq D_nZ \wedge \Sigma^{n^2-n}S^0\langle \det\rangle \wedge P_n,
 \end{align}
where the smash product is understood to be $K(n)$-local.

The $P$ we consider at height $2$ is exactly this $P_2$. We can find a minimum for its order in $\kappa_2$ by studying the interplay between the Spanier-Whitehead dual $D_2\tmf = F(\tmf, L_{K(n)}S^0)$ and the Gross-Hopkins dual $I_2 \tmf = F(\tmf, I_2)$ of $\tmf$. 
The Spanier-Whitehead as well as Gross-Hopkins duals of $\tmf$ are well known.
\begin{thm}[\cite{Bobkova_DTMF}]\label{thm:DTMF} There is an equivalence
of $\tmf$-modules 
\[
D_2 \tmf\simeq\Sigma^{44} \tmf.
\]
\end{thm}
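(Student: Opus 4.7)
The approach I would take is to identify $D_2\tmf$ via homotopy fixed point descent, using $\tmf \simeq E^{hG_{48}}$, and then pin down the shift by careful bookkeeping of the relevant Gross--Hopkins twists.

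First, I would reduce the computation to analyzing $(D_2 E)^{hG_{48}}$. Using $K(2)$-local Tate vanishing $E^{tG_{48}} \simeq \ast$ (which holds for every finite subgroup of $\GG_2$), the norm map becomes an equivalence $L_{K(2)}E_{hG_{48}} \simeq \tmf$. Combined with the adjunction $F(X_{hF}, Y) \simeq F(X, Y)^{hF}$ for $Y$ with trivial $F$-action, this yields
\[
D_2\tmf \simeq F(E, L_{K(2)}S^0)^{hG_{48}} = (D_2 E)^{hG_{48}},
\]
where $G_{48}$ acts on $D_2 E$ by precomposition with its given action on $E$.

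Next, I would unravel the $G_{48}$-equivariant structure of $D_2 E$ via Gross--Hopkins duality. Combining $I_2 \wedge E \simeq \Sigma^{2} E \wedge S\langle\det\rangle$, the formula $D_2 X \simeq I_2 X \wedge I_2^{-1}$, and the decomposition $I_2 \simeq \Sigma^2 S\langle\det\rangle \wedge P$ for $P \in \kappa_2$, one gets $D_2 E \simeq E \wedge P^{-1}$ non-equivariantly. The $G_{48}$-action by precomposition differs from the natural action on $E \wedge P^{-1}$ by the dualizing character; since $G_{48} \subseteq \ker\det$ the determinant contribution is trivial on $G_{48}$, but the dualizing character still contributes a shift in internal degree. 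Running the HFPSS
\[
H^s(G_{48}, \pi_t D_2 E) \Longrightarrow \pi_{t-s}D_2\tmf
\]
with this twisted coefficient structure yields $\pi_\ast D_2\tmf \cong \pi_{\ast - k}\tmf$ for an integer $k$, and a $\tmf$-module map $\Sigma^k\tmf \to D_2\tmf$ extending a generator of $\pi_k D_2\tmf$ realizes the equivalence.

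To extract $k = 44$, the cleanest route is to identify a Serre-duality reflection symmetry on the $E_\infty$-page of the HFPSS for $\tmf$, which is $192$-periodic with non-vanishing classes symmetric about a central internal degree; alternatively one could compute the dualizing character of $G_{48}$ on the tangent space of the universal deformation of $\Gamma_2$ directly. The main obstacle is confirming this numerical shift: the framework robustly yields $D_2\tmf \simeq \Sigma^k\tmf$ for some $k$, but isolating $k = 44$ requires careful tracking of Gross--Hopkins and dualizing twists, and is especially delicate at $p = 2$ where the exotic class $P_2 \in \kappa_2$ contributes non-trivially. A further subtlety is promoting the spectrum-level equivalence to one of $\tmf$-modules, which requires performing each step compatibly with the $\tmf$-multiplication.
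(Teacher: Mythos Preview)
The paper does not give its own proof of this theorem; it is quoted from \cite{Bobkova_DTMF}, with \cite[Theorem~13.25]{BGHS} cited as an alternative. So the comparison is between your sketch and those external arguments.

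Your first reduction, $D_2\tmf \simeq (D_2E)^{hG_{48}}$ via Tate vanishing and the norm, is sound and is indeed the starting point of the \cite{BGHS} approach. The trouble begins with your analysis of $D_2E$. You write $D_2 E \simeq E \wedge P^{-1}$ by combining the Gross--Hopkins decomposition of $I_2$ with an identification of ``$I_2 \wedge E$'', but you appear to conflate $I_2 \wedge E$ (the smash product, which is what Gross--Hopkins computes as $\Sigma^2 E\langle\det\rangle$) with $I_2E = F(E,I_2)$ (the functional dual, which is what enters the formula $D_2X \simeq I_2X \wedge I_2^{-1}$). These are genuinely different objects, since $E$ is not dualizable in the $K(2)$-local category. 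Even granting some equivariant identification of $(D_2E)^{hG_{48}}$, your route lands on $\tmf \wedge P^{-1}$ up to a dualizing-character twist, and here you hit a circularity: in this paper the equivalence $\tmf \wedge P \simeq \Sigma^{-24}\tmf$ is \emph{deduced from} \Cref{thm:DTMF} together with \Cref{thm:ITMF} (see \Cref{thm:first-quotient}), so you cannot invoke the effect of $P$ on $\tmf$ to establish \Cref{thm:DTMF}. You acknowledge that isolating $k=44$ is the main obstacle and do not carry it out; but that number is the entire content of the statement.

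The published proofs avoid $P$ altogether. Bobkova's argument runs through the topological duality resolution of $E^{h\SS_2^1}$: dualizing the resolution and comparing it with the original pins down the suspension on the end term $E^{hG_{24}}$. The \cite{BGHS} argument identifies $D_2E^{hG_{24}}$ with an $RO(G_{24})$-graded suspension $\Sigma^{-V}E^{hG_{24}}$, where $V$ is an explicit virtual representation built from the adjoint action of $G_{24}$ on the Lie algebra of $\SS_2$, and then reads off the integer shift from the known $J$-homomorphism $RO(G_{24}) \to \Pic(E^{hG_{24}})$. Your option (b), ``compute the dualizing character directly'', is essentially pointing at this second route, but making it work requires that representation-theoretic input rather than the Gross--Hopkins packaging through $P$.
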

See also \cite[Theorem 13.25]{BGHS} for a different proof of \cref{thm:DTMF}.

The next result identifies the Gross-Hopkins dual of $\tmf$. It has a long history but has not been recorded in the literature in the precise form that we use, and here we present a proof based on calculations from \cite{Pham}.

\begin{thm}\label{thm:ITMF} There is an equivalence
of $\tmf$-modules 
\[
I_2 \tmf \simeq\Sigma^{22}\tmf.
\]
\end{thm}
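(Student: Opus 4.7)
The plan is to deduce the $K(2)$-local Gross-Hopkins duality statement for $\tmf$ from the integral Anderson self-duality of the classical topological modular forms spectrum at the prime $2$ and then track the degree shift through the chromatic machinery.

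First, I would invoke Stojanoska's Anderson self-duality for $Tmf$ at $p=2$, completed in \cite{TmfAt2} and \cite{Pham} with essential calculational input from \cite{BrunerGreenleesRognes} (and the strategy originating in \cite{MahowaldRezkBCDual}): there is an equivalence of $2$-local $Tmf$-modules
\[
I_{\ZZ}\,Tmf_{(2)} \simeq \Sigma^{21}\,Tmf_{(2)},
\]
where $I_{\ZZ}$ denotes the Anderson dualizing spectrum. This is the deep computational input, and all previous work on the subject is organized toward extracting exactly this statement.

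Second, I would translate this into Brown-Comenetz duality by smashing the defining fiber sequence $I_{\ZZ} \to I_{\QQ} \to I_{\QQ/\ZZ}$ with $Tmf_{(2)}$ and applying $L_{K(2)}$. The middle term is rational, hence $K(2)$-acyclic, so the resulting cofiber sequence collapses to an equivalence
\[
L_{K(2)} I_{\QQ/\ZZ}\,Tmf_{(2)} \simeq \Sigma\, L_{K(2)} I_{\ZZ}\,Tmf_{(2)} \simeq \Sigma^{22}\,\tmf.
\]

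Third, I would identify the left-hand side with $I_2\,\tmf$. Using the formula $I_2 X = F(M_2 X, I_{\QQ/\ZZ})$ together with the chromatic fiber sequence $M_2 \tmf \to \tmf \to L_1 \tmf$ (which is valid since $\tmf$ is $K(2)$-local, hence $L_2$-local), I would argue that applying $L_{K(2)} F(-, I_{\QQ/\ZZ})$ to $\tmf \to L_1 \tmf$ produces $I_2\,\tmf$ on the cofiber; this reduces to verifying that the Brown-Comenetz dual of $L_1 \tmf$ vanishes after $K(2)$-localization, which follows from $L_1 \tmf$ being $K(1)$-local and a standard chromatic vanishing argument.

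The main obstacle is the careful chromatic bookkeeping in this last step, particularly ensuring that no exotic extensions appear and that the integral shift of $22$ survives passage between the Anderson, Brown-Comenetz, and Gross-Hopkins duality regimes without ambiguity in $\pi_0$. In practice the cleanest route, as executed in \cite{Pham}, supplements this chromatic reasoning with a direct verification via the $G_{48}$-homotopy fixed point spectral sequence in a small range around $t-s = 22$, confirming the degree on the nose and pinning down any potential unit ambiguity in the equivalence.
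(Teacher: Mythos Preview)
Your approach is valid and is explicitly acknowledged as an alternative in \Cref{rem:dualityproof}, but the paper's own argument takes a quite different route. Rather than invoking Anderson self-duality of $Tmf$, the paper combines the Gross--Hopkins decomposition $I_2 \simeq S^2\wedge S\langle\det\rangle\wedge P_2$ with the known Spanier--Whitehead dual $D_2\tmf\simeq\Sigma^{44}\tmf$ from \Cref{thm:DTMF} and the triviality of $S\langle\det\rangle$ over $\tmf$ to obtain $I_2\tmf\simeq\Sigma^{46}(P_2\wedge\tmf)$; then \Cref{lem:top-bound-2} and \Cref{prop:all-are-modules} force $P_2\wedge\tmf\simeq\Sigma^{24k}\tmf$ for some $k$ modulo $8$, and the value $k\equiv -1$ is pinned down by smashing with a self-dual $A(1)$ and comparing against Pham's tables of $\pi_\ast(\tmf\wedge A(1))$. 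Your route front-loads the deep input into the Anderson duality statement for $Tmf$, while the paper's route recycles machinery already built here and isolates the external input to a single homotopy-group lookup; the paper's version also has the practical advantage of simultaneously determining the action of $P_2$ on $\tmf$, which is exactly what is needed downstream in \Cref{thm:first-quotient}. One caution on your writeup: in your third step you tacitly pass from $I_{\QQ/\ZZ}(Tmf)$ (what step~2 produced) to an argument about $I_{\QQ/\ZZ}(\tmf)$, and that passage needs its own justification, since Brown--Comenetz duality does not commute with $K(2)$-localization on the nose --- the cleanest fix is precisely the Greenlees--Stojanoska result the paper cites in \Cref{rem:dualityproof}.
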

\begin{proof}
Substituting the equivalence from \Cref{thm:DTMF} into  \eqref{eq:Gross-Hopkins-relation}, and using that
$\tmf\wedge S\langle \det \rangle\simeq \tmf$ by \cite[Corollary 3.11]{BBGS}, we get 
\begin{align}
I_2(\tmf) \simeq \Sigma^{46} P_2 \wedge \tmf.
\end{align}
By \Cref{lem:top-bound-2} and \Cref{lem:top-bound-3}, applied to $X =P_2 \in \kappa_2$, we conclude that there is an
integer $k$ such that $P_2\wedge \tmf \simeq \Sigma^{24k} \tmf $, which in turn implies that $I_2\tmf \simeq \Sigma^{46+24k} 
\tmf$. Since $\tmf$ is $192$-periodic the value of $k$ is only determined modulo $8$. The following 
discussion shows that $k\equiv -1$.

Let $A(1)$ be
a finite $2$-local spectrum with the property that $H^\ast (A(1),\FF_2)$ is free on a generator in degree
zero over the subalgebra of  the Steenrod algebra generated by $\mathrm{Sq}^1$ and $\mathrm{Sq}^2$.
We have some choice here, and we choose a version of $A(1)$ which is self-dual; in \cite{Pham} these
are called $A_1[10]$ and $A_1[01]$. Either choice will do. We will show that 
\[
I_2(\tmf) \wedge A(1) \simeq \Sigma^{46-24} \tmf \wedge A(1)
\]
and that if $k \not\equiv -1$ modulo $8$, then
\[
I_2(\tmf) \wedge A(1) \not\simeq \Sigma^{46+24k} \tmf \wedge A(1).
\]

We have that $DA(1) = F(A(1),S^0) = \Sigma^{-6}A(1)$. Since $A(1)$ is a type $2$ complex
$L_1(\tmf \wedge A(1)) \simeq \ast$ and
\[
I_2(\tmf \wedge A(1)) = I_{\ZZ/2^\infty} (\tmf \wedge A(1)).
\]
Here, $I_{\ZZ/2^\infty}X$ denotes the Brown-Comenetz dual of $X$, whose homotopy groups are $ (\pi_*X)^\vee = \Hom(\pi_*X, \ZZ/2^\infty)$.
Using \Cref{prop:all-are-modules},  \eqref{eq:Gross-Hopkins-relation}, and the fact that $A(1)$ is a finite complex,
we have
\begin{align*}
I_{\ZZ/2^\infty} (\tmf \wedge A(1))
&\simeq S^2 \wedge S^0\langle \det \rangle \wedge P \wedge \Sigma^{44}E^{hG_{48}} \wedge \Sigma^{-6} A(1)\\
& \simeq \Sigma^{40}\tmf \wedge P \wedge A(1)\\
& \simeq \Sigma^{40 + 24k} \tmf \wedge A(1)
\end{align*}
with $k$ to be determined. This gives an equation
\begin{align*}
[\pi_{-n}(\tmf \wedge A(1))]^\vee &\cong \pi_{n-40-24k}(\tmf \wedge A(1))\\
&\cong \pi_{n+152-24k}(\tmf \wedge A(1)).
\end{align*}
The last isomorphism uses the periodicity of $\tmf$.
Investigating \cite[Figures 22-25]{Pham} for low values of $n$ we see that this is only
possible if $k\equiv -1$ modulo $8$. Specifically, we see that if $1 \leq n \leq 15$, then
\[
\pi_{-n}(\tmf \wedge A(1)) = \pi_{192-n}(\tmf \wedge A(1)) = 0. 
\]
In the same range, $\pi_{n+152-24k}(\tmf \wedge A(1)) = 0$ only if $k=-1$. 
\end{proof}

\begin{rem}\label{rem:dualityproof}
There are several possible alternatives to this proof. For one, the calculations given above could be combined with a strategy analogous to the prime 3 version given in \cite[Proposition 2.4.1]{BehrensModular} to arrive at the same conclusion.
A different approach would be to cite Greenlees \cite[Example 4.4]{GreenleesTHHDuality}, for the fact that $tmf$ is Gorenstein self-dual, which is also done in much more detail in the newer \cite{BrunerRognes-tmf,BrunerGreenleesRognes}. Then one can apply \cite[Proposition 4.1]{GreenleesStojanoska} to obtain that the non-connective, non-periodic $Tmf$ is Anderson self-dual, and then $K(2)$-localize to obtain that $\tmf = L_{K(2)}Tmf$ is Gross-Hopkins self-dual as claimed.

The most conceptual proof would use Serre duality on a suitable cover of the compactified moduli stack of elliptic curves. The analogue at the prime 3 was done in \cite{Stojanoska}, while the algebraic calculations needed at $p=2$ are set up in \cite{TmfAt2}. 
\end{rem}

Combining these two results give the main result of this section.

\begin{theorem}\label{thm:first-quotient}
Let $P=P_2 \in \kappa_2$ be determined by the equation \eqref{eq:define-Pn} at $n=2$, i.e. 
\[
I_2 \simeq  S^{2}\wedge S^0\langle \det\rangle \wedge P.
\]
Then we have an equivalence of $\tmf$-modules
\[
\Sigma^{-24} \tmf \simeq \tmf \wedge P
\]
and, thus, an isomorphism
\[
\kappa_2/{\kappa(G_{48})} \cong \ZZ/8.
\]
\end{theorem}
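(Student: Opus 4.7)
The plan is essentially to assemble the two duality identifications of $\tmf$ (Theorems \ref{thm:DTMF} and \ref{thm:ITMF}) through the Gross--Hopkins relation \eqref{eq:Gross-Hopkins-relation}, and then to invoke the upper-bound result of \Cref{prop:upper-bnd-top} to promote the resulting $\tmf$-module equivalence to an isomorphism statement about $\kappa_2/\kappa(G_{48})$.

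First I would specialize \eqref{eq:Gross-Hopkins-relation} at $n=2$ and $Z = \tmf$. Since $n^2-n = 2$, this reads
\[
I_2 \tmf \simeq D_2 \tmf \wedge \Sigma^{2} S^0\langle \det\rangle \wedge P.
\]
The spectrum $S^0\langle\det\rangle$ is trivialized upon smashing with $\tmf$ by \cite[Corollary 3.11]{BBGS}, so the right-hand side simplifies to $\Sigma^{2}(D_2\tmf) \wedge P$. Substituting \Cref{thm:DTMF} gives $\Sigma^{46}\tmf \wedge P$, while \Cref{thm:ITMF} rewrites the left-hand side as $\Sigma^{22}\tmf$. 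Equating and using the invertibility of suspension, I obtain the desired equivalence of $\tmf$-modules
\[
\tmf \wedge P \simeq \Sigma^{22-46}\tmf = \Sigma^{-24}\tmf.
\]

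Finally, recall from \Cref{prop:upper-bnd-top} that the injection $\kappa_2/\kappa(G_{48}) \hookrightarrow \Z/8$ is defined by sending $X$ to the integer $k$ modulo $8$ with $E^{hG_{48}} \wedge X \simeq \Sigma^{24k}E^{hG_{48}}$. The equivalence just established shows $P$ maps to $-1 \in \Z/8$, which is a generator, so the image of the homomorphism is all of $\Z/8$. Combined with injectivity, this gives the claimed isomorphism $\kappa_2/\kappa(G_{48}) \cong \Z/8$, generated by the image of $P$.

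There is no real obstacle here; all the substantive work is contained in \Cref{thm:DTMF}, \Cref{thm:ITMF}, and the machinery of \Cref{prop:upper-bnd-top}. The only subtlety to state clearly is that the integer $k$ is only well-defined modulo $8$ because of the $192$-fold periodicity of $\tmf$, which is precisely why $-1$ (equivalently $7$) is recognized as a generator of $\Z/8$ rather than some multiple of $8$ that could otherwise collapse to zero.
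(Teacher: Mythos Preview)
Your proof is correct and follows essentially the same approach as the paper: specialize the Gross--Hopkins relation to $Z=\tmf$, trivialize the determinant sphere via \cite[Corollary 3.11]{BBGS}, substitute the two duality results \Cref{thm:DTMF} and \Cref{thm:ITMF}, and then read off from \Cref{prop:upper-bnd-top} that $P$ hits a generator of $\ZZ/8$.
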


\begin{proof}
We set $Z = \tmf$ in \eqref{eq:Gross-Hopkins-relation}, and then combine with
\Cref{thm:DTMF} and  \Cref{thm:ITMF} to get
\[
\Sigma^{22}\tmf \simeq  \Sigma^{44}\tmf \wedge \Sigma^2 S^0\langle \det \rangle \wedge P.
\]
Then we note that $S^0\langle \det \rangle \wedge \tmf \simeq \tmf$ since $G_{48}$
is in the kernel of the determinant map; see \cite[Corollary 3.11]{BBGS}. This then reduces to the
indicated equivalence.

The final isomorphism follows from \Cref{prop:upper-bnd-top}, since $8$ is the smallest integer $m$ such that $\tmf \wedge P^{\wedge m} \simeq \tmf$.
\end{proof}

\begin{rem}\label{rem:PasPhi5}
In the homotopy fixed point spectral sequence $E_r^{*,*}(G_{48},P)$, we have that $\iota_P$ is a $d_3$-cycle and $d_5(\iota_P) $ is a generator of $E_5^{5,4}(G_{48}, P) \cong \ZZ/4 \{ k\nu \}$.
Compare \Cref{rem:diff-patt-top}.
\end{rem}

\begin{rem}
A similar comparison of the effect of $P_2$ in $E^{hC_2}$ is done in \cite[Theorem 6.6]{HeardLiShi}, but that result shows that $C_2$ can only see that the order of $P_2$ is divisible by 2.
\end{rem}

\begin{rem}\label{rem:as-close-as-we-get}
It is natural to ask whether we can construct an element of $\kappa_2$ which maps to a generator of
$\kappa_2/\kappa(G_{48})\cong \Z/8$ using the $J$ construction of \cref{sec:j-construction}. We have not been able 
to do quite this much,
but here is what we \emph{can} do towards that goal.
 
Recall from \eqref{eq:K-split-S2} that $\mathbb{S}_2$ has an open normal subgroup $K$ with the property that the composition
\[
G_{24} \longr \SS_2 \longr \SS_2/K
\]
is an isomorphism, which produces a splitting $ \mathbb{S}_2 \cong K \rtimes G_{24} $. Unfortunately, $K$ is not Galois-invariant, so we cannot write $G_{24}$ or $G_{48}$ as the quotient of $\GG_2$. Still, we can use the $J$-construction for the quotient $\SS_2/K \cong G_{24}$ to produce an invertible $E^{h\SS_2}$-module, which if we knew is Galois invariant, would be a generator of the quotient $\kappa_2/\kappa(G_{48}).$

Let $\mathbb{H}$ be real quaternion algebra; this a four dimensional irreducible representation of $Q_8$
\[
\mathbb{H} \cong \R\{1,i,j,k\}.
\]
The action of $C_{3}$ on $Q_8$ which permutes $i,j,k$ extends $\mathbb{H}$ to a $G_{24}$-representation. We can let 
$\SS_2$ act on $\mathbb{H}$ via the quotient $q \colon \mathbb{S}_2 \to \mathbb{S}_2/K\cong G_{24}$.
Let $V=\mathbb{H}-4$ and let
\[
J(V) = (E \wedge S^{V})^{h\mathbb{S}_2} \simeq (E^{hK} \wedge S^{V})^{hG_{24}},
\]
compare \eqref{eq:defnJQFK}; however, this is not quite the same construction as  \eqref{eq:defnJQFK}, since $\SS_2/K$ is not a quotient of $\GG_2$. Nonetheless, $J(V)$ is a $K(2)$-local invertible $E^{h\SS_2}$-module, and
\[
E^{hK} \wedge J(V)  \simeq E^{hK}.
\]
Furthermore, by \cref{prop:shearing-iso}, we have an equivalence
\[
E^{hG_{24}} \wedge_{E^{h\mathbb{S}_2}}  (E \wedge S^V)^{h\mathbb{S}_2} \simeq (E \wedge S^V)^{hG_{24}}
\]
In Propositions 13.22 and 13.23 \cite{BGHS}, all elements of the form $(E \wedge S^V)^{hG_{24}}$ in
the Picard group $\Pic(E^{hG_{24}})$ of invertible $E^{hG_{24}}$-modules are computed. This is done by showing that the $J$ homomorphism factors as\footnote{There is a slight gap 
in \cite[Proposition 3.23 (2)]{BGHS}. We only know that the right vertical map is an inclusion, as there is no proof in the 
literature that $\Pic(E^{hG_{24}}) \cong \Z/192$.} 
\[
\xymatrix{ RO(G_{24})\ar[d]^-{\psi} \ar[r]^-{J} & \Pic(E^{hG_{24}}) . \\
\Z \oplus \Z/8 \ar@{->>}[r] & (\Z \oplus \Z/8)/ (24,1) \cong \ZZ/192 \ar[u]_-{\subseteq}
}
\]
Here, the map
\[
\psi \colon RO(G_{24}) \to \Z \oplus \Z/8
\]
is defined by $\psi(W) =(\dim W, \lambda(W))$ for a certain characteristic class $\lambda$ which has been normalized 
so that $\lambda(\mathbb{H})=1$. Therefore, $\psi(V)=(0,1)$. 
From this, it follows that
\[
(E \wedge S^V)^{hG_{24}} \simeq \Sigma^{-24}E^{hG_{24}} .
\]
If we could find an $X \in \kappa_2$ such that $E^{h\SS_2} \wedge X \simeq J(V)$ then 
$X$ would be a generator for $\kappa_2/\kappa(G_{48})$. 
This can be phrased as the question of whether $J(V) \in \Pic(E^{h\SS_2}) $ is Galois invariant. 
\end{rem}

\subsection{The group $\kappa_2$}
In this final section we show that the extension 
\begin{equation}\label{eq:one-to-be-split}
\xymatrix{
0 \ar[r] & \kappa(G_{48}) \ar[r] & \kappa_2 \ar[r] & \Z/8\ar[r] & 0
}
\end{equation}
from \Cref{thm:first-quotient} is split, and we relate these groups to the descent filtration.
We begin with the following intermediate results, which examine the descent filtration and decompose the
subgroup $\kappa_{2,5} \subseteq \kappa_2$. Recall that $\kappa_5(G_{48}) = \kappa(G_{48}) \cap \kappa_{2,5}$.  

\begin{prop}\label{kappa25:SES} There is a short exact sequence
\[
0\rightarrow \kappa_5(G_{48}) \rightarrow \kappa_{2,5} \rightarrow \Z/8 \rightarrow 0,
\]
where $\kappa_{2,5}\rightarrow \Z/8$ is the composite
$\kappa_{2,5} \hookrightarrow \kappa_{2}\twoheadrightarrow \kappa_2/\kappa(G_{48})\cong \Z/8. $
\end{prop}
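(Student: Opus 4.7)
The plan is to recognize that the map $\kappa_{2,5}\to\Z/8$ is tautologically a well-defined homomorphism (a composite of an inclusion and a quotient), and that its kernel is by construction $\kappa_{2,5}\cap\kappa(G_{48}) = \kappa_5(G_{48})$. So exactness at $\kappa_5(G_{48})$ and in the middle are free, and the real content is surjectivity. To obtain surjectivity I will take the Gross--Hopkins generator $P \in \kappa_2$ from \Cref{thm:first-quotient}, whose image already generates $\Z/8$, and modify it by an element of $\kappa(G_{48})$ to land inside $\kappa_{2,5}$.

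First I would observe that $\kappa_2 = \kappa_{2,3}$: the only possible obstruction would be $d_2(\iota_X) \in H^2(\GG_2,E_1) = 0$, which vanishes since $E_t = 0$ for $t$ odd. So $\phi_3(P) \in H^3(\GG_2,E_2)$ is defined, and by the computation cited in \Cref{prop:kappaG48-filt} this target is $\Z/2\{\eta\wchi\}\oplus\Z/2\{\zeta\langle\wchi,2,\eta\rangle\}$. The crucial input is the first isomorphism in \Cref{prop:kappaG48-filt}, which asserts that the restriction $\phi_3\colon \kappa(G_{48})/\kappa_5(G_{48})\to H^3(\GG_2,E_2)$ is an \emph{isomorphism}. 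In particular $\phi_3|_{\kappa(G_{48})}$ is surjective onto $H^3(\GG_2,E_2)$, so I can pick $Z \in \kappa(G_{48})$ with $\phi_3(Z) = \phi_3(P)$. Since $\phi_3$ is a homomorphism by \Cref{lem:desfilt}, this yields $\phi_3(PZ^{-1}) = 0$, equivalently $d_3(\iota_{PZ^{-1}}) = 0$, placing $PZ^{-1}$ in $\kappa_{2,4}$.

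To promote $\kappa_{2,4}$ to $\kappa_{2,5}$, I note that $E_2^{4,3}(\GG_2,S^0) = H^4(\GG_2,E_3) = 0$ (again by vanishing of $E_t$ for odd $t$), so any $d_4(\iota_X)$ lands in the zero group and $\kappa_{2,4} = \kappa_{2,5}$. Hence $PZ^{-1}\in\kappa_{2,5}$, and since $Z\in\kappa(G_{48})$ its image in $\kappa_2/\kappa(G_{48})\cong\Z/8$ agrees with that of $P$, which is a generator. This proves surjectivity and completes the short exact sequence.

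The main obstacle is the availability of the corrector $Z$: I need the image of $\phi_3$ on $\kappa(G_{48})$ to cover \emph{both} summands of $H^3(\GG_2,E_2)$, not just one. This is precisely what is delivered by \Cref{prop:kappaG48-filt}, combining the $\zeta\langle\wchi,2,\eta\rangle$-direction coming from $\kappa(\GG_2^1)$ (\Cref{thm:comb-2-filts-fin}) with the $\eta\wchi$-direction coming from the $J$-construction element $Q$ of \Cref{thm:Qfirst}. If either piece were missing the argument would fail, so both prior computations are genuinely essential inputs here.
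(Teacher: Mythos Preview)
Your argument is correct and follows essentially the same approach as the paper: the key input is the surjectivity of $\phi_3$ restricted to $\kappa(G_{48})$ from \Cref{prop:kappaG48-filt}, and you use it to correct an arbitrary lift into $\kappa_{2,5}$. The paper packages the same diagram chase as a Snake Lemma argument on the inclusions $\kappa_5(G_{48})\hookrightarrow\kappa_{2,5}$, $\kappa(G_{48})\hookrightarrow\kappa_2$, with $\phi_3$ as the horizontal maps, obtaining $\kappa_{2,5}/\kappa_5(G_{48})\cong\kappa_2/\kappa(G_{48})$ directly; your explicit correction of the single generator $P$ is the unwound version of that same step.
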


\begin{proof} 
Consider the following commutative diagram
\[
\xymatrix{
& \kappa_5(G_{48})\ar@{^{(}->} [d]\ar[r]& \kappa(G_{48})\ar@{^{(}->} [d] \ar[r]^-{\phi_3} &
\ar[d]^{=} H^3(\GG_2,E_2) \rightarrow 0\\
0 \ar[r]& \kappa_{2,5} \ar[r]& \kappa_2 \ar[r]^-{\phi_3} &  H^3(\GG_2,E_2),
}
\]
The lower sequence is exact by \cref{lem:desfilt}. That the top $\phi_3$ is onto follows from \Cref{prop:kappaG48-filt}, and the rest of the top sequence is exact because $\kappa_5(G_{48}) = \kappa(G_{48}) \cap \kappa_{2,5}$. In particular, the bottom $\phi_3$ must be onto as well, and then the Snake Lemma gives the needed isomorphism
\[
\kappa_{2,5}/\kappa_5(G_{48})\cong \kappa_2/\kappa(G_{48})\cong \Z/8. \qedhere
\]
\end{proof}

In order to give a splitting of \eqref{eq:one-to-be-split}, we will first split its restriction from \Cref{kappa25:SES}.
\begin{prop}\label{kappa25:split} The short exact sequence 
\[
0 \rightarrow \kappa_{5}(G_{48})\rightarrow \kappa_{2,5}\rightarrow \Z/8\rightarrow 0
\]
splits.
\end{prop}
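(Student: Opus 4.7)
The plan is to exhibit the splitting explicitly by verifying that the element $P \in \kappa_2$ from Theorem \ref{thm:first-quotient} has order exactly $8$ in $\kappa_{2,5}$. Since $P$ already maps to a generator of $\Z/8 = \kappa_2/\kappa(G_{48})$, this will produce a section $\Z/8 \to \kappa_{2,5}$, $1 \mapsto P$, splitting the sequence.

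First I would record, using Remark \ref{rem:PasPhi5} together with the filtration-quotient description listed after Diagram \eqref{eq:the-filtered-diag-kappa1}, that $P$ lies in $\kappa_{2,5}$ with $\phi_5(P) = k\nu \in E_5^{5,4}(\GG_2, S^0)$ of additive order $4$. Since $\phi_5$ is a homomorphism (Lemma \ref{lem:desfilt}), it follows at once that $\phi_5(P^{\wedge 4}) = 4\,k\nu = 0$, so $P^{\wedge 4} \in \kappa_{2,7}$.

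Next I would argue that $\kappa_{2,7}$ has exponent $2$. The map $\phi_7$ sends $\kappa_{2,7}$ into $E_7^{7,6}(\GG_2, S^0)$, and the image is contained in the $2$-torsion subgroup $\Z/2\{k\eta^3\} \oplus \Z/2\{\zeta e \eta^3\}$ (one summand comes from $\kappa(\GG_2^1) \subseteq \kappa(G_{48})$ via Proposition \ref{prop:kappaG48-filt}, the other from the $\Z/2\{k\eta^3\}$ contribution of $\kappa_2/\kappa(G_{48})$ in the associated-graded table). Hence $2\,\kappa_{2,7} \subseteq \ker\phi_7 = \kappa_{2,8}$, and Corollary \ref{lem:kappa8is0} gives $\kappa_{2,8} = 0$, so $\kappa_{2,7}$ is $2$-torsion. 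Applying this to $P^{\wedge 4} \in \kappa_{2,7}$ yields $P^{\wedge 8} = (P^{\wedge 4})^{\wedge 2} = L_{K(2)}S^0$. Since $P$ maps to an element of order $8$ in the quotient $\Z/8$, its order in $\kappa_{2,5}$ is then exactly $8$, and the subgroup $\langle P \rangle \cong \Z/8$ provides the desired splitting.

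The only delicate input is the precise additive order of $\phi_5(P) = k\nu$ in $E_5^{5,4}(\GG_2, S^0)$, which rests on the full computation of $H^*(\GG_2, E_*)$ and the $d_3$-differentials from \cite{BBGHPScoh} (summarized in Figure \ref{fig:HFPSSG2}). Once that is in hand, the splitting reduces entirely to the already established vanishing $\kappa_{2,8}=0$ and two elementary applications of the fact that $\phi_r$ is a homomorphism, sidestepping any Massey-product or secondary-differential computation on $\iota_P^{\otimes 8}$.
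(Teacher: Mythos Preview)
Your core mechanism --- bound the exponent of $\kappa_{2,5}$ by $8$ using the filtration through $\kappa_{2,7}$, then split off a $\ZZ/8$ --- is exactly the paper's argument. The gap is in how you try to pin down $P$ specifically.

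Remark~\ref{rem:PasPhi5} concerns the $G_{48}$-homotopy fixed point spectral sequence, not the $\GG_2$-spectral sequence that defines $\kappa_{2,r}$ and $\phi_r$. Since $G_{48}$ lies in the kernel of both $\chi$ and $\zeta$, the two generators $\wchi\eta$ and $\zeta\langle\wchi,2,\eta\rangle$ of $H^3(\GG_2,E_2)$ both restrict to zero in $H^3(G_{48},E_2)$; hence knowing $d_3(\iota_P)=0$ for $G_{48}$ says nothing about $d_3(\iota_P)$ for $\GG_2$. You have therefore not established $P\in\kappa_{2,5}$, let alone $\phi_5(P)=k\nu$. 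Worse, the associated-graded table after diagram~\eqref{eq:the-filtered-diag-kappa1} is a roadmap preview that explicitly cites Theorem~\ref{prop:kappa-filt}, whose proof uses the proposition you are proving; invoking it here (both for $\phi_5(P)$ and for the image of $\phi_7$) is circular.

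The paper sidesteps all of this by arguing abstractly. From \Cref{fig:HFPSSG2} one reads off that $E_5^{5,4}(\GG_2,S^0)$ has exponent $4$ and $E_7^{7,6}(\GG_2,S^0)$ (a subquotient of $E_5^{7,6}$) has exponent $2$; combined with $\kappa_{2,8}=0$ this forces every element of $\kappa_{2,5}$ to have order dividing $8$. Since $\kappa_5(G_{48})\cong\ZZ/8\oplus\ZZ/2$ and the quotient is $\ZZ/8$, any preimage of a generator has order exactly $8$ and splits the sequence. Your steps on $\kappa_{2,7}$ and $\kappa_{2,8}$ already do this; you only need to drop the specific assertions about $P$ and replace them with the bare exponent facts from \Cref{fig:HFPSSG2}.
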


\begin{proof} From \cref{prop:kappaG48-filt}, we have that
$
\kappa_5(G_{48})\cong \Z/8\oplus \Z/2.
$
The result will follow if we prove that $\kappa_{2,5}$ has exponent $8$. 

\cref{lem:desfilt} provides us with an exact sequence
\[
0
\rightarrow \kappa_{2,7}\rightarrow \kappa_{2,5} \rightarrow E_5^{5,4}(\GG_2,S^0).
\]
The right most term $E_5^{5,4}(\GG_2,S^0)$ has exponent $4$; see \cref{fig:HFPSSG2}.
Therefore, it suffices to prove that $\kappa_{2,7}$ has  exponent $2$. 
For this, we note that the map
\[
\phi_7 \colon \kappa_{2,7} \longr E_7^{7,6}
\] 
is injective as a consequence of \Cref{lem:kappa8is0}. But $E^{7,6}_7(\GG_2,S^0)$ has exponent $2$, from \cref{fig:HFPSSG2}. So, it follows that $\kappa_{2,7}$
has exponent $2$ and we are done.\end{proof}

The final calculation of $\kappa_2$ is now an immediate corollary.

\begin{Theorem}\label{thm:kappa} The short exact sequence
\[\xymatrix{
0 \ar[r] & \kappa(G_{48}) \ar[r] & \kappa_2 \ar[r] & \Z/8\ar[r] & 0.
}
\]
splits and gives an isomorphism
\[
\kappa_2 \cong \kappa(G_{48}) \oplus \ZZ/8 \cong (\ZZ/8)^2 \oplus (\ZZ/2)^3.
\]
\end{Theorem}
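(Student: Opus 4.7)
The plan is to leverage the work already done in \Cref{kappa25:SES} and \Cref{kappa25:split} to produce a splitting of the outer short exact sequence essentially by inclusion. First I would record the commutative diagram
\[
\xymatrix{
0 \ar[r] & \kappa_5(G_{48}) \ar[r] \ar@{^{(}->}[d] & \kappa_{2,5} \ar[r] \ar@{^{(}->}[d] & \ZZ/8 \ar[r] \ar@{=}[d] & 0 \\
0 \ar[r] & \kappa(G_{48}) \ar[r] & \kappa_2 \ar[r] & \ZZ/8 \ar[r] & 0
}
\]
in which the top row is the short exact sequence of \Cref{kappa25:SES} and the right-hand vertical map is the identity by the very construction of the quotient map $\kappa_{2,5} \to \kappa_2/\kappa(G_{48}) \cong \ZZ/8$ in \Cref{kappa25:SES}.

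By \Cref{kappa25:split}, the top row admits a splitting $s \colon \ZZ/8 \to \kappa_{2,5}$. Composing with the inclusion $\kappa_{2,5} \hookrightarrow \kappa_2$ yields a homomorphism $\ZZ/8 \to \kappa_2$ whose composition with $\kappa_2 \twoheadrightarrow \ZZ/8$ agrees, by the commutativity of the diagram above, with the composition $\ZZ/8 \xrightarrow{s} \kappa_{2,5} \twoheadrightarrow \ZZ/8$, which is the identity. This gives the desired splitting of the bottom row, hence an isomorphism $\kappa_2 \cong \kappa(G_{48}) \oplus \ZZ/8$.

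Combining this with the identification $\kappa(G_{48}) \cong \ZZ/8 \oplus (\ZZ/2)^3$ from \Cref{prop:kappaG48} gives the final formula $\kappa_2 \cong (\ZZ/8)^2 \oplus (\ZZ/2)^3$. There is essentially no obstacle at this stage: the substantive work, namely the existence of an order-$8$ generator detected in the descent filtration at $\kappa_{2,5}$ and the exponent bound on $\kappa_{2,5}$, has already been carried out in \Cref{thm:first-quotient}, \Cref{kappa25:SES}, and \Cref{kappa25:split}. The only thing one might want to be slightly careful about is ensuring that the element $P$ of \Cref{thm:first-quotient}, which generates $\kappa_2/\kappa(G_{48})$, can be replaced by an element whose image lies in $\kappa_{2,5}$; but this is exactly what the proof of \Cref{kappa25:SES} furnishes, via \Cref{rem:PasPhi5} which shows that $\iota_P$ is itself a $d_3$-cycle, so $P \in \kappa_{2,5}$.
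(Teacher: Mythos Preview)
Your proof is correct and follows essentially the same approach as the paper, which simply states that the splitting follows from \Cref{kappa25:SES} and \Cref{kappa25:split} and cites \Cref{prop:kappaG48} for the structure of $\kappa(G_{48})$. You have spelled out explicitly, via the commutative diagram, how a splitting of the top row transports to a splitting of the bottom row; this is exactly the intended deduction, and your closing remark about $P \in \kappa_{2,5}$ is a helpful (if not strictly necessary) clarification.
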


\begin{proof} That the short exact sequence splits follows from \cref{kappa25:SES} and \cref{kappa25:split}.
For the calculation of $\kappa(G_{48})$, see \Cref{prop:kappaG48}.
\end{proof}

We end by recording the descent filtration on $\kappa_2$.

\begin{thm}\label{prop:kappa-filt} In the filtration
\[
0 \subseteq \kappa_{2,7} \subseteq \kappa_{2,5} \subseteq \kappa_{2,3} = \kappa_2 \cong
(\ZZ/8)^2 \times (\ZZ/2)^3
\]
we have isomorphisms
\begin{align*}
\kappa_{2,5} & \cong (\ZZ/8)^2 \times \ZZ/2\\
\kappa_{2,7} & \cong (\ZZ/2)^2\\
\kappa_{2,s} & = 0,\qquad s > 7. 
\end{align*}
Furthermore,
\[
\xymatrix@R=10pt{
\kappa_2/ \kappa_{2,5} \ar[r]_-\cong^-{\phi_3} &
\ZZ/2\{\wchi\eta\} \times \ZZ/2\{\zeta \langle \wchi, 2, \eta\rangle\} \\
\kappa_{2,5}/ \kappa_{2,7}\ar[r]_-\cong^-{\phi_5} & \ZZ/4\{k\nu\} \times \ZZ/4\{\zeta e \nu\} \times \ZZ/2\{\zeta\wchi\eta^2\}\\
\kappa_{2,7} \ar[r]_-\cong^-{\phi_7} &  \ZZ/2\{k\eta^3\} \times  \ZZ/2\{\zeta e\eta^3\}.
}
\]
\end{thm}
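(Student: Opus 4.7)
The strategy is to leverage the splitting $\kappa_2 \cong \kappa(G_{48}) \oplus \ZZ/8\{P\}$ from \Cref{thm:kappa}, the descent filtration on $\kappa(G_{48})$ of \Cref{prop:kappaG48-filt}, the vanishing $\kappa_{2,8}=0$ from \Cref{lem:kappa8is0}, and the key computational input from \Cref{rem:PasPhi5} that in the $G_{48}$-homotopy fixed point spectral sequence $d_5(\iota_P)$ generates $E_5^{5,4}(G_{48},P)\cong \ZZ/4\{k\nu\}$. The three quotients of the descent filtration will then be computed one at a time.

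For the top quotient $\kappa_2/\kappa_{2,5}$, I would first observe that $\kappa(G_{48}) + \kappa_{2,5} = \kappa_2$, since by \Cref{kappa25:SES} the composite $\kappa_{2,5}\hookrightarrow \kappa_2\twoheadrightarrow \ZZ/8$ is surjective. Combined with $\kappa(G_{48})\cap\kappa_{2,5}=\kappa_5(G_{48})$, the second isomorphism theorem yields $\kappa_2/\kappa_{2,5}\cong \kappa(G_{48})/\kappa_5(G_{48})$, which \Cref{prop:kappaG48-filt} identifies as $\ZZ/2\{\wchi\eta\}\oplus \ZZ/2\{\zeta\langle\wchi,2,\eta\rangle\}$ under $\phi_3$.

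For the middle quotient $\kappa_{2,5}/\kappa_{2,7}$, I would first apply \Cref{kappa25:split} to obtain $\kappa_{2,5}\cong \kappa_5(G_{48})\oplus \ZZ/8\cong (\ZZ/8)^2\oplus \ZZ/2$. From \Cref{prop:kappaG48-filt} we already know $\phi_5(\kappa_5(G_{48}))=\ZZ/4\{\zeta e\nu\}\oplus \ZZ/2\{\zeta\wchi\eta^2\}$. For $P$, naturality of the spectral sequence under restriction along $G_{48}\subseteq \GG_2$, together with \Cref{rem:PasPhi5}, shows that the restriction of $\phi_5(P)$ equals $k\nu$ in $E_5^{5,4}(G_{48},S^0)$. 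Since the restriction of $E_5^{5,4}(\GG_2,S^0)$ to the $G_{48}$-spectral sequence has kernel spanned by $\zeta e\nu$ and $\zeta\wchi\eta^2$ (as $\zeta$ and $\wchi$ restrict to zero), we must have $\phi_5(P)\equiv k\nu$ modulo $\phi_5(\kappa_5(G_{48}))$. Hence $\phi_5$ induces an isomorphism of $\kappa_{2,5}/\kappa_{2,7}$ with $\ZZ/4\{k\nu\}\oplus\ZZ/4\{\zeta e\nu\}\oplus \ZZ/2\{\zeta\wchi\eta^2\} = E_5^{5,4}(\GG_2,S^0)$, and $\kappa_{2,7}$ is the kernel.

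Finally, for $\kappa_{2,7}$, the injectivity of $\phi_7\colon \kappa_{2,7}\hookrightarrow E_7^{7,6}(\GG_2,S^0)\cong \ZZ/2\{k\eta^3\}\oplus \ZZ/2\{\zeta e\eta^3\}$ follows from $\kappa_{2,8}=0$, and by \Cref{prop:kappaG48-filt} we have $\phi_7(\kappa_7(G_{48}))=\ZZ/2\{\zeta e\eta^3\}$. To produce a class hitting $k\eta^3$, consider $P^{\wedge 4}$: additivity of $\phi_5$ gives $\phi_5(P^{\wedge 4})=4\phi_5(P)=0$, so $P^{\wedge 4}\in \kappa_{2,7}$. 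Since $P$ has order $8$ in $\kappa_2$, $P^{\wedge 4}\neq 0$, so $\phi_7(P^{\wedge 4})\neq 0$. Moreover, $4[P]\neq 0$ in $\kappa_2/\kappa(G_{48})\cong \ZZ/8$, so $P^{\wedge 4}\notin \kappa(G_{48})$ and hence $\phi_7(P^{\wedge 4})\notin \phi_7(\kappa_7(G_{48}))=\ZZ/2\{\zeta e\eta^3\}$; therefore $\phi_7(P^{\wedge 4})$ projects non-trivially onto the $\{k\eta^3\}$ summand, forcing $\phi_7(\kappa_{2,7}) = \ZZ/2\{k\eta^3\}\oplus \ZZ/2\{\zeta e\eta^3\}$. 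The vanishing of $\kappa_{2,s}$ for $s>7$ is immediate from $\kappa_{2,8}=0$. The main obstacle in this program is the identification of $\phi_5(P)$ modulo $\phi_5(\kappa_5(G_{48}))$ as $k\nu$: this requires both the existence of $P$ (an input from Gross–Hopkins duality via \Cref{thm:first-quotient}) and the naturality comparison with the $G_{48}$-spectral sequence where the restriction kills $\zeta$ and $\wchi$.
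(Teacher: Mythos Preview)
Your proof is essentially correct and closely parallels the paper's argument, using the same key inputs (\Cref{kappa25:SES}, \Cref{kappa25:split}, \Cref{prop:kappaG48-filt}, \Cref{rem:PasPhi5}, \Cref{lem:kappa8is0}). There is one outright error and one small difference in organization worth noting.

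The error: you write that $\phi_5$ identifies $\kappa_{2,5}/\kappa_{2,7}$ with $\ZZ/4\{k\nu\}\oplus\ZZ/4\{\zeta e\nu\}\oplus \ZZ/2\{\zeta\wchi\eta^2\} = E_5^{5,4}(\GG_2,S^0)$. The last equality is false; as the paper points out in \Cref{rem:phi-not-surj}, the class $\eta^2 e$ lies in $E_5^{5,4}(\GG_2,S^0)$ but not in the image of $\phi_5$. Relatedly, your assertion that the kernel of the restriction $E_5^{5,4}(\GG_2,S^0)\to E_5^{5,4}(G_{48},S^0)$ is spanned by $\zeta e\nu$ and $\zeta\wchi\eta^2$ is not quite justified as stated, since you have not ruled out other classes (such as $\eta^2 e$) in that kernel. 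The paper sidesteps this by working directly with a commutative diagram whose bottom row is $0\to \ZZ/4\{\zeta e\nu\}\times\ZZ/2\{\zeta\wchi\eta^2\}\to \im(\phi_5)\to E_5^{5,4}(G_{48},S^0)\to 0$, which only requires knowing the image of the left and right vertical maps rather than the full kernel of restriction; this is the safer formulation.

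The difference: for $\kappa_{2,7}$ the paper applies the Snake Lemma to the diagram above to obtain a short exact sequence $0\to\kappa_7(G_{48})\to\kappa_{2,7}\to\ZZ/2\to 0$, then argues it splits since $E_7^{7,6}(\GG_2,S^0)$ has exponent $2$. Your approach instead exhibits $P^{\wedge 4}$ directly as an element of $\kappa_{2,7}\setminus\kappa_7(G_{48})$, using injectivity of $\phi_7$ to conclude $\phi_7(P^{\wedge 4})\notin\phi_7(\kappa_7(G_{48}))$. This is a valid and somewhat more constructive alternative; the paper's version has the advantage of simultaneously establishing that $E_5^{7,6}(\GG_2,S^0)\cong E_7^{7,6}(\GG_2,S^0)$ by a counting argument.
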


\begin{proof} 
The group $\kappa_{2,5}$ is of the given form by \Cref{kappa25:split} and \cref{prop:kappaG48-filt}. The proof of \Cref{kappa25:SES} also gave the isomorphism of $\kappa_{2}/\kappa_{2,5}$ as claimed.

Now consider the diagram
\[
\xymatrix@C=13pt{
0 \ar[r] & \kappa_5(G_{48})\ar[d]^{\phi_5} \ar[r] & \kappa_{2,5}\ar[r]\ar[d]^{\phi_5} & \Z/8\ar[r]\ar[d] & 0\\
 0 \ar[r]& \Z/4 \{\zeta e \nu \}\times \Z/2 \{\zeta \wchi \eta^2 \} \ar[r] & \im(\phi_5) \ar@{^(^->}[d] \ar[r]& E_5^{5,4}(G_{48},S^0)\cong \Z/4\{ k \nu\} \ar[r] & 0,\\
&& E_5^{5,4}(\GG_2,S^0)
}
\]
where the left vertical map is surjective by \Cref{prop:kappaG48-filt}, and the right vertical map is surjective by \Cref{rem:PasPhi5}. This gives the image of the middle $\phi_5$, which is the quotient $\kappa_{2,5}/\kappa_{2,7}$ as claimed.

The Snake Lemma for the kernels in the above diagram now gives a short exact sequence
\[
0 \rightarrow \Z/2\cong\kappa_{7}(G_{48})\rightarrow \kappa_{2,7} \rightarrow \Z/2\rightarrow 0,
 \]
 which has to split since $\phi_7: \kappa_{2,7} \to E_7^{7,6}(\GG_2,S^0)$ is injective and $E_7^{7,6}(\GG_2,S^0)$ has exponent 2. In fact, $E_5^{7,6}(\GG_2,S^0) \cong \ZZ/2 \{ k\eta^3\} \times \ZZ/2 \{ \zeta e\eta^3 \}$, so we conclude that $E_5^{7,6}(\GG_2,S^0) \cong E_7^{7,6}(\GG_2,S^0)$ and that $\phi_7$ must be onto.

Finally, we have that $\kappa_{2,s}$ is trivial for $s>7$ by \Cref{lem:kappa8is0}.
\end{proof}

\begin{rem}\label{rem:phi-not-surj}
Note that this proof shows that $\phi_5$ does not surject onto the group $E_5^{5,4}(\GG_2,S^0)$, which was computed in \cite{BBGHPScoh}, see \Cref{fig:HFPSSG2}. 
Namely, the class $\eta^2 e$ is not in the image of $\phi_5$.
\end{rem}

\bibliographystyle{alphaurl}
\bibliography{pic-bib}

\end{document}